\newcommand\puteqnum{
  \refstepcounter{equation}\hfill\textup{(\theequation)}}
\DeclareSymbolFontAlphabet{\mathbb}{AMSb}   
\DeclareMathOperator{\Stab}{Stab}
\DeclareMathOperator{\Mod}{Mod}
\DeclareMathOperator{\ord}{ord}
\DeclareMathOperator{\TProj}{TProj}
\DeclareMathOperator{\Sym}{Sym}
\DeclareMathOperator{\Div}{Div}
\DeclareMathOperator{\IC}{IC}
\DeclareMathOperator{\bD}{\mathbb{D}}
\DeclareMathOperator{\Der}{Der}
\newcommand{\wbk}{\widetilde{\mathscr{B}}}
\DeclareMathOperator{\Spec}{Spec}
\DeclareMathOperator{\inv}{inv}
\DeclareMathOperator{\CT}{CT}
\DeclareMathOperator{\TopMod}{TopMod}
\DeclareMathOperator{\ISpec}{ISpec}
\DeclareMathOperator{\gr}{gr}
\DeclareMathOperator{\id}{id}
\DeclareMathOperator{\Hom}{Hom}
\DeclareMathOperator{\Bk}{{\mathscr{B}}}
\DeclareMathOperator{\Id}{Id}
\DeclareMathOperator{\End}{End}
\DeclareMathOperator{\Ima}{Im}
\DeclareMathOperator{\Gr}{Gr}
\DeclareMathOperator{\Aff}{Aff}
\DeclareMathOperator{\Proj}{Proj}
\DeclareMathOperator{\Sh}{Shv}
\DeclareMathOperator{\Rhom}{R\mathscr{H}\text{\kern -3pt {\calligra\large om}}\,}
\DeclareMathOperator{\EExt}{\mathscr{E}\text{\kern -3pt {\calligra\large xt}}\,}
\DeclareMathOperator{\Ind}{Ind}
\DeclareMathOperator{\Aut}{Aut}
\DeclareMathOperator{\Ad}{Ad}
\newcommand{\uug}{\underline{g}}
\DeclareMathOperator{\uucX}{\underline\cX}
\DeclareMathOperator{\uucY}{\underline\cY}
\DeclareMathOperator{\uucZ}{\underline\cZ}
\DeclareMathOperator{\uuphi}{\underline\phi}
\DeclareMathOperator{\uupsi}{\underline\psi}
\DeclareMathOperator{\Sat}{Sat}
\DeclareMathOperator{\thick}{thick}
\DeclareMathOperator{\Perv}{Perv}
\DeclareMathOperator{\Ker}{Ker}
\DeclareMathOperator{\Irr}{Irr}
\DeclareMathOperator{\Rep}{Rep}
\DeclareMathOperator{\car}{car}
\DeclareMathOperator{\Ext}{Ext}
\DeclareMathOperator{\Stk}{St}
\DeclareMathOperator{\PrStk}{PrSt}
\DeclareMathOperator{\Pro}{Pro}
\DeclareMathOperator{\AlgSp}{AlgSp}
\DeclareMathOperator{\PreSh}{PreSh}
\DeclareMathOperator{\StCat}{StCat}
\DeclareMathOperator{\QCoh}{QCoh}
\DeclareMathOperator{\PrCat}{PrCat}
\DeclareMathOperator{\Map}{Map}
\DeclareMathOperator{\dAff}{dAff}
\DeclareMathOperator{\dPrSt}{dPrSt}
\DeclareMathOperator{\PrSh}{PrShv}
\DeclareMathOperator{\Fact}{Fact}
\DeclareMathOperator{\cT}{\mathcal{T}}
\DeclareMathOperator{\act}{act}
\DeclareMathOperator{\hgt}{ht}
\DeclareMathOperator{\SL}{SL}
\DeclareMathOperator{\rk}{rk}
\DeclareMathOperator{\Lie}{Lie}
\DeclareMathOperator{\Sets}{Sets}
\DeclareMathOperator{\Fl}{Fl}
\DeclareMathOperator{\Cat}{Cat}
\DeclareMathOperator{\free}{free}
\DeclareMathOperator{\aff}{{{aff}}}
\DeclareMathOperator{\fin}{{{fin}}}
\DeclareMathOperator{\tor}{tor}
\DeclareMathOperator{\Pic}{Pic}
\DeclareMathOperator{\Vin}{Vin}
\DeclareMathOperator{\RGm}{R\Gamma}
\DeclareMathOperator{\ev}{{{ev}}}
\DeclareMathOperator{\ins}{ins}
\DeclareMathOperator{\quotlim}{``lim''}
\newcounter{stepcounter}[subsection]
\let\oldsubsubsection\subsubsection
\renewcommand{\subsubsection}[1]{%
  \refstepcounter{stepcounter}%
  \oldsubsubsection{#1}%
}
\newtheoremstyle{thms}%
 {6pt}{0pt}{\itshape}{}{\bfseries}{.}{ }{}
\newtheoremstyle{defs}%
  {6pt}{0pt}{}{}{\bfseries}{.}{ }{}
\theoremstyle{thms}
\newtheorem{thm}[stepcounter]{Theorem}
\Crefname{thm}{theorem}{theorems}
\newtheorem{lemma}[stepcounter]{Lemma}
\Crefname{lemma}{lemma}{lemmas}
\newtheorem{prop}[stepcounter]{Proposition}
\Crefname{prop}{proposition}{propositions}
\newtheorem{cor}[stepcounter]{Corollary}
\Crefname{cor}{corollary}{corollaries}
\newtheorem{conj}[stepcounter]{Conjecture}
\Crefname{conj}{corollary}{conjectures}
\theoremstyle{defs}
\newtheorem{defi}[stepcounter]{Definition}
\Crefname{defi}{definition}{definitions}
\newtheorem{exa}[stepcounter]{Example}
\Crefname{exa}{example}{examples}
\newtheorem{rmq}[stepcounter]{Remark}
\Crefname{rmq}{remark}{remarks}
\newtheorem*{rmqs-tweak}{Remarks}
\renewcommand{\b}{\textbf}
\newcommand{\brems}{\begin{rmqs} \hfill \begin{enumerate}[label=\b{\thenumberingbase.},ref=\thenumberingbase]}
\newcommand{\remi}{\item}
\newcommand{\erems}{\end{enumerate} \end{rmqs}}
\newcommand{\bexs}{\begin{exas} \hfill \begin{enumerate}[label=\b{\thenumberingbase.},ref=\thenumberingbase]}
\newcommand{\eexs}{\end{enumerate} \end{exas}}
\newcommand{\bsm}{\begin{smallmatrix}}
\newcommand{\esm}{\end{smallmatrix}}
\newcommand{\blem}{\begin{lemma}}
\newcommand{\elem}{\end{lemma}}
\newcommand{\bconj}{\begin{conj}}
\newcommand{\econj}{\end{conj}}
\newcommand{\bdefi}{\begin{defi}}
\newcommand{\edefi}{\end{defi}}
\newcommand{\bprob}{\begin{Problem}}
\newcommand{\eprob}{\end{Problem}}
\newcommand{\bq}{\begin{Q}}
\newcommand{\eq}{\end{Q}}
\newcommand{\wti}{\widetilde}
\newcommand{\wh}{\widehat}
\newcommand{\benum}{\begin{enumerate}[label={{\upshape(\alph*)}}]}
\newcommand{\benuma}{\begin{enumerate}[label={{\upshape(\arabic*)}}]}
\newcommand{\benumr}{\begin{enumerate}[label={{\upshape(\roman*)}}]}
\newcommand{\eenum}{\end{enumerate}}
\newcommand{\bitem}{\begin{itemize}}
\newcommand{\eitem}{\end{itemize}}
\newcommand{\bc}{}
\newcommand{\bexa}{\begin{exa}}
\newcommand{\eexa}{\end{exa}}
\newcommand{\bcl}{\begin{claim}}
\newcommand{\ecl}{\end{claim}}
\newcommand{\ft}{{\mathrm{ft}}}
\newcommand{\ad}{{\mathrm{ad}}}
\newcommand{\qcqs}{{\mathrm{qcqs}}}
\newcommand{\ba}{\begin{aligned}}
\newcommand{\ea}{\end{aligned}}
\newcommand{\hV}{\hat{V}}
\newcommand{\be}{\begin{align}}
\newcommand{\ee}{\end{align}}
\newcommand{\bpf}{\begin{proof}}
\newcommand{\epf}{\end{proof}}
\newcommand{\bthm}{\begin{thm}}
\newcommand{\ethm}{\end{thm}}
\newcommand{\bprop}{\begin{prop}}
\newcommand{\eprop}{\end{prop}}
\newcommand{\bcor}{\begin{cor}}
\newcommand{\ecor}{\end{cor}}
\newcommand{\brem}{\begin{rmq}}
\newcommand{\llb}{\llbracket}
\newcommand{\rrb}{\rrbracket}
\newcommand{\llp}{(\!(}	
\newcommand{\rrp}{)\!)}
\newcommand{\erem}{\end{rmq}}
\newcommand{\Hyp}{{\mathrm{Hyp}}}
\newcommand{\form}{{\mathrm{f}}}
\newcommand{\mono}{{\mathrm{mon}}}
\newcommand{\ind}{{\mathrm{ind}}}
\newcommand{\BM}{{\mathrm{BM}}}
\newcommand{\Top}{{\mathrm{top}}}
\providecommand{\qxq}[1]{\quad\text{#1}\quad}
\newcommand{\ov}{\overline}
\newcommand{\cH}{\mathcal{H}}
\newcommand{\uula}{\underline{\lambda}}
\newcommand{\ct}{\mathcal{T}}
\newcommand{\cE}{\mathcal{E}}
\newcommand{\cQ}{\mathcal{Q}}
\newcommand{\cW}{\mathcal{W}}
\newcommand{\wco}{\hat{\mathcal{O}}}
\newcommand{\cU}{\mathcal{U}}
\newcommand{\ti}{\tilde}
\newcommand{\cM}{\mathcal{M}}
\newcommand{\bP}{\mathbb{P}}
\newcommand{\thra}{\twoheadrightarrow}
\newcommand{\co}{\mathcal{O}}
\newcommand{\cO}{\mathcal{O}}
\newcommand{\cS}{\mathcal{S}}
\newcommand{\kq}{\mathfrak{q}}
\newcommand{\hkg}{\hat{\mathfrak{g}}}
\newcommand{\hkb}{\hat{\mathfrak{b}}}
\newcommand{\cB}{\mathcal{B}}
\newcommand{\f}{\phi}
\newcommand{\cD}{\mathcal{D}}
\newcommand{\kp}{\mathfrak{p}}
\newcommand{\la}{\lambda}
\newcommand{\km}{\mathfrak{m}}
\newcommand{\bO}{\mathbb{O}}
\newcommand{\eps}{\epsilon}
\newcommand{\cI}{\mathcal{I}}
\newcommand{\hkn}{\hat{\mathfrak{n}}}
\newcommand{\cZ}{\mathcal{Z}}
\newcommand{\hS}{\hat{S}}
\newcommand{\hOm}{\hat{\Omega}}
\newcommand{\bZ}{\mathbb{Z}}
\newcommand{\ra}{\rightarrow}
\newcommand{\hra}{\hookrightarrow}
\newcommand{\bql}{\overline{\mathbb{Q}}_{\ell}}
\newcommand{\ab}{\mathbb{A}}
\newcommand{\bF}{\mathbb{F}}
\newcommand{\NN}{\mathbb{N}}
\newcommand{\al}{\alpha}
\newcommand{\hP}{\hat{P}}
\newcommand{\hG}{\hat{G}}
\newcommand{\hI}{\hat{I}}
\newcommand{\hU}{\hat{U}}
\newcommand{\hX}{\hat{X}}
\newcommand{\hu}{\hat{u}}
\newcommand{\hT}{\hat{T}}
\newcommand{\hB}{\hat{B}}
\newcommand{\bA}{\mathbb{A}}
\newcommand{\bG}{\mathbb{G}}
\newcommand{\red}{{\mathrm{red}}}
\newcommand{\fppf}{{\mathrm{fppf}}}
\newcommand{\re}{{\mathrm{re}}}
\newcommand{\op}{{\mathrm{op}}}
\newcommand{\bC}{\mathbb{C}}
\newcommand{\bN}{\mathbb{N}}
\newcommand{\bB}{\mathbb{B}}
\newcommand{\bQ}{\mathbb{Q}}
\newcommand{\cY}{\mathcal{Y}}
\newcommand{\cC}{\mathcal{C}}
\newcommand{\cJ}{\mathcal{J}}
\newcommand{\cA}{\mathcal{A}}
\newcommand{\cF}{\mathcal{F}}
\newcommand{\cX}{\mathcal{X}}
\newcommand{\Spc}{{\mathrm{Sp}}}
\newcommand{\La}{\Lambda}
\newcommand{\kj}{\mathfrak{j}}
\newcommand{\kl}{\mathfrak{l}}
\newcommand{\kg}{\mathfrak{g}}
\newcommand{\kn}{\mathfrak{n}}
\newcommand{\kb}{\mathfrak{b}}
\newcommand{\kt}{\mathfrak{t}}
\newcommand{\Gm}{\Gamma}
\newcommand{\wK}{\hat{K}}
\newcommand{\bX}{\mathbb{X}}
\newcommand{\Cofib}{Cofib}
\newcommand{\uu}{\underline{u}}
\newcommand{\uw}{\underline{w}}
\newcommand{\khn}{\hat{\mathfrak{n}}}
\newcommand{\Bun}{{Bun}}
\newcommand{\str}{str}
\newcommand{\Om}{\Omega}
\newcommand{\colim}{\operatornamewithlimits{colim}} 
\newcommand{\Sp}{{\mathrm{Sp}}}
\newcommand{\LLa}{{\check\Lambda}}
\newcommand{\Lal}{{\check\al}}
\newcommand{\Lbe}{{\check\beta}}
\newcommand{\Lrho}{{\check\rho}}
\newcommand{\LDelta}{{\check\Delta}}
\def\overbigdot#1{\overset{\hbox{\tiny$\circ$}}{#1}}
\numberwithin{equation}{subsection}
\begin{document}

\title{On the geometric Satake equivalence for Kac-Moody groups}
\author{Alexis Bouthier and Eric Vasserot}
\address{Sorbonne Université, Institut de Math\'{e}matiques de Jussieu-Paris Rive Gauche,, 4 place Jussieu, 75005 Paris, France.}
\address{Université Paris Cité, Institut de Math\'{e}matiques de Jussieu-Paris Rive Gauche, Bâtiment Sophie Germain, 8 place Aurélie Nemours, F-75013 Paris, France.}
\maketitle
\begin{center}
\textit{In memoriam Gérard Laumon (1952-2025)}
\end{center}

\begin{abstract}
This article establishes a geometric Satake equivalence for affine Kac-Moody groups as an equivalence of abelian semisimple categories over 
algebraically closed fields. We define a well-behaved category of equivariant sheaves on the $\infty$-stack $\Gr_{G}$ that we equip with 
a $t$-structure. We obtain an Braden's hyperbolic localization theorem for such a stack and prove that the constant term functor is $t$-exact using dimension 
estimates for affine MV-cycles. We then deduce the sought-for equivalence and prove that the $\IC$-complexes match with the 
irreducible highest weight representations of the dual group $G^{\vee}$.
\end{abstract}

\tableofcontents

\bigskip

 \section{Introduction}
\subsection{Motivations and brief outline}

Let $G$ be a connected reductive group.
The affine Grassmannian $\Gr_{G}=G\llp s\rrp/G\llb s\rrb$  
has a natural stratification by $G\llb s\rrb$-orbits indexed by dominant cocharacters $\la$.
 The geometric Satake equivalence for $G$ relates the $\IC$-complex of the closure 
$\overline{\Gr}_{\la}$ of  $\Gr_\lambda=G\llb s\rrb\cdot s^{\la}$ in $\Gr_{G}$ 
with the irreducible representation  $L(\la)$ of highest weight $\la$ of the Langlands 
dual group $G^{\vee}$. It has been initiated by Lusztig in \cite{Lusz} and culminates with the work of Mirkovic-Vilonen \cite{MV} with 
contributions of Ginzburg \cite{Gi} and Beilinson-Drinfeld \cite{BD}.

The search for an analog for Kac-Moody groups  was initiated by I. Frenkel and I. Grojnowski 20 years ago
and has since then been an active open problem. Because of its highly infinite nature and the lack of technology to 
treat such spaces, the current approach, initiated by Braverman-Finkelberg in \cite{BFI}, \cite{BFII} and \cite{BFIII}, 
is to formulate a Satake equivalence in terms on some analog of finite dimensional transversal slices to orbits in  the affine Grassmannian
that are supposed to encapsulate the 
behaviour of intersection cohomology complexes when restricted to smaller strata. 
This was done in affine type A, in which case the slices are related to moduli 
spaces of instantons and Nakajima's quiver varieties, yielding the
partial results of Braverman-Finkelberg-Nakajima in \cite{BFG}, \cite{BFN1}, \cite{Nak18}. 
It also led, to a definition of these slices in terms of Coulomb branches for symmetrizable 
Kac-Moody algebras, see \cite{ICMFinkl}. Beyond these constructions, the link with the Satake equivalence is still far-reaching.

In this work, we bypass completely this approach and  deal directly with the affine grassmannian of a Kac-Moody group $G_{\aff}$, as 
infinite as it might be. To approach such a space, the  techniques developed in \cite{BKV} are extremely useful. Instead of trying to put 
some geometric structures on a space like $\Gr_{G_{\aff}}$ that do not exist, the key idea is to treat it just as a prestack, focus rather on 
the relations between the various functors involved and prove  finiteness and representability results on smaller pieces that we can glue 
after  by general formalism.

This way, we obtain an equivalence of semisimple abelian categories between a category of equivariant perverse sheaves 
$\Perv_{G_{\aff}\llb s\rrb}(\Gr_{G_{\aff}})$ on the affine grassmannian $\Gr_{G_{\aff}}$ and the category $\Ind(\Rep(G_{\aff}^{\vee}))$ of 
representations of the Langlands dual $G_{\aff}^{\vee}$ that we detail below. Note that in this work, because of the amount of technical 
difficulties that have to be overcome, we do not investigate the monoïdal structure that is intended to be studied in a future work.
Let us explain our approach in details. 

\subsection{Kac-Moody groups}

Before diving in the affine Grassmannian of the Kac-Moody (KM) group, several foundational results on KM groups must be settled. 
We do not restrict to the untwisted affine case, because the Langlands duality exchanges twisted and untwisted types. 
For that purpose, we deal with the general KM case.
In addition we work either over an arbitrary algebraically closed field or over $\bZ$, 
to pass from finite field to complex numbers.
In this setting few things are known on KM groups.

Associated to a KM root datum $\cD$, there is a well-defined $\bZ$-form $\kg_{\bZ}$ of the KM $\bC$-algebra 
$\kg$ as well as a completed version $\hat\kg_{\bZ}$. The situation for groups is more delicate.
Over $\bC$, there are two group ind-schemes: 
the minimal one $G_{\bC}$, defined by Kumar in \cite{Kum}, which is of ind-finite type, and 
the formal one $\hat{G}_{\bC}$, defined by Mathieu \cite{Mat}, which is an ind-pro-scheme.
The formal KM group is defined over an arbitrary ring.
We need the minimal KM group, for which no construction is known over integers.
Over $\bZ$, the only available object is an abstract group functor introduced by Tits \cite{Tits}, which carries no geometric structure.
This group functor is well behaved on fields or on Euclidean rings only. In the reductive case it does not recover the usual 
Chevalley group schemes.
Elementary questions, such as the  formal smoothness of $\hG$, the computation of its Lie algebra over $\bZ$ 
or its behaviour with respect to base change were not considered.
We summarize  Propositions  \ref{Bcmut}, \ref{minZ}, \ref{g-ind} and \ref{U-rep} in the following theorem.

\bthm\label{fond1}
Let $\cD$ be a simply connected KM root datum \eqref{r-dim} and $R$ be a ring.
\hfill
\begin{enumerate}[label=$\mathrm{(\alph*)}$,leftmargin=8mm]
\remi
We have
$\hG_{\bZ}\times_{\Spec(\bZ)}\Spec(R)\cong \hG_{R}$
and $\Lie(\hG_{R})=\hkg_{\bZ}\widehat{\otimes}_{\bZ}R$. 
\item
$\hG_{\bQ}$ is formally smooth. 
\remi
There is a group ind-scheme $G_{R}$ ind-(affine finitely presented) and ind-normal over $\Spec(R)$ such that 
$\Lie(G_{R})\cong\kg_{\bZ}\otimes_{\bZ}R$
and an ind-closed embedding $G_{R}\hra \hG_{R}$. 
\remi
The minimal group $G$ has a closed Borel subgroup $B$ that splits as $B=U\rtimes T$, for $T$ a maximal torus. 
We have $G/B\cong\hG/\hB$, the latter being  ind-projective and the morphism $G\ra G/B$ is Zariski locally trivial.
\eenum
\ethm

We abbreviate $\hG=\hG_{\bZ}$ and  $G=G_{\bZ}$.
We expect that formal smoothness should  hold over $\bZ$, but we were not able to prove it. Our argument over $\bQ$ 
relies on a generalization for group ind-schemes of Cartier's Theorem \ref{cart}.
In addition to the construction of the minimal group, we provide a construction of the Kashiwara flag scheme $\hG/B^{-}$ over $\bZ$, as 
a formally smooth separated scheme in Theorem \ref{fs-Kash}.

\subsection{The geometry of the affine grassmannian}
\subsubsection{Cartan and Iwasawa decomposition}

Consider now the minimal KM group ind-scheme $G$, 
associated with a simply connected KM root datum over $\bZ$. Let $T\subset G$ be a 
maximal torus, $X_*(T)$ and $X^{*}(T)$ the lattices of cocharacters and characters. 
Let $\{\al_{i}\}_{i\in I}$ be the set of simple roots.
Consider an element $\rho\in X^*(T)$ such that  $\left\langle \rho,\alpha^{\vee}_i\right\rangle=1$ for all $i\in I$ with 
$\al_i^{\vee}$ the corresponding coroot, see \S\ref{r-KM}.
We form the quotient stack of the polynomial loops:
\[\Gr_G=G[s^{\pm1}]/G[s],\]
 where we sheafify for the étale topology. 
 We switch to Laurent polynomials instead of Laurent series, because 
 $G[s^{\pm1}]$ and $G[s]$ are ind-finite type schemes over $\bZ$.
On the contrary, the group $G\llb s\rrb$ is a pro-ind-object and $G\llp s\rrp$ is even worse 
 and we cannot apply the techniques of \cite{BKV} to them.
In the reductive case, the Beauville-Laszlo theorem yields an isomorphism of ind-schemes
\[G[s^{\pm1}]/G[s]\cong G\llp s\rrp/G\llb s\rrb,\]
and one can check that Cartan and Iwasawa decompositions match on both sides. 
So, one can formulate a
Geometric Satake theorem
for reductive groups using Laurent polynomials. 
Before going to sheaf theory, we need basic geometric properties, that is to say, 
the Cartan and Iwasawa decompositions and the description of the closure of Schubert cells and semi-infinite orbits. 
For any field $k$, we have an Iwasawa decomposition, see Proposition \ref{iwa},
\[G(k[s^{\pm1}])=B(k[s^{\pm1}]) G(k[s])=B^{-}(k[s^{\pm1}]) G(k[s]).\]
This decomposition is not new since it already appears in \cite{BFK} or \cite{GR08}. 
However  \cite{BFK}, although geometric, holds only in the untwisted affine case only, whereas \cite{GR08} is only group theoretical
but holds for general types. Here we treat uniformly general types in a geometric way.

In particular, we can define for $\nu\in X_{*}(T)$ the quotients
\[S_{\nu}=s^{\nu}\cdot\Gr_{U},\quad T_{\nu}=s^{\nu}\cdot\Gr_{U^{-}}\subset\Gr_{G}.\]
 Although both sides are not representable, the inclusion $S_{\nu},T_{\nu}\ra\Gr_{G}$ 
 are both finitely presented (fp) locally closed immersions.
For $G$ symmetrizable,  Proposition \ref{ST-comp1} yields
\begin{equation}
\ov{S_{\nu}}=\bigsqcup_{\mu\leq\nu} S_{\mu},\quad 
\ov{T_{\nu}}= \bigsqcup_{\mu\geq\nu} T_{\mu}.
\label{s-clo}
\end{equation}
 Here the closures have to be understood as the closures of their inverse images in $G[s^{\pm1}]$, which is an ind-scheme, 
 quotiented by $G[s]$. The symmetrizability assumption appears because we need some fact about coweights \eqref{roots}
  to reduce to the $SL_2$ 
 case.

For the Cartan decomposition, we work over a base field $k$.
It is well-known \cite[Appendix A]{BKP} that the Cartan decomposition does not hold for the full $G[s^{\pm1}]$ 
but rather for a smaller closed subsemigroup, see Proposition \ref{Cart},
\begin{equation}
G_{c}=\bigsqcup\limits_{\la\in X_*(T)^{+}}G[s]\cdot s^{\la}\cdot G[s]\subset G([s^{\pm 1}]).
\label{gc}
\end{equation}
For dominant cocharacters $\la\in X_{*}(T)^{+}$, we define in Lemma  \ref{Schub1} a fp closed substack
$\ov{\Gr}_{\la}\subset\Gr_{G}$
and a quasi-compact dense open $\Gr_{\la}\subset\ov{\Gr}_{\la}$.
If $G$ is symmetrizable, Proposition \ref{Cartan}
yields a decomposition  
\begin{equation}
\ov{\Gr}_{\la}=\bigsqcup\limits_{\mu\leq\la}\Gr_{\mu}.
\label{c-clo}
\end{equation}
We do not have Chevalley's theorem to prove that orbits are locally closed.
We must prove by hand, in Proposition \ref{unif}, that $\Gr_{\la}$ is a $G[t]$-orbit in a schematic way and not only on $K$-points, for 
$K/k$ a field extension. In order to do that, we reduce to the formal case, i.e., replacing $G$ by $\hG$.
The corresponding 
Schubert cell $\widehat{\Gr}_{\la}$ has better representability properties, although it is by a nasty ind-scheme,
see Proposition \ref{sch-form}(d). On the 
contrary, the functor $\Gr_{\la}$ is not representable but has a better cohomological behaviour.

 \subsection{Affine MV-cycles and the classical Satake}

The function counterpart of the geometric Satake for KM groups has already been established, first in the untwisted affine case by 
 Braverman-Kazhdan in \cite{BK} and then in the general case by 
 Gaussent-Rousseau in \cite{GR2}. The affine 
 case enables to use geometry, based on the link with moduli spaces of $G_{\fin}$-torsors on surfaces with $G_{\fin}$ a
 reductive group. 
 The general case is more combinatorial, using masures, that are partial analogs of Bruhat-Tits buildings for KM groups. 
Nevertheless, even when geometry is used, it is only at the level of $k$-points of some moduli spaces, where $k$ is a field.
Every 
statement there, involves sets, which is why classical Satake is more amenable than the geometric one.

Once the stratifications are introduced, the first test for the geometric Satake equivalence is to study the interaction between 
Schubert cells and semi-infinite orbits that deserve to be called affine Mirkovic-Vilonen (MV)-Cycles, or open affine MV cycles. 
 Here a new feature appears, opposed to the reductive case. In the reductive case, the intersections $\Gr_{\la}\cap S_{\nu}$ and 
 $\Gr_{\la}\cap T_{\nu}$ play a symmetric role and have the same dimension.
In the Kac-Moody case, the intersection $\Gr_{\la}\cap T_{\nu}$ is finite dimensional whereas  $\Gr_{\la}\cap S_{\nu}$ is finite 
codimensional.
It is important there to take $\Gr_{\la}$ instead of $\ov{\Gr}_{\la}$, where we loose representability.
For that reason, the first intersection was much more studied.
Over a finite field, it  was proved  that the set $(\Gr_{\la}\cap T_{\nu})(\bF_q)$ is finite 
in \cite[Thm.~1.9]{BKGP} in the untwisted affine case and in \cite[Thm.~5.6]{Heb2} in the general case, 
which is a weak shadow of the finite dimensionality, known as Gindinkin-Karpelevich finiteness.
The other intersection was never considered.

In both works assertions, the finiteness follows from the finiteness of central fibers of the affine Zastavas, that are intersections $S_{\la}\cap T_{\nu}$. 
We follow a similar strategy but we work schematically.
The following key theorem is proved in  Proposition \ref{Z-fin},  Theorem \ref{thm:GT} and Lemma \ref{GrS3}.

\bthm\label{MV-cyc}
Let $G$ be a minimal simply connected KM group over $\bZ$. Let $\la,\mu,\nu\in X_*(T)$ be cocharacters.
\hfill
\begin{enumerate}[label=$\mathrm{(\alph*)}$,leftmargin=8mm]
\remi
The intersection $(S_{\mu}\cap T_{\nu})_\red$ is a finite type scheme over $\bZ$.
\remi
Assume that $G$ is symmetrizable. The following holds:
\benumr
\item 
the intersection $(\Gr_{\la}\cap T_{\nu})_\red$ is a finite type scheme over $\bZ$ with relative dimension $\left\langle \rho,\la-\nu\right\rangle$,
\item
the number of top dimension irreducible components of  $\Gr_{\la}\cap T_{\nu}$ is  $\dim L(\la)_{\nu}$, where
$L(\la)_{\nu}$ is the weight $\nu$-multiplicity space in the irreducible representation of highest weight $\la$ of $G^{\vee}$,
\item
the intersection $(\Gr_{\la}\cap S_{\mu})_\red$ is equidimensional of codimension
$\left\langle \rho,\la-\mu\right\rangle$.
\eenum
\eenum
\ethm

The proof of (a) relies on an other interpretation of these intersections as some central fibers of  mapping spaces  
$\Hom(\bP^{1},[U^{-}\backslash G/B])$.
We  prove a general representability result for these spaces in Proposition \ref{Dr-rep},
 and obtain that the connected components of such are smooth in Proposition \ref{Zfond2}, inspired by \cite[Prop.~2.25]{BFG}.
After that, we need to prove that both definitions of the Zastavas match, which is delicate, because we lack of results such as Beauville-Laszlo 
for group ind-schemes or the fact that $(G/U)$ is a presheaf quotient, i.e for any ring $R$
\[(G/U)(R)\cong G(R)/U(R),\]
that holds in the untwisted affine case using results of \v{C}esnavi\v{c}ius (see \ref{U-rep}). To circumvent these difficulties, we use rather a
Beauville-Lazslo gluing assertion for $\hG/U^{-}$, which is a separated scheme (even though not noetherian), to pass from $\bP^1$ to a formal 
disc $\Spec(\bZ\llb s\rrb)$ for the formal group $\hG$. After that, we need to say that the Zastavas obtained over with Laurent series and the 
group $\hG$ agree with the ones obtained with Laurent polynomials and the group $G$.

The next two assertions use representability of Zastavas as well as a counting argument. The symmetrizability appears because we need to 
use the affine Mac-Donald formula (\cite[§7.8]{BKP} for the untwisted affine case and \cite[Thm. 7.3]{BGR} for the symmetrizable) to relate the 
Satake transform of $1_{\Gr_{\la}}$ with the intersections $(\Gr_{\la}\cap T_{\nu})(\bF_q)$ and deduce the dimension and number of top 
irreducible components by working asymptotically when $q$ goes to $\infty$. A similar trick was used by Zhu in \cite[Cor. 2.8]{ZhuII}, for the 
mixed characteristic version of Geometric Satake.
Note that in order to compare our intersections with the ones of Gaussent-Rousseau, 
we must go from $\bF_q[s^{\pm1}]$ to $\bF_q\llp s\rrp$ (\ref{comp-Gr}), identify our maximal compact with theirs (\ref{int-comp}) and convert 
their formula which involves Hecke paths to the affine MV cycles \eqref{sat0}.

The last assertion is the most difficult, since we cannot rely on previous results, and even the notion of finite codimensionality is already 
hard to make precise. When we pass to the formal group, we have a reasonable notion of 
equidimensionality of codimension $\left\langle \rho,\la-\nu\right\rangle$ but not quite for the minimal group. 
We only obtain a weaker 
form, which essentially says that after pulling back by some ind-group and divide by an other ind-group, we end up with 
something of the right codimension \eqref{diag2}. 
Also, note that in the formal version, our proof uses the standard argument of cutting by successive effective Cartier divisors.
As opposed to the reductive case where there is a finite number of $\mu's$ smaller to $\la$, here it is no longer the case. 
However, as suggested to us by T. Van den Hove, we can reduce to weights $\mu\leq\la$ such that 
$w\la\leq\mu\leq\la$, for some $w\in W$. 
In particular, we essentially sandwich $\Gr_{\la}\cap S_{\mu}$ between $\Gr_{\la}$ and $\Gr_{\la}\cap S_{w\la}$ 
and for the latter, we compute it by hand, because it is a nice and easy intersection. 
Indeed, in the reductive case it is just an affine 
space $\ab^{\left\langle \rho,\la+w\la\right\rangle}$.
 
 \subsection{The \texorpdfstring{$t$}{t}-structure on the affine grassmannian}
We now move to the cohomological part. We fix a prime $\ell$ different for the characteristic of the base field. It is important to work with $G[t]$-equivariant sheaves on $\Gr_{G}$, because it is 
only there that the $t$-structure exists.
The techniques of \cite{BKV} allow us to obtain an $\ell$-adic $\infty$-category $\cD_{G[s]}(\Gr_{G})$ that satisfies gluing, i.e., 
we have the  four 
functors $i^{*},$ $i^{!},$ $i_{*},$ $i_{!}$ for any fp-locally closed immersion $i$. 
By the Cartan decomposition, 
we work on the smaller 
category  $\cD_{G[s]}(\Gr_{G_c})$, with $\Gr_{G_{c}}=G_{c}/G[s]$, where $G_c$ is introduced in \eqref{gc}, 
that still satisfies gluing for a `reasonable' stratification by the Cartan orbits (Lem. \ref{b-constr}).
In particular, to obtain a $t$-structure on $\cD_{G[s]}(\Gr_{G_c})$ it is enough to get one on $\cD_{G[s]}(\Gr_{\la})$. 
Note nevertheless that it will be on the big categories. Now as $\Gr_{\la}$ is a $G[s]$-orbit, we have
\[\cD_{G[s]}(\Gr_{\la})=\cD(\bB K_{\la}),\]
for $K_{\la}\subset G[s]$ the stabilizer of $\la$. This is a group ind-scheme, thus it is not clear how to define a $t$-structure on 
$\cD(\bB K_{\la})$. Nevertheless the loop action contracts $K_{\la}$ to a parabolic $P_{\la}$ of $G$ and, if $G$ is affine 
(twisted or untwisted), the parabolic further contracts to its Levi factor $L_{\la}$, which is reductive. 
In particular, in Proposition \ref{t-strat} we prove  that
\[\cD(\bB K_{\la})\cong\cD(\bB L_{\la}),\]
and the latter has a well-defined $t$-structure. So far, it is the only place where we need our KM group $G$ to be affine, 
because otherwise it can happen that the Levi $L_{\la}$ is still an ind-group and thus $\bB L_{\la}$ does not a priori have a $t$-structure. Nevertheless, some ongoing work of Y. Varshavsky should allow to remove this assumption.
In particular, for any dominant cocharacter $\la$, we define an intersection complex
\[\IC_{\la}\in\cD_{G[s]}(\Gr_{G_c}).\]
The next step is to construct a fiber functor $F$ such that
\[F(\IC_{\la})=L(\la),\]
where $L(\la)$ is the irreducible representation of highest weight $\la$ of the dual group. 
In order to do that, we must establish an hyperbolic 
localization theorem.
 
 \subsection{Hyperbolic localization and Geometric Satake}
\subsubsection{Hyperbolic localization}

Let $G$ be a minimal simply connected Kac-Moody group over an algebraically closed field of characteristic zero or affine in arbitrary characteristics. 
We consider the $\bG_m$-action on $\Gr_{G}$ given by $2\rho^{\vee}$.
We prove in Proposition \ref{id-att} that there is an hyperbolic diagram
$$\xymatrix{&\Gr_{B}\ar[dr]^{p^{+}}\\\Gr_{T}\ar[ur]^{i^{+}}\ar[dr]_{i^{-}}&&\Gr_{G}\\&\Gr_{B^{-}}\ar[ur]_{p^{-}}}$$
that is to say that, $\Gr_{T}$ identifies with the functor of fixed points, $\Gr_{B}$ with the attractor and $\Gr_{B}^{-}$ the repulsor. In 
addition, the maps $i^{\pm}$ are fp closed immersions and $p^{\pm}$ are fp locally closed immersion when restricted to each 
connected component. In particular,  the functors $(p^{\pm})^{*}$, $(p^{\pm})^{!}$ and $(i^{\pm})^{*}$, $(i^{\pm})^{!}$ are well-defined.
We can thus consider by the constant term functor
\[\CT_{*}=(i^{+})^{*}(p^{+})^{!}: \cD(\Gr_{G})\ra\cD(\Gr_{T}).\]
It decomposes along the connected components as
\[\CT_{*}=\bigoplus\limits_{\nu\in X_{*}(T)}\CT_{*,\nu}.\]
 Based on the usual geometric Satake, we prove the following  in Theorems \ref{Braden} and \ref{t-exact}.
 
\bthm\label{sum-Hyp}
\hfill
\begin{enumerate}[label=$\mathrm{(\alph*)}$,leftmargin=8mm]
\remi
There is a morphism of functors \eqref{adj-map}
\begin{equation}
\CT_{*}\ra\CT_{!}^{-}=(i^{-})^{!}(p^{-})^{*}.
\label{hyp-map}
\end{equation}
It is an equivalence when restricted to the category of $\bG_m$-monodrodromic objects.
\remi
If $G$ is symmetrizable, then the normalized constant term functor
\[\CT_{*}[\deg]=\bigoplus\limits_{\nu\in X_{*}(T)}\CT_{*,\nu}[2\left\langle \rho,\nu\right\rangle],\]
restricted to $\cD_{G[t]}(\Gr_{G_c})$ is $t$-exact.
\eenum
\ethm

In order to obtain (a), one cannot rely on the existing versions of Braden's hyperbolic localization. In the literature, there are essentially 
two ways to get it: one is due to Richarz \cite{Ric}, which, by a series of devissages, reduces to the case of an affine scheme of finite 
type and the other one, due to Drinfeld-Gaitsgory \cite{DG1}, that reinterprets Braden's theorem as a statement on adjoint pairs.

In our situation, there does not seem to exist a way to reduce to known situations, so we must reprove it by hand.
In that 
perperspective, the Drinfeld-Gaitsgory's approach is much more flexible. 
First, we prove a contraction principle in Proposition \ref{adj} that allows to reformulate, as them, our problem to a question on adjoint 
pairs with the map \eqref{hyp-map} as the co-unit of the adjunction. The difficult part is then to construct the unit map. This is done 
using an  $\ab^1$-family $\widetilde{\Gr_{G}}$ that interpolates $\Gr_{G}$ and $\Gr_{B}\times_{\Gr_{T}}\Gr_{B^{-}}$, with an action of 
$\bG_m$. 
The key assertion is that there is an fp-locally immersion by Corollary \ref{cor-quot}
\begin{equation}
\la:\widetilde{\Gr_{G}}\ra\ab^{1}\times\Gr_{G}\times\Gr_{G},
\label{la-Hyp}
\end{equation}
which is $\bG_m$-equivariant for the diagonal action on the right hand side given by $2\rho^{\vee}$.
In particular, the pushforward $\la_*$ is defined and by considering the specialization map in Proposition \ref{sp} associated with
$\la_{*}\omega_{\widetilde{\Gr_{G}}}$, we obtain the desired unit map.
After that, it is essentially formal to get Braden's theorem and we follow Drinfeld-Gaitsgory's argument 
provided that we have the key geometric statement in Proposition \ref{fib-prod}. Moreover, in order to get the assertion on 
\eqref{la-Hyp}, similarly to \cite{DG2}, we relate this interpolation $\widetilde{\Gr_{G}}$ to a natural hyperbolic monoid $\Hyp_{G}$, 
obtained from the Vinberg monoid, whose construction is generalized for KM groups in \S\ref{vinberg}.

Proving the $t$-exactness of the functor  $\CT_{*}[\deg]$ amounts to compute the image by this functor of $(i_\la)_*\omega_{\Gr_{\la}}$ 
and $(i_\la)_*\omega_{\Gr_{\la}}$, for $i_{\la}:\Gr_{\la}\ra\Gr_{G}$ the obvious immersion. 
The first image is related to the Borel-Moore homology of the affine MV cycle $\Gr_{\la}\cap T_{\nu}$, 
the second one to the cohomology with compact support of $\Gr_{\la}\cap S_{\nu}$. 
We then use the estimates in Theorem \ref{MV-cyc} to obtain $t$-exactness.

\subsubsection{Affine Geometric Satake}

We now reach the final step of our work in Proposition \ref{ic-calc} and Theorem \ref{satake-fin}. 
Assume that $G$ is  affine, simply connected, over an algebraically closed field of arbitrary characteristic.
By exchanging roots and coroots in the KM root datum, we consider $G^{\vee}$ the minimal group associated to the dual root datum, 
defined over $\bql$.
We introduce $\Rep(G^{\vee})$ to be the category of integrable $\kg$-modules in the category $\cO$, see \S\ref{O-cat}.
It is abelian semisimple by \cite[\S10]{Kac}.

 \bthm
The category $\Perv_{G[t]}(\Gr_{G_{c}})$ is semisimple.
The normalized constant term functor
\[\CT_{*}[\deg]:\Perv_{G[t]}(\Gr_{G_{c}})\ra\Ind(\Rep(G^{\vee}))\]
induces an equivalence of abelian semisimple categories that sends $\IC_{\la}$ to $L(\la)$.
\ethm

To prove the semisimplicity, we consider an exact sequence in $\Perv_{G[t]}(\Gr_{G_{c}})$
\begin{equation}
\Delta_{\la}\ra\IC_{\la}\ra\nabla_{\la},
\label{ct-A}
\end{equation}
where $\Delta_{\la}$ is standard and $\nabla_{\la}$ costandard. 
We have $$\CT_{*}[\deg](\nabla_{\la})=L(\la),$$ as $X_{*}(T)$-graded vector spaces.
We prove that the functor $\CT_{*}$ is conservative.
Using the perverse inequalities of the $\IC$-complex, a spectral sequence argument yields
\[\CT_{*}[\deg](\Delta_{\la})\cong\CT_{*}[\deg](\IC_{\la})\cong L(\la).\]
Since the normalized constant term functor is conservative, all maps in \eqref{ct-A} are isomorphisms. 
In particular, when we restrict $\IC_{\la}$ to $\ov{\Gr}_{\la}\setminus\Gr_{\la}$, it gives a stronger perversity inequalities 
which allows us to prove that
\[\Hom(\IC_{\la},\IC_{\mu}[1])=0,\]
yielding the semisimplicity. 
Since the functor $\CT_{*}[\deg]$ is exact, faithful, gives a bijection on simple objects, and the categories on both sides
are semisimple and stable by direct sums, we get the desired equivalence. 
 \subsection{Remarks on the hypotheses}
Throughout this article we work in the simply connected case, essentially for a matter of reference and convenience; removing this 
assumption is not a problem, just  makes the presentation a lot heavier. As we already explained, the work of Varshavsky should 
permit to have a $t$-structure in the general case. Nevertheless, we would still need symmetrizability in order to get the closure 
relations \eqref{c-clo} and \eqref{s-clo} and because the affine Mac-Donald formula is only known in this case. Finally, the characteristic 
zero assumption essentially appears because we want to have formal smoothness of $G$ that is needed for hyperbolic localization.
Nevertheless, in many statements, at the cost of replacing isomorphisms by topological equivalences, that are enough from 
cohomological point of view, it might be possible to discard this assumption. But of course, it would be much more interesting to have 
formal smoothness in general. In conclusion, with few improvements we expect to get the equivalence in the symmetrizable case in arbitrary characteristic.
Also, we work with $\bql$-coefficients, but most of this work should also hold with $\bZ/\ell^{n}$-coefficients and should shed light on modular representations of KM group for which little is known.

\subsection{Conventions and notations}

\subsubsection{\texorpdfstring{$\infty$}{Infinity}-categories}
All categories in this work are $\infty$-categories, all functors are $\infty$-functors,
and all limits and colimits are the homotopical ones. In particular, ordinary categories are viewed as $\infty$-categories.
A morphism in an $\infty$-category is an isomorphism if it is an isomorphism in the homotopy category.

Our conventions regarding $\infty$-categories follow those of \cite[§.0.6]{DG0}.
We shall use \cite{Lu1} as a basic reference.
Let $\StCat$ be the $\infty$-category of small $\infty$-categories, stable and linear over $\bql$ for a field $L$. In the sequel, $L$ is essentially $\bql$ for a prime $\ell$ prime to the residual characteristics of the object we consider.
The morphisms are the exact functors, i.e., the functors that preserve finite limits and colimits. 
By \cite[Thm.~1.1.4.4, Prop.~1.1.4.6]{Lu2}, the category $\StCat$ has all small filtered colimits and all small limits.
The $\infty$-categories we will encounter all  belong to $\StCat$.

Most of the $\infty$-categories we will encounter are cocomplete, i.e., contain all small colimits.
A functor between cocomplete $\infty$-category is continuous if it commutes with all small colimits.
Continuous functors are exact.
An $\infty$-category is presentable if it is cocomplete and is generated under colimits by a set of compact objects, i.e., 
objects for which the corresponding corepresentable functor commutes with filtered colimits, see \cite[Def.~5.5.0.1]{Lu1}.
A left adjoint functor is always continuous.
In presentable categories, a right adjoint functor whose left adjoint preserves compact objects
is continuous, see \cite[Prop.~5.5.7.2]{Lu1}.
A corollary of the adjoint functor theorem implies that presentable $\infty$-categories also contain
all small limits, see \cite[Cor.~5.5.2.4]{Lu1}.
Let $\PrCat$ be the $\infty$-category of presentable $\infty$-categories which are stable and linear over a field $L$.
The morphisms are the continuous functors.
This category bicomplete by \cite[\S I.1, Cor.~5.3.4]{GRI}.

Given  $\cC\in\StCat$,  we form its category of ind-objects $\Ind(\cC)$, see \cite[Def.~5.3.5.1]{Lu1}.
It is stable and presentable, see \cite[Prop.~1.1.3.6]{Lu2}.
This yields a functor
$\Ind:\StCat\rightarrow\PrCat$
which commutes with small filtered colimits, see \cite[Prop.~5.3.5.10]{Lu1}, \cite[\S 1.9.2]{DG} and \cite{Roz}.

An $\infty$-category $\cC$ is accessible if it is small and idempotent complete, see \cite[Cor.~5.4.3.6]{Lu1}.
Let $\cC$ be an accessible $\infty$-category with finite limits. 
Let $\Spc$ be the $\infty$-category of spaces (=$\infty$-groupoids).
The pro-category $\Pro(\cC)$ is the opposite of the category of accessible functors  $F:\cC\ra\Spc$ 
which commute with finite limits, see \cite[Def.~A.8.1.1]{Lu3}.
By \cite[Rmk.~A.8.1.2]{Lu3}, we have $\Pro(\cC)=\Ind(\cC^{\op})^{\op}$. 
By \cite[Rmk.~A.8.1.3]{Lu3}, the Yoneda embedding yields a fully faithful functor $\cC\hra\Pro(\cC)$.
If $\cC$ is stable then $\Pro(\cC)$ is also stable by \cite[Rmk.~1.1.1.13, Prop.~1.1.3.6]{Lu2}. 
By \cite[Lem.~6.1.2.(a)]{BKV}, if $\cC$ is equipped with a $t$-structure, then $\Pro(\cC)$ is also equipped with a $t$-structure
such that $\Pro(\cC)^{\leq 0}=\Pro(\cC^{\leq 0})$ and $\Pro(\cC)^{\geq 0}=\Pro(\cC^{\geq 0})$.
By \cite[App.~A3]{DG1}, for any continuous functor $\cC\to\cD$ of cocomplete $\infty$-categories,
we can define its left adjoint as a functor $\cD\to\Pro(\cC)$. 

\subsubsection{\texorpdfstring{$\infty$}{Infinity}-stacks}
Let $k$ be a commutative ring.
Let $\Aff_k$ be the category of affine $k$-schemes, equipped with the \'etale topology.
We abbreviate qc=quasi-compact, qs=quasi-separated,
lfp=locally finitely presented,  fp=finitely presented,  lft=locally finite type, and ft=finite type.
Let $\AlgSp_k^\ft$ be the category of $k$-algebraic spaces of ft, and
$\AlgSp_k^\qcqs$ the category of qcqs $k$-algebraic spaces.
We follow \cite[Tag.~01IO]{Sta} and call an immersion, a morphism of schemes that can be factored as $j\circ i$ with $i$ 
a closed immersion and $j$ an open one.

An $\infty$-prestack over $k$ is a functor $\Aff^{\op}_k\to\Spc$.
Let $\PrStk_k$ be the $\infty$-category of $\infty$-prestacks over $k$.
Let $\Stk_k$ be the $\infty$-category of $\infty$-stacks over $k$, i.e., 
the full subcategory of $\PrStk_k$ of sheaves for the \'etale topology on $\Aff_k$.
We write
\begin{align}\label{PSt}
\PrStk_k=\PrSh(\Aff_k)
,\quad
\Stk_k=\Sh(\Aff_k).
\end{align}
The sheafification functor $\PrStk_k\to\Stk_k$ is the left adjoint to the inclusion
$\Stk_k\subset\PrStk_k$. 
All quotients in $\Stk_k$ are made for the étale topology, unless explicitely stated.

Let (P) be a class of morphisms $f:\cX\to Y$ from an $\infty$-stack to an affine scheme which is closed under pullbacks.
A morphism $f:\cX\to\cY$ of $\infty$-stacks is (P)-representable, if for every
morphism $Y\to\cY$ where $Y\in\Aff_k$, the pullback $\cX\times_\cY Y\to Y$ belongs to (P).
We say that $f$ is representable / schematic / affine
if it is (P)-representable, where (P) is the class of all morphisms $\cX\to Y$, 
where $\cX$ is an algebraic space / scheme / affine scheme.
We say that $f$ is (fp) open / (fp) closed / (fp) locally closed immersion
if $(P)$ is the class of (fp) open / (fp) closed / (fp) locally closed immersions of schemes.

For any $\infty$-stack $\cX$ let $\cX_\red$ be the corresponding reduced $\infty$-stack.
We say that $\cX$ is reduced if the counit map $\cX_\red\to \cX$ is invertible.
More precisely, let $\iota:\Aff_{\red,k}\hra\Aff_k$ be the inclusion of the category of reduced affine schemes over $k$.
The \'etale topology on $\Aff_k$ restricts to the \'etale topology on $\Aff_{\red,k}$, hence we can consider the $\infty$-category
$\Stk_{\red,k}=\Sh(\Aff_{\red,k})$.
The restriction functor $\iota^*:\Sh(\Aff_k)\to\Sh(\Aff_{\red,k})$ has a fully faithful left adjoint $i_!$.
We define $\cX_\red=\iota_!\iota^*\cX$.
See \cite[\S 1.4]{BKV} for details. 
We say that a morphism of $\infty$-stacks $f$ is a topological equivalence if the morphism $f_\red$ is an equivalence, and that 
$f$ is a topological fp locally closed immersion if $f_\red$ is an fp locally closed immersion.

 \subsection{Acknowledgments}

At first, we are extremely grateful to Pierre Baumann and Thibaud Van den Hove. The first, for countably many answers, examples and counter-examples and the second, for this very nice trick that cleared an important obstacle on MV cycles.
We also thank, Alexander Braverman, Kestutis \v{C}esnavi\v{c}ius, Michael Finkelberg, Dennis Gaitsgory, Stéphane Gaussent, Syu Kato, Shrawan Kumar, Jo\~ao Lourenço, Olivier Mathieu, Ivan Mirkovic, Dinakar Muthiah, Sam Raskin, Simon Riche, Guy Rousseau, Yakov Varshavsky and Xinwen Zhu for helpful correspondence and conversations.

\section{Foundations on Kac-Moody groups}

\subsection{Ind-schemes}
For any $k$-algebra $R$ we write
$$K_R=R[s,s^{-1}]
,\quad
\cO_R=R[s]
,\quad
\cO_R^-=R[s^{-1}]
,\quad
\hat K_R=R\llp s\rrp
,\quad
\hat\cO_R=R\llb s\rrb.$$
We abbreviate
$$K=K_k
,\quad
\cO=\cO_k
,\quad
\cO^-=\cO_k^-
,\quad
\hat K=\hat K_k
,\quad
\hat\cO=\hat\cO_k.$$

\subsubsection{Ind-schemes}\label{sec-ind-sch}

\bdefi\label{d-indsch} 
\hfill
\begin{enumerate}[label=$\mathrm{(\alph*)}$,leftmargin=8mm]
\item
A weak ind-scheme (resp.~weak ind-algebraic space) over $k$ is a space $\cX$ over $k$  
which admits a presentation as a filtered 
colimit $X\cong\colim X_a$ where $X_a$ are $k$-schemes (resp.~algebraic spaces) and transition maps are closed immersions. 
If in addition the schemes (resp.~the algebraic spaces) are qcqs, we say that $X$ is an ind-scheme 
(resp.~ind-algebraic space).
\item
 We say that $X$ is a reasonable ind-scheme if transition maps are fp closed immersions.
\item
If (P) is a class of schemes which is stable by closed immersions, we say that $X$ is ind-(P), if each $X_a$ is (P).
If (P) is a class of morphisms schemes which is stable under base change, then a morphism $f:X\to Y$ of ind-schemes
is ind-(P) if there is a presentation $f_{a,b}:X_a\to Y_b$ of $f$ by morphisms in (P)
with $X\cong\colim X_a$ and $Y\cong\colim Y_b$.
\end{enumerate}
\edefi

\brem
\hfill
\begin{enumerate}[label=$\mathrm{(\alph*)}$,leftmargin=8mm]
\item
A morphism $f:\cX\ra\cY$ of prestacks is ind-representable if for any map $\Spec(R)\ra\cY$
the fiber product $\cX_R=\Spec(R)\times_{\cY}\cX$ is representable by an ind-algebraic space.
\item
The colimit is taken in $\PrStk_k$ and is computed componentwise, i.e., $X(R)\cong\colim X_a(R)$ for any 
$k$-algebra $R$, see \cite[Prop.~5.1.2.2]{Lu1}.
Since a filtered colimit of $n$-truncated groupoids is $n$-truncated,
an ind-scheme takes values into $\Sets$ (take $n=0$), see \cite[Rmk.~5.5.8.26]{Lu1} and  \cite[Ex.~7.3.4.4]{Lu1}.
\item
By Gabber's theorem  \cite[Tag.~0APL]{Sta}, an ind-algebraic space is a stack for the fpqc topology. 
Indeed, each $X_a$ is a fpqc sheaf and the 
finite limits involved in the fpqc descent commute with filtered colimits.
\eenum
\erem

\blem\label{lem:immersion}
Let $i:X\ra Y$ be an immersion of ind-schemes over $k$.
If $Y$ is of ind-ft, then $i$ is a quasi-compact immersion.
If further $Y$ is ind-affine, then $i$ is also quasi-affine.
\elem

\bpf
For any ft-closed subscheme $Z\subset Y$, 
the morphism $i_{Z}:X\times_{Y}Z\ra Z$ is an immersion of finite type, being locally closed in a ft $k$-scheme. Thus $i$ is an immersion of finite type, thus quasi-compact.
\epf

An $\infty$-stack $\cX$ is a space if the $\infty$-groupoid $\cX(S)$ is isomorphic to a set for each $S\in\Aff_k$.
Let $H$ be a group space acting on an $\infty$-stack $\cX$.
A morphism of $\infty$-stacks $f:\cX\ra\cY$ is an $H$-torsor if $f$
is an epimorphism in the \'etale topology, i.e., there are sections locally for the \'etale topology, 
and the action map $H\times\cX\to\cX\times_\cY\cX$ is an isomorphism.
We can form the quotient $[\cX/H]$.
It is an $\infty$-stack such that,
for each affine scheme $S$, the $\infty$-groupoid $[\cX/H](S)$ classifies pairs consisting of an étale $H$-torsor $E\to S$ 
and an $H$-equivariant map $\phi: E\ra\cX$.
As in the classical case, the quotient map $\cX\to[\cX/H]$ is an $H$-torsor, and if $f:\cX\ra\cY$ is an $H$-torsor then
the induced map $[\cX/H]\to\cY$ is an isomorphism.
See \cite[\S1.2.6]{BKV} for more details.

\begin{lemma}\label{quotH}
Let $f:X\ra Y$ be a morphism of ind-schemes which is equivariant for the action of a group ind-scheme $H$, 
with $X$ ind-separated. Let $[f/H]:[X/H]\ra[Y/H]$ be the corresponding morphism of stacks.
Then $[f/H]$ is ind-representable. Consider one the following properties $(P)$
\hfill
\begin{enumerate}[label=$\mathrm{(\alph*)}$,leftmargin=8mm]
\item
quasi-affine,
\item
finitely presented,
\item
qc immersion,
\item
ind-affine or ind-quasi-affine.
\end{enumerate}
If $f$ is (P), so does $[f/H]$.
If in addition $H$ is ind-affine, then $[X/H]$ has ind-affine diagonal.
\end{lemma}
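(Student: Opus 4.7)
The plan is to leverage the Cartesian square
$$
\begin{tikzcd}
X \ar[r, "f"] \ar[d] & Y \ar[d] \\
{[X/H]} \ar[r, "{[f/H]}"] & {[Y/H]}
\end{tikzcd}
$$
whose vertical arrows are the quotient \'etale $H$-torsors, combined with \'etale descent. Given an affine $S$ together with a morphism to $[Y/H]$, I choose an \'etale cover $S'\to S$ over which the composite $S'\to[Y/H]$ lifts to $S'\to Y$; by the Cartesian square, the pullback of $[f/H]$ along $S'\to[Y/H]$ is identified with $X\times_Y S'\to S'$, which is a qs ind-algebraic space since $X$ is ind-separated and $f$ is a morphism of ind-schemes. \'Etale descent for qs ind-algebraic spaces along $S'\to S$ then yields ind-representability of $[f/H]$.

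For assertions (a)--(d), each of quasi-affine, finitely presented, qc immersion, ind-affine, and ind-quasi-affine is preserved under base change and is \'etale-local on the target. Hence if $f$ has property $(P)$, then so does the base change $X\times_Y S'\to S'$, and descending along $S'\to S$ gives $(P)$ for the pullback $[X/H]\times_{[Y/H]}S\to S$, hence for $[f/H]$ itself.

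For the ind-affine diagonal of $[X/H]$, consider an affine $S$ with a morphism to $[X/H]\times[X/H]$ given by two pairs $(E_i,\phi_i)$ with $E_i\to S$ an \'etale $H$-torsor and $\phi_i:E_i\to X$ an $H$-equivariant map. The fiber of $\Delta_{[X/H]}$ over $S$ is the $S$-sheaf of $H$-equivariant isomorphisms $E_1\cong E_2$ compatible with the $\phi_i$'s. After an \'etale cover $S''\to S$ trivializing both torsors, the $\phi_i$ yield maps $\psi_i:S''\to X$, and this Isom sheaf becomes the equalizer of the two morphisms $H\times S''\rightrightarrows X$ given by $(h,s)\mapsto h\cdot\psi_1(s)$ and $(h,s)\mapsto\psi_2(s)$. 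Since $X$ is ind-separated, this equalizer is an ind-closed subscheme of $H\times S''$, which is ind-affine when $H$ is ind-affine; ind-affineness then descends along $S''\to S$.

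The main technical obstacle is verifying \'etale descent for qs ind-algebraic spaces and checking that each property in (a)--(d) descends through such covers. In principle this reduces componentwise to classical \'etale descent for schemes, but threading compatible ind-presentations of $X$, $Y$, and $H$ requires some bookkeeping; once the descent formalism is in place, all three parts of the lemma follow uniformly from the square above.
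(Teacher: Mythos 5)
Your proposal follows essentially the same route as the paper: lift the test point $\Spec(R)\to[Y/H]$ along an \'etale cover, identify the pullback of $[f/H]$ with $X\times_Y\Spec(R')\to\Spec(R')$, descend, and treat the diagonal of $[X/H]$ via the action map together with ind-separatedness of $X$ and ind-affineness of $H$. The one point you defer as ``bookkeeping'' --- \'etale descent for (qs) ind-algebraic spaces --- is precisely where the paper does something specific rather than generic: it first checks that the diagonal of $\cX_R=\Spec(R)\times_{[Y/H]}[X/H]$ is representable by a closed immersion (using effectivity of descent for affine morphisms, since this holds after the \'etale cover), and then invokes \cite[Lem.~3.12]{HR}, which upgrades ``\'etale-locally an ind-algebraic space with representable diagonal'' to ``an ind-algebraic space.'' Naive descent of an ind-presentation does not work directly because the descent datum need not respect the filtration, so you should replace your bookkeeping remark by this diagonal-plus-criterion argument (or an equivalent one). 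Your treatment of the diagonal of $[X/H]$ via the Isom/equalizer description is a mild repackaging of the paper's factorization of $X\times H\to X\times X$, $(x,h)\mapsto(x,h\cdot x)$, as a closed immersion followed by an ind-affine map, and is fine.
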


\begin{proof}
Given $\phi:\Spec(R)\ra [Y/H]$, we set $\cX_{R}=\Spec(R)\times_{[Y/H]}[X/H].$
Recall we use the quotient for the étale topology.
Hence, after an étale cover $\Spec(R')\ra\Spec(R),$ 
the map $\phi$ lifts to $Y$.
Further,  the functor $\cX_{R'}$ is representable by an ind-scheme. Moreover the 
diagonal $\cX_{R}$ is representable by a closed immersion, as it is the case after an étale cover, by effectivity of descent of affine 
morphisms. Then, it follows from \cite[Lem.~3.12]{HR} that the functor $\cX_{R}$ is representable by an ind-algebraic space.
To conclude, it is thus enough to prove that all these properties are étale local for algebraic spaces and this follows
from \cite[Tag 0423, 041V, 0424]{Sta}. Finally for (d), it follows again by effectivity of descent for affine/quasi-affine schemes.
Let us now prove the statement on the diagonal.
We have the Cartesian diagram
$$\xymatrix{X\times H\ar[d]\ar[rr]^{\Delta'}&&X\times X\ar[d]\\
[X/H]\ar[rr]^-{\Delta_{[X/H]}}&&[X/H]\times[X/H]}$$
with $\Delta'(x,h)=(x,h.x)$. But $\Delta'$ is the composite of the map
$$\Delta_{X}\times\id_{H}:X\times H\ra X\times X\times H$$ 
which is a closed immersion, because $X$ is ind-separated, followed by the map
$$\act:X\times X\times H\to X\times X,\quad(x_1,x_2,h)\mapsto (x_1,h\cdot x_2)$$ which is ind-affine, because $H$ is.
\end{proof}

We will need the following small variant of the above.

\blem\label{quotH2}
Let $S$ be a scheme and $X$ be an $S$-scheme with an action of an fppf affine group scheme $H$ over $S$.
Then $[X/H]$ has a schematic separated diagonal (here we sheafify for fppf topology). 
If in addition $X$ is quasi-separated, then the diagonal $\Delta_{[X/H]}$ is qc.
\elem

\bpf
By \cite[Tag.~04UI, Tag.~04TB, Tag.06DB]{Sta}, 
the functor $[X/H]$ is an algebraic stack.
Thus, by \cite[Tag.~04YQ]{Sta}, the morphism $\Delta_{[X/H]}$ is schematic and separated. 
If $X$ is quasi-separated, then in the proof of Lemma  \ref{quotH} the map
$\Delta'$ is qc, because it is the composition of a qc morphism followed by an affine one, 
as $H$ is affine thus qc.
\epf

\bexa\label{ind-tors}
Let $H$ be an ind-affine or ind-quasi-affine group ind-scheme over a base scheme $S$. 
Let $T$ be an $S$-scheme.
Each étale-locally trivial $H$-torsor over $T$ is representable by an ind-affine scheme (resp.~ind-quasi-affine) over $T$. 
Indeed by Lemma \ref{quotH}, the morphism $S\ra\bB H$ is ind-affine (resp.~ind-quasi-affine) and if $E$ is an étale-locally trivial $H$-torsor over $T$, 
there is a map $T\ra\bB H$, such that $E\cong T\times_{\bB H}S$ and $E$ is ind-affine (resp.~ind-quasi-affine) over $S$.
\eexa

\subsubsection{Arcs and loops}

We use the polynomial loops functor because it has better finiteness properties.
For any prestack $\cX$ over a ring $k$, we consider the functors 
$\cX_\cO$, $\cX_K$ on $k$-algebras given by
$$\cX_\cO:R\mapsto \cX(\co_R),\quad 
\cX_K:R\mapsto \cX(K_R).$$

\begin{lemma}\label{loop-rep}
\hfill
\begin{enumerate}[label=$\mathrm{(\alph*)}$,leftmargin=8mm]
\item
If $\cX$ is formally smooth then $\cX_\cO$ is also formally smooth.
\item
Let $X$ be an ind-affine scheme over $k$.
Then  $X_\cO$ and $X_K$ are representable by ind-affine schemes.
\item
Let $X$ be an ind-fp-affine scheme over $k$.
Then  $X_\cO$ and $X_K$ are representable by ind-fp-affine schemes.
\end{enumerate}
\end{lemma}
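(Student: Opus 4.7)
Part (a) follows from the lifting criterion. Given a square-zero extension $A\twoheadrightarrow A/I$, the induced map $A[s]\twoheadrightarrow (A/I)[s]$ is again a square-zero extension, with kernel $I[s]$. By adjunction, a lift of a given map $\Spec(A/I)\to\cX_{\cO}$ to $\Spec(A)\to\cX_{\cO}$ is the same data as a lift of the corresponding map $\Spec((A/I)[s])\to\cX$ to $\Spec(A[s])\to\cX$, and the latter exists by formal smoothness of $\cX$.

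For (b) and (c), the plan is in two steps: first reduce to the case where $X$ is a single affine scheme, then represent the loop spaces via a filtration by degree. For the reduction, observe that both $(-)_{\cO}$ and $(-)_{K}$ send (fp) closed immersions of schemes to (fp) closed immersions of presheaves, and commute with filtered colimits along closed immersions, since any map $\Spec(\cO_{R})\to X$ from an affine scheme factors through some step of the colimit. Hence, given a presentation $X\simeq\colim X_{a}$ as in Definition \ref{d-indsch}, we get $X_{\cO}\simeq\colim (X_{a})_{\cO}$ and similarly for $X_{K}$; once each $(X_{a})_{\cO}$ is shown to be ind-affine (resp.\ ind-fp-affine), a reindexing of the iterated colimit yields an ind-affine (resp.\ ind-fp-affine) presentation of $X_{\cO}$.

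For representability on a single affine scheme $X=\Spec A$, write $A=k[x_{i}\colon i\in I]/J$ and consider the closed embedding $X\hra\bA^{I}=\Spec k[x_{i}\colon i\in I]$. The functor $R\mapsto R[s]^{I}$ admits the presentation $\colim_{d}(R[s]_{\leq d})^{I}$, where $R[s]_{\leq d}$ denotes polynomials of degree $\leq d$; each $(R[s]_{\leq d})^{I}$ is represented by the affine scheme $\bA^{I\times\{0,\dots,d\}}$, with transitions given by setting the top coefficient to zero, which are closed immersions. This exhibits $(\bA^{I})_{\cO}$ as ind-affine. The closed subfunctor $X_{\cO}\hra(\bA^{I})_{\cO}$ is cut out at the $d$-th truncation by the equations $\phi((g_{i}))=0$ for $\phi\in J$, each of which unpacks into polynomial equations in the coefficients of the $g_{i}$ (namely, the coefficients of $\phi((g_{i}))\in R[s]$), so cuts out a closed affine subscheme of $\bA^{I\times\{0,\dots,d\}}$; assembling these gives an ind-affine presentation of $X_{\cO}$. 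The same reasoning using the symmetric filtration $R[s,s^{-1}]=\colim_{d}s^{-d}R[s]_{\leq 2d}$ handles $X_{K}$. For part (c), one takes $I$ finite and $J=(f_{1},\dots,f_{m})$ finitely generated; each $f_{j}((g_{i}))=0$ contributes only finitely many coefficient equations in each truncation, so each step is fp-affine and the transitions are fp closed immersions, yielding the ind-fp-affine conclusion.

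The proof is essentially bookkeeping: no deep obstacle is expected, beyond the careful verification that the relations of $A$ translate, at each degree truncation, into closed (and in the fp case, fp closed) conditions on coefficients, and that the degree filtrations give genuine ind-scheme presentations — i.e.\ that the transition maps are closed immersions of schemes, not merely monomorphisms of presheaves. The mildly subtle point is the $X_{K}$ case, where a filtration symmetric in $s$ and $s^{-1}$ (or any finite-length window) must be chosen so that each stage sits as an (fp) closed subscheme of the next; the interval filtration above is designed precisely for this.
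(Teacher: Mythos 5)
Your proof is correct and follows essentially the same route as the paper: part (a) via the adjunction applied to the square-zero extension $A[s]\twoheadrightarrow (A/I)[s]$ (which the paper dismisses as obvious), and parts (b), (c) by reducing to a single affine scheme using commutation with filtered colimits, embedding into affine space, and presenting the loop space by degree truncations with closed (resp.\ fp closed) transition maps. The only cosmetic difference is in (c), where the paper reduces further to $X=\bA^{1}$ by invoking preservation of fp closed immersions and finite products, while you verify directly that finitely many relations impose fp closed conditions at each truncation; the content is the same.
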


\begin{proof}
Part (a) is obvious.
To prove (b), since both loop functors commute with filtered colimits, we may assume that $X$ an affine $k$-scheme. 
Choose a closed embedding of $X$ in $\ab^{(I)}=\Spec(k[x_i\,;\,i\in I])$. Then $X_{\co}$ is closed inside
\[\ab_\cO^{(I)}=\colim_{d\geq 0} \Spec\Big(k[x_n\,;\,n\in I\times\bN]\,/\,(x_n\,;\,n\in I\times[d,\infty))\Big).\]
For $X_K$ the proof is similar. 
Now, let us prove (c).
Both loop functors commute with filtered colimits.
Thus we may assume that $X$ is fp and closed in $\ab^N$ for some positive integer $N$. 
Since both functors preserve fp closed immersions and finite products, 
we are reduced to $X=\ab^{1}$.
The claim is now obvious.
\end{proof}

An other basic result on loop torsors is the following.

\bprop\label{H-tors}
Let $G$ be an ind-affine group ind-scheme over a ring $k$, and $R$ be a $k$-algebra. The loop functor yields a  
bijection between $G$-torsors on $\ab^{1}_{R}$ (resp.~$\bG_{m,R}$) étale locally trivial over $R$, 
and étale locally trivial $G_\cO$-torsors (resp.~$G_K$-torsors) on $\Spec(R)$.
\eprop

\bpf
We treat the case of $G_\cO$, the case of $G_K$ is analog.
Let $E$ be a $G$-torsor over $\ab^{1}_{R}$ that is étale locally trivial over $\Spec(R)$.
Then $E_\cO\ra\Spec(R)$ is an étale locally trivial $G_\cO$-torsor as
\[E_\cO\times_{\Spec(R)}E_\cO\cong(E\times_{\ab_R^1}E)_\cO\cong E_\cO\times_{\Spec(R)} G_\cO.\]
Consider now the converse.
Let $E$ be $G_\cO$-torsor over $\Spec(R)$ 
that trivializes after an étale cover $\Spec(R')\ra\Spec(R).$
As $G_\cO$ is ind-affine, by Example \ref{ind-tors}, a $G_\cO$-torsor over $R$ trivializable 
over $R'$ is equivalent to the trivial $G_\cO$-torsor over $R'$ with a descent datum.
The descent datum amounts of an element 
$$g\in G_\cO(R'\otimes_{R}R')=G((R'\otimes_{R}R')[s])$$ that satisfies a cocycle condition.
Since the map
$\ab_{R'}^1\ra\ab_R^1$ 
is also étale surjective, we can use this descent 
datum for the trivial $G$-torsor on $\ab_{R'}^1$.
This yields a $G$-torsor $\ab_R^1$.
\epf

\brem
For Laurent power series rings $\hat K_R$ instead of $K_R$, the situation is more delicate 
but the result is still true for $G$ reductive with an embedding in $GL_n$, 
as we were informed by K. \v{C}esnavi\v{c}ius.
\erem

\subsubsection{Pro-rings and pro-modules}
Let  $k$ be a ring. Let $\Pro(\Mod_k)$ be the category of $k$-pro-modules, which consists of cofiltered projective systems 
of $k$-modules. It is an abelian category where short exact sequences are given by cofiltered 
systems of short exact sequences of $k$-modules.
For each ring morphism $k\ra R$, there is a base change functor 
$\Pro(\Mod_k)\ra\Pro(\Mod_R),$ $M\mapsto M\hat{\otimes}_{k}R.$
A pro-ring is a pro-object $k=(k_a)$ in the category of commutative rings. 
A pro-module over a ring $k_a$ is a  pro-object in the category of $k_a$-modules.
 Given a morphism $k_a\to k_b$ the base change of pro-modules is denoted by
$M_a\to M_a\widehat\otimes_{k_a}k_b.$
A pro-module over the pro-ring $k$ is a compatible system $M=(M_a)$ of pro-modules $M_a$ over $k_a$
such that $M_b\cong M_a\widehat\otimes_{k_a}k_b.$
It is countably generated or $\aleph_0$ if it can be represented by a compatible system 
whose index set is countable.
A pro-ideal $I$ of $k$ is a pro-module over $k$, where each $M_a=I_a$ is an ideal of $k_a$.
For any abelian category $\cC$, a pro-object $M=(M_a)$ in $\Pro(\cC)$
is Mittag-Leffler (ML) if it is equivalent to a filtered projective system $(M_{a})$ with surjective transition maps.
It is  strictly ML if it is ML and $\lim_{a\geq b} M_a\ra M_b$ is surjective for each $b$.

An ind-affine scheme $T$ over a ring $k$ is equivalent to an $k$-pro-algebra that is ML.
There is a  functor to topological $k$-modules
\[F:\Pro(\Mod_{k})\ra\TopMod_{k}\]
given by $(M_{a})\mapsto\lim M_a$ and a left adjoint
\[G:\TopMod_{k}\ra\Pro(\Mod_{k})\]
that sends $M$ to the projective system of its discrete quotients. 
In general, the functors $F$ and $G$ are not equivalences. 
It is convenient to specify a class of objects for which this is the case.
If $N\in\TopMod_k$, then $G(N)$ is strict ML.
Thus, if $M\in\Pro(\Mod_k)$  then
\[G\circ F(M)\cong M\iff M\ \text{is\ strict\ ML}\]
By \cite[Tag.0597]{Sta}, for any countably generated $k$-pro-module $M$, the conditions 
$M$ is ML and $M$ is strict ML are equivalent.
In particular, an $\aleph_0$-ind-affine ind-scheme  $T\cong\colim \Spec(R_a)$ over $\Spec(k)$ is the same as a $k$-complete 
topological algebra $R=\lim R_a$, whose topology is defined by a countable family of open ideals $I_a$.

\subsubsection{Algebraic smoothness and formal smoothness}

Consider an ind-scheme $X$ over a field $k$. 
For each $x\in X$ we can construct the following ind-affine ind-scheme
\[\ISpec(\cO_{X,x})=\colim_{x\in Z\subset X}\Spec(\cO_{Z,x}),\]
where $Z$ runs over closed subschemes of $X$. It is filtered because $X$ is an ind-scheme.
The ind-affine scheme $\ISpec(\cO_{X,x})$ corresponds to a pro-algebra $\cO_{X,x}$ represented by  the pro-system
$\cO_{X,x}\cong (\cO_{X_{a},x})$
for an ind-scheme presentation $X\cong\colim X_{a}$. 
The pro-local ring $\cO_{X,x}$ has a maximal ideal $\km_{x}=(\km_{x,X_a})$.
Similarly, we can consider the local completion at $x$ 
\[\ISpec(\cO^{\wedge}_{X,x})=\colim_{x\in Z\subset X}\Spec(\cO^{\wedge}_{Z,x}).\]
Note that we have isomorphims of projective systems 
\[\cO^{\wedge}_{X,x}\cong(\cO^{\wedge}_{X_{a},x})\cong\lim(\cO_{X_a,x}/\km^n_{X_{a},x})\cong\lim(\cO_{X,x}/\km_{X,x}^{n}).\]
Further, we have
\begin{equation}
\gr^{p}(\cO_{X,x})\cong\km_{x}^{p}/\km_{x}^{p+1}\cong\gr^{p}(\cO_{X,x}^{\wedge}),
\label{p-gr}
\end{equation}
The following terminology was introduced by Shafarevich \cite{Shaf} and used in \cite[Def.~4.3.1]{Kum}

\bdefi
Let $X$ be an ind-scheme over a field $k$. 
We say that $X$ is algebraically smooth at $x$ if
$\Sym^{p}(\km_x/\km_x^{2})\cong\km_{x}^{p}/\km_{x}^{p+1}$ for all $p\geq 0$
as $\cO_{X,x}$-pro-modules.
\edefi

For a $k$-pro-vector space $V$ which is  ML, we can form the ind-affine scheme
\[\ab(V^{\vee})=\colim_{V\thra V_{a}} \Spec(\Sym (V_a))\]
where $V_a$ is a vector space, and $V^{\vee}$ is the dual ind-vector space of $V$. 
This ind-affine scheme is formally smooth. If $V=\km_x/\km_x^2$ then 
\begin{equation}
\gr^{p}(\cO_{\ab(V^{\vee}),0})\cong\Sym^{p}(\km_x/\km_x^{2})\cong\gr^{p}(\cO_{\ab(V^{\vee}),0}^{\wedge}).
\label{p-gr2}
\end{equation}

We want to relate algebraic smoothness to formal smoothness.
\blem\label{cont-iso}
Let $X$ be an ind-scheme, and $x\in X$.
If $X$ is algebraically smooth at $x$, then we have an isomorphism of pro-algebras
$\cO_{\ab(T_{X,x}),0}^{\wedge}\cong\cO_{X,x}^{\wedge}$
and an isomorphism of ind-schemes 
$$\ISpec(\cO_{X,x}^{\wedge})\cong\ISpec(\cO_{\ab(T_{X,x}),0}^{\wedge}).$$
\elem

\bpf
By choosing lifts in $\km_x$ of a projective system of generators of $\km_{x}/\km_{x}^{2}$.
We construct a local morphism of pro-local rings
$\phi:\Sym(\km_x/\km_{x}^{2})\ra\cO_{X,x}$
that yields a map of pro-rings 
\[\hat{\phi}:\cO_{\ab(T_{X,x}),0}^{\wedge}\ra\cO_{X,x}^{\wedge}.\]
By algebraic smoothness and \eqref{p-gr}, \eqref{p-gr2}, it is an isomorphism on each graded pieces.
Thus it is an isomorphism.
\epf

\blem\label{inf-art}
An ind-ft scheme $T$ over a noetherian ring $k$
is formally smooth if it satisfies the infinitesimal lifting property for local complete $k$-algebras.
\elem

\bpf
By Gabber's theorem \cite[Thm.~2.2.3, 6.2.5]{Bt3}, 
it is enough to check formal smoothness for a ring $R$ after a fpqc cover.
Since $T$ commutes with filtered colimits, we reduce to $R$ a local $k$-algebra of ft and using the completion morphism, 
as $k$ is noetherian, to $R$ local complete.
\epf

\bprop\label{inf-art 2}
Consider an ind-ft-scheme $T$ over a field $k$. If $T$ is algebraically smooth, then it is formally smooth.
\eprop

\bpf
By Lemma \ref{inf-art}, it is enough to check the infinitesimal lifting criterion for $R$ a complete local ring. 
Let $I\subset R$ be a square zero ideal.
We must lift a map $\phi:\Spec(R/I)\ra T$.
Let $x\in X$ be the image of the closed point. 
Since $R$ is local complete, the map $\phi$ factors through $\ISpec(\cO_{T,x}^{\wedge})$ which is formally smooth by Lemma \ref{cont-iso}.
Thus the map $\phi$ lifts.
\epf
The next statement is a generalisation of Cartier's theorem. Even the case of an affine group scheme does not appear in the litterature.
\bthm\label{cart}
Let $k$ be a field of characteristic zero.
\begin{enumerate}[label=$\mathrm{(\alph*)}$,leftmargin=8mm]
\item
An affine group scheme over $k$ is formally smooth.
\item
An ind-affine group $\aleph_0$-ind-scheme $G$ over $k$ is formally smooth.
\eenum
\ethm

\bpf
By \cite{Per}, an affine $k$-group scheme $G$ is a projective limit of affine algebraic groups $G_i$. 
In characteristic zero, the groups $G_i$ are smooth 
and the cotangent complex $L_{G_i/k}$ is in degree zero and projective.
As the cotangent complex commutes with colimits, the complex
$L_{G/k}$ is concentrated in degree zero, isomorphic to $\Omega^{1}_{G/k}$ and flat. 
By \cite[Tag.~047I]{Sta}, the module $\Omega^{1}_{G/k}$ is free.
Thus  $G$ is formally smooth by \cite[Tag.~0D0L]{Sta}, proving (a).
Now we prove (b). By \cite[Thm.~4.3.7]{Kum} the group scheme $G$ is algebraically smooth. We conclude by Proposition \ref{inf-art 2}. 
The $\aleph_0$ assumption is needed to replace pro-modules by topological modules.
\epf

Finally, let us quote the following result for a future use.

\bprop\label{et-lift}
Let $k$ be a commutative ring.
Let $f:\cX\ra\cY$ and $g:\cY \ra\cZ$
be ind-fp-schematic morphisms of prestacks.
Assume that $f$ has sections étale locally and that $g  f$ is formally smooth.
Then $g$ is formally smooth.
\eprop

\bpf
Let $R$ be a $k$-algebra and $I\subset R$ an ideal with $I^2=0$. Set $\ov{R}=R/I$.
Let $z\in\cZ(R)$ such that its reduction $\bar{z}\in\cZ(\ov{R})$ lifts to $\bar{y}\in\cY(\ov{R})$.
After pulling back by $z$, we can assume that $\cZ=\Spec(R)$.
Then $\cX$, $\cY$ are ind-fp-schemes over $\cZ$.
By assumption, there is $\ov{R}\ra \ov{R'}$ étale surjective such that $\bar{y}$ lifts as $\bar{x}\in\cX(\ov{R'})$.
By \cite[Tag.~039R]{Sta}, there is an étale $R$-algebra $R'$ such that $R'/IR'\cong \ov{R'}$.
Since the map  $g  f$ is formally smooth, there is a lift $x\in\cX(R')$ of $z\vert_{R'}$, hence $g(x)\in\cY(R')$ lifts also $z\vert_{R'}$.
Thus the map $g$ satisfies the infinitesimal lifting after an étale cover $R\ra R'$.
By Gabber's theorem \cite[Thm.~6.2.5]{Bt3}, for ind-fp-schemes this is equivalent to the usual formal smoothness.
\epf

\subsubsection{Tangent bundles}
Let $X$ be an ind-scheme over a base scheme $S$ and $x\in X(S)$.
The tangent space $T_{x}X$ at $x$ is the sheaf which associates to any $S$-scheme $T$ 
the set of points $y\in X(T[\epsilon])$ such that $y=x_{T}\mod\eps$.
Since the functor commutes with colimits, by \cite[Prop.~5.2]{HLR}, we have 
\begin{equation}
T_{x}X(T)=\Hom_{\cO_{T},c}(x_T^{*}\Omega_{X_{T}/T}^1,\cO_T):=\colim\limits_{Z\subset X}\Hom_{\cO_{T}}(x_T^{*}\Omega^{1}_{Z_{T}/T},\cO_{T}),
\label{fond-T}
\end{equation}
where $Z$ runs over all closed subschemes of $X$ and the subscript 'c' stands for continuous.
In particular, $T_{x}X(T)$ has a structure of $\Gm(T,\cO_{T})$-module.
The next statement generalizes \cite[Lem.~5.5]{HLR} to ind-schemes.

\bprop\label{Z-tan}
Let $R$ be a Dedekind ring,  $S=\Spec(R)$, and $X\ra S$ an $\aleph_0$-ind-ft-scheme over $S$. 
Let $x\in X(R)$. For all $R$-algebras $R'$, the canonical map $T_{x}X(R)\otimes_{R} R'\ra T_{x}X(R')$ 
is injective. It is bijective if and only if $x^{*}\Omega^{1}_{X/S}$ is torsion free. 
Then  $x^{*}\Omega^{1}_{X/S}$ is dual to a projective $R$-module.
\eprop

\bpf
By localizing we may assume that $R$ is a DVR. Let $M=x^{*}\Omega^{1}_{X/S}$.
Write $X\cong\colim X_{a}$.
Then $M$ is a ML pro-set isomorphic to the system $(M_a)$ with $M_{a}=x^{*}\Omega^{1}_{X_{a}/S}$. Each $M_{a}$ splits as $M_{a}^{\free}\oplus M_{a}^{tor}$.
Let $X'=X_{R'}$ and $x'\in X(R')$ be the section given by $x$. 
Since $(x')^*\Omega^{1}_{X'/R'}=x^{*}\Omega^{1}_{X/R}\otimes R'$, see \cite[Tag.~01UV]{Sta}, we have
\begin{align*}
T_{x}X(R')&=\colim\Hom_{R'}(M_{a,R'}, R')\\
&\cong \colim (M_{a}^{\free}\otimes_RR')^*\oplus\colim\Hom_{R'}(M_a^{\tor}\otimes_RR',R').
\end{align*}
Since $R$ is torsion free, $\Hom_R(M_{a}^{\tor},R)=0$ and the injectivity follows.
The bijectivity is equivalent to 
$$\colim\Hom_R(M_{a}^{\tor}\otimes_RR',R')=0$$ for any $R$-algebra $R'$. 
Let us show that it implies that $(M_{a}^{\tor})=0$.
Taking $R'=R/\km$, we get that $(M_{a}^{\tor}/\km)=0$ and \cite[Lem.~5.4.9]{Bt3} gives the claim. We recall the argument here.
For every $a$, there exists $b\geq a$, such that $f_{ba}:M_{b}\ra M_a$ is the zero map modulo $\km$. 
By applying Nakayama  to $\Ima(f_{ba})$ we get $f_{ba}=0$, as wished.
The last claim follows from \cite[Prop.~5.2.11]{Bt3}: by Raynaud-Gruson, a pro-projective module of finite type, which is countably 
generated and is a  ML pro-set is dual to a  projective module.
\epf

\subsubsection{Untwisted affine Kac-Moody groups}\label{sec:AKM}
In this section, we introduce the untwisted affine KM groups.
Recall that for any ring $k$, 
a reductive group $G$ over $\Spec(k)$ is a smooth affine group with geometric fibers that are connected reductive.
Given a  split reductive group scheme $\bG_{f}$ over $k$, we say that a central extension $\widetilde{\bG_{f,K}}$ of $\bG_{f,K}$
is of KM type if we have a central extension
\[1\ra Z\ra \widetilde{\bG_{f,K}}\ra \bG_{f,K}\ra 1\]
that splits over $\bG_{f,\co}$, and $Z$ is a split $k$-torus.

\bdefi\label{aff-def}
Let $G$ be a minimal KM group over a commutative ring $k$. We say that
$G$ is untwisted affine type if there is a $k$-split torus $H$ and a split reductive group $\bG_{f}$ over $k$ such that 
$G\cong\widetilde{\bG_{f,K}}\rtimes H$, where $\widetilde{\bG_{f,K}}$ is a central extension of $\bG_{f,K}$ of KM type.
\edefi

The main feature of the untwisted affine case is that there is a parabolic $P=Z\times\bG_{f,\co}\rtimes H$ such that 
\begin{equation}
G/P\cong\Gr_{\bG_{f}}.
\label{part-flag}
\end{equation}
We recall the following theorem obtained in \cite{Ces} as a particular case.

\bthm\label{ces-spl}
Let $\bG$ be a split reductive group over a ring $k$, then $\Gr_{\bG}$ is a presheaf quotient, i.e., for any ring $R$ we have
$\Gr_{\bG}(R)=\bG(R[t,t^{-1}])/\bG(R[t]).$
\ethm

\bcor\label{ces-2}
Let $G$ be an  untwisted affine KM group over a ring $k$.
Let $\bG_f$ be as in Definition \ref{aff-def}. 
Let $\bB_{f}\subset\bG_f$ be a Borel subgroup of $\bG_f$, and $B$ the corresponding one in $G$. 
Let $R$ be a ring such that $\Pic(R)=0$.
We have $(G/B)(R)=G(R)/B(R).$
\ecor

\bpf
Let $x\in (G/B)(R)$.
There is a parabolic $P\subset G$ such that $G/P=\Gr_{\bG_f}$. 
Let $\ov{x}$ be the image of $x$ in 
$(G/P)(R)$. By Corollary \ref{ces-2}, we can lift $\ov{x}$ to $y\in G(R)$ such that 
$$h=y^{-1}x\in (P/B)(R)=(\bG_{f}/\bB_{f})(R),$$ 
where $\bB_{f}$ is the corresponding Borel of $\bG_{f}$. 
Since $\Pic(R)=0$, we can lift $h$ to $(\bG_{f}/U_{f})(R)$ for $U_{f}$ the unipotent radical, 
and then further to $\bG_{f}(R)$, which concludes.
\epf

We will need the following variant for the opposite parabolic sugroups.
Let $\bG$ be split reductive over a ring $k$.
We consider the thick flag variety $X_{\bG}^{\thick}$
given by the étale quotient
$$X_{\bG}^{\thick}=[\bG_{\hat K}/\bG_{\cO^{-}}],$$
see \S\ref{sec-ind-sch}. It is representable by a scheme, see, e.g., \cite[Rmk.~2.3.6]{Zhu}.

\bprop\label{ces-3}
If $\bG$ is a split reductive group over a ring $k$, then $X_{\bG}^{\thick}$ is a presheaf quotient.
\eprop

\bpf
Let $R$ be a ring.
Then $X(R)$ classifies the pairs $(E,\phi)$ of a $\bG$-torsor $E$ over $\bP_{R}^{1}$ and a trivialization $\phi$ 
on the formal disc $\Spec(R\llb t\rrb)$ at 0, see, e.g.,  \cite[Rmk.~2.3.6]{Zhu}. 
We need to prove that $E$ is trivial over $\ab^{1}_R=\bP^{1}_{A}\setminus\{0_{R}\}$.
This follows from \cite[Thm.~16]{Ces}.
\epf

\subsection{The formal Kac-Moody group}
In this section, we review the construction of formal Kac-Moody groups à la Mathieu, as group ind-schemes.

\subsubsection{The Kac-Moody root datum}\label{r-KM}
We follow  \cite{Rou}.
Consider a quadruple $\cD=(\La,\LLa,\Delta,\LDelta)$ such that
\begin{enumerate}[label=$\mathrm{(\alph*)}$,leftmargin=8mm]
\item
 $\La$ is a free $\bZ$-module of ft with $\bZ$-dual $\LLa$,
\item
 $\Delta\subset\La$ and $\LDelta\subset\LLa$ are finite sets vectors with a bijection $\Delta\to\LDelta$ 
 such that $\al\mapsto\Lal$.
\eenum

Consider the matrix $A_{\cD}=(\langle \beta,\Lal\rangle)$ with $(\Lal,\beta)\in\LDelta\times\Delta$.
The quadruple $\cD$ is called
a Kac-Moody (KM) root datum if  $A_{\cD}$ is a 
generalized Cartan matrix (GCM).
The KM root datum $\cD$ is free if the family of vectors of $\Delta$ is free in $\La$. 
It is cofree if the family of vectors of $\LDelta$ is free in  $\LLa$.
Set $Q=\bigoplus_{\al\in\Delta}\bZ \al$.
There is a morphism of Abelian groups $Q\ra\La$ that sends $\al$ to $\al$.
When $\cD$ is free, we identify $Q$ to a submodule of $\La$.
A KM root datum $\cD$ is cotorsion free if  $\LLa/\bZ\LDelta$ is torsion free.
The elements of $\Delta$ and $\LDelta$ are called simple roots and coroots.
A morphism of KM root datum 
$$\phi:\cD=(\La,\LLa,\Delta,\LDelta)\ra\cD_1=(\La_1,\La_1^{\vee},\Delta_1,\Delta_1^{\vee})$$
is the datum of a linear map $\phi^\vee:\LLa\ra\La_1^{\vee}$,
and  an injection $s:\Delta\ra\Delta_1$ such that $A_{\cD}=A_{\cD_1}\vert_{\Delta\times\Delta}$,
$\phi^\vee(\Lal)=s(\al)^{\vee}$ and $s(\al) \phi^\vee=\al$ for all $\al\in\Delta.$
Let $\phi:\La_1\ra\La$ the dual map.
We say that
\begin{enumerate}[label=$\mathrm{(\alph*)}$,leftmargin=8mm]
\item
$\cD$ is a $z$-extension of $\cD_1$ if $\phi^\vee$ is surjective and $s$ is a bijection.
\item
$\cD$ is a subroot datum of $\cD_1$ if $\phi^\vee$ is injective and $\La_1^{\vee}/\phi^\vee(\LLa)$ is torsion free.
\item
$\cD_1$ is a semi-direct extension of $\cD$ if $\cD$ is a subroot datum of $\cD_1$ and $s$ is a bijection. 
\eenum
We have the following proposition, see \cite[Prop.~1.3]{Rou}.

\bprop\label{rou}
Let $\cD$ be a KM root datum.
\begin{enumerate}[label=$\mathrm{(\alph*)}$,leftmargin=8mm]
	\item 
	There exists a free semi-direct extension $\cD^l$ of $\cD$.
	\item
	There exists a  cofree and cotorsion free $z$-extension $\cD^{sc}$ of $\cD$. If $\cD$ is free, then so is $\cD^{sc}$. 
The root datum $\cD^{scl}:=(\cD^l)^{sc}$ is free, cofree and cotorsion free.
	\item
	If $\cD$ is free, cofree and cotorsion free, then it is a semidirect extension of a KM root datum
	$\cD^{m}$ which is free, cofree, cotorsion free and of dimension $\rk\cD+\dim\Ker(A_{\cD})$. 
\qed
\end{enumerate}
\eprop

\subsubsection{The Kac-Moody Lie algebra}
Let $\cD=(\La,\LLa,\Delta,\LDelta)$ be a KM root datum.
We consider the torus $T_{\cD}=\Spec(\bZ[\La])$. 
The set of characters of $T_{\cD}$ identifies with $\La$.
Set $\kt_{\cD}=\LLa\otimes_{\bZ}\bC$ its complexified Lie algebra.
The complex KM algebra $\kg_{\cD}$ associated with $\cD$ is the $\bC$-algebra generated by $\kt_{\cD}$ 
and elements $(e_{\al},f_{\al})_{\al\in\Delta}$ submitted to relations \cite[\S 1]{Kac}, i.e., 
for $h,h'\in \kt_{\cD}$ and $\al\neq\beta$ we have
\begin{equation}
[h,h']=0,\quad [h,e_{\al}]=\al(h)e_\al,\quad [h,f_{\al}]=-\al(h)f_{\al}, \quad [e_{\al},f_{\al}]=-\Lal,\quad  [e_{\al},f_{\beta}]=0,
\label{kac1}
\end{equation}
\begin{equation}
(\ad e_{\al})^{1-a_{\al,\Lbe}}(e_{\beta})=(\ad f_{\al})^{1-a_{\al,\Lbe}}(f_{\beta})=0.
\label{kac2}
\end{equation}
We will abbreviate $\kg=\kg_\cD$ and $\kt=\kt_\cD$.
Let $R\subset Q\setminus\{0\}$ be the root system of $\kg$ 
and $R^{\vee}$ its coroot system.
We have a $Q$-grading
$$\kg=\kt\oplus\bigoplus_{\al\in R}\kg_{\al}.$$
We set $\Delta=\{\al_i\,;\,i\in I\}$ and
$$Q^{+}=\bigoplus_{\al\in\Delta}\NN\al
,\quad
R^{+}=R\cap Q^{+}
,\quad
R^{-}=-R^{+}.$$
Any $\al\in R^+\cup\{0\}$ can be written as $\sum_{i\in I} n_i\al_i$  with $n_i\in\bN$.
The height of $\al$ is the integer
$\hgt(\al)=\vert\sum_{i\in I} n_i\vert.$
The positive and negative Borel subalgebras are
\[\kb=\kt\oplus\kn
,\quad
 \kb^{-}=\kt\oplus\kn^-
,\quad
\kn=\bigoplus_{\al\in R^+} \kg_{\al}
 , \quad
 \kn^-=\bigoplus_{\al\in R^-} \kg_{\al}.\]
For any subset $J\subset\Delta$, we define the parabolic subalgebra
\begin{equation}
\kp_{J}=\kb\oplus\bigoplus_{\al\in R^{-}_J} \kg_{\al},
\label{jpar}
\end{equation}
for $R_{J}=R\cap(\bigoplus_{\al\in J}\bZ\al)$ and $R_{J}^{\pm}=R_{J}\cap R^{\pm}$.
The nilradical and the Levi component of $\kp_{J}$ are the subalgebras
\begin{equation}
\kn_{J}=\bigoplus_{\al\in R^{+}\setminus R^{+}_{J}}\kg_{\al}
,\quad
\kl_{J}=\kt\oplus\bigoplus_{\al\in R_J}\kg_{\al}.
\label{jlev}
\end{equation}
We say that $J$ is of ft if $\kl_{J}$ is finite dimensional.
Let us consider the cone of dominants characters
\[\Lambda_{+}=\{\la\in\La\,;\,\langle\la,\Lal\rangle\geq0\,,\,\forall~\al\in R^{+}\}\]
and the subset of regular dominant characters 
$$\La_{++}=\{\la\in\La_{+}\,;\,\langle\la,\Lal\rangle>0,\forall\al\in\LDelta\}.$$
The cone of dominant cocharacters is $\LLa_+\subset\LLa.$
For $\la,\mu\in \LLa$ we write $\mu\leq\la$ if $\la-\mu$ is a sum of positive coroots.
The Weyl group is the subgroup $W\subset\Aut_{\bZ}(Q)$ generated by the simple reflections $s_{\al}\in\Aut(Q)$ such that
$s_{\al}(\beta)=\beta-\beta(\Lal)\al$ for each $\beta\in Q$.
Finally, let $R_{\re}$ be the set of real roots.

\subsubsection{The formal Kac-Moody group}
Consider a free, cofree and cotorsion free KM root datum $\cD$ such that
\begin{equation}
\rk\La=\rk\cD+\dim\Ker A_{\cD}.
\label{r-dim}
\end{equation}
We refer to this situation as the simply connected case. 
By Proposition \ref{rou}, we can reduce to this setting.
In the rest of the paper, we will work constantly under this assumption, unless explicitely mentioned.
To ease the reading, we may omit the dependence on $\cD$, e.g, we abbreviate $T=T_{\cD}$. 
We have
$$\Lambda=X^*(T),\quad \LLa=X_*(T).$$
By \cite[\S 2]{Rou}, for any commutative ring $k$, there is an integral version $\kg_{k}$ of the KM algebra with a triangular decomposition 
$\kg_{k}=\kn_{k}\oplus\kt_{k}\oplus\kn_{k}^{-}$ and a root space decomposition.
A subset of positive roots $\Theta\subset R^{+}$ is closed if $\alpha+\beta\in\Theta$ whenever
$\alpha,\beta\in\Theta$ and $\alpha+\beta\in R^+$.
By \cite[\S3.1, Prop.~3.2]{Rou}, for any closed subset $\Theta\subset R^{+}$ 
there is a $k$-split pro-unipotent group $(\hU_\Theta)_k$ with a group and Lie algebra isomorphisms
\begin{equation}
x_\Theta:(\khn_\Theta)_k\cong(\hU_\Theta)_k
,\quad
\Lie((\hU_\Theta)_k)\cong(\khn_\Theta)_k=\prod\limits_{\al\in\Theta}(\kg_\al)_k,
\label{exp-comp}
\end{equation}
given by an integral version $[\exp]$ of the exponential map.
For $\Theta=R^+$ we abbreviate $\hU_k$ and $\hkn_k$ for the affine group scheme $(\hU_\Theta)_k$ and 
its Lie algebra $(\hkn_\Theta)_k$.
In particular, for any real root $\al$ we have a root group $\hU_{\alpha,k}$
with an isomorphism $x_\alpha:\bG_{a,k}\to (\hU_\al)_k$, see, e.g., \cite[\S1.3.6]{Kum}.
There is an natural action of $T_k$ on $(\hU_\Theta)_k$ 
coming from the corresponding action on the Lie algebra $(\hkn_\Theta)_k$.
We set 
\[\hB_{k}=\hU_{k}\rtimes T_k.\]
By \cite[\S3.5]{Rou} there is a minimal parabolic $(\hP_\al)_k$ with a Levi decomposition
\[(\hP_\al)_k=(\hU_\al)_k\rtimes (G_\al)_k.\]
where $(G_\al)_k$ is the unique split reductive group over $k$ with the root datum
$\cD_{\al}=(\La,\LLa, \al,\Lal)$, see  \cite[Thm.~10.1.1]{Spr}. 
The group $(G_\al)_k$ admits $T_k$ as a maximal torus and we have
$$\Lie((G_\al)_k)=\kt_k\oplus(\kg_\al)_k\oplus(\kg_{-\al})_k.$$
For each $\al\in\Delta$, we have an element 
$n_{\al}\in (\hP_\al)_k$
 that normalizes the $k$-torus $T_k$ and represents the simple reflection $s_{\al}\in W$.
All these data are obtained by base change from the case where $k=\bZ$. 
In the sequel, the constructions may depend on $k$.
Following \cite{Mat}, for any $w\in W$ and any reduced decomposition $\uw=s_{i_1}\cdots s_{i_n}$ of $w$ we set 
\[E(\uw)_{k}=(\hP_{\al_{i_1}})_k\times^{\hB_k}\dots\times^{\hB_{k}}(\hP_{\al_{i_n}})_k.\]
The corresponding Demazure scheme is
$D(\uw)_{k}=E(\uw)_{k}/\hB_{k}.$ 
It is smooth and projective. 
The affine scheme
\[\cB_{k}(w)=\Spec(\Gamma(E(\uw)_{k},\cO_{E(\uw)_{k}}))\]
is independent of the  choice of the reduced decomposition $\uw$. 
For $w'\leq w$ we have a closed immersion of affine schemes $\cB_{k}(w)\hra\cB_{k}(w')$,
see \cite[p.~130]{Mat2}.
We define
\[\hG_k=\colim_{w\in W}\cB_k(w).\]
If we complete with respect in the negative direction, we get
\[\Lie(\hU_k^{-})=\hkn^-:=\prod_{\al\in R^{-}}(\kg_\al)_k.\]
We define in a similar way the opposite formal KM group $\hG_k^{\op}$.

\blem[\cite{Mat}]\hfill
\begin{enumerate}[label=$\mathrm{(\alph*)}$,leftmargin=8mm]
\item 
The functor $\hG_k$ is an ind-affine group ind-scheme over $\Spec(k)$.
\item 
The affine group schemes $\hB_k$ and $\hU_k$ are closed subgroups of $\hG_k$.
\eenum
\qed
\elem

\brem\hfill
\begin{enumerate}[label=$\mathrm{(\alph*)}$,leftmargin=8mm]
\item 
We will see in Proposition \ref{Bcmut} that the formation of $\hG_k$ commutes with base change.
and prove in Theorem \ref{cart2} that $\hkg_{k}\cong\Lie(G_{k})$, both are assertions do not seem to appear in the litterature.
\item
In \cite{Kum}, Kumar constructs a group $\hG^K$ as an amalgamated product of the normalizer 
$N(\bC)$ and the minimal 
parabolic subgroups $P_{i}(\bC)$. 
By \cite[\S 3.20]{Rou}, we have $\hG^K=\hG(\bC)$.
\eenum
\erem

\subsection{Flags on Kac-Moody groups}

We keep the assumptions of the previous section and work over a commutative ring $k$. The goal of this section is to prove that the formation of thing flag manifold commutes with base change and give a construction of Kashiwara flag variety over integers, not considered before. 

\subsubsection{Representations of Kac-Moody groups}\label{EndInd}
For any $\Lambda$-graded $k$-module $M=\bigoplus_{\nu\in\Lambda}M_\nu$ with free finite rank weight spaces,
let $M^\wedge$ be the completion $M^\wedge=\prod_{\nu\in\Lambda}M_\nu$ of $M$,
$M^\vee$ the restricted dual of $M$, i.e., 
the direct sum of the duals of the weight spaces,
and $M^*$ its completion, i.e., the product of the duals of the weight spaces.
The dual $M^*$ has an obvious structure of a pro-$k$-module of ft.
In particular, the projective space $\bP(M^*)$ is a scheme, 
while $\bP(M)$ and $\bP(M^\vee)$ are ind-scheme, see Example \ref{ex:Pproj}.

For each dominant character $\omega$, let 
$\rho_\omega:\kg\to\mathrm{End}(L(\omega))$
be the integrable  
highest weight $\kg$-module with highest weight $\omega$.
Let $v_\omega$ be a generator of the weight subspace of $L(\omega)$ of weight $\omega$,
and $v_\omega^\vee$ a generator of the weight subspace of $L(\omega)^{\vee}$ of weight $-\omega$.
We abbreviate $v_i=v_{\omega_i}$ and $v_i^\vee=v_{\omega_i}^{\vee}$ for each $i\in I$.
Let 
$U(\kg)_{\bZ}$,
$U(\kb)_{\bZ}$
and
$U(\kn)_{\bZ}$ be the Tits integral forms (= the hyperalgebras) 
of the enveloping algebras of $\kg$, $\kb$ and $\kn$, see, e.g.,  \cite[\S 2.1]{Rou}.
Following \cite[\S 2.14]{Rou},
for any dominant character $\omega$ we set
\begin{equation}
 L(\omega)_{\bZ}=U(\kn^{-})_{\bZ}\cdot v_{\omega}
 ,\quad
L(\omega)_k=L(\omega)_{\bZ}\otimes_{\bZ}k
\label{A-om}
\end{equation}
We fix $v_\omega^\vee$ to be a generator of the rank one $\bZ$-submodule of $L(\omega)^{\vee}$ of weight $-\omega$.

Let $V$ be a highest weight representation of a KM group $G$. 
Then $V$ is an ind-scheme of ind-ft, but $\End(V)$ is not an ind-scheme in general. 
We consider a more reasonable subfunctor.
Following Solis \cite[Def.~4.4]{Sol}, we use the following definition.
Let $V=\colim_{a\in\NN} V_a$ an ind-scheme structure on $V$ and a corresponding one $G\cong \colim_{a\in\NN} G_a$ on $G$. 
The action map $G\times V\ra V$ is a morphism of ind-schemes, 
thus for any $a\in\NN$, there is an integer $n(a)\in\NN$, such that $G_{i}\times V_i\ra V$ factors through $V_{n(a)}$. 
In particular, for any $a$, $b$ we have a morphism of schemes
$$G_a\times V_{j}\subset G_{a+b}\times V_{a+b}\ra V_{n(a+b)}.$$
For any $a\in\NN$ and any $k$-algebra $R$ we  set
\begin{align*}
\End_a(V)(R)&=\{\phi\in \End(V)(R)\,;\,\phi(V_b(R))\subset V_{n(a+b)}(R)\}.
\end{align*}
Then, we define
\begin{align*}
\End^\ind(V)&=\colim_{a\in\NN}\End_a(V).
\end{align*}
By \cite[Lem.~4.5]{Sol} the functor
$\End_a(V)$ is representable by a $k$-vector space and $\End^\ind(V)$ by an ind-vector space.
In particular $\End^\ind(V)$ is ind-affine. 
This ind-scheme depends on the choice of the integers $n(a)$, but it won't matter for the rest.
Note that $\End^\ind(V)$ is not necessarily of ind-ft.
There is an obvious morphism of ind-schemes $G\to\End^\ind(V)$.
In particular, we write
\begin{align}\label{rho-omega}\rho_\omega:G\to\End^\ind(L(\omega))\end{align}

\subsubsection{ The thin flag manifold of a Kac-Moody group}\label{FM}
By \cite[p.~45]{Mat}, the quotient $\hG_k/\hB_k$ is representable by an 
ind-projective scheme over $\Spec(k)$ called the thin flag manifold. 
Let us review the construction.
By \cite[p.~40]{Mat}, for any element $w\in W$ with reduced composition $w=s_1\dots s_n$ we
have a morphism of ind-fp $k$-ind-schemes
\[\pi_{w}:D(\uw)_k\ra\bP(L(\omega)_k),\quad (g_{1},\dots,g_n)\mapsto g_1\dots g_n\cdot L_{w\omega},\] 
where $L_{w\omega}\subset L(\omega)_k$ is the line of weight $w\omega$. 
The source being fp, the map factors through some fp closed scheme.
Let $(S_{w,\omega})_k$ be the closure of the image.
The locally ringed space $((S_{w,\omega})_k\,,\,\pi_{*}\cO_{D(\uw)_k})$ is a scheme by \cite[Lem.~141]{Mat2}.
It is independent of $\omega$  by \cite[Lem.~141, \S XVIII.2]{Mat2}.
We abbreviate $(S_w)_k=(S_{w,\omega})_k$ and call it the Schubert cell.
It is a projective $k$-scheme.
For $u\leq w$ in $W$, the closed embedding $D(\uu)_k\hra D(\uw)_k$ yields a 
closed immersion $(S_u)_k\hra\ (S_w)_k$ by \cite[Lem.1]{Mat}.
By \cite[p.~45]{Mat}, we have an isomorphism of functors
\begin{equation}
\hG_k/\hB_k\cong\colim_{w\in W} (S_w)_k.
\label{Abflag}
\end{equation}
If the ring $k$ is normal and integral, then  $(S_w)_k$ is also normal and integral by \cite[p.~58]{Mat}.
The next proposition proves that the constructions commute with base changes.
Recall that a morphism of schemes $X\ra Y$ is normal if it is flat with geometrically normal fibers.
We use a more general definition than \cite[Tag.~0390]{Sta}, that requires that the fibers are locally noetherian schemes.

\bprop\label{Bcmut}
Let $k$ be a commutative ring.
\begin{enumerate}[label=$\mathrm{(\alph*)}$,leftmargin=8mm]
\item 
For every $w\in W$, we have an isomorphism
$(S_w)_\bZ\times_{\Spec(\bZ)}\Spec(k)\cong (S_w)_k.$
\item
The morphism $\hG_k/\hB_k\ra\Spec(k)$ commutes with base change.
It is surjective, ind-projective and ind-normal.
\item
We have an isomorphism
$\hG_{\bZ}\times_{\Spec(\bZ)}\Spec(k)=\hG_k.$
The morphism $\hG_{k}\ra\Spec(k)$ is ind-normal and surjective. 
\eenum
\eprop

\bpf
By \cite[Lem.~138]{Mat}, we have that $R^{q}(\pi_w)_*\cO_{D(\uw)_{\bZ}}=0$ for $q>0$.
Hence $(S_w)_\bZ$ is flat over $\bZ$ and commutes with base change. 
Since  $(S_w)_k$ is normal integral, 
the part (b) follows from \eqref{Abflag}.
Since the formation of $\hB_{\bZ}$ commutes with base change, the part (c)
follows from the corresponding statement for the flag variety.
\epf

\brem
The ind-scheme structure on $\hG_{\bC}/\hB_{\bC}$ is the same as in \cite[Def.~7.1.13, 7.1.19]{Kum}, 
because $(S_w)_\bC$ is normal integral.
\erem

\subsubsection{The opposite parabolics of a Kac-Moody group}\label{opp-J}
By Proposition \ref{Bcmut}, we may suppress the subscript $k$ or $\bZ$ and work over $k=\bZ$, as we do from now.
For instance, unless specified otherwise we simply write $U(\kg)$ for the hyperalgebra $U(\kg)_{\bZ}$.
For any subset $J\subset\Delta$ of ft, see \eqref{jlev},
we have the parabolic subgroup $\hP_{J}$ of $\hG$ associated with the Lie algebra $\kp_J$.
We now define the opposite parabolic $P_{J}^{-}$. 
Recall that for an ind-affine ind-scheme $X$ with a  $\bG_m$-stable presentation, one can define as subfunctors the attractor  $X^{+}$, 
the repeller  $X^{-}$, and the fixed point $X^{0}$ loci, see \S\ref{att-rep} below.
Since $X$ is ind-affine, all of them are representable by closed ind-affine ind-schemes, 
see \cite[Thm.~2.1]{HR2} and \cite[Lem.~1.9]{Ric}.
Fix a dominant cocharacter $\la$ of $T$  such that $\alpha \la=0$
if $\alpha\in J$ and $\alpha \la>0$ if $\alpha\in\Delta\setminus J$.
The cocharacter $\la$ acts by conjugacy on $\hG$.
Let $P_\la^-$ be the repeller locus and $\hP_\la$ the attractor one.
They do not depend on the choice of the cocharacter $\la$.
We have $\hP_\la=\hP_{J}$.
We define $P_{J}^{-}=P_\la^-$.
Let $\hU_\lambda=\hU_J$ be the unipotent radical of $\hat P_J$.
The fixed point locus $L_\la$ is identified with the Levi subgroup $L_{J}$ of $\hP_{J}$, see \cite[Proof of Lemma 4.1]{HLR}. 
We have a morphism of group ind-schemes 
$q^{-}:P_{J}^{-}\ra L_{J}$
given by evaluation at $\infty$.
Let $U_\la^-=U_{J}^{-}$ be the kernel of $q^{-}$. 
We have a decomposition  $P_{J}^{-}=L_{J}\ltimes U_{J}^{-}$.
The following is proved in \cite[Lem.~4.1]{HLR}.

\blem\label{op-cell}\hfill
\begin{enumerate}[label=$\mathrm{(\alph*)}$,leftmargin=8mm]
\item 
The group $U_{J}^{-}$ is representable by a closed ind-affine group ind-scheme over $\bZ$ of ind-ft.
\item
The multiplication gives an open immersion
$U_{J}^{-}\times\hP_{J}\ra\hG$.
\qed
\eenum
\elem

Note that the authors assume in loc.~cit.~that the KM group is symmetrizable but this is not used in their argument.
The proof of loc.~cit. implies also that for every field $k$, 
the group $U_{J}^{-}(k)$ is generated by the root groups $U_{-\al}(k)$ for $\al\in R_{J,\re}^{+}$. 
Hence its base change to $\bC$ coincides with the opposite ind-group $U_{J,\bC}^{-}$ considered in \cite[Thm.~6.2.8, Def.~7.3.6]{Kum}.
Since the map $\hG/\hB\ra\hG/\hP_{J}$ is smooth surjective with geometrically connected fibers, 
by Proposition \ref{Bcmut}, the partial flag varieties $\hG/\hP_{J}$ are surjective and 
ind-normal over $\Spec(\bZ)$. The same statement holds for $U_{J}^{-}$ which is identified with an open in $\hG/\hP_{J}$.
If $J=\emptyset$ we abbreviate 
$U^-=U_{\emptyset}^-.$
Taking the opposite formal group $\hG^{\op}$, we define similarly a group ind-scheme $U$ of ind-ft over $\bZ$, 
such that for any field $k$, the group $U(k)$ is generated by $U_{\al}(k)$ for all positive real roots $\al\in R_{\re}^+$.

\subsubsection{The Kashiwara flag manifold}\label{KFM}
Let $\cD$ be a simply connected root datum.
We now consider the quotient $\bX=\hG/B^-$, usually called the Kashiwara flag manifold or thick flag manifold. 
We also need the partial version $\bX_J=\hG/P_J^{-}$ for a  ft subset $J\subset \Delta$.

\bprop\label{w-cov}\hfill
\begin{enumerate}[label=$\mathrm{(\alph*)}$,leftmargin=8mm]
\item 
We have an open cover 
$\hG=\bigcup_{w\in W} w\hU B^{-}=\bigcup_{w\in W} wU^{-}\hB$.
\item
The maps $\hG\ra\hG/\hB$ and $\hG\ra\hG/B^{-}$ are Zariski locally trivial.
\eenum
\eprop

\bpf
Let $\Omega=U^{-} \hB/\hB$ and $\Omega_W=\bigcup_{w\in W}w\Omega$.
We must prove that 
\begin{equation}
\Omega_W=\hG/\hB.
\label{omeg-eq}
\end{equation}
The left hand side is open, and both sides are ind-ft ind-schemes over $\bZ$.
In particular they are Jacobson, 
and  their formation commutes with arbitrary base change by Proposition \ref{Bcmut}. 
If $x$ is a closed point in the complement in $\hG/\hB$, 
then it is sent by the Jacobson property to a closed point in $s\in\Spec(\bZ)$. 
Since $\Omega_W$ surjects to $\Omega_{W,s}$ (because $U^{-}(K)$ is generated by $U_{-\al_i}(K)$ for any field $K$), if we know the equality \eqref{omeg-eq} over any finite field, 
we would get a contradiction. 
Now over a field, the decomposition is a general fact for refined Tits systems \cite[Proof of Theorem 5.2.3, (16)]{Kum}.
Further, the tuple  $(\hG_k,N_k,\hU_k,U_k^{-},T_k,\Delta)$ is a refined Tits system over any field $k$ by \cite[\S 3.16]{Rou}.
\epf

By \eqref{w-cov}  we have an open cover
\begin{equation}
\hG=\bigcup_{w\in W} w\hU_{J}P_J^{-}.
\label{Pg-k}
\end{equation}
Thus the quotient $\bX_J$ has an open cover by schemes isomorphic to $\hU_{J}$ and the map
$\hG\ra\bX_J$
is Zariski locally trivial.
We now focus on the Borel case. We want to describe $\hG/B^{-}$ as a Proj algebra. 
In view of the description of $U^{-}$ in the next paragraph, we switch from $\hG/B^{-}$ to $\hG^{\op}/B$.

By \cite[p.~58 Cor.~2]{Mat}, for each $\la,\mu\in\La_+$ we have a surjection of $U(\kg)$-modules
\begin{equation}
L(\la)^{\vee}\otimes L(\mu)^{\vee}\ra L(\la+\mu)^{\vee}.
\label{surj-rep}
\end{equation}
It gives a commutative ring structure on the Abelian group 
$$R=\bigoplus_{\omega\in\Lambda_{+}} L(\omega)^{\vee}.$$ 
We set $\bX=\Proj^{\Lambda_{+}}(R).$
 Here the $\Proj$ is relative to the $\Lambda_{+}$-grading.
 By \cite[Def.~1.12]{Kato}, we have
$$\bX=\big(\Spec(R)\setminus\{x\in\Spec(R)\,;\,x\neq0\, \text{on}~L(\omega_i)^{\vee},\forall i\in I\}\big)/T$$
By \eqref{surj-rep} the sum $\bigoplus_{i\in I} L(\omega_i)^{\vee}$ 
generates $\bigoplus_{\omega\in\Lambda_{+}} L(\omega)^{\vee}$.
Hence there is a surjection of $\Lambda_{+}$-graded rings
\[\bigotimes\limits_{i\in I} S(L(\omega_{i})^{\vee})\thra\bigoplus_{\omega\in\Lambda_{+}} L(\omega)^{\vee}\]
where $S(V)$ is the symmetric algebra of an Abelian group $V$.
By \cite[Lem.~3.14, 3.16]{MR}, we get a closed immersion
\begin{equation}
\bX\hra\Proj^{\Lambda_{+}}\Big(\bigotimes\limits_{i\in I} S(L(\omega_{i})^{\vee})\Big)=\prod\limits_{i\in I}\bP(L(\omega_i)^{\wedge}).
\label{X-clo}
\end{equation}
To identify an open cell in $\bX$, we follow \cite[Prop.~ 1.18]{Kato}. 
Let $V(\omega)=U(\kg)\otimes_{U(\kb)}\bZ_{\omega}$ be the Verma module of highest weight $\omega$. 
From \cite[\S 3.1]{Rou}, we have
\begin{equation}
\bZ[\hU^{-}]=U(\kn^{-})^{\vee}=V(0)^{\vee}.
\label{verU}
\end{equation}
The surjection $V(\omega)\thra L(\omega)$ yields an injection
\begin{equation}
\bigoplus\limits_{\omega\in\La_{+}} L(\omega)^{\vee}\subset\bigoplus\limits_{\omega\in\La_{+}}V(\omega)^{\vee}\cong\bigoplus\limits_{\omega\in\La_{+}}\bZ[\hU^{-}]\otimes_{\bZ}\bZ_{-\omega}\subset\bZ[\hB^{-}],
\label{B-incl}
\end{equation}
The left hand side a subring of the right hand side. 
Hence the scheme $\bX$ is integral.
By inverting $v_\omega^\vee$, we obtain an isomorphism of algebras
\begin{equation}
\bigoplus\limits_{\omega\in\La_{+}} L(\omega)^{\vee}[(v_\omega^\vee)^{-1}]\cong \bZ[\hU^{-}]
\label{op-U-cell}
\end{equation}
We deduce that
\begin{equation}
\hU\cong\bX\setminus\{v_i^\vee=0\}_{i\in I}.
\label{U-bir}
\end{equation}

Since $N_{G}(T)$ acts on $\bX$, we deduce that $\bigcup_{w\in W}w\hU$ is open in $\bX$.
Hence
\begin{equation}
\bX=\bigcup_{w\in W}w\hU,
\label{X-wcov}
\end{equation}
because one has equality over $k$-points for any field $k$ by \cite[Thm. 1.23]{Kato}.
On the other hand, we have a  map
\begin{equation}
\phi:\hG^{\op}\ra\prod\limits_{i\in I}\bP(L(\omega_i)^{\wedge})
\label{phi-G}
\end{equation}
that sends $g$ to the tuple $(g\cdot v_i)_{i\in I}$. 

\blem\label{stab}
The group $B$ is the stabilizer of the lines $(\bZ v_i)_{i\in I}$. 
The map $\phi$ yields a monomorphism
\[\hG^{\op}/B\ra\prod\limits_{i\in I}\bP(L(\omega_i)^{\wedge}).\]
\elem

\bpf
Let $H$ be the stabilizer of the lines $(\bZ v_i)_{i\in I}$. 
Since $B$ is closed in $\hG^{\op}$, it is also closed in $H$. 
Recall that $\hU^{-}B$ is open in $\hG^{\op}$.
Further $B=H\cap(\hU^{-}B)$, because $H\cap\hU^{-}=\{1\}$.
Indeed, this is obvious on $k$-points and the Lie algebras match. 
Thus $B$ is also open in $H$. 
Finally, by the Bruhat decomposition of $\hG^{\op}$ the $k$-points are the same and
 we deduce that  $B=H$.
\epf

\bthm\label{fs-Kash}
\hfill
\begin{enumerate}[label=$\mathrm{(\alph*)}$,leftmargin=8mm]
\item
The map $\hG^{\op}/B\ra\prod\limits_{i\in I}\bP(L(\omega_i)^{\wedge})$ factors through $\bX$.
It yields an isomorphism $\hG^{\op}/B\cong\bX$.
\remi
Let $J$ be  a ft subset of $\Delta$.
The scheme $\bX_J=\hG^{\op}/P_J$ is formally smooth and separated over $\bZ$.
\eenum
\ethm

\bpf
For (a), recall that by \eqref{U-bir}, \eqref{Pg-k} and \eqref{U-bir}, the group $\hU^{-}$ sits as a dense open in 
$\bX$ and $\hG^{\op}/B$ and there is a commutative diagram
$$\xymatrix{\hG^{\op}/B\ar[r]^-{\phi}&\prod\limits_{i\in I}\bP(L(\omega_i)^{\wedge})\\
\hU^{-}\ar[u]\ar[r]&\bX\ar[u]}$$
Now $\hU^{-}$ is schematically dense in $\hG^{\op}/B$ and is reduced.
By \eqref{X-clo}, the schematic image of $\hG^{\op}/B$ factors through $\bX$.
Further, the diagram is $W$-equivariant.
Combining \eqref{Pg-k} and \eqref{X-wcov}, we deduce that it is an isomorphism.
Now, we prove (b). 
Since $\hU_{J}$ is formally smooth, because formal smoothness is Zariski local by \cite[Tag.~0D0F]{Sta} and
 \eqref{X-wcov}, we deduce that $\bX_J$ is formally smooth. 
For separatedness, the case $P=B$ follows from (a) and \eqref{X-clo}.
The general case follows by \cite[Tag.~09MQ]{Sta} because 
the map $\hG^{\op}/B\ra \hG^{\op}/P_J$ is projective and surjective.
\epf

We now move to  $\bC$. We will need it for Proposition \ref{Zfond2}.
By \cite[Thm.~1.23]{Kato}, the quotient $\bX_{\bC}$ coincides with the one in \cite[\S4]{Kash}. 
For each $w\in W$, we consider the $\hB_{\bC}^{-}$-orbit $\bO^{w}=\hB_{\bC}^{-}\cdot w$ in $\bX_{\bC}$. 
By \cite[Lem.~4.5.7, Cor.~4.5.8]{Kash} this orbit is affine locally closed of codimension $\ell(w)$.
Here, the codimension is defined in terms of tangent spaces as both schemes are formally smooth.
We obtain a stratification $\bX_{\bC}=\bigsqcup_{w\in W}\bO^{w}.$
By \cite[Prop.~4.5.11]{Kash}, the closure of $\bO^{w}$ in $\bX_{\bC}$ is
$\overline{\bO^{w}}=\bigsqcup_{w\leq v} \bO^{v}.$

\subsection{The structure of the ind-group $U$}

\subsubsection{The comparison of $U$ and $\hU$}\label{UU}
We first recall some facts on $\hG/\hB$. 
Recall that $\hG$ is simply connected and that $\hG/\hB$ is a colimit of Schubert varieties $S_{w}$, see \eqref{Abflag}. 
On each $S_{w}$ there is a $\hG$-equivariant line bundle $\cO_{S_{w}}(\omega)$ 
for $\omega\in \La_+$. By \cite[p.~253 Prop.~24]{Mat2}, we have
\[H^0(S_{w},\cO_{S_{w}}(\omega))=L_{w}(\omega)^{\vee}=(U(\kn)\cdot w\cdot v_{\omega})^{\vee},\quad
 H^{>0}(S_{w},\cO_{S_{w}}(\omega))=\{0\}.\]
 Note that $\cO_{S_{w}}(\omega)$ corresponds to $\tilde{\mathcal{L}}_{w}(-\omega)$ in loc.~cit.
Set 
\begin{align}\label{Rw}R_w=\bigoplus_{\omega\in \La_+}L_{w}(\omega)^{\vee}.\end{align}
The product  is given
 by the surjections deduced from \eqref{surj-rep} as in \cite[Cor.~1.14]{Kato}
\[L_{w}(\omega)^{\vee}\otimes L_{w}(\gamma)^{\vee}\ra L_{w}(\omega+\gamma)^{\vee},\quad \omega,\gamma\in \La_{+}\]
If $\omega\in\La_{++}$ there is a closed immersion $S_{w}\subset\bP(L_{w}(\omega))$ by \cite[p.~58 Cor.~1]{Mat}.
Thus, as in \cite[Cor.~1.16]{Kato}, we have
$S_{w}=\Proj^{\La_{+}}(R_w).$
We consider the topological ring
\[\widehat{R}=\lim_{w\in W}R_{w}.\]
We  have $\hG/\hB=\TProj(\widehat{R})$, where we define the topological Proj to be 
\begin{equation}
\TProj(\widehat R)=\colim_{w\in W}(\Proj(R_{w})).
\label{t-gb}
\end{equation}
Since $\lim_w L_{w}(\omega)^{\vee}=L(\omega)^{*}$, it is convenient to write
\[\widehat{R}=\widehat{\bigoplus\limits_{\omega\in\La_{+}}}L(\omega)^{*}.\]
Recall the groups $U$ and $U^-$ introduced in \S\ref{opp-J}.
We want to identify $U^{-}$ inside $\hG/\hB$ and compare it with $\hU^{-}$. 
There is an injection of rings
\[\bigoplus_{\omega\in\La^{+}}L(\omega)^{\vee}\ra \widehat{\bigoplus\limits_{\omega\in\La_{+}}}L(\omega)^{*}\]
such that for every $w\in W$, the composite with the projection
\[\bigoplus_{\omega\in\La^{+}}L(\omega)^{\vee}\ra
\widehat{\bigoplus\limits_{\omega\in\La_{+}}}L(\omega)^{*}\ra
\bigoplus\limits_{\omega\in \La_+}L_{w}(\omega)^{\vee}\]
is a surjection of graded rings.  
Using Theorem \ref{fs-Kash} and \cite[Lem. 3.14]{MR}, we get an ind-closed morphism
\begin{equation}
\hG/\hB\ra\hG^{\op}/B.
\label{thin-plong}
\end{equation}
and by considering the algebras, a Cartesian diagram whose horizontal maps are closed immersions
\begin{equation}
\begin{split}
\xymatrix{\hG/\hB\ar[d]\ar[r]&\prod\limits_{i\in I}\bP(L(\omega_i))\ar[d]\\\hG^{\op}/B\ar[r]&\prod\limits_{i\in I}\bP(L(\omega_i)^{\wedge})}
\label{cartB}
\end{split}
\end{equation}
In \eqref{minZ} we will prove that $G/B\cong\hG/\hB$, for $G$ the minimal group over $\bZ$.
Then the morphism \eqref{thin-plong} will be obtained through an embedding of $G$ in $\hG^{\op}$.

\bprop\label{fonct}
\hfill
\begin{enumerate}[label=$\mathrm{(\alph*)}$,leftmargin=8mm]
\remi
We have an isomorphism
$U^{-}\cong \hU^{-}\times_{\hG^{\op}/B}\hG/\hB.$

\item
The morphism $U^{-}\ra\hU^{-}$ is ind-closed and is a morphism of group ind-schemes.
\remi
We have $\Lie(U^{-}_k)\cong\Lie(U^{-})\otimes_{\bZ}k\cong\kn_k^-$ as $k$-Lie algebras for any ring $k$. 
In particular, the $\bZ$-module $e^{*}\Omega^{1}_{U^{-}/\bZ}$ is dual to a projective $\bZ$-module.
\eenum
\eprop

\bpf
Let first prove (a) and (b).
 By \eqref{op-U-cell}, we have 
 $$\bZ[\hU^{-}]=\bigoplus_{\omega\in\La_+} L(\omega)^{\vee}[(v_\omega^\vee)^{-1}].$$
Thus, by Theorem \ref{fs-Kash} and \eqref{t-gb}, it is enough to check that
\[\bZ[U^{-}]=\widehat{\bigoplus\limits_{\omega\in\La_{+}}}L(\omega)^{*}[(v_\omega^\vee)^{-1}].\]
This is equivalent to proving that 
\begin{equation}
U^{-}=(\hG/\hB)\setminus \bigcap_{\la\in\La_{+}}\{v_\omega^\vee=0\}.
\label{uminus}
\end{equation}
One inclusion is clear. Equality of opens can be checked over $k$-points for any field $k$ and this follows from the beginning of the proof of \cite[Cor.~4.3]{HLR}.
Finally, the morphism is a morphism of group ind-schemes, because the map
\[U^{-}\ra\prod\limits_{\omega\in\La_+}\bP(L(\omega)^{\wedge})\]
given by $g\mapsto (g\cdot v_{\omega})$ is equivariant by left action by $U^{-}$ and it factors through $\hU^{-}$.
Now, we prove (c).
From (a), (b) and \eqref{cartB} and since the formation of this objects commute with base change, 
we deduce that $U_{k}^{-}$ identifies schematically 
with 
$$\{g\in\hU_{k}^{-}\,;\, g\cdot L(\omega)_{k}\subset L(\omega)_{k}\,,\,\forall\omega\in\La_{+}\}.$$
Since $\Lie(U_{k}^{-})$ is a colimit of finite dimensional $T$-stable spaces, it is contained in 
the set of $T$-finite vectors of $\Lie(\hU_{k}^{-})=\hkn_{k}^-$ by \eqref{exp-comp}, and the later is
$\kn_{k}^-$ .
For the equality, note that $(\kg_\al)_k$ stabilizes $L(\omega)_{k}$ for any $\omega\in\La$ and
any $\al\in R^{-}$. 
The last claim follows from Proposition \ref{Z-tan}.
\epf
For every $w\in W$ we define the group schemes
$$\hU_{w}^{-}=\hU^{-}\cap w (\hU^{-})
,\quad
\hU^{-,w}=\hU^{-}\cap w(\hU)$$ 
and the group ind-schemes 
$$U_{w}^{-}=U^-\cap\hU_{w}^{-}
,\quad
U^{-,w}=U^-\cap\hU^{-,w}.$$

\bprop\label{uw-comp}
\hfill
\begin{enumerate}[label=$\mathrm{(\alph*)}$,leftmargin=8mm]
\remi
We have  $U^{-,w}=\hU^{-,w}$.
It is a smooth affine group scheme over $\bZ$.
\remi
The multiplication yields an isomorphism  
$U_{w}^{-}\times U^{-,w}\cong U^{-}$
of ind-schemes over $\bZ$.
\eenum
\eprop

\bpf
Using the decomposition \eqref{exp-comp} we deduce that the multiplication gives an isomorphism of schemes
\begin{equation}
\hU_{w}^{-}\times \hU^{-,w}\cong \hU^{-}.
\label{m-form}
\end{equation}
Similarly, we get an isomorphism
\[\prod\limits_{\al\in R^{-}\cap w(R^{+})} U_{\al}\cong\hU^{-,w},\]
where $R^{-}\cap wR^{+}$ is a finite set of real roots. 
Consequently, we have $U^{-,w}\cong\hU^{-,w}$ and $U^{-,w}$ is a smooth affine group scheme over $\bZ$.
Intersecting \eqref{m-form} with $U^{-}$ yields the isomorphism 
$U_{w}^{-}\times U^{-,w}\cong U^{-}$.
\epf

For each $n\in\NN$ the set $\Psi_n=\{\al\in R^{-}\,;\,\hgt(\al)\leq -n\}\cup\{R^{-}_{\textrm{im}}\}$
is a closed set of negative roots, because $R^{-}_{\textrm{im}}$  is closed in $R^-$ by 
\cite[Prop.~5.2, Ex.~5.16]{Kac}, see also \cite[Lem.~3.6]{Mar2} for details.
Consequently,  by \cite[\S3.1]{Rou}, one can define the pro-unipotent group 
$\hU^{-}_{(n)}=\hU^{-}_{\Psi_{n}}$.
The group of $R$-points of $\hU^{-}_{(n)}$ over a ring $R$ is the subgroup of $\hU^{-}(R)$ formed by products 
$\prod_{x}[\exp](\la_{x}x)$ where $\la_x\in R$ and
$x$ runs over a basis of $\bigoplus_{\al\in\Psi_n}\kg_{\al}$.
We set $U_{(n)}^{-}=U^{-}\cap\hU^{-}_{(n)}$.

\bprop\label{Un}
\hfill
\begin{enumerate}[label=$\mathrm{(\alph*)}$,leftmargin=8mm]
\remi
There is an ind-closed embedding $U^{-}\hra\hU^{-}$ over $\bZ$.
\item
The composed map $U^{-}\ra\hU^{-}\ra \hU^{-,(n)}$ is surjective. It yields an isomorphism of schemes over $\bZ$
\[U^{-}/U^{-}_{(n)}\cong\hU^{-}/\hU^{-}_{(n)}.\]
The map $U^{-}\ra U^{-}/U^{-}_{(n)}$ splits as a morphism of $\bZ$-ind-schemes.
\eenum
\eprop

\bpf
Part (a) is proved in Proposition \ref{fonct}. For (b), it is sufficient to note that the quotient $\hU^{-}\ra \hU^{-,(n)}$ is isomorphic to the 
product of $U_{\al}$'s for all negative real roots $\al$ such that $\hgt(\al)>n$.
Since $U^{-}$ contains all negative real root groups, the composition $U^{-}\ra \hU^{-}/\hU^{-}_{(n)}$ splits.
\epf

We define
$\hU^{-,(n)}=\hU^{-}/\hU^{-}_{(n)}$.
We will also use a normal subgroup without splitting.
For $n\in\NN$, let $\hV_{(n)}^-\subset\hU^{-}$ be the normal subgroup associated as above
with the set of negative roots  $\{\al\in R^{-}\,;\,\hgt(\al)\leq-n\}$.
The quotient $\hV^{-,(n)}=\hU^{-}/\hV_{(n)}^-$ is  a smooth unipotent group.
We set $V_{(n)}^{-}=U^{-}\cap \hV_{(n)}^-$.

\blem\label{Un2}
The composed map $U^{-}\ra \hU^{-}\ra \hU^{-}/\hV_{(n)}^-$ yields an isomorphism 
\[[U^{-}/V_{(n)}^{-}]_\fppf\cong \hU^{-}/\hV_{(n)}^{-}.\]
We sheafify the left hand side for the fppf topology.
\elem

\bpf
We first check the surjectivity  on $k$-points for any field $k$.
The proof of \cite[Cor.~7.3.8]{Kum} implies that the group
$\hV_{k}^{-,(n)}$ is generated by the image of the groups $(U_{-\al_i})_k$ for $i\in I$.
To prove the isomorphism above,  
note that the schemes $\hV^{-,(n)}$ and 
$X=\prod\limits_{i\in I} U_{-\al_i}$ are smooth over $\bZ$, and that
the multiplication gives a map
$m:X\ra\hV^{-,(n)}$.
Set 
$X^{1}=X\bigsqcup X$, and, for each $d>1$, define inductively
$$
X^{d}=X^{1}\times X^{d-1}
.$$
Let $m^{1}:X^{1}\ra \hV^{-,(n)}$ be the map given by $m^1=(m,\iota \circ m)$, where 
$\iota$ is the inversion, and set
\[m^{d}= \mu\circ (m^{1}\times m^{d-1}):X^{d}\ra \hV^{-,(n)},\]
where $\mu$ is the multiplication in $\hV^{-,(n)}$.
By the above, the map
\[\bigsqcup\limits_{d\geq 0} X^{d}\ra \hV^{-,(n)}\]
is surjective, beacuse it is surjective on $k$-points for any field $k$.
Thus, by  \cite[Exp.~6B,\,Prop.~7.6]{SGA3}, the sheaf $\hV^{-,(n)}$ is the fppf-sheafification of the presheaf
\[R\ra\left\langle U_{-\al_i}(R)\,;\,i\in I\right\rangle.\]
Consequently, the morphism $U^{-}\ra \hV^{-,(n)}$ is a surjection of fppf sheaves and it induces thus an isomorphism
\[U^{-}/V_{(n)}^{-}\cong\hU^{-}/\hV_{(n)}^-.\]
\epf

\subsubsection{Lie algebras of Kac-Moody groups and formal smoothness}
We want to compare $\Lie(\hG)$ and $\hat{\kg}$.
For all $i\in I$, the minimal parabolic $P_{i}\subset\hG$ yields a morphism $\SL_{2}\ra\hG$ over $\bZ$,
hence a morphism 
$\mathfrak{sl}_{2}\ra\Lie(\hG)$
of Lie algebras over $\bZ$,
and the inclusion $\hB\subset\hG$ yields a morphism  $\hat\kb\ra\Lie(\hG)$
of Lie algebras over $\bZ$.
Further, we have  $[e_{i},f_{j}]=0$ for $i\neq j$. 
Let  $\tilde{\kg}_{\bQ}$ be the quotient of the free Lie algebra over $\bQ$ by the relations \eqref{kac1}, completed in the $\kn$ direction.
By the above we have a morphism  $\kappa:\tilde{\kg}_{\bQ}\ra\Lie(\hG_{\bQ})$
of Lie algebras over $\bQ$.
We claim that this morphism factors through a morphism 
\begin{align}\label{kappa}\kappa:\hkg_{\bQ}\ra\Lie(\hG_{\bQ}).\end{align}
We must prove the relations \eqref{kac2}.
The first relation holds in $\hkb=\Lie(\hB)$. 
Since $\Lie(\hG_{\bQ})$ is an integrable $\mathfrak{sl}_{2,j}$-module $e_j$, $f_j$ 
act locally nilpotently, hence \cite[Lem.~1.3.9]{Kum} implies that the second relation in \eqref{kac2} also holds.

\bthm\label{cart2}
The map $\kappa$  restricts to an isomorphism of $\bZ$-Lie algebras $\hkg\to\Lie(\hG).$
\ethm

\bpf
The map $\kappa$ maps $\hkb$ isomorphically onto $\Lie(\hB)$ as $\bZ$-Lie algebras. 
In particular, it is a Lie algebra homomorphism which identifies the Cartan algebras. 
For weight reasons, it maps
 $\kn_{\bQ}^{-}$ into $\Lie(U^{-}_{\bQ})$.
By Proposition \ref{fonct} the embedding 
$$\theta:\Lie(U^{-})\ra\Lie(\hU^{-})=\hkn^-$$ factors through a $\bZ$-Lie 
algebra isomorphism $\Lie(U^{-})\cong\kn^{-}$.  
Hence, the composed map $\theta\circ \kappa$ restricts to a  $\bQ$-Lie algebra homomorphism 
$\kn_{\bQ}^{-}\to\kn_{\bQ}^{-}$ such that $f_i\mapsto f_i$. 
Thus it restricts to the identity of $\kn^{-}_{\bQ}$, and a fortiori of $\kn^-$ and $\kappa$ restricts 
to a $\bZ$-Lie algebra isomorphism $\kn\to\Lie(U^{-})$, from which 
we deduce that $\kappa$ is a Lie algebra isomorphism $\hkg\cong\Lie(\hG)$.
\epf

\bthm\label{fsm}
Let $k$ be a field of characteristic zero. Then $\hG_k$ and $\hG_k/\hB_k$ are formally smooth over $k$. 
If $G$ is affine over any algebraically closed field $k$, then $G$ is formally smooth.
\ethm

\bpf
The map $\hG_k\ra\hG_k/\hB_k$ is Zariski locally trivial and formally smooth as $\hB_k$ is.
Thus it is enough to prove that $\hG_k/\hB_k$ is formally smooth. 
By Lemma \ref{op-cell}, it is covered by translates of $U_k^{-}$, which is formally smooth by Theorem \ref{cart}.
For the  affine case, in the formal case, from the description of \cite[App. 2]{Tits2} we have that $\hG=\bG_{\hat K}$ for  a quasi-split 
reductive group scheme $\bG$ over $k\llp t\rrp$. From that, we get that $\hG/\hB=G/B$ is formally smooth and thus $U^{-}\subset\hG/\hB$ 
also. By considering $\hG^{\op}$, we have the corresponding assertion for $U$ and we get that $G$ is also formally smooth, as it is 
covered by translates of the open cell.
\epf

\brem
We expect that the same statement should hold over $\bZ$, or already over any field $k$. 
This would imply immediately all the geometric results of \S6.3.
\erem

\subsection{The Minimal Kac-Moody group}
In this section we give a construction of minimal KM groups, as group ind-schemes of ind-ft over $\bZ$. 
Over $\bZ$, the group functor $E_{\cD}$ introduced by Tits \cite[\S 3.6]{Tits} is known to be the wrong functor because,
already in the ft case, it does not recover the Chevalley group schemes, see \cite[\S1.2]{Tits2}. 
Only the points of $E_{\cD}$ over a field $k$ are well-behaved.
Over $\bC$, there is a construction due to Kumar \cite[\S 7]{Kum}.
Our construction is independent and gives the same answer over $\bC$. 
Let $\cD$ be a semisimple simply connected KM root datum.

\subsubsection{The definition of the minimal Kac-Moody group}
Let 
\begin{align}\label{Om}\Omega= U^-\times T\times U\subset\hOm=U^-\times T\times \hU.\end{align}
By Proposition \ref{fonct} applied to $U$, the subfunctor $\Omega$ is ind-closed in $\hOm$ and 
is representable by an ind-scheme of ind-ft over $\bZ$. Let $B=U\rtimes T$.
Let $G$ the subfunctor of $\hG$ defined as the Zariski sheafification of the presheaf
\begin{equation}
R\mapsto \bigcup_{w\in W}w\Omega(R).
\label{w-minZ}
\end{equation}
More generally, for an arbitrary commutative ring $k$, we can form the same way the subfunctor $G_{k}\subset\hG_{k}$, by considering translates of $U_{k}\times T_{k}\times U^{-}_{k}$ and sheafifying for Zariski topology.
\bprop\label{minZ}
\hfill
\begin{enumerate}[label=$\mathrm{(\alph*)}$,leftmargin=8mm]
\item
We have a canonical isomorphism $G\times_{\Spec(\bZ)} \Spec(k)\cong G_k$.
\item 
$\hOm\cap w\Omega\subset\Om$ as ind-schemes for each $w\in W$.
\remi
The map $G\ra \hG$ is ind-closed. The functor $G$ is representable by an ind-affine ind-scheme over $\bZ$.
\item
 $\hB\cap G=B$ and  $\hU\cap G=U$.
 \item
 The composed map 
$G\ra\hG\ra\hG/\hB$
factors through an isomorphism  $G/B\cong \hG/\hB.$
\item
The morphism $G\ra G/B$ is Zariski locally trivial.
The ind-scheme $G$ is ind-normal of ind-ft.
\eenum
\eprop

\bpf
(a) We have a canonical map $G\times_{\Spec(\bZ)} \Spec(k)\ra G_k$, that already exists at the level of presheaves. It is sufficient to 
prove the isomorphism Zariski locally, thus we are reduced to the claim for the open cell where it follows from the fact that attractors 
commute with base change and Proposition \ref{Bcmut}(b).

For (b), let $x\in (\hOm\cap w\Omega)(R)$ for a ring $R$. We have
\begin{equation}
v t \hu=wv_{1}t_{1}u_{1}
,\quad
 \hu\in\hU(R)
 ,\quad
 u_{1}\in U(R)
,\quad
 t,t_1\in T(R)
 ,\quad
 v,v_1\in U^{-}(R).
\label{dhO}
\end{equation}
Thus $\hu=wv't'u'$. 
Now we make this element act on the highest vector $v_{\omega}$ in $L(\omega)$ for various $\omega$. 
Since $\hU\cdot v_{\omega}=v_{\omega}$ by Lemma \ref{stab}, we get $w=t'=v'=1$ and $\hu=u'$.
For (c), to prove that $G\ra\hG$ is ind-closed, using Proposition \ref{w-cov}, 
it is enough to prove that the base change to $w\hOm$ is ind-closed and the claim follows from (b) and Proposition \ref{fonct}.
For (d), note that $\hB\cap G=B$ follows from (a). 
Part (e) is clear as $\hG/\hB$ is covered by $w$-translates of $U^{-}$.
Finally we prove (f). By (e), the functor $G/B$ is ind-projective over $\bZ$ and by (a), the map $G\ra G/B$ is Zariski locally trivial. 
Thus $G$ is of ind-ft as $B$ is.  By (e), Proposition \ref{Bcmut} and Lemma \ref{op-cell}, 
the functors $B$ and $G/B$ are ind-normal. Thus $G$ is also ind-normal. 
\epf

\bprop\label{g-ind}
The functor $G$ is an ind-fp-affine group ind-scheme.
The obvious morphism $G\ra\hG$ is a morphism of group ind-schemes over $\bZ$ and $\Lie(G)=\kg$.
\eprop

\bpf
Consider the composed map
$G\times G\ra\hG\times\hG\to\hG$
given by the multiplication in $\hG$.
We must prove that this map factors schematically through $G$.
Since $G$ is ind-closed in $\hG$ and the source is geometrically reduced by Proposition \ref{minZ} and \cite[Tag.~06DG]{Sta}, 
it is enough to prove that it factors on $k$-points for any field $k$ by  \cite[Tag.~0356]{Sta}.
Let $G_1(k)\subset\hG(k)$ be the subgroup generated by $T(k)$ by $U_{\al}(k)$ for all $\al\in R_{\re}$.
Let $U_1(k)$ and $U_1^-(k)$ be
the subgroups generated by $U_{\al}(k)$ for positive real roots and negative real roots respectively. 
By \S\ref{opp-J} and \eqref{w-minZ}, we have 
$$U(k)=U_1(k)
,\quad
U^{-}(k)=U_1^-(k)
,\quad
G(k)\subset G_1(k).$$
By \cite[Prop.~3.13]{Rou}, the group $G_1(k)$ is the Tits minimal group. 
By \cite[\S 1.5.4,\,\S 8.4.1]{Remy} the tuple 
$$(G_1(k)\,,\, N(k)\,,\,U_1(k)\,,\, U_1^-(k)\,,\, T(k)\,,\, S)$$ 
where $S\subset W$ is the set of simple reflexions is a refined Tits system.
Thus by \cite[Thm.~5.2.3]{Kum} we  have 
\begin{equation}
G_1(k)\subset\bigcup_{n\in N(k)}nU^{-}(k)U(k)=G(k).
\label{k-mar}
\end{equation}
The case of the inversion is analog and easier since we already know that $G(k)$ is a group.
The description of the Lie algebra follows from the description of the open cell in Proposition \ref{fonct} and Theorem \ref{cart2}.
\epf

\subsubsection{The set of \texorpdfstring{$\hat{\mathcal{O}}$}{\hat{O}}-points of the minimal KM group}\label{Elem}
Instead of an integral model of a minimal KM group, the references \cite{BKP} or \cite{GR2} use an abstract 
group $G_x$ which is defined by some generators.
We must compare the group $G_x$ with $G(\widehat{\cO})$, for $G$ the minimal group ind-scheme constructed above. 
Here it is important to 
work with $\wco$ rather than $\co$. The corresponding assertion for $\co$ is probably wrong.
By  \cite[\S3.4]{GR2}, we have
\[G_x=(\hU(\wco)\cap G(\wK))U^{-}_{0}N(\wco),\]
with $N$ the normalizer of $T$ and $U^{-}_{0}$ the group generated by $U_{\al}(\wco)$ for $\al\in R_{\re}^{-}$.
The following proposition is an illustration that our minimal group $G$ behaves well.

\bprop\label{int-comp}
We have $G_x=\langle U_{\pm\al_i}(\wco), T(\wco)\,;\,i\in I\rangle=G(\hat\cO)$.
\eprop

\bpf
By \cite[Prop.~3.1]{Heb}, we have $G_x=\langle U_{\pm\al_i}(\wco), T(\wco)\,;\,i\in I\rangle$ and  $G_x\subset G(\wco)$.
Conversely, let $g\in G(\wco)$. Replacing $g$ by $hg$ with $h\in G(k)$ a product by of elements of the middle group by \eqref{k-mar}, 
we can assume that $g=1$ modulo $s$. As $\wco$ is a local ring, we have $g\in(UTU^{-})(\wco)$.
Since $U(\wco)\subset\hU(\wco)\cap G(\hat K)$, we deduce that $G(\wco)\subset G_x$.
\epf

\subsection{Basic affine spaces of Kac-Moody groups}
\subsubsection{The positive basic affine space}
We assume that $G$ is simply connected.
Let the Schubert variety $S_w$ be as in \S\ref{FM}.
We define as in \eqref{X-clo} the closed embeddings of ind-ft-schemes $\bZ$:
\begin{equation}
S_{w}\hra G/B\hra\prod\limits_{i\in I}\bP(L(\omega_i)).
\label{gb-plong}
\end{equation}
There is a $T$-torsor over the right hand side given by
\begin{equation}
\prod\limits_{i\in I}( L(\omega_{i})\setminus\{0\})\ra\prod\limits_{i\in I}\bP(L(\omega_i)).
\label{T-tors}
\end{equation}
We consider the map 
$$G\ra\prod\limits_{i\in I} L(\omega_{i}),\quad
g\mapsto (g\cdot v_i)_{i\in I},$$
It is $U$-equivariant and factors  through the left hand side of \eqref{T-tors}.
Since both functors are $T$-torsors over $G/B$, we get the following isomorphism of $\bZ$-ind-schemes of ind-ft
\begin{equation}
G/U\cong G/B\times_{\prod\limits_{i\in I}\bP(L(\omega_i))}\prod\limits_{i\in I} (L(\omega_{i})\setminus\{0\}).
\label{G/U}
\end{equation}

\bprop\label{U-rep}
\hfill
\begin{enumerate}[label=$\mathrm{(\alph*)}$,leftmargin=8mm]
\item
The obvious map $G/U\ra\hG/\hU$ is an isomorphism.
\item
The quotients $G/U$, $G/U^{-}$ and $\hG/\hU$ are representable by quasi-compact locally closed in ind-affine ind-ft ind-schemes
 over $\bZ$.
\item
If $G$ is untwisted affine, then $G/U$ is a presheaf quotient: for any ring $R$ we have $$G(R)/U(R)=(G/U)(R).$$
\eenum
\eprop

\bpf
To prove (a), note that the map $G/U\ra\hG/\hU$ is an isomorphism, 
because both sides are $T$-torsors over $G/B\cong\hG/\hB$ by Proposition \ref{minZ}.
Now, by (a), it is enough to prove (b) for $G/U$, the other case is similar. 
Then, the assertion follows from \eqref{G/U}.
Finally, we prove (c). Let $R$ be a ring, and $x\in (G/U)(R)$.
Taking again the notations of Corollary \ref{ces-2}, let $\bar{x}$ be the image of $x\in (G/P)(R)$, 
using Theorem \ref{ces-spl}, we find $y\in G(R)$ such that 
\[y^{-1}x\in (P/U)(R)=(\bG_{f}/U_{f})(R),\]
and the latter lifts as $U_{f}$ is split unipotent.
\epf

\brem
\hfill
\begin{enumerate}[label=$\mathrm{(\alph*)}$,leftmargin=8mm]
\item
It would be useful to know whether (c) holds beyond the untwisted affine case.
\item
Let $I_w$ be the ideal of the ring $R_w$ in \eqref{Rw} given by $I_w=\bigoplus\limits_{\omega\in\La_{++}}L_{w}(\omega)^{\vee}$.
Following \cite[(4.6)]{MR}, we expect that
\[G/U\times_{G/B}S_{w}\cong \Spec(R_w)\setminus V(I_w).\]
This should be useful if one to consider the basic affine space $\overline{G/U}$ in the Kac-Moody case.
\eenum
\erem

We need the variant for the Kashiwara flag scheme.

\bprop\label{U-quot}
Assume that $G$ is untwisted affine. The quotient $\hG/U^{-}$ is a presheaf quotient,  i.e., for any algebra $R$ we have 
$$\hG(R)/U^{-}(R)=(\hG/U^{-})(R).$$
\eprop

\bpf
The proof is the same proof as for Proposition \ref{U-rep}(c), replacing $P$ by $P^{-}$ and using Proposition \ref{ces-3}.
\epf



\section{Affine Grassmannians of Kac-Moody groups}
\subsection{Relative representability}
\subsubsection{Definition of the affine Grassmannian}\label{AGR}
Let $G$ be an ind-affine group ind-scheme over the commutative ring $k$.
Set $S=\Spec(k)$.
The affine Grasmmannian of $G$ is the $S$-étale sheaf
\begin{align}\label{Gr}\Gr_{G}=[G_K/G_\cO].\end{align}
For each $k$-algebra $R$ the groupoid $\Gr_{G}(R)$ consists of étale $G$-torsors over $\ab^{1}_{R}$, 
with a trivialization over $\bG_{m,R}$, that are étale locally trivial over $\Spec(R)$.
The sheaf $\Gr_G$ is not representable in general.
We also need the Laurent series version $\Gr_{G}^\form$
where we replace $\ab^{1}_{R}$ by $\Spec(R\llb t\rrb)$, and $\bG_{m,R}$ by $\Spec(R\llp t\rrp)$, i.e., we set
\[\Gr_{G}^\form=[G_{\hat K}/G_{\widehat{O}}].\]

\bprop\label{ind-quot}
Let $G$ be an ind-affine group ind-scheme over $k$.
Let $H\subset G$  be a closed subgroup. 
\begin{enumerate}[label=$\mathrm{(\alph*)}$,leftmargin=8mm]
\remi
If $G/H$ is either ind-quasi-affine (resp.~ind-strongly quasi-affine) or
open $G$-equivariantly in an ind-affine ind-scheme $Z$,
then the map $\Gr_{H}\ra\Gr_{G}$ is locally closed (resp.~fp locally closed).
\remi
If $G/H$ is ind-affine or ind-fp-affine, then this map is  closed or fp closed respectively.
\eenum
Moreover, we have the same conclusion with $\Gr_G$ replaced with $\Gr_{G}^\form$.
\eprop
\brem
Recall that a $k$-scheme is quasi-affine (resp.~strongly quasi-affine) if it is quasi-compact open of an affine scheme (resp.~finitely presented affine scheme). 
An open $V$ in an affine scheme is not necessarily quasi-compact, but any quasi-compact open of $V$ is quasi-affine.
\erem

\bpf
This lemma is well-known for algebraic groups \cite[Prop.~1.2.6]{Zhu}. We start with the  case  of Laurent polynomials.
Let $(\cE,\beta)$ be a $k$-point of $\Gr_G$ represented by a morphism $S\ra\Gr_{G}$. 
We must prove that the morphism $\cF\to S$ with
\[\cF=S\times_{\Gr_{G}}\Gr_{H}\]
is locally closed. 
Let $\pi:\cE\ra\ab^{1}_{k}$ be the structural morphism.
The trivialization $\beta$ is given by a section of $\pi$ over $\bG_{m,k}$.
Consider the étale quotient 
$\bar{\pi}:[\cE/H]\ra\ab^{1}_{k}.$
Note that $[\cE/H]$ is étale locally isomorphic to $G/H$.
If $G/H$ is ind-quasi-affine (resp.~ind-strongly quasi-affine, ind-affine, ind-fp-affine), by \cite[Lem.~3.12]{HR}, effectivity of descent for quasi-affine schemes (resp.~affine schemes) and descent of finite presentation(\cite[Tag. 0245, 0247, 041V]{Sta}), $\cE_{Z}$ is represented by an ind-quasi-affine scheme (resp.~ind-strongly quasi-affine, ind-affine, ind-fp-affine).

If $G/H$ is open $G$-equivariantly in an ind-affine ind-scheme $Z$, we form the twisted quotient $\cE_{Z}=\cE\times^{G}Z$, which is ind-affine. Consequently $[\cE/H]$ is open in $\cE_{Z}$ by descent (\cite[Tag. 041V]{Sta}).

The section $\beta$ yields a section 
$\bar\beta$
of $\bar{\pi}$ over $\bG_{m,k}$.
An $H$-reduction of $\cE$ is the same as a section of $\bar{\pi}$ over $\ab^{1}_{k}$. 
Consider the presheaf $\cF$ over $S$ that assigns to each $k\ra R$ the set of sections 
$\beta'$
of $\bar{\pi}$ over $\ab^{1}_{R}$ such that $\beta'|_{\bG_{m,R}}=\bar{\beta}|_{\bG_{m,R}}$.

Fix a filtered presentation $[\cE/H]\cong\colim V_{a}$ with $V_{a}$ open in an affine (resp.~quasi-affine, resp.~strongly quasi-affine, resp.~affine, resp.~fp affine) over $S$. 
The section $\bar{\beta}$ factors through some $V_{a}$ and further through some qc subset $V_{aa'}$ of $V_a$ 
 which is quasi-affine by the remark above. In (b) we have $V_a=V_{aa'}$.
We consider the subpresheaf $\cF_a\subset\cF$ 
that consists of sections $\beta'\in\cF(R)$ such that $\beta'$ factors through $V_{aa'}$ for a $k$-algebra $R$.
We claim that $\cF_a=\cF.$

To prove this, let $\beta'\in\cF(R)$ for a $k$-algebra $R$.
It is a section of $\bar\pi$.
We must prove that it factors through $V_{a}$. 
Over $\bG_{m,R}$, we have $\beta'_{R}=\bar{\beta}_{R}$.
Since $\bG_{m,R}$ is schematically dense in $\ab^{1}_{R}$ and $[\cE/H]$ is ind-separated, 
we have $\beta'=\bar{\beta}$. The claim follows.
To conclude, we must prove that $\cF_a$ is locally closed in $S$.
This follows from the
following lemma.

\blem\label{sect-sheaf}
Let $p:V\ra \ab^{1}_k$ be a quasi-affine (resp.~strongly quasi-affine, resp.~affine,  fp-affine) scheme, 
with $\sigma$ a section of $p$ over $\bG_{m,k}$.
Then the presheaf over $S$ that assigns to each morphism $k\ra R$ the set of sections $\sigma'$ of $p_R$ such that 
$\sigma'\vert_{\bG_{m,R}}=\sigma\vert_{\bG_{m,R}}$ is represented by a locally closed 
(resp.~fp locally closed, closed, fp closed) subscheme of $S$.
\elem

\bpf
Assume first that $V$ is affine.  
We can suppose that $V\subset\bA_\cO^{(I)}$ is closed, with the set $I$ finite if $V$ is fp. 
We have
$\sigma=(\sigma_i(s))_{i\in I}$ with
$\sigma_i(s)=\sum \sigma_{ij}s^j$ in $K.$
The presheaf considered is then defined by the closed subscheme of $S$ defined by 
$\sigma_{ij}=0$ for all $i\in I$ and $j<0.$
For the quasi-affine (resp.~strongly quasi-affine) case, let $W$ be affine (resp.~fp-affine) such that $V$ is qc open in $W$. The affine case gives a closed subscheme 
$\Spec(R)\subset S$ and a tautological map $\ab^{1}_R\ra W\times_{\bA^1_k}\bA_R^1$, 
whose restriction to $\bG_{m,R}$ is $\sigma\vert_{\bG_{m,R}}$.
A base change along the 0 section
$\Spec(R)\ra\bA_R^1$
gives a section 
$$\tau:\Spec(R)\ra W\times_{\ab^1_{k}}\Spec(R).$$
The presheaf is represented by the quasi-compact open subscheme $\tau^{-1}(V\times_{\ab^{1}_k}\Spec(R))$ of $\Spec(R)$.
\epf
Finally for the Laurent series case, the proof goes through mutatis mutandis, replacing $\ab^1_R$ by $\Spec(R\llb t\rrb)$ and $\bG_{m,R}$ by $\Spec(R\llp t\rrp)$.
\epf
\blem\label{mono-fact}
If an immersion of prestacks  $\cY\hra\cZ'$ factors through a monomorphism of prestacks $\cZ\hra \cZ'$, 
then the map $\cY\hra \cZ$ is an immersion. If the map $\cY\hra\cZ'$ is fp then the map $\cY\hra\cZ$ is also fp.
\elem

\bpf
Let $S\ra\cZ$ with $S=\Spec(k)$.
 We must prove that the map $\cY\times_{\cZ}S\ra S$ is locally closed (resp.~locally closed and fp).
By assumption, the map $\cY\times_{\cZ'}S\ra S$ obtained through $S\ra\cZ\ra \cZ'$ is an immersion 
(resp.~fp immersion).
But as $\cZ\ra\cZ'$ is a monomorphism, we get have $\cY\times_{\cZ'}S\cong\cY\times_{\cZ}S$.
\epf

\bprop\label{Gr-sep}
Let $G$ be an ind-affine group  ind-scheme over $k$.
\begin{enumerate}[label=$\mathrm{(\alph*)}$,leftmargin=8mm]
\remi
The functor $\Gr_{G}$ has a closed diagonal.
\remi
If further $G$ is ind-fp-affine then $\Gr_{G}$ has an fp closed diagonal.
\eenum
\eprop

\bpf
The map 
${\Delta}:G_K\times G_\cO\ra G_K\times G_K$
such that
$(g,h)\ra (g,gh)$ is closed (resp.~fp closed), 
being the composition of the multiplication map and the base change of the inclusion  $G_\cO\hra G_K$ which is closed (resp.~fp closed).
The map $[{\Delta}/(G_\cO)^{2}]$ is the diagonal of $\Gr_{G}$.
It is closed (resp.~fp closed) by Lemma \ref{quotH}. 
\epf

\subsubsection{Modular description of the affine Grassmannian}\label{mod-desc}
Let $G$ be an ind-affine group ind-scheme over a ring $k$.
Let $\Gr'_{G}$ be the functor from $k$-algebras to sets taking $R$ to 
the set of isomorphism classes of $G$-torsors on $\ab^{1}_R$ with a trivialization over $\bG_{m,R}$.
By \cite[Thm.~3.4, 3.6]{Ces},
if $G$ is  a quasi-split reductive group scheme over  $k$, 
then the functor $\Gr'_{G}$ 
is isomorphic to the presheaf quotient $G_{K}/G_{\co}$, i.e.,  
 there are bijections
\begin{equation}
\Gr'_{G}(R)\cong\Gr_{G}(R)\cong G(K_R)/G(\cO_R).
\label{kes}
\end{equation}
In particular, a $G$-torsor over $\ab^{1}_{R}$ trivial over $\bG_{m,R}$ is trivial.
For a KM group $G$ the functors $\Gr'_{G}$ and $\Gr_{G}$ may differ.
Nevertheless, we have the following result in the untwisted affine case.

\bthm\label{Gr-presheaf}
Let $G$ be a untwisted affine minimal KM group of affine type over a ring $k$.
Let $R$ be a $k$-algebra. 
Any étale $G$-torsor over $\ab^{1}_{R}$ trivial over $\bG_{m,R}$ is trivial.
In particular, we have $\Gr_{G}'\cong\Gr_{G}$ and $\Gr_{G}$ is a presheaf quotient.
\ethm

\bpf
As in Definition \ref{aff-def}, let  $G=\widetilde{\bG_{f,K}}\rtimes H$ where $\widetilde{\bG_{f,K}}$ is a central extension of $\bG_{f,K}$ 
by a split $k$-torus $Z$, the group $\bG_{f}$ split reductive over $k$, and $H$ is a split $k$-torus. 
Let $E$ be a $G$-torsor over $\ab^{1}_{R}=\Spec(R[s])$ trivial over $\bG_{m,R}=\Spec(R[s,s^{-1}])$. 
We want to prove that $E$ is trivial.
Considering the map $p:G\ra H$, the torus case,  and the fact that the map $p$ splits, 
we deduce that the torsor $E$ comes from a 
$\widetilde{\bG_{f,K}}$-torsor over $\ab^{1}_{R}$ which is trivial over $\bG_{m,R}$.
Pushing $E$ along the central extension
$\widetilde{\bG_{f,K}}\to \bG_{f,K}$, we get a $\bG_{f,K}$-torsor over $\ab^{1}_{R}$ which is trivial over $\bG_{m,R}$.
This defines an element in $\Gr_{\bG_{f}}(R[t,t^{-1}])$.
Thus, using \eqref{kes} for the group $\bG_{f}$ and the ring $R[t,t^{-1}]$, we deduce  that 
this $\bG_{f,K}$-torsor is trivial.
Hence $E$ comes from a $Z$-torsor over $\ab^{1}_{R}$ which over $R[s,s^{-1}]$ comes from a section of $\bG_{f,K}(R[s,s^{-1}])$.
The composed map
$Z\ra\widetilde{G_{f,K}}\ra\bG_{f,K}$
factors through the isomorphism
$\widetilde{\bG_{f,\cO}}\cong\bG_{f,\cO}\times Z.$
Hence, the class of $E$ in $H^{1}(R[s,s^{-1}],Z)$  has 
trivial image in $H^{1}(R[s,s^{-1}],\widetilde{\bG_{f,K}})$.
Thus it comes from an element
\[x\in(\widetilde{\bG_{f,K}}/\widetilde{\bG_{f,\cO}})(K_R)=(\Gr_{\bG_{f}})(R[s,s^{-1}]).\]
Using again \eqref{kes} we get that $x$ lifts as a point in $\bG_{f,K}(R[s,s^{-1}])$. 
Hence $x$ has a trivial image in $H^{1}(K_R,\widetilde{\bG_{f,\cO}})$ that contains $H^{1}(K_R,Z)$ as a direct 
summand, thus we end up with an element in $\Gr_{Z}(R)$ and apply one more time \eqref{kes} to get that $E$ is trivial.
\epf

\subsection{Semi-infinite orbits}
\subsubsection{The Iwasawa decomposition}
In this section $G$ is a simply connected minimal  KM group over $\bZ$.
The next property is fundamental for the Iwasawa decomposition. The key subtlety is that we have a global curve.

\bprop\label{glift}
Let $R$ be a principal ring.
Let $G$ be a minimal  KM group over $\bZ$, and $B\subset G$ a Borel subgroup. 
Any map $x:\Spec(R)\to G/B$  lifts to a map $\Spec(R)\ra G$.
\eprop


\bpf
We write $G/B=\colim_{w\in W}(S_w)$ with $S_w=\overline{BwB}/B$.
The map $x$ factors through some $S_w$.
We prove by induction on $\ell(w)$ that $x$ lifts to a map $\Spec(R)\to G$.
If $w=1$, it is clear. Assume that $\ell(w)\geq 1$. We consider a reduced decomposition 
$w=s_{i_1}s_{i_2}\dots s_{i_n}$. 
We consider the associated Demazure scheme 
$D(\underline{w})=E(\underline{w})/B$
with
$E(\underline{w})=P_{i_1}\times^{B}\dots\times^{B}P_{i_n}.$ 
The map 
\[m:D(\underline{w})\ra S_{w}\]
is proper, surjective, and birational over the open subset
$BwB/B$ of $S_{w}$ by \cite[Lem.~29]{Mat2}. 
Note that this is proved for the formal version, but, since $P/B\cong \hat{P}/\hat{B}$, the formal version and the minimal versions of 
$D(\underline{w})$ are isomorphic.
By the valuative criterion of properness, the map $x$ lifts uniquely to a map
$\Spec(R)\ra D(\underline{w}).$
We have a commutative diagram
$$\xymatrix{E(\underline{w})\ar[d]\ar[rr]^-{m}&&G\ar[d]\\
D(\underline{w})\ar[r]^m&\Fl_{\leq w}\ar[r]&G/B}$$
Since $R$ is principal, we have $\Pic(R)=0$.
Hence, since $T$ is split, we can lift $x$ to
a map 
$$\ti{x}:\Spec(R)\to E(\underline{w})/U.$$
Note that  $E(\underline{w})/U$ is the quotient of $P_{\underline{w}}:=P_{i_{1}}\times\dots \times P_{i_{n}}$ 
by $B^{n-1}\times U$ acting by
\[(p_1,\dots,p_n)\cdot (b_{1},\dots, b_n)=(p_{1}b_{1},b_{1}^{-1}p_{2}b_{2},\dots,b^{-1}_{n-1} p_{n}b_{n}).\]
Let $H$ be a closed subgroup of $\bigcap_{i=1}^{n} R_{u}(P_{i})$ which is normal in every $P_i$ 
and such that $U/H$ is split unipotent.
Set
\[P_{\underline{w}}/H^{n}=P_{i_1}/H\times\dots\times P_{i_n}/H.\]
Since $H$ is normal, there is a map
\[\theta:P_{\underline{w}}/H^{n}\ra E(\underline{w})/U\]
that takes the $H^n$-orbit of the tuple $(p_{1},\dots,p_n)$ to its $B^{n-1}\times U$-orbit.
The map $\theta$ is thus an $(B^{n-1}\times U)/H^n$-torsor.
The group $(B^{n-1}\times U)/H^n$ is split solvable.
Thus the torsor is trivialisable because $\Pic(R)=0$. 
Thus we can assume that the map $x$ lifts to a map
$$\ti{x}:\Spec(R)\to P_{\underline{w}}/H^{n}.$$
By Lemma \ref{Un2}, we can choose $H=V_m:=U\cap \hV_m$,
where $\hV_m$ is
the pro-unipotent subgroup of $\hU$ consisting of roots $\al\in R^{+}$ with 
$\hgt(\al)\geq m$ for a big enough integer $m$.
We decompose $\ti{x}$ as a tuple $\ti{x}=(\ti{x}_1,\dots,\ti{x}_n)$ with
$\ti{x}_s:\Spec(R)\to P_{i_s}/H$.
We have $x=\ti{x}_1\dots \ti{x}_n$ in $(G/B)(R)$.
We abbreviate $P=P_{i_{1}}$.
Consider the first projection to $P/H$.
 We have
\[P\ra P/H\ra P/R_{u}(P).\]
The composed map is split by a Levi factor. 
The second map is a torsor over a split unipotent group,
hence it is trivial over any ring $R$.
Thus we can find $y_{1}\in P(R)$ such that 
\[y_{1}\ti{x}_{1}\in (R_{u}(P)/H)(R)\subset (U/H)(R).\]
Set $v=s_{i_2}s_{i_3}\dots s_{i_n}$.
We get a tuple
\[(y_{1}\ti{x}_{1},\dots, \ti{x}_n)\in (U\times P_{\underline{v}})/H^{n}\]
whose image  $y_{1}x$ in $G/B$ lands in $S_v$.
Thus $y_{1}x$ lifts to $G(R)$ by induction.
Since $y_1\in P(R)$, the $R$-point $x$ lifts also to $G(R)$.
\epf

The following proposition is well-known in the affine case. 
It seems to be new in our generality.
Note that a similar statement in  \cite{GR08} uses group theory.

\bprop\label{iwa}
Let $G$ and $\hG$ be the minimal and formal KM groups over a field $k$.
We have
$$G(K)=B(K)\cdot G(\cO)
,\quad
G(K)=B^{-}(K)\cdot G(\cO)
,\quad 
\hG(K)=\hB(K)\cdot\hG(\cO).$$
\eprop

\bpf
The rings $K$ and $\cO$ are both principal. Since $G/B\cong\hG/\hB$ and $G/B^{-}$ are ind-projective,
the valuative criterion of properness for  $\Spec(\co)$ yields 
$$(\hG/\hB)(K)=(\hG/\hB)(\co)
,\quad
(G/B)(K)=(G/B)(\co)
,\quad
(G/B^-)(K)=(G/B^-)(\co).$$
By Proposition \ref{glift}, we have 
$$G(K)/B(K)=(G/B)(K)=(G/B)(\co)=G(\co)/B(\co).$$
The formal case follows because as $\hU$ is pro-unipotent and $T$ is split
\[H^{1}(K,\hU)=H^{1}(K,T)=H^{1}(\co,\hU)=H^{1}(\co,T)=\{1\}.\]
\epf

\subsubsection{The semi-infinite orbits}
Let $G$ and $\hG$ be the minimal and formal KM groups over $\bZ$.
By \eqref{Gr} we can consider the affine Grassmannians $\Gr_G$ and $\Gr_{\hG}$ of
$G$ and $\hG$ and the obvious morphism $\Gr_G\to\Gr_{\hG}$.
For any cocharacters $\mu,\nu\in \LLa$, there are subfunctors $S_\mu$, $T_\nu$, $S_{\leq\mu},$ $T_{\geq\nu}$ of $\Gr_{G}$ given by 
\begin{align}\label{ST}
S_{\mu}=s^{\mu}\cdot\Gr_{U},\quad T_{\nu}=s^{\nu}\cdot\Gr_{U^-}
,\quad
S_{\leq\mu}=\bigsqcup_{\mu'\leq\mu} S_{\mu'}
,\quad
T_{\geq\nu}=\bigsqcup_{\nu'\geq\nu} T_{\nu'}.
\end{align}
Let 
$S'_{\mu}$, $T'_{\nu}$, $S'_{\leq\mu}$ and $T'_{\geq\nu}$ be the subfunctors of $G_K$ 
given by the inverse images 
by the obvious map $\pi:G_{K}\ra \Gr_{G}$.
We define the closures  $\ov{S_{\mu}}$ and $\ov{T_{\nu}}$ of $S_{\mu}$ and $T_{\nu}$ in $\Gr_{G}$ to be the quotients
$$\ov{S_{\mu}}=[\ov{S'_{\mu}}/G_{\cO}]
,\quad
\ov{T_{\nu}}=[\ov{T'_{\nu}}/G_{\cO}]$$
where $\ov{S'_{\mu}}$ and $\ov{T'_{\nu}}$ are the closures in the ind-scheme $G_K$ with the reduced structures.
The formal variants are the subfunctors $\hS_\mu$, $\hT_\nu$ of $\Gr_{\hat{G}}$ given by
\begin{align}\label{hST}
\hS_{\mu}=s^{\mu}\cdot\Gr_{\hU}
,\quad
\hT_{\nu}=s^{\nu}\cdot\Gr_{U^-}\cong T_\nu.
\end{align}
There are obvious projections 
$$\Gr_{B},\Gr_{\hB}\ra\Gr_{T}=\Gr_{T}=\bigsqcup_{\la\in\LLa}\Gr_{T}^{\la}.$$
We define
\begin{align}\label{GrBla}\Gr_{B}^{\la}=\Gr_B\times_{\Gr_T}\Gr_T^\la,\quad
\Gr_{\hB}^{\la}=\Gr_{\hB}\times_{\Gr_T}\Gr_T^\la.\end{align}

\blem\label{Gr-ST}Let $\mu, \nu\in \LLa$. 
\hfill
\begin{enumerate}[label=$\mathrm{(\alph*)}$,leftmargin=8mm]
\item
The obvious map is a bijective fp closed immersion $S_\mu\to\Gr_B^\mu$.
We have $(S_\mu)_\red\cong(\Gr_B^\mu)_\red$.
\item
The obvious map is a bijective fp closed immersion $T_\nu\to\Gr_{B^-}^\nu$.
We have $(T_\nu)_\red\cong(\Gr_{B^-}^\nu)_\red$.
\eenum
\elem

\bpf
Since $B/U\cong T$ is fp and affine, by Proposition \ref{ind-quot} the map
$\Gr_{U}\ra\Gr_{B}$ is fp closed.
Since $S_\mu=s^\mu\cdot\Gr_{U}$ by \eqref{hST}, by left translation by $s^\mu$ 
the map $S_\mu\ra\Gr_{B}$ is also fp closed.
Part (a) follows, because $(\Gr_T)_\red=X_*(T)$. Part (b) is proved similarly.
\epf

\brem
We do not know if $S_\mu$ and $T_\nu$ are reduced.
\erem

\bprop\label{iwa2}
Let $k$ be a field. We have
$$\Gr_G(k)=\bigsqcup_\mu S_\mu(k)=\bigsqcup_\nu T_\nu(k),
\quad
\Gr_{\hG}(k)=\bigsqcup_\mu \hS_\mu(k).$$
\eprop

\bpf
Follows from Proposition \ref{iwa}. 
\epf

There is a Plücker description of the semi-infinite orbits. 
For any $\omega\in \Lambda_+$, let 
$$\eta_{\omega}:L(\omega)_\bZ\to\ell_\omega=\bZ\cdot v_\omega$$ 
be the projection to the highest weight line.
We consider the closed subsets of $G_{K}$ given by
\begin{align}
\label{S1}
Y(\mu)&=\bigcap_{i\in I}\{g\in G_{K}\,;\, g^{-1}(\ell_{\omega_i}\otimes\co)\subset s^{-\langle\omega_i,\mu\rangle}L(\omega_i)\otimes\co\},\\
Z(\nu)&=\bigcap_{i\in I}\{g\in G_{K}\,;\, \eta_{\omega_i}(g L(\omega_i)\otimes\co)
\subset s^{\langle\omega_i,\nu\rangle}\ell_{\omega_i}\otimes{\co}\}.
\label{S2}
\end{align}
The set $Y(\mu)$ is $ U_{K}\times G_{\cO}$-invariant and contains $s^{\mu'}$ for each $\mu'\leqslant\mu$.
The set $Z(\nu)$ is $U_{K}^{-}\times G_{\cO}$-invariant and contains $s^{\nu'}$ for each $\nu'\geqslant\nu$.
Hence, we have 
$$S'_{\leq\mu}\subseteq Y(\mu)
,\quad
T'_{\geq\nu}\subseteq Z(\nu).$$
In addition, 
for each cocharacter $\la$ such that $\la\not\leqslant\mu$ there is a fundamental weight $\omega_i$ such that 
$$\langle\omega_i,\la\rangle\geqslant\langle\omega_i,\mu\rangle+1,$$
hence $s^\la\notin Y(\mu).$
The Iwasawa decomposition in Proposition \ref{iwa} yields
$(S'_{\leq\mu})_\red= Y(\mu)_\red$.
Similarly, we have
$(T'_{\geq\nu})_\red= Z(\nu)_\red.$
Hence
\begin{align}\label{ST1}
\ov{S_\mu}\subset S_{\leq\mu}
,\quad
\ov{T_\nu}\subset T_{\geq\nu},
\end{align}
Finally, let
$\hU_{\pi^-}$ be the kernel of the evaluation $\hU_{\cO^-}\ra\hU$ at $\infty$.

\bprop\label{ST-comp1}
Let  $\mu,\nu\in \LLa$.
\begin{enumerate}[label=$\mathrm{(\alph*)}$,leftmargin=8mm]
\item
We have fp locally closed embeddings 
$S_{\mu}\,,\,T_{\nu}\ra\Gr_{G}$ and
$\hS_{\mu}\ra\Gr_{\hG}.$
\item
We have 
$\overline{S_{\mu}}\subset S_{\leq\mu}$ and
$\overline{T_{\nu}}\subset T_{\geq\nu}$, with equalities if $G$ is symmetrizable.
\item
The functor $\hS_{\mu}$ is representable by an ind-affine ind-scheme. We have $\hS_{0}\cong\hU_{\pi^-}$. 
\end{enumerate}
\eprop

We need a preliminary result.

\bprop\label{roots}
Let $G$ be a minimal symmetrizable KM group over $\bZ$. 
Let $\Lal,\Lbe\in R^{\vee}_{+}$ be such that $\langle \Lal,\beta\rangle<0$.
Then $\Lal+\Lbe\in R^{\vee}_{+}$. 
Hence, for any pair of dominant cocharacters $\mu<\la$, there is a positive 
coroot $\Lal\in R_{+}^{\vee}$ such that $\mu\leq\la-\Lal<\la$.
\eprop

\bpf
The roots are the same over $\bZ$ and $\bQ$. Thus it is enough to check the assertion 
over $\bQ$ where it follows from \cite[Lem. 3.6-3.7]{Mar2}.
Once we have the first assertion, the proof becomes identical to \cite[Lem.~2.3]{Rap}.
\epf

We can now prove Proposition \ref{ST-comp1}.

\bpf 
For (a), it is enough to prove the claim for $\nu=0$ up to a left translation by $s^\nu$.
The morphisms $S_0\,,\,T_0\ra\Gr_{G}$ and $\hat S_0\ra\Gr_{\hG}$ are fp locally closed embeddings 
by Propositions \ref{ind-quot} and \ref{U-rep}.
Now, we prove (b). Both assertions can be checked on $k$-points for an arbitrary field $k$.
Since $G$ is symmetrizable, by Proposition \ref{roots} it is enough to check that $S_{\mu-\al}\subset\overline{S}_\mu$ 
and $T_{\nu+\al}\subset\overline{T}_\nu$ for any positive coroot $\al$.
Let us concentrate on the inclusion $S_{\mu-\al}\subset\overline{S}_\mu$ because the other one is very similar.
Set  $m=\langle \mu,\Lal\rangle-1$.
The inclusion follows from the construction of the curve $C_{\mu,\al}$ 
in the proof of Proposition \ref{Cartan}. Indeed, with the notation there, we have
\begin{align*}
 U_K\cap i_{\al}(K_m)&\cong
\begin{pmatrix}
1&s^m\cO\\0&1
\end{pmatrix}\\
\Ad_{s^{\mu}}( G_\cO)\cap U_K\cap i_{\al}(K_m)&\cong
\begin{pmatrix}
1&s^{m+1}\cO\\0&1
\end{pmatrix}
\end{align*}
For (c), note that the multiplication map
$\hU_{\pi^-}\times\hU_{\cO}\ra\hU_{K}$
is a monomorphism, because $\hU_\cO\cap\hU_{\pi^-}=\{1\}$.
It yields a monomorphism
$f:\hU_{\pi^-}\ra\hS_{0}.$
We must prove surjectivity on $R$-points for any ring $R$.
Since $\hU$ is split pro-unipotent, by \cite[Prop.~A.6]{RS} we have $H^{1}(S,\hU)=\{0\}$ for any affine scheme $S$.
Thus there is a bijection
\[H^{1}(\bP^{1}_{R},\hU)\cong\hU_{\cO^-}(R)\backslash\hU_K(R)/\hU_\cO(R).\]
We consider the presentation $\hU=\lim U^{(n)}$. 
We have $H^{1}(\bP^{1}_{R},\bG_a)=0$. Hence the maps 
$H_0(\bP^{1}_{R},U^{(n+1)})\to H_0(\bP^{1}_{R},U^{(n)})$ are surjective.
We deduce that ${}^1\lim H_0(\bP^{1}_{R},U^{(n)})=0$.
Hence $H^{1}(\bP^{1}_{R},\hU)=0$ by \cite[Lem.~A.3]{RS}.
Since $H^0(\bP^{1}_R,\hU)=\hU(R)$, we deduce that the map $f$ above is an isomorphism.
Finally $\hU_{\pi^-}$ is an ind-affine ind-scheme.
\epf

We need the following result, which strengthens Proposition \ref{ST-comp1}.

\bthm\label{Bfond}
The obvious map $\Gr_{B}\ra\Gr_{G}$ restricts to an fp immersion $\Gr_{B}^{\mu}\ra\Gr_G$
for each $\mu\in \LLa$.
\ethm

\bpf
The map $\Gr_{B}\ra\Gr_{G}$ is already a monomorphism.
The isomorphism $T_{K}\times U_{K}\cong B_{K}$ yields an isomorphism $T_{K}\times S_{0}\cong B_{K}/U_{\co}$. 
On the left hand side we have an action of $T_{\co}$ given by $t\cdot (h,u)=(ht^{-1},tut^{-1})$.
The twisted product $T_{K}\times^{T_{\co}} S_{0}$ yields a commutative diagram
$$\xymatrix{T_{K}\times S_{0}\ar[d]\ar[r]&B_{K}/U_{\co}\ar[d]\\T_{K}\times^{T_{\co}} S_{0}\ar[r]&\Gr_{B}}$$
The vertical maps are $T_{\co}$-torsors and the top horizontal one is an isomorphism.
So the bottom horizontal map is an isomorphism.
We must prove that the composed map $T_{K}\times^{T_{\co}} S_{0}\ra\Gr_{G}$ is an fp immersion
when restricted to a connected component $T^{\mu}_{K}\times^{T_{\co}} S_{0}$.
We can be check this étale locally, after pulling back to $G_{K}$ and prove that the following map  is an fp-immersion
\begin{equation}
T^{\mu}_{K}\times^{T_{\co}} S'_{0}\ra G_{K},
\label{indB}
\end{equation}
with $S'_{0}=S_{0}\times_{\Gr_{G}}G_K$. By left multiplication by $s^{-\mu}$ we can further assume that $\mu=0$.
By Proposition \ref{ST-comp1} the map $S'_{0}\ra G_{K}$ is fp locally closed.
Since $\Gr^{0}_{T}$ is an ind-thickening of $\Spec(k)$, the map \eqref{indB} is ind-locally closed.
Indeed, if the maps $X\stackrel{f}{\rightarrow}Y\stackrel{g}{\rightarrow} Z$ are such that $g\circ f$ is fp locally closed, 
the map $f$ is a nilpotent closed immersion, and $g$ is a monomorphism of ft schemes, 
then the map $g$ is fp locally closed
(after restricting to an open 
$U\subset Z$, we can replace locally closed by closed. 
Then we use \cite[Tag. 03GN]{Sta} and the fact that a proper monomorphism is  
a closed immersion \cite[Tag. 04XV]{Sta}).
Therefore, we must prove that for any $\la\in \LLa_+$ the map
$$(T^0_{K}\times^{T_{\co}} S'_{0})\times_{G_K}G_{\leq\la}\to G_{\leq\la}$$
is locally closed.
Let $R$ be a $k$-algebra and consider
\[tug\in T_{K}(R)U_{K}(R)\cap G_{\cO}(R)\cap G_{\leq\la}(R).\]
We have $t u\in G_{\leq\la}(R)$, because $G_{\leq\la}$ is $G_{\co}$-invariant.
Hence, for any dominant character $\omega\in\Lambda^+$, we have
\[t u\cdot v_{\omega}=\omega(t)\cdot v_{\omega}\in s^{\langle\omega,\la\rangle}\co_R\cdot v_\omega.\]
Thus $t$ is an $R$-point of the fiber product $T^{0}_{\leq\la}=T^{0}_K\times_{G_{K}}G_{\leq\la}$.
Since $T^{0}_{\leq\la}$ is closed in  $T_K$,
the quotient $T^{0}_{\leq\la}/T_{\co}$ is a finite nilpotent scheme.
So the first map in the chain of maps
\[S'_{0}\times_{G_{K}} G_{\leq\la}\ra (T^{0}_{\leq\la}\times^{T_{\co}} S'_{0})\times_{G_{K}} G_{\leq\la}\ra G_{\leq\la},\]
is a nilpotent closed immersion. The second one is schematic, and the composite is fp locally closed.
Thus the second map is also fp locally closed.
Finally, the ind-ft ind-scheme $$(T^{0}_{K}\times^{T_{\co}} S'_{0})\times_{G_{K}}G_{\leq\la}$$ is sandwiched between 
$S'_{0}\times_{G_{K}} G_{\leq\la}$ and $(T^{0}_{\leq\la}\times^{T_{\co}} S'_{0})\times_{G_{K}} G_{\leq\la}$.
Thus, we conclude by the following lemma.

\blem\label{closed}
Let $f:X\ra Y$ and $g:Y\ra Z$ be morphisms of ind-ft ind-schemes such that $f$ and $g  f$ are nilpotent closed immersions and  
$g$ is ind-locally closed. Then $g$ is closed. 
\elem

\bpf
By pulling-back to any closed subscheme $S\subset Z$, we are reduced to the case where $X$ and $Z$ are schemes of ft and $g  f$ is 
defined by a coherent sheaf of ideals $\cI$. We write $Y\cong\colim Y_{\al}$ with $Y_{\al}$ locally closed in $Z$.
For some $\alpha$ we have the commutative diagram of schemes
$$\xymatrix{X\ar[r]^-f\ar[rd]&Y_\alpha\ar[d]^-{g|_{Y_\alpha}}\\&Z}
$$ 
As $f$ is nilpotent, hence surjective, by \cite[Tag.~03GN]{Sta} 
the subscheme $Y_{\al}$ is closed in $Z$. As $\cI$ is coherent, 
there exists $\al_0$ such that for every $\beta\geq\al_0$ we have $Y_{\alpha_0}\cong Y_{\beta}$.
Thus $g$ is a closed immersion.
\epf	
\epf

\brem\hfill
\begin{enumerate}[label=$\mathrm{(\alph*)}$,leftmargin=8mm]
\item
The ind-scheme $\hS_0$ is wild: the transition maps are closed but may not be fp. 
\item
The proof above implies that
$(\ov{S'_{\mu}})_\red= Y(\mu)_\red$ and
$(\ov{T'_{\nu}})_\red= Z(\nu)_\red.$
\item
The map $\hT_{\nu}\ra\Gr_{\hG}$ may not be locally closed and
$\Gr_{\hG}(k)\neq\bigsqcup_\nu \hT_\nu(k)$, see \cite[\S2.3]{BFK}.
\eenum
\erem

\bprop\label{ST-comp2} Let $\mu,\nu\in\LLa$.\hfill
\begin{enumerate}[label=$\mathrm{(\alph*)}$,leftmargin=8mm]
\item
 $S_\mu\cap T_\nu\neq\emptyset\iff\mu\geq\nu$.
\item
$S_{\mu}\cap T_{\mu}=\Spec(\bZ)$.
\eenum
\eprop

\bpf
If $S_\mu\cap T_\nu\neq\emptyset$, then the combination of \eqref{S1} and \eqref{S2} yields
$\langle\omega_i,\mu\rangle\geq\langle\omega_i,\nu\rangle$ for each $i\in I$,
hence $\mu\geq\nu$. 
To prove (b), note that (a) and Proposition \ref{ST-comp1} imply that the open immersions
\begin{equation}
S_{\mu}\cap T_{\mu}\hra S_{\mu}\cap\ov{T_{\mu}}\hra \ov{S_{\mu}}\cap\ov{T_{\mu}}
\label{ser-incl}
\end{equation}
are bijective. Thus the intersection $S_{\mu}\cap T_{\mu}\cong \ov{S_{\mu}}\cap\ov{T_{\mu}}$ is closed in $\Gr_G$.
Further $S_{\mu}$ and $T_{\mu}$ are contained in the attractor and repeller of the point $[s^{\mu}]$
for the action by $2\Lrho$ on $\Gr_{G}$.
So any element $x\in(S_{\mu}\cap T_{\mu})(R)$ yields a $\bG_m$-equivariant map $\bP^{1}_{R}\ra\Gr_{G}$ 
that factors through $S_{\mu}\cap T_{\mu}$, because it is closed.
Composing this map with the obvious map $S_\mu\to\hS_\mu$, 
we get a map $\bP^{1}_{R}\ra\hS_{\mu}$ that is constant equal to $[s^{\mu}]$,
 because $\hS_{\mu}$ is ind-affine by Proposition \ref{ST-comp1}.
\epf

We will prove in Proposition \ref{Z-fin} that the intersection $(S_\mu\cap T_{\nu})_\red$ is a ft-scheme over $\bZ$.
Finally, we define the Laurent-series counterpart of our objects, i.e., we consider the functor $\Gr_{G}^\form=[G_{\hat K}/G_{\hat\cO}]$
and the subfunctors
\begin{align}\label{STf}
S_{\mu}^\form=s^{\mu}\cdot\Gr_{U}^\form
,\quad 
T_{\nu}^\form=s^{\nu}\cdot\Gr_{U^-}^\form,\quad \hat{S}_{\mu}^{\form}=s^{\mu}\cdot\Gr_{\hU}^\form.
\end{align}

\blem\label{f-fp}
The obvious maps $S^{\form}_{\nu},T^{\form}_{\nu}\ra\Gr_{G}^\form$ are fp locally closed.
We have $\hat{S}_{0}\cong\hat{S}_{0}^{\form}.$ 
\elem

\bpf
The first claim follows from Proposition \ref{ind-quot} and Proposition \ref{U-rep}.
For second one, by Proposition \ref{ST-comp1}(c) it is sufficient to prove that $\hat{S}_{0}^{\form}\cong\hU_{\pi^{-}}$.
By \cite[Lem.~2.2.11(b)]{BC}, we have a Beauville-Laszlo gluing for $\hU$, which yields
\[H^{1}(\bP^{1}_{R},\hU)\cong\hU(R[t^{-1}])\backslash\hU(R\llp t\rrp)/\hU(R\llb t\rrb)=\{0\},\]
as in the proof of Proposition \ref{ST-comp1}(c). 
We then conclude as in loc.~cit.~that $\hat{S}_{0}^{\form}\cong\hU_{\pi^{-}}$.
\epf

\subsubsection{Affineness of semi-infinite orbits}
Let $G$ be a minimal KM group over $\bZ$. 

\bthm\label{sinf-aff}
The immersions $S_{\mu}\hra\Gr_{G}$ and $T_{\nu}\hra \Gr_{G}$ are fp affine.
\ethm

First, we consider the formal setting.

\blem\label{A-st1}
The morphism $\Gr_{\hU}\ra\Gr_{\hG}$ is affine and fp locally closed.
\elem

\bpf
By Proposition \ref{ST-comp1}, we already know that this morphism is fp and locally closed.
By Lemma \ref{quotH}, to check that the morphism $\Gr_{\hU}\ra\Gr_{\hG}$ is affine it is enough to prove it after pulling to 
$\hG_{K}$. 
Let $$\hS'_{0}=\Gr_{\hU}\times_{\Gr_{\hG}}\hG_{K}.$$
Since the morphism $\hG_K\times\hG_\cO\to\hG_K$ is ind-affine, by Lemma \ref{quotH}, the morphism $\hG_K\ra\Gr_{\hG}$ is 
also ind-affine.
Thus by base change and Proposition \ref{ST-comp1}
we deduce that $\hS'_0$ is ind-affine, fp and locally closed in $\hG_{K}$. 
Since $\hG_{K}$ is ind-affine, the map $\hS_0'\ra\hG_K$ is affine by \cite[Tag.~01SG]{Sta}.
\epf

\blem\label{fib1}\hfill
\begin{enumerate}[label=$\mathrm{(\alph*)}$,leftmargin=8mm]
\item
The canonical maps
$\Gr_{U}\ra\Gr_{\hU}\times_{\Gr_{\hG}}\Gr_{G}$
and
$\Gr_{B}\ra\Gr_{\hB}\times_{\Gr_{\hG}}\Gr_{G}$
are bijective fp closed immersions. In particular they are affine.
\item
The map
$S_{\mu}\ra \hS_\mu\times_{\Gr_{\hG}}\Gr_{G}$
is a bijective fp closed immersion.
\eenum
\elem

\bpf
Closed immersions are affine. Let us prove the remaining statements.
Part (b) follows from (a) and \eqref{ST}.
By Proposition \ref{ST-comp1}, the maps $\Gr_{\hU}\ra\Gr_{\hG}$ and $\Gr_{U}\ra\Gr_{G}$ are fp locally closed.
Thus, the map
\[\Gr_{U}\ra\Gr_{\hU}\times_{\Gr_{\hG}}\Gr_{G}\]
is fp locally closed. Hence, it is enough to prove that $k$-points are the same for $k$ an algebraically closed field.
Let $E$ be a $k$-point of $\Gr_{\hU}\times_{\Gr_{\hat{G}}}\Gr_{G}$.
By Proposition \ref{U-rep}, we have $G/U\cong\hG/\hU$.
Thus the $G$-torsor $E$ over $\ab_k^1$ has a $U$-reduction, see \S\ref{AGR} and \S\ref{mod-desc}. 
We must prove that it defines an $k$-point of $\Gr_{U}$.
The $G$-torsor $E$ is trivial over $\bG_{m,k}$.
Thus it yields an element in $(G/U)(K)$. 
It lifts to $G(K)$ by Proposition \ref{glift}.
This implies that  the $U$-reduction of $E$ is also trivial over $\bG_{m,k}$, proving the claim.
The proof for $\Gr_{B}$ is the same using Theorem \ref{Bfond} instead.
\epf

We can now finish the proof of Theorem \ref{sinf-aff}.

\bpf
We can assume that $\nu=0$.
By base change and composition, Lemmas \ref{A-st1} and \ref{fib1} imply that the map
$S_0=\Gr_{U}\ra\Gr_{G}$ is affine and fp.
The corresponding statement for $T_0$ is proved in a similar way using $\hG^{\op}$.
\epf

\brem
Lemma \ref{fib1} is finer than saying that we have equivalences on reduced stacks, because the map 
$\cX_\red\ra\cX$ may not be schematic.
\erem

\subsection{The Cartan decomposition}
\subsubsection{The Cartan semigroup}
If $G$ is a minimal KM group over a field $k$, then the Cartan decomposition may not hold for $G_K$. 
One must introduce the Cartan sub-semigroup of $G_K$.
Its definition is inspired from \cite[Appendix A]{BKP}.
Let $g\in G(K)$.
We say that $g$ is bounded if for each representation $L(\omega)$ of highest weight $\omega\in \Lambda_+$, 
there exists an integer $N_\omega$ such that
\[g(L(\omega)_\cO)\subset s^{-N_{\omega}}L(\omega)_\cO.\]
We consider the sub-semigroups $G_b(k)$ and $G_c(k)$ of $G(K)$ given by
\begin{equation}
G_b(k)=\{g\in G(K)\,;\,g~ \text{is bounded}\}
,\quad
G_c(k)=G_b(k)^{-1}.
\label{sem-cart}
\end{equation}
They are stable by left and right action by $G(\cO)$, because $G(\cO)$ preserves $L(\omega)_\cO$.
For  $v\in L(\omega)_{K}$ we define
$$\ord(v)=\min\{n\in\bZ\,;\,s^nv\in L(\omega)_\cO\}. $$
By definition the element  $v_{\omega}$ has the order 0.

\blem\label{la1}
Let $\la\in \LLa$. We have $\la\in\LLa_+$ if and only if $s^{\la}\in G_c(k)$.
\elem

\bpf
For any vector $v\in L(\omega)$ of weight $\mu$, we have $s^{\la}v=s^{\langle\mu,\la\rangle}v.$
Hence $s^\la$ is bounded if and only $\langle\mu,\la\rangle\geq -N_\omega$ for some $N_\omega\in\NN$ and all $\mu\leq\omega$ and
all
$\omega$.
Hence, if and only if the set $\{\langle\al,\la\rangle\,;\,\alpha\in Q^+\}$ is bounded above, thus $\la\in-\La_+$.
\epf

By Lemma \ref{la1}, for any field  $k$ we have
\begin{equation}
G_c(k)\supset\bigcup_{\la\in \LLa_+} G(\cO)s^{\la}G(\cO).
\label{inc-cart}
\end{equation}
\bprop\label{Cart}
For any field $k$, we have an equality
\[G_c(k)=\bigcup_{\la\in \LLa_+} G(\cO)s^{\la}G(\cO).\]
In particular, the right hand side is a semigroup.
\eprop

\bpf
The proof in \cite[Appendix]{BKP}, which is based on the Iwasawa decomposition, applies.
We recall this proof, because loc.~cit. considers only the untwisted affine case with coefficients in $\hat\cO$. 
We will use the Iwasawa decomposition in the form $G(\co)T(K)U(K)$.
Let $g\in G_b(k)$.
By Lemma \ref{la1} it is enough to prove that $g=k_1 s^{-\la}k_2$ for $k_1, k_2\in G(\cO)$ and $\la\in \LLa_+$.
Fix a dominant weight $\omega\in \La_+$.
Since $g\in G_b(k)$, we may
choose $k\in G(\cO)$ such that the element $(gk)\cdot v_{\omega}$ in $L(\omega)_\cO$
has maximal order.
By Proposition \ref{iwa} there a	re $k_1$, $u$ and $t$ such that
\[gk=k_{1}tu,\quad k_1\in G(\cO),\quad u\in U(K),\quad t\in T(K).\]
We must prove that $u\in U(\cO)$. 
Assume that $u\notin G(\cO)$.
The Iwasawa decomposition yields $u^-$, $\mu$ and $k_2$ such that
\[u=u^{-}s^{\mu}k_2,\quad u^-\in U^-(K),\quad\mu\in\LLa,\quad k_2\in G(\cO)\]
We have $u\in T'_\mu\cap S'_0$.
By Proposition \ref{ST-comp2}, we have
$T'_0\cap S'_0= G(\cO)$.
Since $u\notin G(\cO)$, we deduce that $\mu\neq 0$. 
By Proposition \ref{ST-comp2}, we also have 
$$T'_{\mu}\cap S'_0\neq\emptyset\iff\mu\leq 0.$$
Thus $\mu<0$.
Now we have
\begin{align}
\begin{split}
 \ord((gkk_{2}^{-1})\cdot v_{\omega})
  & =\ord(k_1tu^{-}s^{\mu}\cdot v_{\omega})\\
  &=\ord(tu^{-}s^{\mu}\cdot v_{\omega})\\
   & \geq \ord(tv_{\omega})-\langle\omega,\mu\rangle\\
   &=\ord(gkv_{\omega})-\langle\omega,\mu\rangle.\end{split}
\end{align}
This contradicts the maximality of the order. So $u\in G(\cO)$.
\epf

\subsubsection{The cell decomposition of the Cartan semigroup}
Let $G$ be the minimal KM group over $\bZ$.
By Proposition \ref{Cart}, for any field $k$ and any element $g\in G(K)$, there is an element
\[\inv_k(g)\in \LLa_+\cup\{\infty\},\]
such that $\inv(g)$ is the image of $g$ in the double quotient 
$G(\cO)\backslash G_c(k)/G(\cO)$ if $g\in G_c(k)$, and $\inv(g)=\infty$ otherwise.
Now, for any ring $R$ any $g\in G_K(R)$ and any $x\in\Spec(R)$ of residue field $k(x)$,
we set
\[\inv_{x}(g)=\inv_{k(x)}(g\vert_{\Spec(k(x))})\] 

\blem\label{Schub1}
For any  $g\in G_K(R)$ and $\la\in \LLa_+$, the following subset of $\Spec(R)$ is Zariski closed
\[\Spec(R)_{\leq \la}=\{x\in\Spec(R)\,;\,\inv_{x}(g)\leq\la\}.\]
\elem

\bpf
For each dominant character $\omega$,
the $G$-action \eqref{rho-omega} yields a morphism of ind-schemes 
$$\rho_\omega:G_K\to\End^\ind(L(\omega)_K).$$
We consider the functor $X(\la,\omega)$ such that
\begin{align}\label{S0}
\begin{split}
X(\la,\omega)(R)
&=\{g\in G_K(R)\,;\, \rho_\omega(g)(L(\omega)_{\cO_{k(x)}})
\subset s^{\langle\omega,\la\rangle}L(\omega)_{\cO_{k(x)}}\,,\,\forall x\in\Spec(R)\}.
\end{split}
\end{align} 
Given an $R$-point $g$ of $G_K$,
the following fiber product is  closed in $\Spec(R)$
$$X(\lambda,\omega)\times_{G_K}\Spec(R)$$
We claim it coincides with $\Spec(R)_{\leq \la}$.
Since $X(\lambda,\omega)$ is $G_\cO\times G_\cO$-invariant and contains $s^{\mu}$ for any $\mu\leq\la$, we have
$$\Spec(R)_{\leq\la}\subseteq X(\lambda,\omega)\times_{G_K}\Spec(R).$$
Set $X(\la)=\bigcap_{i\in I}X(\lambda,\omega_i)$.
Then 
$$\Spec(R)_{\leq\la}\subseteq X(\lambda)\times_{G_K}\Spec(R).$$
If $g\in X(\la)$ then $g\in G_c(k(x))$ for each $x\in\Spec(R)$.
Further, for each cocharacter $\mu$ such that $\mu\not\leqslant\lambda$ there is a fundamental weight $\omega_i$ such that $\langle\omega_i,\mu\rangle>\langle\omega_i,\la\rangle$.
Hence $s^\mu\notin X(\lambda,\omega_i)$. Thus
\begin{equation}
X(\lambda)\times_{G_K}\Spec(R)_{\leq\mu}=\emptyset
,\quad
 \Spec(R)_{\leq\la}=X(\lambda)\times_{G_K}\Spec(R).
\label{plu-cart}
\end{equation}
\epf

By Lemma \ref{Schub1}, there is a  reduced closed ind-subscheme  $G_{\leq\la}\subset G_K$ such that
for any field $k$ we have
\begin{align}\label{gleqla}
G_{\leq\la}(k)=\{g\in G_K(k)\,;\,\inv_k(g)\leq\la\}.
\end{align}
Let $G_\la\subset G_{\leq\la}$ be the open subset given by
\[G_{\la}= G_{\leq\la}\setminus\bigcup_{\mu<\la} G_{\leq\mu}.\]
We formulate the following proposition over fields. To have it over $\bZ$, we need finer information.

\bprop\label{dg-plus}
Let $G$ be a minimal KM group over an algebraically closed field $k$.
The subfunctor $G_c\subset G_K$ is a closed semigroup ind-subscheme.
\eprop

\bpf
For any closed subscheme $Z\subset G_K$, we have
\[Z\times_{G_K}G_c=\bigcup_{\la\in S}Z\times_{G_K} G_{\leq\la}\]
for some finite $S\subset\Lambda_+$.
Thus $Z\times_{G_K}G_c$ is closed in $Z$ by Lemma \ref{Schub1}.
It remains to prove that $G_c$ is a semigroup ind-scheme.
For $\la,\mu\in \LLa_+$ the multiplication gives a map
$G_{\leq\la}\times G_{\leq\mu}\ra G_K.$
We must prove that it factors through $G_c$.
As we considered the reduced structure and work over an algebraically closed field, a product of reduced is reduced.
Thus proving that the map factors is a statement on closed points.
This statement is clear, because $G_c(k)$ is a semigroup.
\epf

\bprop\label{Cartan}
Let $G$ be a  minimal symmetrizable KM group. 
The open subset $G_{\la}$ of $G_{\leq\la}$ is dense.
\eprop

\bpf
We must prove that the closure of $G_{\la}$ contains each stratum $G_{\mu}$ for $\mu\leq\la$.
The argument adapts \cite[Prop.~2.1.5]{Zhu} in the KM case.
Let $\mu<\la$ with $\lambda,\mu\in\LLa_+$. 
Since $G$ is symmetrizable, by Lemma \ref{roots}, there exists a positive coroot $\al$ such that $\mu\leq\la-\al<\la$.
It suffices to prove that $G_{\lambda-\alpha}\subset\overline{G_{\lambda}}$. 
To do this, we construct a curve 
$C_{\la,\al}\cong\bP^1$ in $\Gr$ with 
$\infty\in \Gr_{\la-\alpha}$ and $\ab^1\subset \Gr_{\la}$.
First, for any integer $m$ let 
$$\theta_m=\begin{pmatrix}
s^{m}&0\\0&1
\end{pmatrix}\in PGL_2(K).$$ 
Set
$K_{m}=\Ad_{\theta_m}(SL_2(\cO))$
and
$H_{m}=\Ad_{\theta_m}(I^{-})$, with $I^{-}\subset SL_2(\cO^-)$ the opposite Iwahori subgroup.
We have $K_m/H_m\cong\mathbb{P}^{1}$.
Let $i_\al:SL_2(K)\ra G_K$ be the  morphism associated with $\al$.
Set $m=\langle \la,\Lal\rangle-1$. 
We consider the orbit
$C_{\la,\al}=i_{\al}(K_m)\cdot [s^{\la}].$
Since $i_\alpha(H_m)\subset \Ad_{s^{\la}}(G_\cO)$, we have $C_{\la,\al}\cong\bP^{1}.$
Since $m\geqslant 0$, we have
\begin{align*}
G_\cO\cap i_{\al}(K_m)&\cong
\begin{pmatrix}
\cO^\times&s^m\cO\\\cO&\cO^\times
\end{pmatrix}\\
\Ad_{s^{\la}}(G_\cO)\cap G_\cO\cap i_{\al}(K_m)&\cong
\begin{pmatrix}
\cO^\times&s^{m+1}\cO\\\cO&\cO^\times
\end{pmatrix}
\end{align*}
Hence, there is an isomorphism
\[(G_\cO\cap i_{\al}(K_m))\cdot [s^{\la}]\cong\ab^{1}.\]
We set
$$\sigma_{m}=
\begin{pmatrix}0&-s^{m}\\s^{-m}&0\end{pmatrix}=
\Ad_{\theta_m}
\begin{pmatrix}
0&-1\\1&0
\end{pmatrix}
\in K_m.$$
We have 
$$s^{-\la+\al}\cdot i_{\al}(\sigma_m)\cdot[s^\la]=
i_\alpha(\sigma_0)
\in G_\cO.$$ 
We deduce that
$i_{\al}(\sigma_m)\cdot [s^{\la}]= \{\infty\}$
under the isomorphism $C_{\la,\al}\cong\bP^{1}$, 
and that $i_{\al}(\sigma_m)\cdot [s^{\la}]\in\Gr_{\lambda-\alpha}$.
\epf

\subsection{The Schubert cells}
Let $G$ be a simply connected minimal KM group over $\bZ$, with the associated formal group $\hG$ over $\bZ$.
We define the Schubert cell $\Gr_{\leqslant\la}$ and the open Schubert cell $\Gr_{\la}$ to be the substacks
of $\Gr_G$ given by
\begin{align}\label{Grla}\Gr_{\leqslant\la}= [G_{\leq\la}/G_\cO],\quad \Gr_{\la}= [G_{\la}/G_\cO].\end{align}
The subfunctor $\Gr_\la\subset\Gr_{G}$ is locally closed because $G_\la\subset G_K$ is locally closed.
Let $i_\la$ denote the immersion
\begin{align}\label{ila}i_\la:\Gr_\la\to\Gr_G.\end{align}
Note that the set of points $\Gr_{\la}(k)$ is a $G(\cO)$-orbit for any field $k$.

\subsubsection{The formal group case}
We first consider the formal group case, where we have better representability statements.
Recall that $\hU_{\pi^-}$ is the kernel of the evaluation $\hU_{\cO^-}\ra\hU$ at $\infty$.
Let $\hG_\pi$ and $\hI$ be the kernel of the evaluation $\hG_\cO\ra \hG$ at $0$ 
and the inverse image of $\hB$.
For each cocharacter $\lambda\in\LLa$ we set
\begin{align}\label{VVJ}
\hU_\la^-=\hU_\cO\cap \Ad_{s^{\la}}(\hU_{\pi^-})
,\quad
\hU_\la=\hU_\cO\cap \Ad_{s^{\la}}(\hU_\cO)
,\quad
\hat J_\la=\hI\cap \Ad_{s^\la}(\hI)\end{align}

\blem\label{hU-orb}\hfill
\begin{enumerate}[label=$\mathrm{(\alph*)}$,leftmargin=8mm]
\item
The multiplication yields an  isomorphism of $\bZ$-ind-schemes
$\hU_\la^-\times \hU_\la\cong\hU_\cO.$
\item
The map $\hU_\cO\ra\Gr_{\hG}$ given by $g\ra g\cdot [s^{\la}]$ yields an immersion
$[\hU_\cO/\hU_\la] \cong \hU_\la^-\ra\Gr_{\hG}.$
\eenum
 \elem

\bpf
We have the scheme isomorphism 
$\hU\cong\Spec(\Sym(\kn^\vee))$.
By \cite[\S3.2]{Rou} it yields compatible group ind-scheme isomorphisms
\begin{align}
\begin{split}\label{hu}
\xymatrix{
\Ad_{s^{\la}}(\hU_\cO)\ar@{^{(}->}[r]\ar@{=}[d]&\hU_K\ar@{=}[d]&\ar@{_{(}->}[l]\Ad_{s^{\la}}(\hU_{\pi^-})\ar@{=}[d]\\
\prod_\al s^{\langle\la,\al\rangle}(\kg_\al)_\cO\ar@{^{(}->}[r]&\prod_\al(\kg_\al)_K&
\ar@{_{(}->}[l]\prod_\al s^{\langle\la,\al\rangle-1}(\kg_\al)_{\cO^-}
}
\end{split}
\end{align}
The claim (a) follows. 
To prove claim \eqref{hU-orb}, note that by Proposition  \ref{ST-comp1} the obvious map
$\Gr_{\hU}\hra\Gr_{\hG}$ is an fp immersion and $\Gr_{\hU}\cong \hU_{\pi^-}$ whicn induces that $S_{\la}\cong s^{\la}\hU_{\pi^-}s^{-\la}$.
Further, by \eqref{hu} the obvious inclusion
$\hU_\la^-\ra\hU_{\pi^-}$
is a closed immersion.
\epf

\blem\label{I-orb}\hfill
\begin{enumerate}[label=$\mathrm{(\alph*)}$,leftmargin=8mm]
\item
The inclusion $\hU_\cO\subset\hI$ yields an isomorphism of stacks
$[\hU_\cO/\hU_\la]\cong[\hI/\hat J_\la].$
\item
The map $\hI\ra\Gr_{\hG}$ given by $g\mapsto g\cdot[s^{\la}]$ yields an immersion 
$[\hI/\hat J_\la]\hra\Gr_{\hG}$.
\eenum
\elem

\bpf
We must prove that the monomorphism
$[\hU_\cO/\hU_\la] \to [\hI/\hat J_\la]$
is surjective.
We will use $\hat\cO$ instead of $\cO$, because
if $W\hra X$ is a qc open immersion of ind-schemes, 
then we have an isomorphism
\begin{align}\label{key}W_{\hat\cO}\cong X_{\hat\cO}\times_{X}W\end{align}
(the proof reduces to the scheme case, which is well-known).
We define $\hat{\hat{I}}\subset\hG_{\hat\cO}$ and
$U_{\widehat\pi}^-\subset U_{\hat\cO}^-$
as above.
Since $\hOm=U^{-}\hB$ is open in $\hG$, from \eqref{key} we deduce that $\hat{\hat{I}}=U_{\widehat\pi}^-\hU_{\hat\cO} T_{\hat\cO}$. As $U_{\widehat\pi}^-$ fixes $s^{\la}$,  we have 
$$[\hU_{\hat\cO}/(\hU_{\hat\cO}\cap \Ad_{s^{\la}}(\hU_{\hat\cO}))] \cong[\hat\hI/(\hat\hI\cap \Ad_{s^\la}(\hat\hI))]$$
 In addition, we have a monomorphism 
 $$[\hI/\hat J_\la]\to[\hat\hI/(\hat\hI\cap \Ad_{s^\la}(\hat\hI))]$$
and a similar computation as in Lemma \ref{hU-orb} gives 
\begin{align*}
[\hU_{\hat\cO}/(\hU_{\hat\cO}\cap \Ad_{s^{\la}}(\hU_{\hat\cO}))]
\cong\hU_{\hat\cO}\cap \Ad_{s^{\la}}(\hU_{\pi^-})
=\hU_\la^-
\cong[\hU_{\cO}/\hU_\la].
\end{align*}
The lemma follows.
\epf

By \S\ref{opp-J}, the parabolic $\hat P_\lambda$ and the opposite parabolic $P_\lambda^-$
represent the functors
$$R\mapsto\{g\in \hG(R)\,;\,s^\la g s^{-\la}\in \hG_\cO(R)\}
,\quad
R\mapsto\{g\in \hG(R)\,;\,s^{-\la} g s^{\la}\in \hG_\cO(R)\}$$
We have $P^{-}_{\la}=\hG\cap \hat K_\la$ with
$\hat K_\la=\hG_\cO\cap \Ad_{s^{\la}}(\hG_\cO)$.
We consider the quotient stacks
$$\widehat\Gr_{\la}=[\hG_\cO/\hat K_\la]
,\quad
\widehat{\underline\Gr}_\la=[\hG/P_\la^-].$$
The evaluation $\hG_\cO\to\hG$ at 0 yields a morphism of stacks
\begin{align}\label{phi}
\phi:\widehat\Gr_{\la}\ra \widehat{\underline\Gr}_\la.
\end{align}

\bprop\label{sch-form}\hfill
\begin{enumerate}[label=$\mathrm{(\alph*)}$,leftmargin=8mm]
\item
The stack $\widehat{\underline\Gr}_\la$ is representable by a scheme.
\item
We have a Cartesian diagram
$$\xymatrix{[\hI/\hat J_\la]\ar[d]\ar[r]&\widehat\Gr_{\la}\ar[d]^-\phi\\ 
\hU_{\la}\ar[r]&\hG/P_{\la}^{-}}$$ 
\item
We have $[\hI/\hat J_\la]\cong\widehat\Gr_{\la}\cap\hS_{\la}$.
It is a qc open subset of $\widehat\Gr_\lambda$. 
We have $\widehat\Gr_\la=\bigcup_{w\in W/W_\la}w\cdot[\hI/\hat J_\la].$
\item
The stack $\widehat\Gr_{\la}$ is representable by a weak ind-scheme. It is locally closed in $\Gr_{\hG}$. 
\eenum
\eprop

\bpf
Part (a) is obvious because $\hG/P^{-}_{\la}$ is a partial thick flag manifold.
To prove (b) we consider the open cell $\hU_\la$ in $\hG/P_{\la}^{-}$.
By (a) we have the map $\phi:\widehat\Gr_\la\ra \hG/P_{\la}^{-}$ such that
$\phi^{-1}(1)=\hG_\pi\cdot[s^{\la}]$.
Part (b) follows because the evaluation at zero sends surjectively $[\hI/\hat J_\la]$ to $\hU_\la$ (seen as the $\hB$-orbit of $s^{\la}$. 
To prove (c), note that Lemma \ref{I-orb} yields the inclusion
$$[\hI/\hat J_\la]\subset\widehat\Gr_{\la}\cap \hS_{\la}.$$
Further, the intersection $\widehat\Gr_{\la}\cap \hS_{\la}$ is contained in the attractor for the $\bG_m$-action on $\widehat\Gr_{\la}$
given by conjugation by $\la$, while, by \S\ref{opp-J}, the group $\hU_\la$ is the strict attractor of the $\bG_m$-action on $\hG/P_{\la}^{-}$
given by conjugation by $\la$. 
Thus the map $\phi$
takes $\widehat\Gr_{\la}\cap \hS_{\la}$ into $\hU_\la$.
Thus the reverse inclusion follows from (b).
The last claims in (c) follows form the open cover
\[\hG/P_{\la}^{-}=\bigcup_{w\in W/W_\la}w\cdot\hU_\la.\]
The representability statement in part (d) follows from (c).
By Lemma \ref{I-orb}, there is a locally closed embedding $[\hI/\hat J_\la]\ra\Gr_{\hG}$.
Thus the map 
$\widehat\Gr_\la\ra\Gr_{\hG}$
is also locally closed by \cite[Tag.~0FCZ]{Sta}, proving (d).
\epf

The $W$-translates of the open subset $\widehat\Gr_{\la}\cap \hat S_{\la}$
yield the open cover
\begin{align}\label{hGrcirc}\widehat\Gr_\lambda=\bigcup_w\widehat\Gr_{w\la}^\circ.\end{align}
Similarly, let $\Gr_{w\la}^\circ\subset\Gr_\la$ be the 
$w$-translate of the intersection $\Gr_{\la}\cap S_{\la}$ 
which yield the open cover
\begin{align}\label{Grcirc}\Gr_\lambda=\bigcup_w\Gr_{w\la}^\circ.\end{align}

\subsubsection{The minimal group case}
\bprop\label{g-cart}\hfill
\begin{enumerate}[label=$\mathrm{(\alph*)}$,leftmargin=8mm]

\item
The obvious map yields a bijective closed immersion
$\widehat\Gr_{\la}\times_{\Gr_{\hG}}\Gr_{G}\ra\Gr_{\la}.$

\item
The stack $\Gr_{\la}$ is locally closed in $\Gr_G$.

\item
The obvious map $\Gr_{\la}^\circ\ra\Gr_{\la}$ is a qc open immersion such that
$\Gr_{\la}=\bigcup_{w\in W}\Gr_{w\la}^\circ$.
\eenum
\eprop

\bpf
For (a) and (b), working étale locally and using \eqref{Grla} and the Plücker description of $G_{\leq\la}$ given by \eqref{S0}, we get an inclusion
\[\widehat\Gr_{\la}\times_{\Gr_{\hG}}\Gr_{G}\subset\Gr_{\la}\]
which is bijective on closed points.
This map is also locally closed by base change and Proposition \ref{sch-form}(d).

The map $\Gr_\la\to\Gr_G$ is a locally closed immersion by base change, (a) and Proposition \ref{sch-form}(d).
Now we prove (c). By (a) and because the open cells of $\hG/P^{-}_{\la}$ and $G/P_{\la}^{-}$ match, we have
\begin{equation}
\widehat\Gr^\circ_{\la}\times_{\Gr_{\hG}}\Gr_{G}=\Gr_{\la}^\circ.
\label{bla-for1}
\end{equation}
The map $\widehat\Gr_{\la}^\circ\ra\widehat\Gr_{\la}$ is qc and open by Proposition \ref{sch-form}(c).
Hence, the map $\Gr_{\la}^\circ\ra\Gr_{\la}$ is also qc and open by base change.
The last claim is proved similarly.
\epf

We abbreviate 
\begin{align}\label{KV}
K_\la=G_{\cO}\cap\Ad_{s^{\la}}(G_\cO)
,\quad
U_\la=U_\cO\cap\Ad_{s^{\la}}(U_\cO).
\end{align}

\bprop\label{unif}\hfill
\begin{enumerate}[label=$\mathrm{(\alph*)}$,leftmargin=8mm]
\item
The map $G_{\cO}\ra\Gr_{G}$, $g\mapsto g\cdot [s^{\la}]$ yields an isomorphism
$[G_{\cO}/K_\la]_\red\cong(\Gr_\la)_\red.$
\item
The isomorphism in $\mathrm{(a)}$ factors through an isomorphism  
$[U_\cO/U_\la]_\red\cong(\Gr^{\circ}_\la)_\red.$
\end{enumerate}
\eprop

\bpf
We have monomorphisms
$[G_{\cO}/K_\la]\subset\Gr_{\la}$
and
$[U_\cO/U_\la]\subset\Gr^{\circ}_{\la}.$
We have to prove that functors are the same on reduced algebras. The claim is étale local. 
By Proposition \ref{g-cart}, for any ring $R$, 
we can assume that the map $G_{\cO}(R)\ra\Gr_{G}(R)$, $g\mapsto g\cdot [s^{\la}]$ lands in some open set 
$\Gr_{w\la}^{\circ}(R)$ Zariski locally on $R$.
So we are reduced to prove (b).
Both functors in (b) commute with filtered colimits, being lft prestacks as defined in \S\ref{ss-fonct}, as quotients of ind-ft group ind-schemes.
Indeed, the source is a quotient in the category of ind-schemes of ind-ft, 
and the target is fp locally closed in such a quotient by Proposition \ref{g-cart}.
So it is enough to compare the functors on reduced local strictly henselian  rings. 
Let $R$ be such a ring and $g\in\Gr_{\la}^\circ(R)$. 
Since $R$ is strictly local henselian, we can assume that $g$ admits a lift $\ti{g}\in G_K(R)$ and that 
\[\ti{g}=us^{\la}k,\quad u\in U_K(R), k\in G_\cO(R).\]
As $g\in \Gr_{\la}(R)$, we can also assume that $k=1$.
We must prove that $u\in U_\cO(R)$. 
Since $U_{\cO}\ra U_{K}$ is closed and $R$ is reduced, by \cite[Tag.~056B]{Sta}, 
the map $u:\Spec(R)\ra U_{K}$ factors through $U_{\cO}$ if it does on each closed point $x\in\Spec(R)$.
So we can assume  that $R=k$ is a field, that $u\in U(K)$, and it is enough to check that $u\in \hU(\co)$. 
By Proposition \ref{sch-form} and Lemma \ref{I-orb}, we have 
\[us^{\la}=v s^{\la}h,\quad v\in\hU(\co), h\in\hG(\co).\]
We deduce that
$$h=s^{-\la}v^{-1}us^{\la}\in \hG(\cO)\cap\hU(K)=\hU(\cO).$$
Since $\la$ is dominant, we have $\Ad_{s^{\la}}(\hU_{\co})\subset \hU_{\co}$.
We deduce that
$u=vs^{\la}hs^{-\la}$ lies in $\hU(\cO).$
\epf

\section{The Mirkovic-Vilonen cycles}

\subsection{The space of maps}
\subsubsection{The space of maps}
Consider a scheme $S$, a proper fppf morphism $X\ra S$ and an $S$-algebraic stack $\cY$.
The functor $\Map_{S}(X,\cY)$ 
on the category of $S$-schemes is such that
\[\Map_{S}(X,\cY)(T)=\Hom_{T}(X_{T},\cY_{T})\]
Here $X_T$, $\cY_T$ are the corresponding base change and morphisms 
$X_T\ra\cY_T$ are morphisms of algebraic stacks over $T$.

\blem\label{mor1}Assume that $\cY=Y$ is a separated $S$-scheme.\hfill
\begin{enumerate}[label=$\mathrm{(\alph*)}$,leftmargin=8mm]
\item
The functor $\Map_{S}(X,Y)$ is representable by an $S$-algebraic space.
\item
Assume that $X$ is projective and $Y$ is a union of qc open subsets that admit a presentation 
as a filtered limit of fp quasi-projective schemes over $S$ with affine transition maps.
Then  $\Map_{S}(X,Y)$ is representable by an $S$-scheme.
\eenum
\elem

\bpf
The assertion is local on $S$, so we can assume that $S$ is affine.
Assume first that $Y$ is qc.
By \cite[Tag.~0GS1]{Sta}, we have $Y\cong\lim Y_a$ with $Y_a\to S$ fp and with affine transition maps,
and by \cite[Tag.~01ZQ]{Sta} we can assume $Y_a\to S$ to be separated.
Then, by  \cite[\S0DPL]{Sta} the functor $\Map_{S}(X,Y_a)$ is representable by a lfp algebraic space over $S$.
For the general case, write $Y=\bigcup U$ as the union of qc open subsets.
For $U\subset U'$
the space of maps $\Map_{S}(X,U)$ is open in  $\Map_{S}(X,U')$.
Hence $\Map_{S}(X,Y)$ is an increasing union of open subsets.
Thus it is representable by an $S$-algebraic space, proving (a).

Assume now that $X$ is projective and $Y_a$ is quasi-projective fp over  $S$.
By noetherian approximation
we can assume that $S$ is noetherian. Then, by \cite[Thm.~5.23]{FGA}, the functor $\Map_{S}(X,Y_a)$ 
is representable by a lfp $S$-scheme.
Thus, we have $\Map_{S}(X,Y)\cong \lim\Map_{S}(X,Y_a)$, and by
\cite[Tag.~05Y6]{Sta} the transition maps are affine.
For the general case, write $Y=\bigcup U$ as the union of its qc open subsets 
that are limit of quasi-projective fp schemes over $S$.
Then part (b) follows as above.
\epf

\bexa\label{P-proj}\label{ex:Pproj}
An example of scheme which is an union of qc pro-quasi-projective
open subsets is $\bP(V)$ for $V$ a pro-finite dimensional vector space. 
Indeed, write $V\cong\lim V_a$. Then $\bP(V)$ is the union of the open subsets 
$\bP(V)\,\setminus\,\bP(\Ker(V\ra V_a))$,
and each of them is the limit of 
$\bP(V_b)\,\setminus\,\bP(\Ker(V_b\ra V_a))$ for $b\geq a$.
Consequently, any fp closed subset in $\bP(V)$ also satisfies this property.
\eexa

Let $\cY$ be a lfp algebraic stack over $S$ with quasi-affine diagonal.
By \cite[Tag.~0CMG]{Sta}, the diagonal of $\cY$ is  fp.
Thus $\cY$ has affine stabilizers, 
as a quasi-affine algebraic group over a field is affine by \cite[Exp.~VI, Prop.~11.11]{SGA3}.
By \cite[Thm.~1.2]{HaR} the functor
$\Map_{S}(X,\cY)$ is representable by a lfp algebraic stack over $S$  with affine diagonal.
Fix an open substack $\bullet\subset\cY$. 
Let 
\begin{align}\label{Hom-bullet}
\Map_{S,\bullet}(X,\cY)\subset\Map_{S}(X,\cY)
\end{align}
be the sub-functor 
which consists of the maps that generically land in $\bullet$. 
We consider the evaluation map
$$\text{can}:X\times\Map_{S}(X,\cY)\ra \cY,\,
(x,f)\mapsto f(x)$$
and the projection 
$$p:X\times\Map_{S}(X,\cY)\ra\Map_{S}(X,\cY).$$ 
The map $p$ is fppf. Thus it is open. We have
\begin{equation}
\Map_{S,\bullet}(X,\cY)=p(\text{can}^{-1}(\bullet)) 
\label{eqcU}
\end{equation}
Hence the map \eqref{Hom-bullet} is open.
The subfunctor 
$\Map_{S}(X,\bullet)$
is also open.
Indeed, for any $S$-scheme $T$ and any map $\sigma_T:X_T\ra\cY_T$, 
the locus where $\sigma_T$ maps to $\bullet$ is $T\,\setminus\, p(X_{T}\,\setminus\,\sigma_T^{-1}(\bullet))$,
which is open in $T$ because the map $p:X_T\ra T$ is proper.
Note that the openess holds without assuming that $X\ra S$ is fppf.

\subsubsection{Maps to pointy stacks}
Let $S$ be a scheme.
Let $\cY$ be a lfp Artin $S$-stack and $\cD\subset\cY$ a closed substack with complement 
$\bullet=\cY\,\setminus\,\cD$. Assume that $\bullet$ 
is isomorphic to $S$ and is dense open in $\cY$.
In \cite{Dr}, such a stack is called a pointy stack.
Consider  a diagram
\begin{equation}
\begin{split}
\xymatrix{X\ar[r]^{j}\ar[dr]_{\pi^{0}}&\ov{X}\ar[d]_-{\pi}&H\ar[dl]\ar[l]_{i}\\&S}
\label{X-diag}
\end{split}
\end{equation}
where $i$ is closed with complement $j$, and $\pi$ is proper fppf.
Let $\Map_{S,\bullet}(X,\cY)$ be the functor that assigns to any $S$-scheme $T$ the groupoid of maps 
$f:X_{T}\ra\cY_T$ that generically land in $\bullet$ and such that $f^{-1}(\cD_T)$ is proper over $T$. 
By \eqref{eqcU}, the locus $U_T$ where $f$ lands in $\bullet$ surjects on $T$.

\blem\label{sep1}
Assume that the diagonal of $\cY$ is separated and the map $\pi$ has geometrically integral fibers.
For any $S$-scheme $T$ the groupoid $\Map_{S,\bullet}(X,\cY)(T)$ is a set.
\elem

\bpf
Write $\cY$ as a quotient of  a smooth $S$-groupoid in algebraic spaces $[U/R]$, see \cite[Tag.~04T5]{Sta}.
Assume that the diagonal of $\cY$ is separated and the map $\pi$ has geometrically integral fibers.
The map $R\ra U\times_{S}U$ is also separated, see \cite[Prop.~4.3.2]{LMB}.
Further, the stabilizer groupoid $G=U\times_{U\times_{S} U}R\ra U$ is separated by base change. 
Let $\cI_{\cY}$ be the inertia stack. 
By \cite[Tag.~06PR]{Sta}, we have an cartesian diagram
$$\xymatrix{G\ar[d]\ar[r]&U\ar[d]\\\cI_{\cY}\ar[r]&\cY}$$
Thus, by descent, the map $\cI_{\cY}\ra\cY$ is separated.
Fix $\sigma:X_T\ra\cY$.
Set $I_{X_{T}}=\sigma^{*}\cI_{\cY}$.
We must prove that $$I_{X_{T}}\cong X_T.$$
Consider a section $\eps:X_T\to I_{X_{T}}$ given by $\id_{x_{T}}$ for each $x_{T}\in\cY(X_T)$.
Let $T'\ra T$ and $y\in(I_{X_T})(T')$.
By assumption, over the open subset $U_{T'}=\sigma^{-1}(\bullet)_{T'}$ of $X_{T'}$, the morphisms $y$ and $\eps$ agree. 
Since the morphism $X_T\ra T$ is fppf with geometrically integral fibers, and the open set $U_{T'}$ is fiberwise non-empty, 
we deduce that $U_{T'}$ is schematically dense fiberwise.
Thus by \cite[Prop.~11.10.10]{EGAIV3}, the open subset $U_{T'}$ is schematically dense in $X_{T'}$.
Then claim follows, because the map $I_{X_T'}\ra X_{T'}$ is separated.
\epf

\blem\label{ext1}
Let $S$ be a scheme. 
Let $W$ be an $S$-scheme, and $W_\circ$ an open subscheme of $W$.
Let $\cY$ be a pointy $S$-stack.
There is a bijection between the set of $S$-maps $\sigma_\circ:W_\circ\ra\cY$ such that 
$(\sigma_\circ)^{-1}(\cD)\subset W_\circ$ is closed, 
and the set of $S$-maps $\sigma:W\ra\cY$ such that $\sigma^{-1}(\cD)\subset W_\circ$.
\elem

\bpf
Recall that $S\cong\bullet=\cY\setminus\cD$.
Given $\sigma_\circ$, the scheme $W$ admits an open cover given by $W_\circ$ and $W\setminus(\sigma_\circ)^{-1}(\cD)$.
Let $\sigma\vert_{ W_\circ}=\sigma_\circ$ and let $\sigma\vert_{ W\setminus(\sigma_\circ)^{-1}(\cD)}$ be
 the structural map to $S=\bullet$. This defines the map $\sigma$. The converse is obvious.
\epf

The following conjecture is made by Drinfeld \cite[Conj. 4.2.3]{Dr}.
Note that Drinfeld only considers the case $S=\Spec(k)$ for $k$ a field, and $C$ a smooth curve.
So the assumption on the relative compactification is automatic.

\bconj
With the assumptions above, assume in addition that $\cY$ has a separated diagonal, 
then $\Map_{S,\bullet}(C,\cY)$ is representable by a lfp $S$-algebraic space.
\econj

We prove the following weaker statement.

\bprop\label{Dr-rep}
Assume that the diagonal of $\cY$ is quasi-affine and we are in the situation of \eqref{X-diag}.
The stack $\Map_{S,\bullet}(X,\cY)$ is representable by a lfp $S$-algebraic space.
\eprop

\bpf
The stack $\Map_{S,\bullet}(\ov{X},\cY)$ is representable by a lfp algebraic stack over $S$ by \cite[Thm.~1.2]{HaR}.
By \cite[04SZ]{Sta} and Lemma \ref{sep1}, 
we deduce that it is representable by an $S$-algebraic lfp space.
By Lemma \ref{ext1}, the functor 
$$\Map_{S,\bullet}(X,\cY)\subset\Map_{S,\bullet}(\ov{X},\cY)$$
consists of 
maps $\sigma_1:\ov{X}\ra\cY$ such that $\sigma_1^{-1}(\cD)\subset X$.
Restricting $\sigma_1$ to $H$ yields a map of functors:
\[\Map_{S,\bullet}(\ov{X},\cY)\ra\ \Map_{S}(H,\cY).\]
Thus, we have the following isomorphism
\[\Map_{S,\bullet}(X,\cY)\cong \Map_{S,\bullet}(\ov{X},\cY)\times_{\Map_{S}(H,\cY)} \Map_{S}(H,\bullet)\]
The claim follows because  $\Map_{S}(H,\bullet)$ is open in $\Map_{S}(H,\cY)$.
Note that here $H\ra S$ is only proper but as mentioned after \eqref{eqcU}, openess does not need the fppf part.
\epf

\subsection{Affine Zastavas}
In this section we work over the ring $k=\bZ$, because the next section will involve a counting argument over finite fields. 
We prove that the Zastava are representable by finite type schemes in Proposition \ref{Z-fin}.
So far, it was only proved that they have a finite number of points over any finite field.
\subsubsection{Definition of the Zastavas}
We define the Birkhoff stacks to be the double quotients 
$$\Bk_G=[B^{-}\backslash G/B]=[B^{-}\backslash \hG/\hB]
,\quad
\wbk_G=[U^{-}\setminus\hG/\hB]$$

\bprop\label{fond-stack}
The $\infty$-stack $\Bk_G$ is representable by a smooth Artin stack over $\bZ$ with affine diagonal. 
\eprop

\bpf
The proof is taken from \cite[Lem.~6]{Fal}.
Recall the open cell $\hOm/\hB\subset \hG/\hB$ which is isomorphic to $U^-$.
For any finite set $F\subset W$ we form the following open subset 
\[\Om_{F}=\bigcup_{w\in F} w\hOm/\hB.\]
Set 
$$U_{w}^{-}=U^{-}\cap w(U^{-})
,\quad
U^{-,w}=U^{-}\cap w(\hU).$$
Choose an integer $n$ such that for each $w\in F$ we have 
\[U_{w}^{-}\cong U_{(n)}^-\times W_n\]
with $W_n$ being the split unipotent group generated by negative roots that occur in $U_{w}^{-}$ but not in $U_{(n)}^-$.
Here $U_{(n)}^-\subset U^-$ is defined as in Proposition \ref{Un}.
By Proposition \ref{uw-comp}, the set $\Omega_{F}$ has a finite cover by open subsets isomorphic to 
$U_{(n)}^-\times\ab^{m}$ for some integers $m$, $n$.
We deduce that $[U_{(n)}^-\backslash\Omega_{F}]$ is representable by a smooth ft scheme. 
Now, since $U_{(n)}^-\subset B^{-}$ has a finite dimensional quotient, we have
\[\Bk_G=\bigcup_{F\subset W}[B^{-}\backslash\Omega_{F}]\]
is an increasing union of qc opens that are representable by smooth ft Artin stacks. 
Set $$\cU_F=[B^{-}\backslash\Omega_{F}].$$
For the last claim, since $\Delta_{\Bk_G}=\bigcup\Delta_{\cU_F}$, it is enough to prove 
that $\cU_F$ has an affine diagonal. 
By Lemma \ref{quotH}, as $\Omega_F$ is ind-separated, we obtain that $\cU_{F}$ has ind-affine diagonal.
On the other hand by considering
\[[U_{(n)}^{-}\backslash\Omega_{F}]\ra\cU_{F}\]
The source is a smooth qc scheme.
Thus it is quasi-separated.
Hence $\Delta_{\cU_{F}}$ is qc schematic by Lemma \ref{quotH2}. 
The diagonal $\Delta_{\cU_F}$ being qc schematic and ind-affine, it is affine.
\epf


By Proposition \ref{fond-stack}, the stack $\wbk_G$ is fp-smooth over $\bZ$.
The substack
$$\bullet=[U^{-}\backslash\hOm/\hB]\subset \wbk_G$$ 
is qc, dense and open.
By Proposition \ref{Dr-rep}, the functor $\Map_\bullet(\bP^{1},\wbk_G)$ 
is representable by a lft algebraic space.
The evaluation at $0$ yields a morphism
\[\ev_{0}:\Map_\bullet(\bP^{1},\wbk_G)\ra \wbk_G.\]
We define the Zastava space to be the fiber 
\begin{align}\label{Zast}\cZ=\{f\in\Map_\bullet(\bP^{1},\wbk_G)\,;\,f(0)=\bullet\}\end{align}
The Zastava space is an open subfunctor of $\Map_\bullet(\bP^{1},\wbk_G)$.
Hence it is representable by a $\bZ$-lft algebraic space.
By Theorem \ref{fs-Kash} and Example \ref{P-proj},
the Kashiwara flag scheme $B^{-}\backslash \hG$ satisfies the 
assumptions of Lemma \ref{mor1}.
Hence the functor 
$$\Map_\bullet(\bP^{1}, B^{-}\backslash \hG),\quad\bullet=B^-\backslash\hOm$$ 
is representable by a $\bZ$-scheme as well as the subfunctor of based maps
\begin{align}\label{bmap}
\Map_b(\bP^{1},B^{-}\backslash \hG)=\{f\in\Map_\bullet(\bP^{1}, B^{-}\backslash \hG)\,;\,f(0)=B^-\backslash B^-\}.
\end{align}

\bcor\label{Zfond}\hfill
\begin{enumerate}[label=$\mathrm{(\alph*)}$,leftmargin=8mm]
\item
The obvious map yields an isomorphism
$\Map_b(\bP^{1}, B^{-}\backslash \hG)\cong \cZ.$
\item
The functor $\cZ$ is representable by a lft scheme over $\bZ$.
\eenum
\ecor

\bpf
The following morphism is an $\hU$-torsor
\[B^{-}\backslash \hG\ra [B^{-}\backslash \hG/\hU]=\wbk_G\]
Since $\hU$ is split pro-unipotent, the proof of Proposition \ref{ST-comp1} yields
$H^0(\bP^{1}_k,\hU)=\hU(k)$ and $H^{1}(\bP^{1}_{k},\hU)=0$  for any ring $k$.
Thus the obvious morphism $\Map_b(\bP^{1}, B^{-}\backslash \hG)\to\cZ$ is surjective on $k$-points.
Since we considered based maps, it is an isomorphism.
Recall that $\Map_b(\bP^{1}, B^{-}\backslash \hG)$ is representable by a scheme and $\cZ$
by a lft algebraic space by Proposition \ref{Dr-rep}. 
Thus the functor  $\cZ$ is representable by a lft scheme.
\epf

Set 
$\ov{T}=\Spec(\bZ[\Lambda_+])$
and
\begin{align}\label{AD}
\ab_\Delta=\prod\limits_{\al\in\Delta}\ab^{1}
,\quad
\bG_\Delta=\prod\limits_{\al\in\Delta}\bG_m
\end{align} 
The scheme $\ov{T}$ is isomorphic to
$\ab_\Delta$ and contains $T$ as an open dense subset isomorphic to $\bG_\Delta$.
We set 
$$\Div_{\ov{T}}=\Map_\bullet(\bP^{1},[T\backslash\ov{T}])
,\quad
\bullet=[T\backslash T]$$
The set $\Div_{\ov{T}}$ is the set of effective Cartier divisors on $\bP^1$ colored by $\LLa_+$.
We have
$$\Div_{\ov{T}}=\bigsqcup\limits_{\la\in \LLa_+}\bP^{\la}
,\quad
\bP^{\la}=\prod_{i\in I}\bP^{\langle\omega_i,\la\rangle}.$$
The map
$\hG/\hU\ra\ov{T}$ such that
$g\mapsto(\langle v_i^\vee, g\cdot v_i\rangle)$
factors through a map 
$$\wbk_{G}\cong[B^{-}\backslash \hG/\hU]\ra[T\backslash\ov{T}]$$ 
It yields a morphism
$\cZ\ra\Div_{\ov{T}}.$
For each $\la\in \LLa_+$ we define
$$\cZ^{\la}=\cZ\times_{\Div_{\ov{T}}}\bP^{\la}.$$

\bprop\label{Zfond2}
For any cocharacter $\la\in \LLa_+$ the $\bZ$-scheme $\cZ^{\la}$ is lft, smooth and connected.
\eprop

\bpf
Once smoothness is proved, it is enough to check the remaining claims over the generic fiber, hence over $\bC$.
Thus the result follows from \cite[Prop.~2.25]{BFG}. 
In loc.~cit. it is assumed that $G$ is symmetrizable. This is not needed as soon as the necessary results on thick flag schemes 
for arbitrary KM groups are established, as explained at the beginning of \S 2 in loc.~cit. This is done in \S\ref{KFM}.
Now we prove smoothness.
The argument is the same as in \cite[Prop.~2.24]{BFG}.
We briefly recall it because loc.~cit. is over $\bC$.
Using Corollary \ref{Zfond}, let $\bar{S}\ra S$ be a square zero extension of Artinian schemes, 
and let 
$\sigma_{0}:\bP^{1}_{\bar{S}}\ra B^{-}\backslash\hG$ be a based map that we must lift.
Since $B^{-}\backslash\hG$ is formally smooth,
the map $\sigma_0$ lifts Zariski locally on $\bP^{1}_{\bar{S}}$.
The obstruction to lift $\sigma_0$ globally lies in the cohomology group
$$H^{1}(\bP^{1}_{\bar{S}}\,,\,(\sigma_0^{*}T_{B^{-}\backslash\hG})(-1)).$$
The tangent sheaf $T_{B^{-}\backslash\hG}$ is flat over $\bZ$. 
The Lie algebra $\kg$ surjects to $T_{B^{-}\backslash\hG}$ at every point.
So we have a surjective morphism of sheaves 
$$\cO_{\bP^{1}_{\bar{S}}}(-1)\otimes\kg\thra(\sigma_{0}^{*}T_{B^{-}\backslash\hG})(-1).$$
Hence, there is a surjection of the first cohomology groups.
Finally, we have 
$$H^{1}(\bP^{1}_{\bar{S}},\cO_{\bP^{1}_{\bar{S}}}(-1)\otimes\kg)=H^{1}(\bP^{1}_{\bar{S}},\cO_{\bP^{1}_{\bar{S}}}(-1))\otimes\kg=0.$$
\epf

\subsubsection{Zastavas and intersections of semi-infinite orbits}
We consider the diagonal embedding 
$$\{\infty\}\hra\bigsqcup\limits_{\la\in\LLa_+}\bP^{\la}=\Div_{\ov{T}}.$$
We define the central fibers of $\cZ$ and $\cZ^{\la}$ to be the schemes
\begin{align}\label{Z}
Z=\cZ\times_{\Div_{\ov{T}}}\{\infty\},\quad
Z^{\la}=Z\times_{\cZ}\cZ^{\la}.
\end{align}
Recall the functors $\Gr_B^\la$ and $\Gr_{\hB}^\la$ introduced in \eqref{GrBla}.
For  future use, we compare the intersection 
$S_{\mu}\cap T_{\nu}$ of the subfunctors of $\Gr_{G}$ with the intersection $S^\form_{\mu}\cap T^\form_{\nu}$ 
of the subfunctors of $\Gr_{G}^\form$ introduced in \eqref{STf}.

\bprop\label{ST-compGr}
\hfill
\begin{enumerate}[label=$\mathrm{(\alph*)}$,leftmargin=8mm]
\item
$Z_\red\cong(\Gr_{B}\times_{\Gr_{G}}\Gr_{U^{-}})_\red$.
\item
$(S_{\mu}\cap T_{\nu})_\red=(S^\form_{\mu}\cap T^\form_{\nu})_\red$.
\eenum
\eprop

\bpf 
By Corollary \ref{Zfond}, the scheme $Z$ is the moduli space of maps $\bP^{1}\ra B^{-}\backslash\hG$ 
such that $f\vert_{\ab^{1}}$ factors through $\hU$ and $f(0)=B^{-}$.
Since the scheme $B^{-}\backslash\hG$ is separated, 
the Beauville-Laszlo gluing applies to $B^{-}\backslash\hG$ 
by \cite[Cor.~4.4]{MB}. 
Hence, we can replace in the moduli problem above $\bP^{1}$ by $\Spec(\bZ\llb s\rrb)$ and use the variant of 
Corollary \ref{Zfond} for $\bZ\llb s\rrb$ to go backwards.
Also, we have
$G/B=\Spec(\bZ)\times_{\bB G}\bB B$.
Quotienting by the group $U^{-}$ we deduce that
$$\wbk_G=\bB B\times_{\bB G}\bB U^-.$$
Recall the functors $\Gr$, $\Gr'$ introduced in \S\ref{AGR}-\ref{mod-desc}.
This implies that
\begin{equation}
Z\cong (\Gr^\form_{\hB})'\times_{(\Gr^\form_{\hG})'}(\Gr^\form_{U^{-}})'
\label{form-pri}
\end{equation}
Next, we must remove the upperscript $(-)'$, the $f$'s and pass from $\hG$ to $G$.
The scheme $Z$ is locally of finite type. Thus it commutes to filtered colimits. 
Also, the fiber product $\Gr_{B}\times_{\Gr_{G}}\Gr_{U^{-}}$ is a prestack locally of finite type,
see Remark \ref{rem-lft}, thus it commutes with filtered colimits. 
Therefore, it is sufficient to check that they have the same points on reduced strictly henselian local rings.
We still have $\Gr^\form_{T}=(\Gr_{T}^\form)'$ and $\Gr^\form_{\hB}=(\Gr_{\hB}^\form)'$.
Since $\Gr_{\hB}^\form\ra(\Gr_{\hG}^\form)'$ factors through $\Gr^\form_{\hG}$, we first prove that:
\[((\Gr^\form_{U^{-}})'\times_{(\Gr_{\hG}^\form)'}\Gr^\form_{\hG})(R)=(\Gr^\form_{U^{-}})(R).\]
As $R$ is strictly henselian, it amounts to prove that a $U^{-}$-torsor $E$ over $R\llb s\rrb$ whose induced $\hG$-torsor is trivial over $R\llb s\rrb$, is trivial. It amounts to an $R\llb s\rrb$-point of $\hG/U^{-}$, that lifts by 
Proposition \ref{w-cov} and as $\Pic(R\llb s\rrb)=0$.
We thus obtain that
\[Z(R)=(\Gr^\form_{\hB}\times_{\Gr^\form_{\hG}}\Gr^\form_{U^{-}})(R).\]
To pass to polynomial loops, we use Lemma \ref{f-fp} to get that $\Gr_{\hB}=\Gr^\form_{\hB}$ and,
 as $\Gr_{\hB}\ra\Gr_{\hG}^\form$ factors through $\Gr_{\hG}$, we need to have
\begin{equation}
(\Gr_{\hG}\times_{\Gr_{\hG}^\form}\Gr_{U^{-}}^\form)(R)=\Gr_{U^{-}}(R).
\label{hG-U}
\end{equation}
Since $\Gr_{U^{-}}^\form\ra\Gr_{\hG}^\form$ is a monomorphism, by base change and Proposition \ref{ST-comp1}, 
the map
\[\iota:\Gr_{U^{-}}\ra\Gr_{\hG}\times_{\Gr_{\hG}^\form}\Gr_{U^{-}}^\form\]
is fp locally closed. In particular, to get \eqref{hG-U}, it is thus sufficient to check that the $k$-points are the same for $k$ algebraically closed, because $\iota$ will induce an isomorphism on the reduced stacks.
Then, the assertion follows from Iwasawa decomposition.
Finally, using Lemma \ref{fib1}, we obtain an element in $(\Gr_{B}\times_{\Gr_{G}}\Gr_{U^{-}})(R)$ as wished.
For the second assertion, by multiplying by $s^{\nu}$, we reduce to the case $\nu=0$.
Using \eqref{form-pri}, we then have a factorization
\[S_{\mu}\cap T_{0}\ra S_{\mu}^{\form}\cap T_{0}^{\form}\ra Z^{\mu}=(\Gr^{\mu,\form}_{\hB})'\times_{(\Gr^\form_{\hG})'}(\Gr^\form_{U^{-}})'\]
and the previous assertion with Lemma \ref{Gr-ST} 
gives that the composite is an isomorphism on the reduced stacks and as all the maps are monomorphisms, this concludes.
\epf
We deduce the following which strengthens Proposition \ref{ST-comp2} and
\cite[Thm.~1.9]{BKGP}, \cite[Thm.~5.6]{Heb2}.

\bprop\label{Z-fin}
For each $\mu,\nu\in\LLa$ the intersection $(S_\mu\cap T_{\nu})_\red$ is a ft scheme over $\bZ$.

\eprop
\bpf
If $S_{\mu}\cap T_{\nu}\neq\emptyset$ then $\mu\geq\nu$ by Proposition \ref{ST-comp2}.
The multiplication by $s^{\nu}$ gives an isomorphism
$$S_{\mu-\nu}\cap T_{0}\cong S_{\mu}\cap T_{\nu}.$$
We can thus assume that $\mu\in \LLa_+$ and $\nu=0$.
By Lemma \ref{Gr-ST} the maps $S_\mu\ra\Gr_{B}$ and $T_0\ra\Gr_{B^-}$ are fp and closed.
Thus the following map is also fp closed
\begin{align}\label{STGr}
S_{\mu}\cap T_{0}\hra \Gr_{B}^\mu\times_{\Gr_{G}}\Gr_{U^-}.
\end{align} 
By Lemma \ref{ST-compGr} we have
$(\Gr_{B}^\mu\times_{\Gr_{G}}\Gr_{U^-})_\red=(Z^\mu)_\red$.
From Proposition \ref{Zfond2} and \eqref{Z}, we deduce that $Z^\mu$ is a scheme.
Thus $(S_{\mu}\cap T_0)_\red$ is a  ft scheme over $\bZ$.
\epf

\subsection{Finite dimensional Mirkovic-Vilonen cycles}
The goal of this section is to prove the following theorem.

\bthm\label{thm:GT}
Assume that $G$ is symmetrizable. Let $\la\in \LLa_+$ and $\nu\in \LLa$. 
\hfill
\begin{enumerate}[label=$\mathrm{(\alph*)}$,leftmargin=8mm]
\item
The intersection $(\Gr_\la\cap T_\nu)_\red$ is a  ft scheme over $\bZ$  of relative dimension $\langle\rho,\la-\nu\rangle$.
\item
The number of irreducible components of $\Gr_\la\cap T_\nu$ of maximal dimension is $\dim L(\la)_\nu$.
\eenum
\ethm

The proof of Theorem \ref{thm:GT} involves a counting argument and combinatorics involved need symmetrizability. For the dimension part, 
there is an other approach, using the dimension formula of Zastavas \cite[Conj. 2.27, Cor. 2.28]{BFG}, but the conjecture in loc.~cit.~is only 
settled in the affine case.

\subsubsection{Preliminary lemmas}
To prove the theorem we need more material.
Recall the cover of $\Gr_\la$ by the open subsets $\Gr_{w\la}^\circ$ introduced in \eqref{Grcirc}.

\blem\label{GrT1}
Let $\la\in \LLa_+$ and $\nu\in \LLa$. 
\hfill
\begin{enumerate}[label=$\mathrm{(\alph*)}$,leftmargin=8mm]
\item
If $\Gr_\la\cap T_\nu\neq\emptyset\Rightarrow\nu\leq\la$.
\item
If $\Gr_{w\la}^\circ\cap T_\nu\neq\emptyset\Rightarrow \nu\leq w\la$.
\item
$(\Gr_\la\cap T_\nu)_\red$ is a ft scheme over $\bZ$.
\eenum
\elem

\bpf
Composing the Cartan involution with the inverse, we get an anti-automorphism $\theta$ of the group
 $G$ which fixes the set $\LLa$
and switches the positive and negative unipotent subgroups.
Claim (a) follows by applying $\theta$ to \cite[Thm.~1.9]{BKGP}.

To prove (b) note that 
if $\Gr_{w\la}^\circ\cap T_\nu\neq\emptyset$ then $(wS_\la)\cap T_\nu\neq\emptyset.$
Since $T_\nu$ is preserved by left translations by elements of the torus $T$ and since $wS_\la$ is the attractor of
the element $[s^{w\lambda}]$ in the affine Grassmannian for the action  of the cocharacter $2w\Lrho$ of $T$, 
by the Iwasawa decomposition, we deduce that 
for each closed point $x$ in $(wS_\la)\cap T_\nu$ the limit of $2w\Lrho(z)\cdot x$ as $z\to 0$ is equal to $[s^{w\lambda}]$
and belongs to the closure of $T_\nu$. 
By Proposition  \ref{ST-comp1}, we deduce that $w\lambda\geqslant\nu$.

Now, we concentrate on (c).
The functor $\Gr_\la$ is covered by the open subsets $\Gr_{w\la}^\circ$ with $w\in W$.
 The set $\{w\lambda\,;\,w\in W\,,\,w\lambda\geqslant\nu\}$ is finite.
Hence it is enough to prove that 
$(\Gr_{w\la}^\circ\cap T_\nu)_\red$ is a ft scheme over $\bZ$ whenever it is non empty.
To do this, we will prove that $((wS_\la)\cap T_\nu)_\red$ is a ft scheme.
If $w=1$ this follows from Proposition \ref{Z-fin}. The general case is proved in a similar way.
More precisely, we have an obvious isomorphism $(wS_\la)\cap T_\nu\cong (wS_{\la-w^{-1}\nu})\cap T_0.$
Hence, according to the discussion above, we can assume that $w\lambda\in \LLa_+$ and $\nu=0$.
Further, we identify 
$$(wS_{\la})\cap T_0\cong S_{\la}\cap (w^{-1}T_0)$$ 
and we observe as in \eqref{STGr} that
the obvious inclusion $w^{-1}(U^-)\subset G$ yields a closed immersion
\begin{align}\label{Zw} 
S_{\la}\cap (w^{-1}T_0)\hra\Gr^{\la}_{B}\times_{\Gr_{G}}\Gr_{w^{-1}(U^{-})}. \end{align}
The automorphism of $G$ given by the left translation by $w$ yields a stack isomorphism
$$[w^{-1}(U^{-})\backslash G/B]\cong \wbk_G.$$
In particular, the substack
$$\bullet=[w^{-1}(U^{-})\backslash w^{-1}\Omega]\subset [w^{-1}(U^{-})\backslash G/B]\cong \wbk_G$$ 
is qc, dense and open.
The proof of Lemma \ref{ST-compGr} yields an isomorphism
\begin{align*}
(\Gr_B\times_{\Gr_{G}}\Gr_{w^{-1}(U^{-})})_\red \cong Z_\red
\end{align*}
The map
$\hG/\hU\ra\ov{T}$ such that
$g\mapsto(\langle v_i^\vee, wg\cdot v_i\rangle)$
factors through a map 
$$\wbk_{G}\cong[w^{-1}(B^{-})\backslash \hG/\hU]\ra[T\backslash\ov{T}]$$ 
which yields a morphism
$Z\to\cZ\ra\Div_{\ov{T}}.$
Recall that $w\lambda$ is dominant.
Let $Z^\lambda$, $\cZ^\lambda$ denote the base changes
of $Z$, $\cZ$  from $\Div(\ov{T})$ to $\bP_\bZ^{w\lambda}$ as above.
We have an isomorphism
\begin{align*}
(\Gr_{B}^\lambda\times_{\Gr_{G}}\Gr_{w^{-1}(U^{-})})_\red \cong Z_\red^\lambda
\end{align*}
Further $Z^\lambda$ is of ft over $\bZ$ by the proof of Propositon \ref{Zfond2}.
Hence the intersection $(S_{\la}\cap (w^{-1}T_0))_\red$ is also of ft.

\epf


\subsubsection{Hall-Littlewood functions}
We first recall some combinatorics following \cite{Vis}. 
We assume in this section that $G$ is symmetrizable.
Set
$$\mathcal{E}_{t}=\Big\{f=\sum\limits_{\mu\in \Lambda}c_{\mu}(t)\,e^{\mu}
\,;\,
c_{\mu}(t)\in\bC\llb t\rrb\Big\}$$
where $c_{\mu}(t)=0$ outside the union of a finite number of sets of the form 
$D(\omega)=\{\nu\in \Lambda\,;\, \nu\leq\omega\}.$
The $e^{\mu}$ are formal exponentials with $e^0=1$ and $e^{\mu+\nu}=e^{\mu}e^{\nu}$.
Consider the partially defined operator 
$$J=\sum\limits_{w\in W}(-1)^{w}w\qxq{on}\cE_t.$$ 
If the stabilizer $W_\mu$ of $\mu$ in $W$ is finite then the infinite sum
$$J(e^\mu)=\sum_{w\in W}(-1)^{\ell(w)}e^{w\mu}$$
 is well-defined in $\cE_{t}$ 
but $J(f)$ is not in general. For $\la\in\LLa_+$, we set
$$f_{\la}=e^{\la+\rho}\prod\limits_{\al\in \Delta_+}(1-te^{-\al})^{m_\al}\in\cE_{t}$$
with  $m_\alpha$ is the root multiplicity of $\alpha$.
By \cite[Prop.~1]{Vis}, the infinite sum $J(f_{\la})$ is well-defined in $\cE_t$.
Let $W_{\la}(t)$ be the formal series in $\bC\llb t\rrb$ given by
$$W_{\la}(t)=\sum\limits_{\sigma\in W_\la}t^{\ell(\sigma)}.$$
The series $W_{\la}(t)$ is invertible, because $W_{\la}(0)=1$.
We define the \emph{Hall-Littlewood function} to be
\begin{equation}
P_{\la}(t)=W_{\la}(t)^{-1}J(e^\rho)^{-1}J(f_\la).
\label{HLW}
\end{equation}
Note that the Weyl-Kac character formula yields
$$J(e^{\rho})^{-1}=e^{-\rho}\prod\limits_{\al\in R^+}(1-e^{-\al})^{-m_{\al}},$$
where we interpret 
$$(1-e^{-\al})^{-1}=1+e^{-\al}+e^{-2\al}+\dots.$$
Thus, we have
$P_{\la}(t)\in\cE_t.$
By \cite[(3.1)]{Vis}, there are formal series $c_{\la\mu}\in\bC\llb t\rrb$ such that
\begin{align}
P_{\la}(t)=\sum\limits_{\mu\in \Lambda^+}c_{\la\mu}(t)\chi_{\mu}.
\label{HL}
\end{align}
We need an explicit description of the coefficients $c_{\la\mu}(t)$.
Let $\cA$ be the set of all finite multisets of positive roots such that each $\al\in R^+$ occurs at most $m_{\al}$ times. 
By \cite[(3.2)]{Vis}, we have
\begin{align}
c_{\la\mu}(t)=W_{\la}(t)^{-1}\sum_{A\in\cA}\sum_{w\in W}(-1)^{\ell(w)}(-t)^{\sharp A},
\label{c-vis}
\end{align}
where the sum runs over all $w\in W$ and all elements $A\in\cA$
such that 
$$w\big(\la+\rho-\sum_{\al\in A}\al\big)=\mu+\rho.$$ 
We have the following properties.

\blem\label{c-prop}\hfill
 
\begin{enumerate}[label=$\mathrm{(\alph*)}$,leftmargin=8mm]
\item
 $c_{\la\mu}(t)\in\bZ[t]$ and when non-zero $\mu\leq\la$.
\item
$c_{\la\la}(t)=1$.
\item
$\mu<\la\Rightarrow c_{\la\mu}(0)=0$. In particular, $P_{\la}(0)=\chi_{\la}$.
\end{enumerate}
\elem

\bpf
The claim (a), (b) are proved in \cite[\S 3.1, 7.1]{Vis}.
For Part (c), we have $W_{\la}(0)=1$. Thus, in \eqref{c-vis} the only contributing $A$ if $t=0$
is $A=\emptyset$. This forces $w(\la+\rho)=\mu+\rho$. As $\mu$ is dominant, we get $\la=\mu$.
\epf

\subsubsection{Affine MacDonald formula}
We still assume that $G$ is symmetrizable.
Since the references we use work over $\bF_{q}\llp t\rrp$ rather that $\bF_{q}[t,t^{-1}]$, we must check that 
the countings over both rings give the same answer.

\blem\label{comp-Gr}
For any $\la\in\LLa_{+}$ and $\mu\in\LLa$, we have $(\Gr_{\la}\cap T_{\nu})(\bF_q)=(\Gr^\form_{\la}\cap T^\form_{\nu})(\bF_q)$.
\elem

\bpf
By Proposition \ref{ST-comp2}, we have a finite stratification of $\Gr_{\la}\cap T_{\nu}$ 
by triple intersections $\Gr_{\la}\cap S_{\mu}\cap T_{\nu}$ for $\la\geq\mu\geq\nu$.
By Lemma \ref{ST-compGr}, we know that
$T_{\nu}\cap S_{\mu}=T^\form_{\nu}\cap S^\form_{\mu}.$
By \eqref{S0} the triple intersections match (on both sides intersecting with $Gr_{\la}$ is a condition on poles using Plücker description).
So the strata are in bijection and we get an equality
\[(\Gr_{\la}\cap T_{\nu})(\bF_q)=(\Gr^\form_{\la}\cap T^\form_{\nu})(\bF_q).\]
\epf

Following  \cite[(4.1)]{Vis} we consider the series $\Delta(t)\in\cE_t$ given by
$$\Delta(t)=\prod\limits_{\al\in R^+}\left(\frac{1-te^{-\al}}{1-e^{-\al}}\right)^{m_\al}.$$
By \cite[\S 7.3]{BKP} in the affine untwisted case, 
and \cite[Thm.~7.3]{BGR} in the symmetrizable one, for each dominant cocharacter $\la\in \LLa_+$ the series 
$$H_{\la}(t)=W_{\la}(t)^{-1}\sum_{w\in W}w(\Delta)~ e^{w\la}$$
and $H_{0}^{-1}$ are well-defined in $\cE_t$ and the coefficients lie in $\bZ[t,t^{-1}]$.
We set
\begin{align}
\begin{split}
\Sat(1_{G_\lambda (\bF_q)})&=q^{ \langle\rho,\lambda\rangle}\frac{H_{\la}(q^{-1})}{H_0(q^{-1})}.
\label{sat2}
\end{split}
\end{align}
We now compare $H_{\la}$ with the Hall-Littlewood function $P_{\la}$.
By \cite[p.~172, \S10.2.2]{Kac}, for any $w\in W$ we have
$$wJ(e^{\rho})=(-1)^{\ell(w)}J(e^{\rho}).$$
We deduce that
\begin{align*}
P_{\la}(t)	
&=W_{\la}(t)^{-1}\sum_{w}(-1)^{\ell(w)}e^{w(\la+\rho)}J(e^\rho)^{-1}\prod\limits_{\al\in R^+}(1-te^{-w\al})^{m_{\al}},\\
&=W_{\la}(t)^{-1}\sum_{w}w\Big(e^{\la+\rho}J(e^\rho)^{-1}\prod\limits_{\al\in R^+}(1-te^{-w\al})^{m_{\al}}\Big),\\
&=W_{\la}(t)^{-1}\sum_{w}w\Big(e^{\la}\prod\limits_{\al\in R^+}\frac{(1-te^{-w\al})^{m_{\al}}}{(1-e^{-\al})^{m_{\al}}}\Big),\\
&=H_{\la}(t).
\end{align*}
In particular,  we have
\begin{align}
\Sat(1_{G_\lambda (\bF_q)})=q^{ \langle\rho,\la\rangle}\frac{P_{\la}(q^{-1})}{P_0(q^{-1})}.
\label{sat3}
\end{align}
We need an alternative formula for $\Sat(1_{G_\lambda (\bF_q)})$ that involves the affine MV cycles. 
For an arbitrary KM group (not necessarily symmetrizable), the Satake transform involves Hecke paths.
By \cite[\S2.8.(1)]{BGR}, see also \cite[prop.~5.2]{GR2}, we have 
\[\Sat(1_{G_\lambda (\bF_q)})=\sum\limits_{\mu\leq\la}n_{\la}(\mu)q^{ \langle\rho,\mu\rangle}e^{\mu},\]
where $n_{\la}(\mu)=\sum_{\pi}S_{\infty}(\pi,\mu)$ is the sum over all Hecke paths $\pi$ of type $\la$ from 0 to $\mu$
of some integers $S_{\infty}(\pi,\mu)$ that involve line segments in the masure.
The only thing that matters for us is that Muthiah in \cite[\S4.5 and \S4.5.5]{Mut}  re-interprets this sum in the following way
\begin{align}
\Sat(1_{G_\lambda (\bF_q)})=\sum\limits_{\mu\in \LLa}
\sharp(\Gr_\lambda \cap T_\mu)_{_{GR}}(\bF_q) q^{\langle\rho,\mu\rangle} e^\mu
\label{sat0}
\end{align}
Note that loc.~cit.~is written is the untwisted affine case, but for \S 4, this restriction is not relevant.
The subscript $GR$ is for Gaussent-Rousseau and means that the numbers are computed inside $G(\bF_{q}\llp t\rrp)$ 
where double orbits are with respect to the group $G_x$  in \S\ref{Elem}.
By Lemma \ref{comp-Gr} and Proposition  \ref{int-comp}, we have
\[(\Gr_\lambda \cap T_\mu)_{_{GR}}(\bF_q)=(\Gr_\lambda \cap T_\mu)(\bF_q).\]
Thus, in \eqref{sat0} we can freely replace the GR's sets by ours.

\subsubsection{Proof of Theorem $\ref{thm:GT}$}
We can now prove the theorem.
By Lemma \ref{GrT1} the intersection $(\Gr_\la\cap T_\mu)_\red$ is empty if $\la\not\geqslant\mu$, and, else,
it is ft scheme over $\bZ$.
We must check that the dimension of $(\Gr_\la\cap T_\mu)_\red$ is $ \langle\rho,\la-\mu\rangle$ and that
the number of irreducible components of maximal dimension is $\dim L(\la)_\mu$.
By \eqref{HL}, Lemma \ref{c-prop}, \eqref{sat3} and \eqref{sat0} we have
$$\sharp(\Gr_\lambda\cap T_\nu)(\bF_q) q^{ \langle\rho,\nu-\lambda\rangle}=
\frac{1}{P_0(q^{-1})}\big(\dim L(\lambda)_\nu+\sum_{\mu<\lambda}c_{\lambda\mu}(q^{-1})\dim L(\mu)_\nu\big)$$
and the coefficient $c_{\lambda\mu}(q^{-1})$ is a polynomial in $q^{-1}$ without constant term.
Since $P_0(q^{-1})\to 1$ as $q\to\infty$, we deduce that
$$\lim_{q\to\infty}\sharp(\Gr_\lambda\cap T_\nu)(\bF_q) q^{ \langle\rho,\nu-\lambda\rangle}=\dim L(\lambda)_\nu$$
from which the theorem follows.

\subsection{Finite codimensional Mirkovic-Vilonen cycles}

The finite codimensional Mirkovic-Vilonen cycles satisfy the following analogue of Lemma \ref{GrT1}.

\blem\label{GrS1}
Let $w\in W$, $\la\in \LLa_+$ and $\mu\in \LLa$. 
\hfill
\begin{enumerate}[label=$\mathrm{(\alph*)}$,leftmargin=8mm]
\item
If $\Gr_\la\cap S_\mu\neq\emptyset\Rightarrow \mu\leq\la$.
\item
If $\Gr_{w\la}^\circ\cap S_\mu\neq\emptyset\Rightarrow \mu\geq w\la$.
\item
The formal analogues of $\mathrm{(a)}$ and $\mathrm{(b)}$ hold.
\eenum
\elem

\bpf
The proof is the same as for Lemma \ref{GrT1}.
Let us recall the argument for (b).
If
$\Gr_{w\la}^\circ\cap S_\mu\neq\emptyset$ then $(wS_\la)\cap S_\mu\neq\emptyset.$
Since $S_\mu$ is preserved by left translations by elements of the torus $T$ and since $wS_\la$ is the attractor of
the element $[s^{w\lambda}]$ in the affine Grassmannian $\Gr$, for the action  of the cocharacter $2w\Lrho$ of $T$, we deduce that 
for each closed point $x$ in $(wS_\la)\cap S_\mu$ the limit of $2w\Lrho(z)\cdot x$ as $z\to 0$ is equal to $[s^{w\lambda}]$
and belongs to the closure of $S_\mu$, hence $\mu\geqslant w\lambda$ by Proposition \ref{ST-comp1}.
\end{proof}

Following \S\ref{UU}, we define
$$U_{w}=U\cap w(U)
,\quad
\hU_{w}=\hU\cap w(\hU)
,\quad
U^w=U\cap w(U^-)
,\quad
\hU^w=\hU\cap w(U^-).$$
Proposition \ref{uw-comp} yields the following scheme and ind-scheme isomorphisms
\begin{align}\label{UhU}
\hU\cong \hU_w\times \hU^w
,\quad
U\cong U_w\times U^w
,\quad
U^w\cong U/U_w
\cong
\hU^w\cong \hU/\hU_w.
\end{align}
We abbreviate $\hU_{w,\cO}=(\hU_w)_\cO$ and $\hU_\cO^w=(\hU^w)_\cO$.
Define $U_{w,\cO}$ and $U_\cO^w$ similarly.
From \eqref{UhU} we deduce the following ind-scheme isomorphisms
\begin{align}\label{UhUO}
\hU_\cO\cong \hU_{w,\cO}\times \hU_\cO^w
,\quad
U_\cO\cong U_{w,\cO}\times U_\cO^w
,\quad
U_\cO^w\cong \hU_\cO^w.
\end{align}
Recall the notation $\hU^-_\la$ and $\hU_\la$ in \eqref{VVJ}.
Let $ H\subset w(\hU_\la)$ be the closed subgroup  given by
$$
 H=w(\hU_\la)\cap\hU_\cO=\hU_{w,\cO}\cap w\Ad_{s^{\la}}(\hU_{\pi^-}).$$
Note that $H=w(\hU_\la)\cap\hU_K$ because $w(\hU_\la)\subset \hG_{\co}$, and that
$$w(\hU_\la)/ H\cong w(\hU^{w^{-1}}_\cO\cap \Ad_{s^{\la}}(\hU_{\pi^-})).$$

\blem\label{GrS2}
Let $\la\in \LLa_+$ and $w\in W$. 
\hfill
\begin{enumerate}[label=$\mathrm{(\alph*)}$,leftmargin=8mm]
\item
There is a fp locally closed embedding $\widehat\Gr_\la\cap\hat S_{w\la}\subset \widehat\Gr_{w\la}^\circ$.
\item
The $w(\hU_\la)$-action on $\widehat\Gr_{w\la}^\circ$ by left translation on $[s^{w\la}]$ 
yields an isomorphism $w(\hU_\la)\cong \widehat\Gr_{w\la}^\circ$.
\item
The $ H$-action on $\widehat\Gr_\la\cap\hat S_{w\la}$ by left translation yields an isomorphism 
$ H\cong \widehat\Gr_\la\cap\hat S_{w\la}$.
\item 
$w(\hU_\la)/ H\cong\bA^{\langle\rho,\la-w\la\rangle}$.
\eenum
\elem

\bpf
By Proposition \ref{sch-form}, to prove (a) we must check that the image of $\widehat\Gr_\la\cap \hat S_{w\la}$
by the map $\phi$ in \eqref{phi} is contained $w\hU_\la P_\la^-/P_\la^-$.
The intersection $\widehat\Gr_\la\cap \hat S_{w\la}$ is the attractor locus of $[s^{w\la}]$ in $\widehat\Gr_\la$
for the cocharacter $2\Lrho$ of the torus $T$. 
Since the map $\phi$ commutes with the $T$-action, we deduce that
$\phi(\widehat\Gr_\la\cap \hat S_{w\la})$ is contained in the attractor locus $\hU_\la wP_\la^-/P_\la^-$
of the closed point $w$ in $ \hG/P_{\la}^{-}$
for the cocharacter $2\Lrho$, from which the inclusion 
$\phi(\widehat\Gr_\la\cap \hat S_{w\la})\subset w\hU_\la P_\la^-/P_\la^-$ follows, because
$\hU_\la wP_\la^-/P_\la^-\subset w\hU_\la P_\la^-/P_\la^-$.
In particular, we get $\widehat\Gr_\la\cap \hat S_{w\la}=\widehat\Gr_{w\la}^{\circ}\cap \hat S_{w\la}$.
Thus, we have a fp locally closed embedding $\widehat\Gr_\la\cap\hat S_{w\la}\subset \widehat\Gr_{w\la}^\circ$
by base change and Proposition \ref{ST-comp1}.
Part (b) follows from  Lemmas \ref{hU-orb}, \ref{I-orb} and Proposition \ref{sch-form}.
Part (c) is obvious, because
$$\widehat\Gr_\la\cap\hS_{w\la}=\widehat\Gr_{w\la}^\circ\cap\hS_{w\la}=
(w(\hU_\la)\cdot[s^{w\la}])\cap(\hU_K\cdot[s^{w\la}])=(w(\hU_\la)\cap\hU_K)\cdot[s^{w\la}]\cong H,$$
where the third equality follows from the fact that 
$(w(\hU_\la)\cdot[s^{w\la}])\cap(\hU_K\cdot[s^{w\la}])$ is the attractor locus of $[s^{w\la}]$ for the cocharacter $2\Lrho$ and
$w(\hU_\la)\cdot[s^{w\la}]\cong w(\hU_\la)$ by (b), hence the intersection is identified with 
the attractor locus of $1$ in $w(\hU_\la)$ for the cocharacter $2\Lrho$, which is $w(\hU_\la)\cap\hU_K$.
Parts (b) and (c) identify the inclusion $\widehat\Gr_\la\cap\hat S_{w\la}\subset \widehat\Gr_{w\la}^\circ$
with the inclusion $ H\subset w(\hU_\la)$, which is closed, thus parts (a), (b) and (c) are proved.
For part (d), note that 
$$w(\hU_\la)\cong \prod_{\al\in w(R^+)}\kg_{\al,\cO}\cap\Ad_{s^{w\la}}(\kg_{\al,\pi^-})
,\quad
 H\cong \big(\prod_{\al\in R^+}\kg_{\al,\cO}\big)\cap w(\hU_\la).
$$
Hence, we have
$$w(\hU_\la)/ H\cong \prod_{\al\in w(R^+)\cap R^-}\kg_{\al,\cO}\cap\Ad_{s^{w\la}}(\kg_{\al,\pi^-})
\cong\prod_{(\al,n)\in\Gamma}U_{\al,n}$$
where $\Gamma$ is the following set 
$$\Gamma=\{(\al,n)\in R^+\cap w^{-1}(R^-)\times\bN\,;\,n<\langle\al,\la\rangle\}$$
and, for each $\alpha\in R_{\re}^+$ and $n\in\bZ$, we define
\begin{align}\label{Uan}
U_{\al,n}=x_{\al}(s^n\bG_a).
\end{align}
Further, we have
\begin{align*}
\sharp\Gamma&=\sum_{\al\in w(R^+)\cap R^-}\langle\al,w\la\rangle
=\langle\!\!\!\!\sum_{\al\in w(R^+)\cap R^-}\!\!\!\!\!\!\!\al\,,\,w\la\rangle
=\langle w\rho-\rho\,,\,w\la\rangle=\langle\rho\,,\,\la-w\la\rangle
\end{align*}
\epf

Since $ H\subset\hU_\cO$, the $ H$-action on $\widehat\Gr_{w\la}^\circ$ preserves
the intersection $\widehat\Gr_{w\la}^\circ\cap \hat S_\mu$ for each  $\mu\in\LLa$
with $w\la\leq\mu\leq\la$. 
We consider the quotient stacks $\hX_\mu$ and $\hX$ given by
\begin{align}\label{XX}
\hX_\mu=[\widehat\Gr_{w\la}^\circ\cap \hat S_\mu/ H]\subset \hX=[\widehat\Gr_{w\la}^\circ/ H].
\end{align}
By the above, the stack  $\hX$ is representable by the affine space
$\bA^{\langle\rho,\la-w\la\rangle}$, while $\hX_\mu$ is a locally closed susbscheme and
$\hX_{w\la}\cong\Spec(\bZ).$
For a future use, let us record the following well-known fact.

\blem\label{GrS0}
Let $X_1$ be a separated scheme of ft, and $X_0\subset X_1$ a closed subscheme such that $X_1\setminus X_0$ is affine.
Let $Z_1\subset X_1$ be an irreducible component such that $Z_1\not\subset X_0$ and $Z_1\cap X_0\neq\emptyset$.
Let $Z_0\subset Z_1\cap X_0$ be an irreducible component.
Then $\dim Z_1=\dim Z_0+1$.
\qed
\elem

We can now prove the following.

\blem\label{GrS3}
For each $w\la\leq\mu\leq\la$,
the subscheme $\hX_\mu\subseteq\hX$
is equidimensional of codimension $\langle\rho,\la-\mu\rangle$.
\elem

\bpf
Let 
$\hX_d=\bigsqcup_{ \langle\rho,\mu\rangle\leqslant d} \hX_\mu$.
We have a sequence of closed embeddings of schemes
$$\Spec(\bZ)=\hX_{\langle\rho,w\la\rangle}\subseteq \hX_{1+\langle\rho,w\la\rangle}\subseteq\cdots
\subseteq \hX_{ \langle\rho,\la\rangle}=\bA^{\langle\rho,\la-w\la\rangle}$$
where the last equality follows from Lemma \ref{GrS1}(a) and the Iwasawa decomposition.
For all $d$ the open subset
$$\hX_{1+d}\setminus \hX_d=\bigsqcup_{ \langle\rho,\mu\rangle=1+d} \hX_\mu$$ 
is affine  by Theorem \ref{sinf-aff}.
For any irreducible component $Z_d\subset \hX_d$, either $Z_d$ is still an irreducible component of $\hX_{d+1}$, or there is
an irreducible component $Z_{d+1}$ containing strictly $Z_d$, in which case Lemma \ref{GrS0} applies.
Hence, in both cases, by descending induction we deduce that the codimension  of $Z_d$ in $\bA^{\langle\rho,\la-w\la\rangle}$ is smaller than 
$ \langle\rho,\lambda\rangle-d$.

Now, we prove that the codimension of $Z_d$ is greater than $ \langle\rho,\lambda\rangle-d$ by ascending induction.
If $d=\langle\rho,w\la\rangle$ the claim is obvious.
Assume that $d>\langle\rho,w\la\rangle$ and fix any irreducible component $Z_d\subset \hX_d$.
If $Z_d\subset \hX_{d-1}$ we are done by induction. 
Else, we claim that $Z_d\cap \hX_{d-1}\neq\emptyset$, and we conclude by applying Lemma \ref{GrS0}. 

To prove the claim we consider the map $\phi$ in \eqref{phi}.
By Proposition \ref{sch-form} we have $\phi(\widehat\Gr_{w\lambda}^\circ)=w\hU_\lambda P_\la^-$ in $\hG/P_\la^-$.
Let $Z'_d$ be the inverse image of $Z_d$ by the obvious projection
$\widehat\Gr_{w\lambda}^\circ\to \hX_{ \langle\rho,\la\rangle}$.
Since $Z'_d$ is preserved by the $\hU_\cO$-action, we deduce that $\phi(Z'_d)$ is a union of $\hU$-orbits
and is contained in the big cell $w\hU_\lambda P_\la^-$. 
The big cell contains a unique $\hU$-orbit.
Hence $\phi(Z'_d)=\hU w P_\la^-$.
Further, by definition of the map $\phi$, the image $\phi(x)$ of any point $x\in \widehat\Gr_\lambda$
is the limit $t\cdot x$ as $t\to 0$ for the  loop action of $\bG_m$.
Such a limit is indeed well defined by Lemma \ref{I-orb} and the open cover of $\widehat\Gr_\lambda$
by the $\widehat\Gr_{w\lambda}^\circ$'s.
Since $Z'_d$ is closed, this implies that the image of
$\phi(Z'_d)$ by the obvious immersion $\hG/P_\lambda^-\to\widehat\Gr_\la$, which is a section of $\phi$,
is contained in $Z'_d$.
Since $\hU w[s^\la]\subset\hat S_{w\la}$, we deduce that $Z'_d\cap \hat S_{w\la}\neq\emptyset$.
Hence $Z_d\cap \hX_{\langle\rho,w\la\rangle}\neq\emptyset$, and, a fortiori, 
we have $Z_d\cap \hX_{d-1}\neq\emptyset$, proving the claim and the lemma.
\epf

Now, we consider the non formal setting.
Lemma \ref{fib1} and Proposition \ref{g-cart} yield the isomorphisms
\begin{equation}
\widehat\Gr_{w\la}^\circ\times_{\Gr_{\hG}}\Gr_{G}\cong\Gr_{w\la}^\circ
,\quad
(\widehat\Gr_{w\la}^\circ\cap \hat S_{\mu})\times_{\Gr_{\hG}}\Gr_{G}\cong\Gr_{w\la}^\circ\cap S_{\mu}
\label{bla-for2}
\end{equation}
Recall the notation $\hU_\la$ and $U_\la$ in \eqref{VVJ}, \eqref{KV}. Lemma \ref{I-orb} and Proposition \ref{unif} yield
$$\widehat\Gr_{w\la}^\circ\cong [w\hU_\cO/w\hU_\la]
,\quad
(\Gr_{w\la}^\circ)_\red\cong [wU_\cO/wU_\la]_\red.$$ 
By \eqref{UhUO}, we have the following ind-scheme isomorphisms
\begin{align*}
wU_\cO
&\cong w(U_{w^{-1},\cO}\times \hU_\cO^{w^{-1}})\\
&\cong(U_\cO\cap wU_\cO)\times w (\hU_\cO^{w^{-1}})\\
&\cong(U_\cO\cap wU_\cO)\times(U_\cO^-\cap w\hU_\la)\times
(U_\cO^-\cap w\hU_\cO\cap w\Ad_{s^\la}(\hU_{\pi^-}))\\
&\cong(U_\cO\cap wU_\cO)\times (U_\cO^-\cap w\hU_\la)\times\hX
\end{align*}
Let $\hX$ and $\hX_\mu$ be as in \eqref{XX}.
We consider the group ind-schemes of ind-ft $H_1$, $H_2$ given by
$$H_1=wU_\la
,\quad
H_2=(U_\cO\cap wU_\cO)\times (U_\cO^-\cap w\hU_\la).$$
Let $p_1:wU_\cO\to\Gr_{w\la}^\circ$ and $p_2:wU_\cO\to\hX$ be the quotients by the groups $H_1$ and $H_2$.
We have the following commutative diagram with Cartesian squares, that we will need in the proof of Theorem \ref{t-exact}
\begin{align}\label{diag2}
\begin{split}
\xymatrix{
\Gr_{w\la}^\circ\cap S_\mu\ar[d]_-{i_0}&\ar[l]_-{q_1}Y_\mu\ar[r]^-{q_2}\ar[d]_-{i_1}&\hat X_\mu\ar[d]^-{i_2}\\
\Gr_{w\la}^\circ&\ar[l]_-{p_1}wU_\cO\ar[r]^-{p_2}&\hat X
}
\end{split}
\end{align}
Note that the group ind-schemes
$H_1$ and $H_2$ have a $\bG_m$-equivariant contracting presentation,
as in Lemma \ref{pul-push} below.
The vertical maps $i_0$, $i_1$, $i_2$ are the obvious locally closed immersions.

\section{Perverse sheaves on the affine Grassmannian of a KM group}

Let $k$ be an algebraically closed field.
In this section all prestacks are defined over the field $k$.

\subsection{Category of \texorpdfstring{$\ell$}{l}-adic sheaves}
This section is a reminder on the category of sheaves on $\infty$-stacks. 
The main reference is \cite{BKV}.
Let $\ell$ be a prime different from the characteristic of $k$.

\subsubsection{Definition of the category of \texorpdfstring{$\ell$}{l}-adic sheaves}\label{l-ad}

For an algebraic space $Y$ of ft, we have 
an $\infty$-category $\cD_{c}(Y)=\cD^{b}_{c}(Y,\bql)$ whose homotopical category is the derived category
$D^{b}_{c}(Y,\bql)$ of bounded complexes of sheaves with constructible cohomology,
see \cite{LZ1}, \cite{LZ2} or \cite{GL}.
We will use the formalism of six functors for the $\infty$-categories $\cD_c$.
This yields a functor
\begin{equation}
\cD_{c}:(\AlgSp^\ft_k)^{\op}\rightarrow\StCat
\label{tfdef1}
\end{equation}
which  to each morphism $f:X\rightarrow Y$ associates $f^{!}:\cD_{c}(Y)\rightarrow\cD_{c}(X)$.
We define the functor
\begin{equation}
\cD:(\AlgSp^\ft_k)^{\op}\rightarrow\PrCat
\label{tfdef2}
\end{equation}
to be $\cD=\Ind\circ\cD_{c}$. 
The embedding 
$\iota:\Aff_k\hra\AlgSp_k$ yields an equivalence of  $\infty$-categories 
$$\iota^{*}:\Sh(\Aff_k)\ra\Sh(\AlgSp_k).$$
Thus, to construct the $\infty$-category $\Stk_k$ we could use
the category $\AlgSp_k$ instead of the category $\Aff_k$ as in \eqref{PSt}.
The same applies to the category $\AlgSp_k^\qcqs$ of qcqs algebraic spaces.
By doing left Kan extensions, the functors $\cD_c$ and $\cD$ yield the  functors
\begin{equation}
\cD_{c}:(\AlgSp_k^\qcqs)^\op\rightarrow\StCat
,\quad
\cD:(\AlgSp_k^\qcqs)^\op\rightarrow\PrCat
\label{kan1}
\end{equation}
By doing right Kan extension we get the  functors
\begin{equation}
\cD_{c}:\PreSh(\AlgSp_k^\qcqs)\rightarrow\StCat
,\quad
\cD:\PreSh(\AlgSp_k^\qcqs)\rightarrow\PrCat
\label{kan2}
\end{equation}
Both Kan extensions exist by \cite[Thm.~5.1.5.6]{Lu1},
because the category $\PrCat$ is bicomplete, see, e.g., \cite[\S2.2]{RS}.
Let $\cD_{\bullet}$ be either $\cD$ or $\cD_c$.

\bprop\label{sheaf}
The functor $\cD_\bullet$ is a sheaf for the étale topology, i.e., it factors through  
$\Sh(\AlgSp_k)$ via  the sheafifying functor.
\eprop

\bpf
Let $\pmb\Delta$ be the semi-simplicial category, whose objects are the $[m]$'s with $m\in\bN$
and whose morphisms $[m]\to[n]$ are the injective monotone maps $\{0,\dots,m\}\to\{0,\dots,n\}$.
To any fp étale covering of qcqs algebraic spaces
$\pi:X\ra Y$  we associate a semi-simplicial algebraic space $(X^{[n]})$
such that 
\begin{align}\label{[n]}\pi^{[n]}:X^{[n]}=X\times_{Y}\dots \times_{Y}X\to Y\end{align} 
is the obvious projection and $X^{[n]}$ is the $(n+1)$-th fiber power of $X$ over $Y$.
We must prove that the functor
\begin{align}\label{upi}\underline{\pi}^!:\cD_\bullet(Y)\to\lim_{t^!}\cD_\bullet(X^{[n]})\end{align}
given by the system of functors $(\pi^{[n]})^!$ 
is an equivalence. 
Here $[n]$ runs over the objects of the category $\pmb\Delta^\op$ opposite to $\pmb\Delta$.
If the map $\pi$ has a section the claim is standard. 
The functor $\underline{\pi}^!$ has a right adjoint $\underline{\pi}_*$.
Indeed, the map $\pi^{[n]}$ is fp \'etale.
So $(\pi^{[n]})^!$ has a right adjoint $(\pi^{[n]})_*$. 
Therefore $\underline{\pi}^!$ has a right adjoint $\underline{\pi}_*$, which sends the object
$\underline{K}=(K_n)$ in $ \lim_{[n]}\cD(X^{[n]})$ to the limit $\lim_{n}(\pi^{[n]})_*(K_n)$.

We claim that the unit $K\to\underline{\pi}_*\underline{\pi}^!K$ is an isomorphism, i.e., the map
$K\to \lim_{[n]}(\pi^{[n]})_*(\pi^{[n]})^!(K)$ is an isomorphism. Since $\pi^!$ is faithful, it suffices to check the isomorphism after we
apply $\pi^!$. Since $\pi^!$ commutes with limits, because it has a left adjoint $\pi_!$, and with $(\pi^{[n]})_*$, by \cite[Prop.~5.2.7.(a)]{BKV},
we are reduced to the corresponding assertion for the projection $\pi^+:X\times_Y X\to X$. 
Since it has a section, which is the diagonal $X\to X\times_Y X$,
we are done.

Finally, we claim that the counit $\underline{\pi}^!\underline{\pi}_*(\underline{K})\to \underline{K}$ is an isomorphism. It suffices to show that
the map $\pi^!(\lim_{[n]}(\pi^{[n]})_*(K_n))\to K_0$ is an isomorphism. As above, the assertion follows from the commutativity of
$\pi^!$ commutes with limits and $(\pi^{[n]})_*$. 
The claim is proved for $\cD$. To prove it for $\cD_c$, note that the functor $\pi^{!}$ preserves $\cD_c$.
\epf

Proposition \ref{sheaf} allows us to see $\cD_{\bullet}$ as a functor on $\Stk_k$ such that
$$\cD_\bullet(\cX)=\lim\cD_\bullet(X_a)$$
where the limit runs over all schemes $i_a:X_a\to\cX$ over $\cX$.
Unwinding the definition, an object $\cE$ in $\cD_\bullet(\cX)$ is the datum of a sheaf $\cE_{i_a}$ in $\cD(X_a)$ for each $i_a$ as above,
with an equivalence of sheaves $\cE_{f(i_a)}\stackrel{\sim}{\rightarrow} f^!\cE_{i_a}$ for each morphism of schemes $f$, 
satisfying a homotopy-coherent system of compatibilities.

For any stack $\cX\in\Stk_k$ the inclusion $\cD_c(\cX)\to\cD(\cX)$ yields a functor
$\Ind(\cD_c(\cX))\to\cD(\cX)$. It is an equivalence if $\cX\in\AlgSp_k^\qcqs$ but may not be an equivalence in general.

\bexa\hfill
\begin{enumerate}[label=$\mathrm{(\alph*)}$,leftmargin=8mm]
\item
If $X\in\AlgSp_k^\qcqs$ has a presentation as a cofiltered limit $X\cong\lim X_a$ with $X_a\in\AlgSp^\ft_k$ 
and affine transition morphisms, 
then we have 
$$\cD_{\bullet}(X)\cong\colim_{t^!}\cD_\bullet(X_a).$$
\item
Let $X=\colim X_a$ be an ind-algebraic space with $X_a\in\AlgSp_k$.
Then we have 
\begin{align}\label{D-ind-sch1}\cD_{\bullet}(X)\cong\lim_{t^!}\cD_{\bullet}(X_a).\end{align}
\eenum
\eexa

\subsubsection{The bar construction}
Given a group ind-scheme $H$ acting on an ind-scheme $X$ the quotient $\infty$-stack $[X/H]$ is equipped with
the stable $\infty$-category $\cD(X/H)$. By definition the $\infty$-prestack $[X/H]$ is the (homotopy) colimit of the bar construction
$\text{Bar}(H,X)$, which is the semi-simplicial $\infty$-prestack built out of action and projection maps 
$$\text{Bar}(H,X)=\Big(\xymatrix{
	\dots \ar@<-1.5ex>[r] \ar@<-0.5ex>[r] \ar@<0.5ex>[r] \ar@<1.5ex>[r]
        &
	H^2 \times X
	\ar@<-1ex>[r]
	\ar[r]
	\ar@<1ex>[r]
	&
	H \times X
	\ar@<0.5ex>[r]^{\phantom{hh}a}
	\ar@<-0.5ex>[r]_{\phantom{hh}p_2}
	&
	X
	}\Big).$$
There is an equivalence
\[\cD(X/H)=\lim_{t^!}\cD(\text{Bar}(H,X)).\]
Thus, an $H$-equivariant complex on $X$ is a collection of objects
$\cE_n\in\cD(H^n\times X)$ with $n\in\NN$, together with equivalences 
$a^!\cE_0\cong \cE_1$, $p_2^!\cE_0\cong\cE_1$, etc., one for any map in $\text{Bar}(H,X)$, 
subject to the compatibility conditions given by the relations between the maps.

\subsubsection{Lurie's adjunction}
Let $\cI$ be a small category and $\cI\rightarrow\PrCat$ a functor.
For each $i\in \cI$, we are given an $\infty$-category $\cC_{i}$ and for each morphism $\alpha:i\to j$
a continuous functor $\phi_\al:\cC_{i}\to\cC_{j}$.
Suppose that for each morphism $\al$ the functor $\phi_{\al}$ has a continuous right adjoint $\psi_{\al}$.
Since adjoints are compatible with compositions, the datum $(\cC_i,\psi_{\al})$ extends to a functor
$\cI^{\op}\rightarrow\PrCat$, see \cite[Cor.~5.5.3.4]{Lu1}.
We consider the  limit
$\wh{\cC}=\lim_{i} \cC_{i}$
with the evaluation functor
$\ev_{i}:\wh{\cC}\to\cC_{i}$
for each $i\in\cI$.

\bthm \label{T:lu1}
\hfill
\begin{enumerate}[label=$\mathrm{(\alph*)}$,leftmargin=8mm]
\item
The colimit $\cC=\colim_{i} \cC_{i}$ exists in $\PrCat$. It is equivalent to $\wh{\cC}$.
\item
The equivalence $\cC\cong \wh{\cC}$ is characterized by the condition that the evaluation functor
$\ev_{i}:\wh{\cC}\to\cC_{i}$ is the right adjoint to the tautological functor
$\ins_{i}:\cC_{i}\to\cC$ for each $i\in \cI$.
\eenum
\qed
\ethm

\brem
Assume $\cI$ is filtered.
In this case \cite{Roz} gives another description of the equivalence $\cC\cong \wh{\cC}$, because
for each $i,j\in\cI$ the composition $\ev_j\circ\ins_i:\cC_i\to\cC\cong\wh{\cC}\to\cC_j$ coincides with the colimit
\[\ev_j\circ\ins_i\cong\colim_{\al,\beta}\phi_{\beta}\circ\psi_{\al}\] 
over all $\al:i\to k$ and $\beta:j\to k$.
\erem

\bcor \label{C:lurie}
For each object  $c\in\cC$, the assignment $i\mapsto \ins_{i}\circ\ev_{i}(c)$ yields a functor $\cI\to\cC$.
The obvious map 
$\colim_{i}\ins_{i}\circ\ev_{i}(c)\to c$
is an isomorphism.
\qed
\ecor

\bexa\label{exa-ind}
If $\cX\cong\colim\cX_a$ is a colimit of prestacks then we have
$$\cD(X)\cong\lim_{t^!}\cD(X_a).$$
If the morphisms $i_a:\cX_a\to\cX$ are such that the functor
$(i_a)^!$ has a left adjoint $(i_a)_!$, then we also have
\begin{align}\label{D-ind-sch2}\cD(X)\cong\colim_{t_!}\cD(X_a).\end{align}
Further, for each $\cE\in\cD(X)$ we have
$$\cE\cong\colim (i_a)_!(i_a)^!(\cE).$$
In particular, assume that $X=\colim X_a$ is a reasonable ind-algebraic space of ind-fp type, see Definition \ref{d-indsch}.
For each fp closed embedding $i$ in $\AlgSp_k^\qcqs$ the functor $i^!$ has a left adjoint $i_!$. 
Hence  \eqref{D-ind-sch1} and Theorem \ref{T:lu1} imply that  we have an equivalence
as in \eqref{D-ind-sch2}.

\eexa

Finally, we consider some adjunctions in limits or colimits, following \cite[\S5.1.7]{BKV}.
Let $\Cat$ \label{N:catell} be either $\StCat$ or $\PrCat$. 
Let $\cI$ be a small $\infty$-category.
Next, let $\cD_{.}$ and $\cC_{.}$ be two functors $\cI\to\Cat$ and
let $\Phi_{.}:\cC_{.}\to\cD_{.}$ be a morphism of functors. It is given by
a functor  $\Phi_i:\cC_i\to\cD_i$ for each $i\in\cI$ and
an equivalence $\Phi_{\al}:\cD_{\al}\circ\Phi_i\cong \Phi_j\circ\cC_{\al}$ for each morphism
$\al:i\to j$ in $\cI$. 
Let $\widehat{\Phi}=\lim_{i}\Phi_i$ be the limit functor $\wh\cC\to \wh\cD$.
If $\cI$ is filtered, let also $\Phi=\colim_i\Phi_i$ be the colimit functor $\cC\to\cD$.
Assume that
\begin{enumerate}[label=$\mathrm{(\alph*)}$,leftmargin=8mm]
\item
for each $i\in\cI$ the morphism $\Phi_i:\cC_i\to\cD_i$ has a left adjoint $\Psi_i$,

\item 
for each morphism $\al:i\to j$ in $\cI$ the base change morphism
$bc_{\al}:\Phi_j\circ \cD_{\al}\to\cC_{\al}\circ \Psi_i$, obtained from the counit map by adjointness
$$\cD_{\al}\to \cD_{\al}\circ\Phi_i\circ\Psi_i\cong  \Phi_j\circ\cC_{\al}\circ \Psi_i$$  is an equivalence
(Beck--Chevalley condition).
\eenum

\bprop\label{adjlu}
Suppose the assumptions above hold.\hfill
\begin{enumerate}[label=$\mathrm{(\alph*)}$,leftmargin=8mm]
\item
The collection of $\Psi_i$'s and $bc_{\al}$'s define a morphism of functors $\Psi:\cD\to \cC$.

\item
The limit functor $\widehat{\Phi}$ has the left adjoint $\wh{\Psi}$.
For each $i\in\cI$ the base change morphism is an equivalence
$\Psi_i\circ\ev_i^{\cD}\rightarrow\ev_i^{\cC}\circ\widehat{\Psi}$.

\item Assume that $\cI$ is filtered. 
The colimit functor $\Phi$ has the left adjoint $\Psi$. 
For each $i\in\cI$ the base change morphism is an equivalence 
$\Psi\circ\ins_i^{\cD}\rightarrow\ins_i^{\cC}\circ~\Psi_{i}$
\qed
\end{enumerate}
\eprop

\subsection{Cohomological operations}
\subsubsection{General functoriality}\label{funct}
For each morphism of $\infty$-stacks $f:\cX\ra\cY$, we have a functor $f^!:\cD_{\bullet}(\cY)\to\cD_\bullet(\cX)$.
If $f$ is a topological equivalence then the functor $f^!$ is an equivalence of categories by \cite[Cor.~5.3.6]{BKV}.
If $f$ is an ind-fp ind-proper morphism, then the functor $f^!$ admits a left adjoint $f_!$ 
that satisfies base change by \cite[Prop.~5.3.7]{BKV}.
If $f$ is a topological fp locally closed immersion, then we have a functor $f_*$ by \cite[\S5.4.4]{BKV}.
We are interested in the existence of a left adjoint $f^*$.
To do so, we consider an intermediate class between ind-schemes and $\infty$-stacks that contains $[X_K/H]$ 
for any group ind-scheme $H$ of ind-ft over $k$ which acts on $X_K$, for any $X\in\Aff^\ft_k$.

\bdefi\label{glu}
Following \cite[Def.~5.5.1]{BKV},
we say that an $\infty$-stack $\cY$ satisfies gluing if for each topological fp  immersion 
$f:\cX\hra\cY$, the functor $f_{*}$ has a left adjoint $f^*$.
\edefi 

\brem\label{glustk}\hfill
\begin{enumerate}[label=$\mathrm{(\alph*)}$,leftmargin=8mm]
\item
If $\cY$ satisfies gluing, then so does each topologically fp-immersion 
$f:\cX\hra\cY$ by \cite[Lem.~5.5.5.(a)]{BKV}.
\item
If $\cX\cong\colim \cX_{\al}$ where each $\cX_{\al}$ satisfies gluing and transition maps are qc open immersions
or closed fp immersions, then  $\cX$ satisfies gluing, see \cite[Lem.~5.5.5.(b)]{BKV}. 
Note that any open immersion is of finite presentation if and only if it is qc by \cite[Tag 01TU]{Sta}.
In particular, it is the case of ind-ft-schemes.
\item
The quotient $[X/H]$ of an ind-ft-algebraic space by an ind-ft group ind-scheme $H$ satisfies gluing,
see \cite[Prop.~5.5.7]{BKV}.  In particular, the Grassmannian $\Gr_{G}$ statisfies gluing for any minimal KM group $G$.
\eenum
\erem

\subsubsection{Functoriality for lft prestacks}\label{ss-fonct}
We will restrict to a class of prestacks for which we have more operations. 

\bdefi\label{lft}
A prestack $\cX$ is locally of finite type (lft)  if 
$\cX=\colim_{S\ra \cX} S$
with
$S\in\Aff^\ft_{k}.$
\edefi

\brem\label{rem-lft}
A quotient $[X/H]$ is a  lft prestack if $X$ is an ind-ft ind-scheme and $H$ is an ind-ft 
group ind-scheme acting on $X$, due to the bar construction.
\erem

\bprop\label{prop-lft}
Let $f:\cX\ra\cY$ and $g:\cZ\ra\cY$ be morphisms of lft prestacks.
\hfill
\begin{enumerate}[label=$\mathrm{(\alph*)}$,leftmargin=8mm]
\item
The functor $f^{!}$ has a left adjoint $f_!$.
\item
If $f$ is ft schematic, there is a continuous adjoint pair $(f^*,f_*)$
with base change equivalences for $(f_*,g^{!})$ and $(f^{*},g_!)$.
\item
If $f$ is  ind-ft ind-schematic, there is a continuous functor $f_*:\cD(\cX)\to\cD(\cY)$ 
with base change equivalences for $(f_*,g^{!})$.
We have a left adjoint at the level of pro-categories
\[f^{*}:\cD(\cY)\ra\Pro(\cD(\cX)).\]
\eenum
\eprop

\bpf
Part (a) is \cite[Cor.~1.4.2]{Ga}.
Part (b) is \cite[\S2.6-2.8]{Ho}.
Note that all stacks are lft in Gaitsgory's work by  \cite[\S0.8.1]{Ga}, as well as in
Ho's work which uses the same conventions as \cite{Ga}, 
see \cite[\S 2.1]{Ho}.
To extend $f_*$ from ft-schematic to ind-ft-schematic, if $\cY$ is a ft-scheme, then using Lurie's adjunction, we write 
$\cD(\cX)\cong\colim_{S\hra \cX,i_*}\cD(S)$ where $S$ runs over the ft-closed subschemes of $\cX$ and we get a functor 
$f_*$ that satisfies base change  $(f_*,g^{!})$, thus it extends to a functor $f_*$ for any ind-ft-schematic morphism of 
lft prestacks that continues to satisfy base change.
The second one is proved in \cite[App.~A3]{DG1}.
\epf

\brem
Given lft prestacks $\cX$ and $\cY$, the Lurie adjunction yields equivalences
$$\cD(\cX)\cong\colim\cD(X_a),\quad \cD(\cY)\cong\colim\cD(Y_a)$$ 
where the colimits run over all ft schemes $\la_a:X_a\to\cX$ and $\mu_a:Y_a\to \cY$.
Let $f:\cX\to\cY$ be a morphism such that $f=\colim f_a$ where $(f_a)$ is a system of morphisms of schemes  $f_a:X_a\to Y_a$.
By  \cite[Cor.~2.8.5]{Ho} we have $$f^*=\colim (\la_a)_!(f_a)^*(\mu_a)^!.$$
Further, if $f:\cX\to Y$ is a morphism from a lft prestack to a ft scheme and $f_a=f\mu_a$, then 
 \cite[Lem.~2.5.9]{Ho} and \cite[Proof of Prop.~1.5.2]{Ga} yield
$$f_!=\colim (f_a)_!(\mu_a)^!.$$
\erem

\subsubsection{Monoidal structure}
Let $f_i:X_i\ra Y_i$ be morphisms of ft schemes with $i=1,2$.
Let $\star$ denote either $!$ or $*$.
The Künneth formula \cite[Exp.~III, (1.6.1), Prop.~1.7.4]{SGA5} yields an isomorphism
\begin{equation}
(f_1\times f_2)^\star(K_1\boxtimes K_{2})\cong f_{1}^\star K_1\boxtimes f_{2}^\star K_{2}
\label{box1}
\end{equation}
and \cite[Exp.~III, \S 1.6.2, (1.7.1)]{SGA5} yields an isomorphism
\begin{equation}
(f_{1}\times f_2)_\star(K_1\boxtimes K_2)\cong(f_1)_\star K_1\boxtimes (f_2)_\star K_2.
\label{box2}
\end{equation}
Using the definition of $\cD$ as a colimit, this allows us to define 
for arbitrary affine schemes $Y_1$, $Y_2$ and for any objects $K_i\in\cD(Y_i)$ the external tensor product $K_{1}\boxtimes K_2$. 
For any prestacks $\cY_i$, not necessarily lft, we define the external tensor product such that \eqref{box1} holds for pull-back 
by morphisms $S_i\ra\cY_i$ for $S_i$ being affine schemes, see also \cite[\S1.3.4]{Ga}
or \cite[\S2.10]{Ho}.
Taking the first projection $p:\cX\times\cY\ra\cX$, we obtain 
\begin{equation}
p^{!}K\cong K\boxtimes\omega_{\cY}.
\label{kun1}
\end{equation}
Indeed it is enough to check \eqref{kun1} on schemes after pulling back to $S_1\times S_2\ra\cX\times \cY$.
Let $ \widehat\boxtimes$ denote the external tensor product of pro-sheaves. 

\blem
Let $f_i:\cX_i\ra \cY_i$ be   ind-ft ind-schematic morphisms, between lft prestacks for $i=1,2$.
\begin{enumerate}[label=$\mathrm{(\alph*)}$,leftmargin=8mm]
\item
We have an isomorphism
\begin{equation}
(f_{1}\times f_2)^{*}(K_1\boxtimes K_2)\cong f_{1}^{*}K_1\widehat\boxtimes f_{2}^{*}K_2.
\label{kun3}
\end{equation}
\item
If in addition $\cY_i$ are ind-schemes of ind-ft, we have an isomorphism
\begin{equation}
(f_{1}\times f_2)_!(K_1\boxtimes K_2)\cong(f_1)_!K_1\boxtimes (f_2)_!K_2.
\label{kun2}
\end{equation}
\eenum
\elem

\bpf
Let first assume that $\cY_i=Y_i$ are ft schemes.
Thus $\cX_i=X_i$ are ind-schemes of ind-ft.
We write $X_i\cong\colim_a X_{i,a}$ for a system of maps $\la_{i,a}:X_{i,a}\to X_i$ where $X_{i,a}$ are ft-schemes.
Hence, for any $K_i\in\cD(Y_i)$, the formula \eqref{box1} yields
\begin{align}
\label{kun3a}
\begin{split}
(f_1\times f_2)^{*}(K_1\boxtimes K_2)
&\cong\quotlim((f_{1,a}\times f_{2,a})^{*}(K_1\boxtimes K_2))\\
&\cong\quotlim((f_{1,a})^{*}K_1\boxtimes (f_{2,a})^*K_2)\\
&\cong f_{1}^{*}K_1\widehat\boxtimes f_{2}^{*}K_2
\end{split}
\end{align}
with $f_{i,a}=f_i\circ\la_{i,a}$ a morphism of ft schemes.
For the second claim,
Lurie's adjunction yields
\[\cD(X_i)\cong\colim_{(\la_{i,a})_!}\cD(X_{i,a}),\quad
\cD(X_1\times X_2)\cong\colim_{(\la_a)_!}\cD(X_{1,a}\times X_{2,a})\]
where $\la_a=\la_{1,a}\times\la_{2,a}$.
Any complex $K_i\in\cD(X_i)$ admits the presentation
$$K_i\cong\colim_a (\la_{i,a})_!(\la_{i,a})^{!}K_i.$$
Since taking $!$-pushforwards is a left adjoint functor, it is continuous.
Thus, we have
\begin{align*}
(f_1\times f_2)_!(K_1\boxtimes K_2)
&\cong(f_1\times f_2)_!\colim_a(\la_a)_!\la_a^!(K_1\boxtimes K_2)\\
&\cong\colim_a(f_{1,a}\times f_{2,a})_!\la_a^!(K_1\boxtimes K_2)\\
&\cong\colim_a(f_{1,a}\times f_{2,a})_!(\la_{1,a}^!K_1\boxtimes \la_{2,a}^!K_2)\\
&\cong\colim_a((f_1)_!(\la_{1,a})_!\la_{1,a}^!K_1\boxtimes (f_2)_!(\la_{2,a})_!\la_{2,a}^!K_2)\\
&\cong(f_1)_!K_1\boxtimes (f_2)_!K_2
\end{align*}
where the third isomorphism follows from \eqref{box1} and the fourth one from \eqref{box2}.

Consider now the general case. 
Fix a presentation 
$$\cY_i\cong\colim_a Y_{i,a},$$ for a system of maps
$\mu_{i,a}:Y_{i,a}\ra\cY_i$ with $Y_{i,a}$ a scheme of ft.
This yields the presentation 
$$\cX_i\cong\colim_a X_{i,a}$$ 
for the system of maps $\la_{i,a}:X_{i,a}\to\cX_i$
where $X_{i,a}=\cX_i\times_{\cY_i}Y_{i,a}$
is an ind-ft ind-scheme.
Lurie's adjunction yields
\begin{align*}
K_{1}\boxtimes K_2
&\cong \colim_a(\la_a)_{!}(\la_a)^{!}(K_1\boxtimes K_2)\\
&\cong \colim_a(\la_a)_{!}((\la_{1,a})^{!}K_1\boxtimes (\la_{2,a})^{!}K_2).
\end{align*}
The first isomorphism is as in Example \ref{exa-ind}, the second one
is \eqref{box1} extended to lft prestacks.
The claim \eqref{kun3} follows by proper base change and \eqref{kun3a}.
Indeed, since taking $*$-pullbacks is a left adjoint functor by Proposition \ref{prop-lft},
it commutes with colimits by \cite[Cor.~5.5.2.9]{Lu1}.
Since we work with pro-sheaves, to check \eqref{kun3} we check  it term by term, hence the claim follows from \eqref{kun3a}.
\epf

\subsubsection{Projection formula}
Let $\cX$ be any prestack. For any $K,L\in\cD(\cX)$, we set
\[K\overset{!}{\otimes}L=\Delta^{!}(K\boxtimes L).\]
By construction, for any morphism of prestacks $\phi:\cY\ra\cX$, we have
\begin{equation}
\phi^{!}(K\overset{!}{\otimes}L)=\phi^{!}K\overset{!}{\otimes}\phi^{!}L
\label{tir-tens}
\end{equation}

\blem\label{p-form}
Let $\phi:\cY\ra\cX$ be any ind-ft ind-schematic morphism of lft prestacks.
For any $K\in\cD(\cX)$ and $L\in\cD(\cY)$ we have un isomorphism
$\phi_*(\phi^!K\overset{!}{\otimes}L)\cong K\overset{!}{\otimes}\phi_*L.$
\elem

\bpf
Using base change and \eqref{tir-tens}, 
we reduce to the case where $\cX$ is a ft scheme.
Then the assertion is standard and follows by Verdier duality from the usual projection formula, as in this case $K\overset{!}{\otimes}L=\bD(\bD(K)\otimes\bD(L))$.
\epf


\subsubsection{Base change identities}

\bprop\label{star-calc}
Let $H$ be a group ind-scheme of ind-ft,
$f:\uucX\ra\cX$ be an $H$-torsor between lft prestacks, and
$\phi:\cY\ra\cX$ be an ind-ft ind-schematic morphism of lft prestacks.
We form the Cartesian diagram
\begin{align}\label{CD1}
\begin{split}
\xymatrix{\uucY\ar[r]^{\uuphi}\ar[d]_{g}&\uucX\ar[d]^\form\\\cY\ar[r]^{\phi}&\cX}
\end{split}
\end{align}
\begin{enumerate}[label=$\mathrm{(\alph*)}$,leftmargin=8mm]
\remi
The obvious morphism is an isomorphism
\begin{equation}
 \uuphi^{*}f^{!}\cong g^{!}\phi^{*}.
\label{star-1}
\end{equation}
\remi
If $\uucX$ is an ind-ft ind-scheme, then
\benumr
\remi
the canonical map is an equivalence
\begin{equation}
\uuphi^{!}f^{*}\cong g^{*}\phi^{!},
\label{wsm2}
\end{equation}
\remi
the base change gives equivalences
\begin{equation}
 f^{!}\phi_{!}\cong\uuphi_{!}g^{!}
,\quad 
f^{*}\phi_{*}\cong \uuphi_{*}g^{*}.
\label{shriek-1}
\end{equation}
\eenum
\eenum
\eprop

\bpf
The bar-resolution gives a presentation of the stack $\cX=[\uucX/H]$.
Hence 
$$\cD(\cX)\cong\lim_{[n],t^!} \cD(H^n\times\uucX).$$
Similarly, we have a stack isomorphism $\uucX=[H\times\uucX/H]$ which yields an 
equivalence 
$$\cD(\uucX)=\lim_{[n],t^!} \cD(H^{n+1}\times\uucX).$$
The transitions morphisms $H^n\times\uucX\to H^m\times\uucX$ decompose as isomorphisms and projections of the form
$H\times\cZ\to\cZ$ as in \cite[Prop.~5.5.7]{BKV}.
Thus \eqref{star-1} follows from the base change property for the Cartesian 
diagram of lft prestacks 
$$\xymatrix{H\times\uucY\ar[d]_{q}\ar[r]^{\id\times\uuphi}&H\times\uucX\ar[d]^{p}\\\uucY\ar[r]^{\uuphi}&\uucX}$$
More precisely, by  \eqref{kun1} and \eqref{kun3}, for each $K\in\cD(\uucX)$ we have
\[(\id\times\uuphi)^{*}p^!K=\omega_H\boxtimes\uuphi^{*}K=q^!\uuphi^{*}K.\]
This proves (a). To prove (b), we assume that $\uucX$ is an ind-ft ind-scheme. Then so is $\uucY$ by base change.
Write $X=\uucX$ and $Y=\uucY$.
Note that $\cX=[X/H]$ and $\cY=[Y/H]$.
For any $K\in\cD(Y)$, using \eqref{kun2} we obtain
\[p^!\uuphi_{!}K=\omega_H\boxtimes\uuphi_{!}K=(\id\times\uuphi)_{!}(\omega_{H}\boxtimes K)=(\id\times\uuphi)_{!}q^!K.\]
Next, we consider the Cartesian diagram of ind-ft schemes
\begin{align}\label{cart-diag0}
\begin{split}
\xymatrix{H\times Y\ar[d]_{q}\ar[r]^{\id\times\phi}&H\times X\ar[d]^{p}\\Y\ar[r]^{\phi}&X}
\end{split}
\end{align}
We must check that
\begin{align}\label{wsma}
(\id\times\phi)^{!}p^{*}\cong q^{*}\phi^{!}
,\quad
p^{*}\phi_{*}\cong (\id\times\phi)_{*}q^{*}
,\quad
p^{!}\phi_{!}\cong (\id\times\phi)_{!}q^{!}.
\end{align}
Writing $H\cong\colim H_{a}$ as a colimit of ft schemes, one has an equality of pro-systems
\[(\id\times\phi)^{!}p^{*}K=\quotlim (\id\times\phi)^{!}p_{a}^{*}K\in \Pro(\cD(H\times Y)),\]
with $p_{a}:H_a\times Y\ra Y$. Similarly, we have
\[q^{*}\phi^{!}K=\quotlim p_{a}^{*}\phi^{!}K\in \Pro(\cD(H\times Y)).\]
So we reduce the first isomorphism in \eqref{wsma} to the case where $H$ is of ft.
We deduce that  
\[(\id\times\phi)^{!}p^{*}K\cong(\id\times\phi)^{!}(\bql\boxtimes K)\cong\bql\boxtimes\phi^!K\cong q^{*}(\phi^{!}K).\]
For the second one, we are reduced to prove the base change property for the Cartesian diagram 
\eqref{cart-diag0}.
Again, we first reduce to the case where $H$ is of ft.
By continuity of the functors, we can assume that everything is of ft. 
Using \eqref{box2}, we get
\[p^{*}\phi_{*}K\cong\bql\boxtimes \phi_{*}K\cong(\id\times\phi)_{*}(\bql\boxtimes K)\cong(\id\times\phi)_{*}q^{*}K.\]
\epf

\subsubsection{Contractive morphisms}

\bdefi\label{def-contracting}\hfill
\begin{enumerate}[label=$\mathrm{(\alph*)}$,leftmargin=8mm]
\item
A $\bG_m$-action contracts a prestack $\cX$ to $\cX^0$ if the attractor, 
repeller and fixed point locus are such that
$\cX^+=\cX$ and $\cX^-=\cX^0$.
\item
A $\bG_m$-equivariant presentation $H=\colim H_a$ of a group ind-scheme is contracting 
if the $\bG_m$-action contracts  $H$  to $\{1\}$.
\eenum
\edefi

\blem\label{pul-push}
Let $H$ be a group ind-scheme of ind-ft with a $\bG_m$-equivariant contracting presentation.
Let $X$ be an ind-ft ind-scheme with an $H$-action.
Set $\cX=[X/H]$ and $f:X\ra\cX$.
Then, the functors $f^{*}$ and $f^{!}$ are fully faithful.
Equivalently, we have $f_{!}f^{!}\cong\id\cong f_{*}f^{*}$.
\elem

\bpf
Since $H$ admits a contracting $\bG_m$-equivariant presentation, 
we have $H\cong\colim H_{a}$ with the $\bG_m$-action on each $H_{a}$ contracting to $\{1\}$.
Let $\pi^{[n]}:H^n\times X\ra\cX$ be the obvious projection.
Let $K\in\cD(\cX)$.
Since $\cX=[X/H]$, we have an equivalence 
$$\cD(\cX)=\lim_{[n],t^!} \cD(H^{n}\times X).$$
Under this equivalence, the complex 
$K$ identifies with the projective system $(K^n)$ given by $K^{n}=(\pi_n)^{!}K$.
Since $X=[H\times X/H]$, we also have an equivalence 
$$\cD(X)=\lim_{[n],t^!} \cD(H^n\times H\times X).$$
Let $p_n$ be the obvious map $H^{n+1}\times X\to H^n\times X$.
The functor $(p_n)_*$ satisfies base change by Proposition \ref{prop-lft}.
Hence Proposition \ref{star-calc} yields
\[f_{*}f^{*}K=((p_n)_{*}(p_n)^{*}K^{n}),\quad f_{!}f^{!}K=((p_n)_{!}(p_n)^{!}K^{n})\]
In particular, we are reduced to prove that for each $n$, we have
\[(p_n)_{*}(p_n)^{*}\cong(p_n)_{!}(p_n)^{!}\cong\id.\]
Fix a $\bG_m$-equivariant presentation $H\cong\colim H_{a}$.
For each complex $K\in\cD(H^n\times X)$ we have an equality of pro-systems
\[(p_n)_{*}(p_n)^{*}K=\quotlim (p_{n,a})_*(p_{n,a})^{*}K,\]
We are thus reduced to the case $H=H_{a}$. 
In this case, the functors $(p_{n,a})_*$ and $(p_{n,a})^*$ are both continuous
by Proposition \ref{prop-lft}.
Taking a presentation $X=\colim X_a$ and using Lurie's adjunction again, we obtain
$\cD(X)=\colim\cD(X_a)$.
Thus we can assume $H^n\times X$ to be a ft $k$-scheme.
Since the action of $\bG_m$ is contracting, the contraction principle for algebraic spaces in \cite[Prop.~3.2.2.(a)]{DG1} 
implies that for each $n,$ $a$ we have
\[(p_{n,a})_*(p_{n,a})^{*}K=\eps_{a}^{*}(p_{n,a})^{*}K\cong K,\]
where $\eps_a:1\ra H_{a}$ is the unit.
The proof of the second assertion is similar, using instead the contraction principle for $\eps_a^{!}$ in \cite[Prop.~3.2.2.(b)]{DG1}.
\epf

We now turn Lemma \ref{pul-push} and Proposition \ref{star-calc} into a definition.

\bdefi\label{dwsm}
Let $f:\uucX\ra\cX$ be a morphism of lft prestacks.
We say that
\begin{enumerate}[label=$\mathrm{(\alph*)}$,leftmargin=8mm]
\item
$f$ satisfies universal descent if the functor $\underline f^!$ in \eqref{upi} yields an equivalence
$$\underline f^!:\cD(\cX)\cong\lim_{[n],t^!}\cD(\uucX^n),$$
as well as after any base change $\cX'\ra\cX$ of lft prestacks,
\item
$f$ is weakly smooth if it is  ind-ft ind-schematic, 
 satisfies universal descent and for each ind-ft ind-schematic morphism $\phi:\cY\ra\cX$ with Cartesian diagram \eqref{CD1}, 
 we have
\benumr
\remi
$\uuphi^{!}f^{*}\cong g^{*}\phi^{!}$ and $\uuphi^{*}f^{!}\cong g^{!}\phi^{*},$
\puteqnum \label{shriek-star}
\remi
$f^{!}\phi_{!}\cong \uuphi_{!}g^{!}$ and $f^{*}\phi_{*}\cong \uuphi_{*}g^{*}$, 
\puteqnum \label{wsm-3}
 \eenum
 as well as after any  ind-ft-schematic base change $\cX'\ra\cX$ of lft prestacks,
\remi
$f$ is contractive, if it is weakly smooth and $f^{*}$, $f^{!}$ are fully faithful and stay so after any  ind-ft-schematic base change 
$\cX'\ra\cX$ of lft prestacks.
\eenum
\edefi

\blem\label{lem-contract}
Let $H$ be a group ind-scheme of ind-ft, and
$f:\uucX\ra\cX$ be an $H$-torsor between lft prestacks.
\begin{enumerate}[label=$\mathrm{(\alph*)}$,leftmargin=8mm]
\item
If $H$ is formally smooth then $f$ is also formally smooth.
\item
If $\uucX$ is an ind-ft-scheme then $f$ weakly smooth.
 \eenum
\elem

\bpf
Formal smoothness of morphisms of stacks is \'etale-local on the target.
Since $f$ is an \'etale $H$-torsor, there are sections \'etale locally, see \S\ref{sec-ind-sch}.
Part (a) follows. Part (b) follows from Proposition \ref{star-calc}.
\epf

\brem\label{rem-contract}\hfill
\begin{enumerate}[label=$\mathrm{(\alph*)}$,leftmargin=8mm]
\item
Ind-ft-schematic morphisms are stable by ind-ft-schematic base change and by composition. 
Indeed, for such $X\ra Y\ra Z$, we can assume $Z$ is a scheme.
Then, write $Y\cong \colim Y_{a}$ with $Y_a$ of finite type over $Z$. 
We have $X\cong \colim X\times_{Y}Y_a\cong\colim X_{aa'}$ with $X_{aa'}$ of finite type over $Y_{a}$, hence the claim.
\item
Universal descent,  weakly smoothness and contractiveness are also stable by ind-ft ind-schematic base change 
and by composition.
\eenum
\erem

We need stability by base change of these notions for a larger class of morphisms.

\
\bprop\label{gm-bc}
Consider a Cartesian diagram of lft prestacks
\begin{equation}
\begin{split}
\xymatrix{\uucZ\ar[d]^{h}\ar[r]^{\uupsi}&\uucY\ar[d]^{g}\ar[r]^{\uuphi}&\uucX\ar[d]^\form\\\cZ\ar[r]^{\psi}&\cY\ar[r]^{\phi}&\cX}
\label{gt1}
\end{split}
\end{equation}
Assume that $\psi$ is weakly smooth and $\phi\psi$ is ind-ft-schematic.
If  $f$ is contractive, then $g$ is contractive.
\eprop

\bpf
By base change, we already know that $g$ is ind-ft-schematic and satisfies universal descent.
We claim that $g^{!}$ and $g^{*}$ are fully faithful. We start with $g^{!}$.
We must prove that the counit $g_!g^{!}\ra\Id$ is an equivalence.
Since $\psi$ satisfies universal descent, the functor $\psi^{!}$ is conservative, thus we can check the assertion after applying $\psi^{!}$.
Since $f$ is contractive and $\phi\psi$ is  ind-ft ind-schematic, the morphism $h$ is contractive by base change.
Thus $h^!$ is fully faithful. Since $\psi$ is weakly smooth, we apply \eqref{wsm-3} to $\psi$ to get
\[\psi^{!}g_{!}g^{!}\cong h_{!}\uupsi^{!}g^{!}\cong h_{!}h^{!}\psi^{!}\cong \psi^{!},\]
 as wished.
The claim for $g^{*}$ is similar: one check it after applying $\psi^!$, using base change, full faithfulness of $h^{*}$ 
and \eqref{shriek-star} for $\psi$.
Let us prove now that $g$ is weakly smooth.
Let $\la:\cW\ra\cY$ be an ind-ft morphism.
We consider the Cartesian diagram
$$\xymatrix{\underline{\cW}\ar[d]^{\uug}\ar[r]^{\uula}&\uucY\ar[d]^{g}\\\cW\ar[r]^{\la}&\cY}$$
Let us prove that $\uula^{!}g^{*}\cong \uug^{*}\la^{!}$, the $g^{!}$-assertion is similar.
We base change by $\la$ the diagram \eqref{gt1} to get
$$\xymatrix{&\cW_{\uucZ}\ar[rrrr]^{\ti{\la}_{\cZ}}\ar[dl]_{\ti{\uupsi}}\ar[dr]^{\ti{h}}&&&&\uucZ\ar[dr]^{h}\ar[dl]_{\uupsi}\\
\underline{\cW}\ar[dr]_{\uug}&&\cW_{\cZ}\ar[dl]^{\ti{\psi}}&&\uucY\ar[dr]_g&&\cZ\ar[dl]^{\psi}\\&\cW\ar[rrrr]^{\la}&&&&\cY}$$
By base change by ind-ft-schematic morphisms, the maps $\uupsi$ and $\ti{\uupsi}$ are still weakly smooth.
Thus the functors $\uupsi^!$ and $\ti{\uupsi}^!$ are conservative, and it is enough to prove the claim after applying $\ti{\uupsi}^!$.
We obtain
\[\ti{\uupsi}^{!}\uula^{!}g^{*}=\la_{\cZ}^{!}\uupsi^{!}g^{*}=\la_{\cZ}^{!}h^{*}\psi^{!}=\ti{h}^{*}\uula^{!}\psi^{!}.\]
Here the second and third equalities follow from weakly smoothness of $\psi$ and $h$. 
Finally, by base change again $\ti{\psi}$ is weakly smooth, thus we get
\[\ti{h}^{*}\uula^{!}\psi^{!}=\ti{h}^{*}\ti{\psi}^{!}\la^{!}=\ti{\uupsi}^{!}\uug^{*}\la^{!},\]
as wished.
The second claim about proper base change is proved along the same lines.
\epf

\subsubsection{Homotopically ind-schematic morphisms}

We must define $*$-pushforwards for a lft prestack. It may not exist. 
We define some pushforwards using Lemma \ref{pul-push}.

\bdefi\label{d-bullet}
Let $\phi:\cY\ra\cX$ be a morphism of lft prestacks.\hfill
\begin{enumerate}[label=$\mathrm{(\alph*)}$,leftmargin=8mm]
\item
$\phi$ is homotopically ind-schematic if there is a morphism of lft
prestacks $g:\uucY\ra\cY$ such that
\benumr
\remi
the composition 
$\phi \,g$
is  ind-ft ind-schematic,
\remi
$g$ is contractive.
\eenum
\item
If $\phi$ is homotopically ind-schematic  
we define the functor $\phi_{\bullet}=(\phi\,g)_{*}g^{*}:\cD(\cY)\ra\Pro(\cD(\cX))$.
\eenum
\edefi

\blem\label{ind-bullet0}\hfill
\begin{enumerate}[label=$\mathrm{(\alph*)}$,leftmargin=8mm]
\item
The functor $\phi_\bullet$ is well-defined.
If $\phi:\cY\ra\cX$ is ind-ft ind-schematic, then $\phi_{*}$ exists and  $\phi_{\bullet}=\phi_{*}$.
\item
Homotopically ind-schematic morphisms are stable by base change by morphisms 
$\phi$ of lft prestacks such that there exists a weakly smooth $\psi$ with $\phi\,\psi$  ind-ft ind-schematic.
\eenum
\elem

\bpf
To prove (a) note that, if there are two contractive morphisms $g_1:\uucY_1\to\cY$ and $g_2:\uucY_2\to\cY$ as in the definition, 
then we can form the morphism $g_3:\uucY_3\to\cY$ where $\uucY_3=\uucY_{1}\times_{\cY}\uucY_2$.
Since $g_1$ and $g_2$ are contractive, the $*$-pullbacks by the maps 
$f_1:\uucY_{3}\ra\uucY_1$ and $f_2:\uucY_{3}\ra\uucY_2$ given by ind-ft base change are fully faithful.
Hence 
$$(\phi\,g_1)_*g_1^*=(\phi\,g_1)_*(f_1)_*f_1^*g_1^*=(\phi\,g_3)_*g_3^*=(\phi\,g_2)_*(f_2)_*f_2^*g_2^*=(\phi\,g_2)_*g_2^*$$
In particular, the functor $\phi_{\bullet}$ does not depend of the choice of the contractive morphism $g:\uucY\to\cY$. 
Part (b) follows from Proposition \ref{gm-bc}, and the fact that ind-ft ind-schematic morphisms are preserved by base change.
\epf

\blem\label{rem-contract2}
Let $H$ be a group ind-scheme of ind-ft with a $\bG_m$-equivariant contracting presentation.
\begin{enumerate}[label=$\mathrm{(\alph*)}$,leftmargin=8mm]
\remi
Any $H$-torsor $g:\uucY\ra\cY$ with $\uucY$ an ind-ft ind-scheme is contractive.
\remi
Any morphism $\phi:\cY\ra\cX$ such that 
$\phi g$ is ind-ft-schematic is homotopically ind-schematic.
\eenum
\elem

\bpf
The torsor $g$ is weakly smooth by Lemma \ref{rem-contract}.
The functors $g^*$, $g^!$ are fully faithful by Lemma \ref{pul-push},
and they stay so after any ind-ft ind-schematic base change 
$\cY'\ra\cY$ of lft prestacks, because $H$-torsors are stable by base change.
Thus $g$ is contractive. 
\epf

\blem\label{hom-comp}
The composition of two homotopically ind-schematic morphisms 
$\psi:\cZ\to\cY$ and $\phi:\cY\to\cX$ is also homotopically ind-schematic.
\elem

\bpf
Consider the Cartesian diagram
$$\xymatrix{\uucZ\times_{\cY}\uucY\ar[d]_{h'}\ar[r]&\uucZ\ar[d]^{h}\\
\cZ\times_{\cY}\uucY\ar[r]^-{g'}\ar[d]&\cZ\ar[d]\\
\uucY\ar[r]^{g}\ar[dr]&\cY\ar[d]^{\phi}\\
&\cX}$$
As ind-ft-schematic morphisms are preserved by composition and base change, remark \ref{rem-contract} gives that  $g'\, h'$ is contractive. 
Moreover, by base change, the morphism $\uucZ\times_{\cY}\uucY\ra\uucY$ is ind-ft ind-schematic and,
by composition, the morphism $\uucZ\times_{\cY}\uucY\ra\cY$ is also ind-ft ind-schematic.
\epf
We claim the functor $(-)_\bullet$ is compatible with base changes, compositions, and the projection formula.

\bprop\label{bul-comp-proj}
Let $\psi:\cZ\to\cY$ and $\phi:\cY\to\cX$ be homotopically ind-schematic morphisms.
\begin{enumerate}[label=$\mathrm{(\alph*)}$,leftmargin=8mm]
\item
For any ind-ft-schematic morphism $\pi:\cX'\to\cX$ of lft prestacks, let
$\pi':\cY'\to\cY$ and $\phi':\cY'\to\cX'$ be the base changes of $\pi$ and $\phi$.
There is an isomorphism of functors $\pi^{!}\phi_{\bullet}\ra (\phi')_{\bullet}(\pi')^{!}.$
\item
There is an isomorphism of functors
$(\phi\,\psi)_{\bullet}\cong\phi_\bullet\psi_\bullet$.
\item
For each $K\in\cD(\cX)$ and $L\in\cD(\cY)$ we have an isomorphism
$\phi_\bullet(\phi^! K\overset{!}{\otimes} L)\cong K\overset{!}{\otimes} \phi_\bullet L$.
\eenum
\eprop

\bpf
For part (a) we consider the  following Cartesian diagram
\begin{equation}
\xymatrix{\uucY'\ar[d]_{g'}\ar[r]^{\pi''}\ar@/_2pc/[dd]_{\uuphi'}&\uucY\ar[d]^{g}\ar@/^2pc/[dd]^{\uuphi}\\
\cY'\ar[d]_{\phi'}\ar[r]^{\pi'}&\cY\ar[d]^{\phi}\\\cX'\ar[r]^{\pi}&\cX}
\label{B-cart}
\end{equation}
The  base changes yields a chain of isomorphisms
\[\pi^{!}\phi_{\bullet}=\pi^{!}\uuphi_{*}g^{*}\cong
(\uuphi')_{*}(\pi'')^{!}g^{*}\stackrel{\eqref{shriek-star}}{\cong}(\uuphi')_{*} (g')^{*}(\pi')^{!}=(\phi')_{\bullet}(\pi')^{!}.\]
To prove (b), note that, by Lemma $\ref{hom-comp}$, the composed map $\phi\,\psi$ is homotopically ind-schematic.
We have the following Cartesian diagram
$$\xymatrix{\uucZ\times_{\cY}\uucY\ar@/_2pc/[dd]_{\uupsi'}\ar[d]_{h'}\ar[r]^-{\uug}&\uucZ\ar[d]^{h}\ar@/^2pc/[dd]^{\uupsi}\\\cZ\times_{\cY}\uucY\ar[r]^-{g'}\ar[d]&\cZ\ar[d]^{\psi}\\\uucY\ar[r]^{g}\ar[dr]_{\uuphi}&\cY\ar[d]^{\phi}\\&\cX}$$
By Lemma \ref{hom-comp} and base change, to compute the functor $(\phi\,\psi)_{\bullet}$ we can use the map
$$\uuphi\uupsi':\uucZ\times_{\cY}\uucY\ra\cX.$$
We have
\[\phi_\bullet\psi_\bullet=\uuphi_*g^{*}\uupsi_*h^{*}=
\uuphi_*\uupsi'_*\uug^{*}h^*=\uuphi_*\uupsi'_*(g'h')^{*}=(\phi\,\psi)_{\bullet},\]
where the second equality follows from the fact that the map $g$ satisfies \eqref{wsm-3}.

To prove (c), let $g:\uucY\ra\cY$ be a contractive morphism such that the map $\phi\,g$ is ind-ft ind-schematic.
The projection formula holds for the maps $\phi\,g$ and $g$.
Further,  Lemma \ref{ind-bullet0} yields $(\phi\,g)_*=(\phi\,g)_\bullet$ and  $g_*=g_\bullet$.
Finally, since $g$ is contractive we have $g_*g^*=\id$.
Thus Lemma \ref{p-form} and Proposition \ref{bul-comp-proj} yield
\begin{align*} 
K\overset{!}{\otimes} \phi_\bullet L
&=K\overset{!}{\otimes} (\phi\,g)_{*}g^{*}L\\
&=(\phi\,g)_*((\phi\,g)^{!}K\overset{!}{\otimes}g^{*}L )\\
&=(\phi\,g)_\bullet((\phi\,g)^{!}K\overset{!}{\otimes}g^{*}L )\\
&=\phi_\bullet g_\bullet((\phi\,g)^{!}K\overset{!}{\otimes}g^{*}L )\\
&=\phi_\bullet g_{*}(g^{!}\phi^{!}K\overset{!}{\otimes}g^{*}L)\\
&=\phi_{\bullet}(\phi^{!}K\overset{!}{\otimes}g_{*}g^{*}L )\\
&=\phi_{\bullet}(\phi^{!}K\overset{!}{\otimes} L).
\end{align*}
\epf

\subsubsection{Formalism of kernels}\label{kern}
We follow \cite[Cor.~4.2.3]{DG}. 
Let $\cY_1,$ $ \cY_2$ be lft prestacks.
Let $p_1,p_2:\cY_1\times\cY_2\ra\cY_1$ be the obvious projections.
Assume that  $p_2$ is homotopically ind-schematic. 
For any sheaf $K\in\cD(\cY_1\times\cY_2)$ we have the functor
$$\Phi_{K}:\cD(\cY_1)\ra\cD(\cY_2),\quad
 \cM\mapsto (p_2)_\bullet(p_{1}^{!}\cM\overset{!}{\otimes}K)$$
We say that $K$ is the kernel of the functor $\Phi_{K}$.
More generally,  let
$$\xymatrix{&\cX\ar[d]^{r}\ar[dr]^{q}\ar[dl]_{p}\\
\cY_{1}&\cY_{1}\times\cY_2\ar[r]^{p_2}\ar[l]&\cY_{2}}$$
be a commutative diagram of lft prestacks.
Assume that $p_2$ and $r$ are homotopically ind-schematic.
Let 
$$K=r_{\bullet}(\omega_{\cX})\in\cD(\cY_1\times\cY_2).$$
The projection formula in Proposition \ref{bul-comp-proj} implies that the functor $\Phi_{K}$ identifies with $q_{\bullet}p^{!}$.
The functor $K\ra \Phi_{K}$ may not be fully faithful. 
Nevertheless, the morphisms $\Phi_{K}\ra\Phi_{L}$ that we will encounter come from morphisms of sheaves $K\ra L$.
In particular, we will encounter the following setting.
Consider a commutative diagram
\begin{equation}
\begin{split}
\xymatrix{
&\cU\ar[d]^{j}\ar[dr]\ar[dl]\\
\cY_1&\cX\ar[d]^{r}\ar[r]^{q}\ar[l]_{p}&\cY_2\\
&\cY_{1}\times\cY_2\ar[ur]\ar[ul]&
}
\label{ker-form}
\end{split}
\end{equation}
Assume that $j$ is a qc open immersion.
Hence, it is ind-ft ind-schematic.
We consider the kernel 
$$L= (rj)_{\bullet}(\omega_{\cU})\in\cD(\cY_1\times\cY_2).$$
Proposition \ref{bul-comp-proj}, the equality $j_{\bullet}=j_{*}$ in
Lemma \ref{ind-bullet0} and the equality $j^{!}=j^{*}$
yield a  morphism  of kernels
\begin{equation}
K\ra r_{\bullet}j_{*}j^{*}\omega_{\cX}\ra (r j)_{\bullet}\omega_{\cU}=L.
\label{ker-form2}
\end{equation}
By functoriality, this yields an obvious map
\begin{equation}
\Phi_{K}\ra\Phi_{L}.
\label{ker-form3}
\end{equation}

\subsection{The t-structures}
Consider an $\infty$-stack $\cY$ and a collection of non-empty topological fp 
locally closed reduced substacks $\cY_{\al}$ with $\al\in\cI$, such that $\cY_{\al}\cap\cY_{\beta}=\emptyset$ whenever $\alpha\neq\beta$.
\begin{enumerate}[label=$\mathrm{(\alph*)}$,leftmargin=8mm]
\item
The collection $\{\cY_{\al}\}$ is a {finite constructible} stratification if $\cI$ is finite and there is a total order 
$\al_1<\dots<\al_n$ of $\cI$ and an increasing sequence $\emptyset=\cY_0\subset\dots\subset\cY_n=\cY$ such that 
$\cY_{\al_i}\subset\cY_i\backslash \cY_{i-1}$ and the embedding is a topological equivalence for each $i=1,\dots, n$,
see \cite[\S 2.4.5]{BKV}.
\remi
A substack $\cX\subset\cY$ is adapted to the collection $\{\cY_{\al}\}$ if for each $\al$, 
either $\cX\cap\cY_{\al}=\emptyset$ or $\cY_{\al}\subset\cX$. Then, we set $\cI_\cX=\{\al\in\cI\,;\,\cY_\al\subset\cX\}$.
\remi
The collection $\{\cY_{\al}\}$ is a bounded constructible stratification if there is a presentation $\cY\cong\colim_\beta \cU_\beta$ 
as a filtered colimit where $\cU_\beta$ is an fp open adapted substack of $\cY$ and  the induced stratification 
on each $\cU_\beta$ is finite constructible.
\remi
The collection $\{\cY_{\al}\}$ is  a constructible stratification if there is a presentation $\cY\cong\colim_\beta \cY_\beta$ as a filtered colimit 
where $\cY_\beta$ is a topologically fp closed adapted substack of $\cY$ and  the induced stratification on each $\cY_\beta$ is bounded 
constructible.
\eenum
A perversity on a stratified stack $(\cY\,,\,\{\cY_{\al}\})$ is a function 
$p_\nu:\cI\ra\bZ$, $\al\mapsto\nu_{\al}$.
If $\cY$ satisfies gluing, then we have pullback functors 
$$(i_\al)^*,(i_\al)^!:\cD(\cY)\ra\cD(\cY_{\al}).$$
If each $\cD(\cY_{\al})$ has a perverse $t$-structure $(\mathstrut^{p}\cD^{\leq 0}(\cY_{\al})\,,\,\mathstrut^{p}\cD^{\geq 0}(\cY_{\al}))$ 
then \cite[Prop.~6.4.2]{BKV} yields a unique $t$-structure $(\mathstrut^{p}\cD^{\leq 0}(\cY)\,,\,\mathstrut^{p}\cD^{\geq 0}(\cY))$ 
on $\cD(\cY)$ such that
\begin{equation}
\begin{split}
\mathstrut^{p}\cD^{\geq 0}(\cY)=\{K\in\cD(\cY)\,;\, (i_\al)^!K\in\mathstrut^{p}\cD^{\geq -\nu_{\al}}(\cY_{\al}), \forall~\al\in\cI\}.
\end{split}
\label{t-st1}
\end{equation}
If the stratification $\{\cY_{\al}\}$ is bounded constructible, then we also have
\begin{equation}
\begin{split}
\mathstrut^{p}\cD^{\leq 0}(\cY)=\{K\in\cD(\cY)\,;\, (i_\al)^*K\in\mathstrut^{p}\cD^{\leq -\nu_{\al}}(\cY_{\al}), \forall~\al\in\cI\}.
\end{split}
\label{t-st2}
\end{equation}
Let $\cD(\cY)^\heartsuit$ be the heart of the $t$-structure. We identify the heart with its homotopy category as in \cite[Rmk.~1.2.1.12]{Lu2}.
In particular, we view it as an Abelian category.
If $j:\cU\hra\cY$ is an fp open immersion adapted to the stratification $(\cY_{\al})$, 
and $i:\cZ\hra\cX$ is its closed complement, 
we equip $\cD(\cU)$ and $\cD(\cZ)$ with the induced $t$-structures. 
For any $K\in \cD(\cU)^\heartsuit$
one can define the intermediate extension $j_{!*}K\in\cD(\cY)^\heartsuit$ as in  \cite[\S6.4.8.(b)]{BKV}.
If further the stratification $\{\cY_{\al}\}$ is bounded, then by \cite[Lem.~6.4.11.(d)]{BKV} the sheaf $j_{!*}K$
is the unique sheaf $\ti{K}\in\cD(\cY)$ such that
\begin{equation} 
(i_\al)^!\ti{K}\in \mathstrut^{p}\cD^{> -\nu_{\al}}(\cY_{\al})
,\quad
(i_\al)^*\ti{K}\in\mathstrut^{p}\cD^{< -\nu_{\al}}(\cY_{\al})
,\quad
\forall~\al\in \cI\smallsetminus\cI_{\cU}
\label{t3}
\end{equation}

\blem\label{Irr-Perv}
Let $\cY$ be a bounded stratified stack with a perversity $p_\nu$ such that $\cD(\cY_a)$ has a $t$-structure.
The simple objects in $\cD(\cY)^\heartsuit$ are the intermediate extensions of the simple objects in
$\cD(\cY_\al)^\heartsuit$ as $\al$ runs into $\cI$.
\elem

\bpf
Choose a partial order on the set $\cI$ such that $\cY_{\leq\al}=\bigcup_{\beta\leq\al}\cY_\beta$ is
an fp closed substack of $\cY$ and $\{\beta\in\cI\,;\,\beta>\al\}$ is finite for each $\alpha\in\cI$. 
Set $\cZ_\al=\cY_{\leq\al}$ and $\cU_\al=\cY\setminus\cZ_\al$. 
Let $i_\al:\cZ_\al\to\cY$ and $j_\al:\cU_\al\to\cY$ be the obvious maps.
By general properties of glueing, we have a short exact sequences of Abelian categories
$$\xymatrix{0\ar[r]&\cD(\cZ_\al)^\heartsuit\ar[r]^{(i_\al)_*}&\cD(\cY)^\heartsuit\ar[r]^{(j_\al)^!}&\cD(\cU_\al)^\heartsuit\ar[r]&0}$$
where $(i_\al)_*$ is exact and fully faithful and embeds 
$\cD(\cZ_\al)^\heartsuit$ in $\cD(\cY)^\heartsuit$ as a Serre subcategory, and $(j_\al)^!$ is exact and yields an equivalence
$\cD(\cY)^\heartsuit/\cD(\cZ_\al)^\heartsuit\cong\cD(\cU_\al)^\heartsuit$ such that simple in $\cD(\cU_\al)^\heartsuit$ lift uniquely to
$\cD(\cY)^\heartsuit$ via $(j_\al)_{!*}$. The claim follows by descending induction.
\epf

\subsection{The t-structure on the affine Grassmannian}
Recall that $\Gr_{G}=[G/G_\cO]$, $\Gr_{G_c}=[G_c/G_\cO]$ and that
both stacks $\Gr_{G}$, $\Gr_{G_c}$ are quotients of an ind-ft ind-scheme by a group ind-scheme of ind-ft.
Thus they both satisfy gluing.
Further, the Cartan decomposition yields  a  stratification on $\Gr_{G_c}$.

\blem\label{b-constr}\hfill
\begin{enumerate}[label=$\mathrm{(\alph*)}$,leftmargin=8mm]
\item
The decomposition $\Gr_{G_c}=\bigsqcup_{\la\in \LLa_+}\Gr_\la$ 
is a constructible stratification.
\item
The decomposition $\Gr_{\leqslant\la}=\bigsqcup_{\mu\leqslant\la}\Gr_\mu$ 
is a bounded constructible stratification.
\eenum
\elem

\bpf
The Cartan decomposition implies that it is a stratification. We must check that the stratification of $\Gr_{\leqslant\la}$ is bounded.
The open sets $U_{\mu}=\Gr_{\leq\la}\setminus\Gr_{\leq\mu}$ with $\mu\leq\la$ and $\mu\in \LLa_+$ yield
the open covering
\[\Gr_{\leq\la}=\bigcup_{\mu\leq\la} U_{\mu}.\]
We must check that $U_{\mu}$ contains only a finite number of strata $\Gr_{\nu}$,
 which by the closure relations ends to check that the set of cocharacters 
 $\{\nu\in \LLa_+\,;\,\mu\leq\nu\leq\la\}$ is finite.
\epf

The $t$-structure we now construct holds only on the category of $G(\cO)$-equivariant sheaves on $\Gr_{G_c}$. 
We'll need the following contraction lemma.

\blem\label{contract}
Let $f:X\ra S$ be a morphism of ind-schemes of ind-ft over $k$. 
Assume that $X$ has a $\bG_m$-equivariant presentation and that the $\bG_m$-action contracts $X$
to $X^0\cong S$. Then $f_{!}\omega_{X}\cong \omega_{S}$.
\elem

\bpf
Since the assertion commutes with filtered colimits, 
considering a $\bG_m$-equivariant presentation of $X$ 
we reduce to the case 
where $X$ and $S$ are ft $k$-schemes and $X$ is contracted to $S$.
Then, the isomorphism $f_{!}\omega_{X}\cong\omega_{S}$
is a particular case of hyperbolic localization, 
with attractor $X^{+}=X$ and repeller and fixed point locus $X^{-}=X^{0}=S$, see \cite[Thm.~B]{Ric}.
\epf

Recall that  $K_{\la}$ 
is the stabilizer in $G_{\co}$ of the point $[s^{\la}]\in \Gr_G$ and $P_\la^-=G\cap K_\la$.

\bprop\label{t-strat}
Let $\la\in \LLa_+$. Assume that $G$ is of affine type (untwisted or not). 
\hfill
\begin{enumerate}[label=$\mathrm{(\alph*)}$,leftmargin=8mm]
\item
There is an equivalence of stacks
$[G_{\co}\backslash\Gr_{\la}]\cong \bB K_{\la}.$
\item
There is an equivalence $\infty$-categories $\cD(\bB K_{\la})\cong\cD(\bB L_\la)$ 
for some reductive group $L_\la$.
\eenum
\eprop

\bpf
Part (a) is clear, because by Proposition  \ref{unif} we have $[G_{\co}/K_{\la}]_\red\cong(\Gr_\la)_\red$
 and $\cD$ takes topological equivalences to equivalences of $\infty$-categories.
Let us prove (b).
The obvious map $p_\la:\Spec(k)\ra \bB K_{\la}$ is a morphism of  lft prestacks, see Remark \ref{rem-lft}.
The functor $(p_\la)^{!}$ is continuous by construction.
It admits a left adjoint $(p_\la)_!$ by Proposition \ref{prop-lft}, which is also continuous.
The functor $(p_\la)^{!}$ is conservative, having sections locally for the étale topology.
By Barr-Beck-Lurie \cite[Thm.~4.7.3.5]{Lu2}, we have an equivalence
\[\cD(\bB K_{\la})\cong (p_{\la})^{!}(p_\la)_!\bql\text{-}\text{mod}\cong H_{*}(K_{\la},\bql)\text{-}\text{mod}.\]
The last equality follows from the base change formula \eqref{shriek-1}, applied to the Cartesian diagram
$$\xymatrix{K_\la\ar[r]\ar[d]&\Spec(k)\ar[d]^{p_\la}\\\Spec(k)\ar[r]^{p_\la}&\bB K_\la}$$
The loop action contracts $K_{\la}$ to $P_{\la}^-$.
So Lemma \ref{contract} yields an isomorphism 
$H_{*}(K_{\la},\bql)\cong H_{*}(P_{\la}^-,\bql).$
By Proposition \ref{sch-form} the group $P_{\la}^-$ is a parahoric of $G$.
Let $L_\la$ be  the Levi factor. 
Since $G$ is of affine type, the group $L_\la$ is reductive.
By \S\ref{opp-J}, we have
$P_{\la}^-\cong L_\la\ltimes U_{\la}^-$
where $U_{\la}^-$ is the unipotent radical. 
In particular, the cocharacter $\la$ contracts $P_{\la}^-$ to $L_\la$ and Lemma \ref{contract} yields an isomorphism
\[H_{*}(K_{\la},\bql)\cong H_{*}(L_\la,\bql).\]
Applying Barr-Beck-Lurie, we obtain an equivalence
$\cD(\bB K_{\la})\cong\cD(\bB L_\la).$
\epf

\brem
Using some ongoing work of Y. Varshavsky, it should be possible to construct a $t$-structure 
for any simply connected KM group, using an equivalence of categories
$\cD(\bB G)\cong\lim_{P}\cD(\bB P),$
where $P$ runs over all fp-parabolics.
\erem

Since $\bB L_\la$ is a smooth ft Artin stack, this proposition yields a  $!$-shifted $t$-structure 
on $\cD_{G_\cO}(\Gr_\la)$
such that the dualizing sheaf $\omega_{\Gr_{\la}}$ is perverse.  
A sheaf $\cE$ is perverse for this $!$-shifted $t$-structure if $\cE[-\dim(\bB L_\la)]$ is perverse for the usual $t$-structure.
To get the right $t$-structure $\cD_{G_\cO}(\Gr_{\leq\la})$, 
we need an extra shift $[-2 \langle\rho,\la\rangle]$ over each strata, 
whose justification will appear when we prove the $t$-exactness of the constant term functor.
On $\cD_{G_\cO}(\Gr_{\leq\la})$ we now define a $t$-structure by gluing, 
using Lemma \ref{b-constr} and \eqref{t-st1}, \eqref{t-st2} with the perversity function 
$$p_\nu:\{\eta\in\LLa_+\,;\,\eta\leqslant\la\}\to\bZ
,\quad
\eta\mapsto 2 \langle\rho,\la-\eta\rangle.$$ We get
\begin{align*}
\begin{split}
K\in \mathstrut^{p}\cD^{\leq 0}_{G_\cO}(\Gr_{\leq\la})
&\Longleftrightarrow (i_\eta)^*K\in\mathstrut^{p}\cD^{\leq -2 \langle\rho,\la-\eta\rangle }_{G_\cO}(\Gr_{\eta})\,,\, \forall\eta\leq\la,
\\
K\in\mathstrut^{p}\cD^{\geq 0}_{G_\cO}(\Gr_{\leq\la})
&\Longleftrightarrow (i_\eta)^!K\in\mathstrut^{p}\cD^{\geq -2 \langle\rho,\la-\eta\rangle}_{G_\cO}(\Gr_{\eta})\,,\, \forall\eta\leq\la.
\end{split}
\end{align*}
Thus, we can consider the object $\IC_\la\in\cD(\Gr_{\leq\la})^\heartsuit$ given by
\[\IC_{\la}=(i_\la)_{!*}\omega_{\Gr_{\la}}[-2 \langle\rho,\la\rangle].\]
Note that  Lurie's adjunction yields
\[\cD_{G_\cO}(\Gr_{G_c})\cong\lim_{\la}\cD_{G_\cO}(\Gr_{\leq\la})\cong\colim_{\la}\cD_{G_\cO}(\Gr_{\leq\la}).\]
Closed immersions are $t$-exact.
The proof is as in \cite[Lem.~6.4.11]{BKV}.
Hence, we get a $t$-structure on $\cD_{G_\cO}(\Gr_{G_c})$.

\bprop\label{t-gen}
Assume that $G$ is of affine type.
\hfill
\begin{enumerate}[label=$\mathrm{(\alph*)}$,leftmargin=8mm]
\item
The category $\mathstrut^{p}\cD^{\leq 0}_{G_\cO}(\Gr_{G_c})$ is generated by the sheaves
$(i_\la)_!\omega_{\Gr_{\la}}[m-2 \langle\rho,\la\rangle]$ with $m\in\NN$.
\item
The category  $\mathstrut^{p}\cD^{\geq 0}_{G_\cO}(\Gr_{G_c})$ is generated by the sheaves
$(i_\la)_*\omega_{\Gr_{\la}}[-m-2 \langle\rho,\la\rangle]$ with $m\in\NN$.
\eenum
\eprop

\bpf
First, note that
$(i_\la)_!\omega_{\Gr_{\la}}[-2 \langle\rho,\la\rangle]\in\mathstrut^{p}\cD^{\leq 0}_{G_\cO}(\Gr_{G_c})$ 
because closed immersions are t-exact.
Let $K\in\mathstrut^{p}\cD^{\leq 0}_{G_\cO}(\Gr_{G_c})$ be a complex such that
$$\Hom((i_\la)_!\omega_{\Gr_{\la}}[m-2 \langle\rho,\la\rangle]\,,\,K)=0,\quad m\in\NN.$$
We must prove that $K\cong 0$.
By Lemma \ref{b-constr}, the stratification
$(\Gr_{\la})_{\la\in\Lambda_{+}}$ is constructible.
Since $\Gr_{G_c}$ satisfies gluing, an induction using the fiber sequence in \cite[Lem.~5.4.1]{BKV}
implies that it is enough to check that for each $\la$ we have $(i_\la)^!K=0$.
Thus we are reduced to the case of a single strata and $K\in {}^{p}\cD^{\leq 0}$. 
 By Proposition \ref{t-strat}, we have an equivalence
 $\cD_{G_{\co}}(\Gr_{\la})\cong\cD(\bB L_{\la})$ given by 
 $!$-pullback, where $L_{\la}$ is a connected reductive group.
Up to a shift by $[\left\langle 2\rho,\la\right\rangle]$, we can suppose that the $t$-structure on  $\cD_{G_{\co}}(\Gr_{\la})$ 
is the pullback of the $!$-$t$-structure on $\cD(\bB L_{\la})$.
The map $\pi:\Spec(k)\ra\bB L_{\la}$ is smooth and $\pi^{!}$ is $t$-exact.
Thus by conservativity, it is sufficient to get the claim for $\pi^{!}K$, where it is clear.
The assertion follows.

If $K\in\mathstrut^{p}\cD^{\geq 0}_{G_\cO}(\Gr_{G_c})$ is orthogonal to all 
 $(i_\la)_*\omega_{\Gr_{\la}}[-m-2 \langle\rho,\la\rangle]$, it is enough to check that for every 
$\la$ we have $(i_{\la})^{*}K=0$ using the fiber sequence in \cite[Lem.~5.5.3]{BKV}.
We conclude in a similar way.
\epf

Since the group $L_\la$ is connected for each dominant cocharacter $\la$, 
Lemma \ref{Irr-Perv} and Proposition \ref{t-strat} yield the following.

\bcor\label{simple}
Assume that $G$ is of affine type.
The set of isomorphism classes of simple objects in $\cD_{G_{\co}}(\Gr_{G_c})^\heartsuit$ is $\{\IC_\la\,;\,\la\in\LLa_+\}$.
\qed
\ecor

\section{The hyperbolic localization}

In this section we gather some material on hyperbolic localization on stacks.
We will apply it in \S\ref{HLGr} to the affine Grassmannian to prove
Theorem \ref{Braden}

\subsection{Interpolating family}

\subsubsection{Attractors and repellers}\label{att-rep}
Given two $\infty$-prestacks $\cY$ and $\cZ$ over $k$ with $\bG_m$-actions, let 
$\Hom^{\bG_m}(\cY,\cZ)$
be the functor of $\bG_m$-equivariant morphisms.
We define the functors
\begin{align}\label{ZZZ}
\cZ^{+} =\Hom^{\bG_m}(\ab^{1},\cZ),\quad \cZ^{-}=  \Hom^{\bG_m}(\ab_{-}^{1},\cZ),\quad \cZ^{0}=\Hom^{\bG_m}(\Spec(k),\cZ),
\end{align}
where $\ab^{1}$ is equipped with the standard $\bG_m$-action and $\ab^{1}_{-}$ the opposite $\bG_m$-action.
The functor $\cZ^{+}$ is called the {attractor}, $\cZ^{-}$ the {repeller} and $\cZ^{0}$ the {fixed point locus}.
We say that the $\bG_m$-action contracts $\cZ$ to $\cZ^0$ if $\cZ^+=\cZ$ and $\cZ^-=\cZ^0$.
We have a commutative diagram
$$\xymatrix{&\cZ^{+} \ar[dl]^{q^{+}}\ar[dr]^{p^{+}}&\\\cZ^{0}\ar@/_1pc/[dr]_{i^-}\ar@/^1pc/[ur]^{i^+}&&\cZ\\&\cZ^{-}\ar[ul]_{q^{-}}\ar[ur]_{p^{-}}}$$
The map $q^{\pm}$ is the evaluation at 0, 
the map $p^{\pm}$ is the functoriality with respect to the inclusion $\bG_m\subset\ab^1$,
 and $i^{\pm}$ is the functoriality with respect to the structural map
$\ab^1\ra\Spec(k)$ or $\ab_{-}^1\ra\Spec(k)$.
Recall that a prestack is separated if the diagonal is  closed.

\blem\label{att-mon}
Let $\cZ$ be a separated prestack.
The maps $p^{\pm}:\cZ^{\pm}\ra \cZ$ are monomorphisms.
\elem

\bpf
Given a test scheme $Y$ and two maps $a, b:Y\ra\cZ$,
the locus $Z\subset Y$ where $a|_Z=b|_Z$ is $\cZ\times_{\cZ\times\cZ}Y.$ 
Since $\cZ$ is separated, the functor $Z$ is closed in $Y$.
The map $p^{+}$ is identified with the restriction map
\[\Hom^{\bG_m}(\ab^1,\cZ)\ra \Hom^{\bG_m}(\bG_m,\cZ),\]
and similarly for $p^{-}$. 
Since $\bG_{m,R}$ is schematically dense in $\ab^{1}_{R}$ for any ring $R$, a point $\phi\in\cZ^{+}(R)$ is completely determined by its restriction.
\epf

A morphism $\cX\ra\cY$ of prestacks admits a relative deformation theory if Definition \ref{defT} holds.

\bprop\label{att-smooth}
Assume that $\cZ$ is formally smooth with a relative deformation theory.
Then $\cZ^{0}$ and $\cZ^{+}$ are formally smooth.
\eprop

\bpf
Let us prove that $\cZ^{+}$ is formally smooth, formal smoothness of $\cZ^{0}$ is analog.
Let $I\subset R$ be a square zero ideal $\bar{R}=R/I$.
Let $\bar{f}:\ab^{1}_{\bar{R}}\ra\cZ$ be a $\bG_m$-equivariant morphism.
By formal smoothness of $\cZ$, we lift it non-equivariantly as $f:\ab^{1}_{R}\ra\cZ$.
Let $L_{\cZ/k}$ be the pro-cotangent complex.
By Proposition \ref{equiv-lift}, the obstruction to the existence of a $\bG_m$-equivariant map sits in
$H^{1}(\bG_m,M)$ with 
\[M=\Map(\bar{f}^{*}(L_{\cZ/k}),\cJ),\]
where $\cJ$ the sheaf of ideals defining $\ab^{1}_{\bar{R}}\ra\ab^{1}_{R}$. 
Since $M$ is a quasi-coherent sheaf,  this $H^{1}$ is zero because $\bG_m$ is diagonalizable.
\epf

\subsubsection{The tilde functor}\label{ti-f}
Let $\bX=\ab^{2}$ with the hyperbolic $\bG_m$-action given by $$\la\cdot(x,y)=(\la x,\la^{-1}y).$$
We view $\bX$ as a scheme over $\ab^{1}$, via the multiplication map 
$$m:\bX\ra\ab^1,\quad (x,y)\mapsto xy.$$ 
For any $\infty$-prestack $\cZ$ over $k$ with a $\bG_m$-action we 
define $\widetilde\cZ$ to be the $\infty$-prestack over $\ab^1$ such that
\begin{align}\label{ti-Z}
\Hom_{\ab^{1}}(S,\tilde\cZ)=\Hom^{\bG_m}(\bX_S,\cZ)
,\quad
\bX_S:=\bX\times_{\ab^1} S
\end{align}
for any affine scheme $S$ over $\ab^{1}$. 
The $\infty$-groupoid
$\ti{\cZ}(S)$ consists of all $\bG_m$-equivariant maps 
$\bX_S\ra\cZ.$
Let $\str$ be the structure map $\ti{\cZ}\to\ab^1$.
For each section $\sigma:\ab^{1}\ra\bX$ there is a morphism $\sigma^{*}:\ti{\cZ}\ra\cZ$.
Let 
$$\pi^-=(\sigma_1)^*,\quad\pi^+=(\sigma_2)^*$$
where $\sigma_1(t)=(t,1)$ and $\sigma_2(t)=(1,t)$.
We define the map 
\begin{equation}
\ti{p}=(\pi^-,\pi^+,\str):\ti{\cZ}\ra\cZ\times\cZ\times\ab^{1}
\label{ti-p}
\end{equation}
The action of $(\bG_m)^2$ on $\bX$ given by
\[(\la,\mu)\cdot(x,y)= (\la x,\mu y),\quad (x,y)\in\bX,\quad \la,\mu\in\bG_m,\]
 yields an action of $(\bG_m)^2$ on $\ti{\cZ}$.
 The map $\ti{p}$ is equivariant for the action of 
 $(\bG_m)^2$ on $\cZ\times\cZ\times\ab^1$ given by
\begin{equation}
(\la,\mu)\cdot(x,y,t)=(\la x,\mu y,(\la\mu)^{-1}t).
\label{gm-eq}
\end{equation}
In the non-affine case, the map $\ti{p}$ may not be closed, but  we have the following.

\blem\label{ti-mono}
\hfill
\begin{enumerate}[label=$\mathrm{(\alph*)}$,leftmargin=8mm]
\item
If $\cZ$ is an ind-ft ind-affine ind-scheme, then $\ti{p}$ is a closed immersion.
\item
If $\cZ$ is a separated prestack, then $\ti{p}$ is a monomorphism.
\eenum
\elem

\bpf
If $\cZ$ is an affine scheme of ft, then, by \cite[Prop.~2.3.7]{DG1}, 
the stack $\ti{\cZ}$ is representable by an affine scheme of ft and $\ti{p}$ is a closed immersion. 
The functor $\cZ\mapsto\ti{\cZ}$ commutes with filtered colimits.
So if $\cZ$ is an ind-ft ind-affine ind-scheme, then the map $\ti{p}$ is again a closed immersion, proving (a).
To prove (b) set $\bX^{\circ}=\bX\setminus\{(0,0)\}$.
For a map $S\ra\ti{\cZ}$, the corresponding 
$\bG_m$-equivariant map $\bX_{S}^{\circ}\ra\cZ$ is completely determined by the composition
\[S\ra\ti{\cZ}\stackrel{\ti{p}}{\rightarrow}\cZ\times\cZ\times\ab^1.\]
Now, the scheme $\bX^{\circ}_{S}$ is schematically dense in $\bX_{S}$.
Since $\cZ$ is separated, we conclude as in Lemma \ref{att-mon}.
\epf

\subsubsection{Some  fiber products}
For any prestack $\cZ$ we form the fiber product $\cZ^\pm\times_{\cZ}\ti{\cZ}$ using $\pi^\pm$.
We want to construct two  maps
\begin{equation}
r^\pm:\cZ^\pm\times\ab^1\to\cZ^\pm\times_{\cZ}\ti\cZ.
\label{r+}
\end{equation}
To do this, following Drinfeld \cite[Def.~3.1.8]{Dr2},  we define two schemes $\bX^\pm$ over $\ab^1$ 
 with a $\bG_m$-action.
Let
\begin{equation}
\ab^1\times\bG_m\ra\bX,\quad (t,\la)\mapsto (t\la ,\la^{-1}),
\label{sigma1}
\end{equation}
\begin{equation}
\ab^1\times\bG_m\ra\bX,\quad (t,\la)\mapsto (\la ,t\la^{-1}). 
\label{sigma2}
\end{equation}
Both maps are open embeddings. Let $\bX^+$ be the pushout of the diagram of open embeddings
\[\ab^1\times\ab^1\hookleftarrow\ab^1\times\bG_m\hra\bX,\]
where the last arrow is  \eqref{sigma1}, and
let $\bX^-$ be the pushout of the diagram 
\[\ab^1\times\ab_{-}^1\hookleftarrow\ab^1\times\bG_m\hra\bX,\]
where the last arrow is  \eqref{sigma2}.
Both diagrams hold in the category of schemes over $\ab^1$ with a $\bG_m$-action.
For $\ab^1\times\ab^1$ and $\ab^1\times\ab^1_-$ the structural map is the first projection.
So $\bX^\pm$ are also schemes over $\ab^1$ with a $\bG_m$-action.
By \cite[(3.5)-(3.6)]{Dr2} we have
\begin{align}\label{isom33}
\Hom_{\ab^{1}}(S,\cZ^\pm\times_{\cZ}\ti\cZ)\cong\Hom^{\bG_m}(\bX_{S}^\pm,\cZ)
,\quad 
\bX_{S}^\pm:=\bX^\pm\times_{\ab^1}S.
\end{align}
In addition, there are  $\bG_m$-equivariant morphisms over $\ab^1$
$$\sigma^+:\bX^+\ra\ab^1\times\ab^1
,\quad
\sigma^{-}:\bX^-\ra\ab^1\times\ab^{1}_{-}.$$
The first one is the blow-up at $(0,0)$ and the second one  the blow-up at $(0,\infty)$.
See \cite[Lem.~3.1.10]{Dr2} for details.
Thus for any test scheme $S\ra \ab^1$, 
we have morphisms 
$$\sigma_{S}^\pm:\bX_{S}^\pm\ra S\times\ab^1_\pm$$
where $\ab^1_+=\ab^1$.
Since
$$
\Hom_{\ab^1}(S,\cZ^\pm\times\ab^1)=\Hom^{\bG_m}(S\times\ab^1_\pm,\cZ),$$
pulling back the isomorphisms \eqref{isom33} by $\sigma_{S}^\pm$ yields the maps
\[(\sigma_{S}^\pm)^{*}:\Hom_{\ab^1}(S,\cZ^\pm\times\ab^1)\ra\Hom_{\ab^{1}}(S,\cZ^\pm\times_{\cZ}\ti\cZ)\]
from which we obtain the maps $r^\pm$ in \eqref{r+}.
By \cite[Prop.~3.1.3]{Dr2},  these maps are open embeddings if $\cZ$ is a ft algebraic space over $k$.

\subsubsection{Interpolating family over \texorpdfstring{$\ab^1$}{A1}}
Let $\ti{\cZ}_{t}$ be the fiber over $t\in\ab^1$. 
By \cite[Prop.~2.2.6]{DG2}, the map $\ti{p}$ in \eqref{ti-p} is an isomorphism from 
$\ti\cZ\times_{\ab^{1}}\bG_m$ 
to the graph $\Gamma$ of the action map  $\bG_m\times \cZ\ra\cZ$.
The fiber over 0 is
$$\ti{\cZ}_{0}=\Hom^{\bG_m}(\bX_{0},\cZ)$$
where
$$\bX_{0}=\{(x,y)\in\ab^{2}\,;\,xy=0\}=\bX_{0}^{+}\cup\bX_{0}^{-}$$
with $\bG_m$-equivariant isomorphisms 
$$\ab^1\stackrel{\sim}{\rightarrow}\bX^+_{0},\, t\mapsto (t,0)
,\quad
\ab_{-}^1\stackrel{\sim}{\rightarrow}\bX^-_{0},\, t\mapsto(0,t^{-1}).$$ 
In particular, we have a  morphism
\begin{align}\label{TZ0}\iota:\ti{\cZ}_{0}\ra\cZ^{-}\times_{\cZ^{0}}\cZ^{+}.\end{align}
It is an isomorphism if $\cZ$ is a scheme by \cite[Prop.~2.2.9]{DG1}.
Since the tilde-functor preserves closed immersions by \cite[Prop.~2.3.2]{DG}, 
the map $\iota$ is also an isomorphism if $\cZ$ is an ind-scheme with a $\bG_m$-equivariant presentation.
In general, we have the following.

\bprop\label{int-quot}
For any separated prestack $\cZ$ the map $\iota$ is a monomorphism.
\eprop

\bpf
By construction, we have a commutative diagram
$$\xymatrix{\ti{\cZ}_{0}\ar[d]_{\iota}\ar[r]^-{\ti{p}_{0}}&\cZ\times\cZ\\\cZ^{-}\times_{\cZ^{0}}\cZ^{+}\ar[r]&\cZ^{-}\times\cZ^{+}\ar[u]}$$
By Lemma \ref{ti-mono}, the map $\ti{p}_0$ is a monomorphism.
Thus $\iota$ is also a monomorphism.
\epf

\blem\label{ti-fsmooth}
Let $f:\cZ_1\ra\cZ_2$ be a $\bG_m$-equivariant ind-schematic formally smooth morphism of prestacks.
\hfill
\begin{enumerate}[label=$\mathrm{(\alph*)}$,leftmargin=8mm]
\item
The following induced morphisms are formally smooth
 $$\ti{f}:\ti{\cZ}_1\ra\ti{\cZ}_2,\quad
 f^{+}:(\cZ_1)^{\pm}\ra(\cZ_2)^{\pm},\quad
 f^0:(\cZ_1)^{0}\ra(\cZ_2)^{0}.$$ 
 \item
If $\cZ_2=\Spec(k)$, then the map $\ti{\cZ}_{1}\ra\ab^1$ is formally smooth.
\eenum
\elem

\bpf
Let $I\subset R$ an ideal  such that $I^2=0$.
Set $\bar{R}=R/I$.
We abbreviate $\bX_R=\bX_{\Spec(R)}$ and $\bX_{\bar R}=\bX_{\Spec(\bar R)}$.
Consider a diagram 
$$\xymatrix{\bX_{\bar{R}}\ar[r]^{\bar{\phi}}\ar[d]&\cZ_1\ar[d]^\form\\\bX_{R}\ar[r]^{\phi}&\cZ_2}$$
By formal smoothness of $f$, we can lift $\phi$ to a map $\ti{\phi}:\bX_R\ra\cZ_1$ such that $\ti{\phi}=\bar{\phi}$ modulo $I$.
By Proposition \ref{equiv-lift}, the obstruction to lift $\phi$ in a $\bG_m$-equivariant way lies in $H^{1}(\bG_m, M)$, 
where
$$M=\Map_{\cO_{\bX_{\bar{R}}}}(\bar{\phi}^{*}L_{\cZ_{1}/\cZ_{2}},\cI)$$ 
and  $L_{\cZ_{1}/\cZ_{2}}$ is the relative cotangent complex.
Since $\bG_m$ is diagonalizable,
the group $H^{1}(\bG_m, M)$ vanishes \cite[3.7]{DeG}.
Thus, the map $\ti{f}:\ti{\cZ}_1\ra\ti{\cZ}_2$ is formally smooth by \eqref{ti-Z}.
Note that  the morphism $f$ has relative deformation theory because it is ind-schematic, see Remark \ref{rem-defT1}.
\epf

\subsection{The specialization map}
Given a lft prestack $\cY$, let 
$$\pi:\cY\times\ab^1\ra\cY
,\quad
i_0, i_1:\cY\ra\cY\times\ab^1$$
be the projection and the fp closed embeddings associated with the inclusions $\{0\},\{1\}\subset \ab^1$.
Let 
$$\cD(\cY\times\ab^1)^{\bG_m\text{-}\mono}\subset\cD(\cY\times\ab^1)$$ 
be the full subcategory generated by the essential image of the $!$-pullback
by the obvious map 
$$\cY\times\ab^1\to\cY\times[\ab^1/\bG_m].$$
An object of $\cD(\cY\times\ab^1)^{\bG_m\text{-}\mono}$
is called a $\bG_m$-monodromic complex.
For any $\bG_m$-monodromic complex $K$ on $\cY\times\ab^1$ we abbreviate
$$K_{0}=(i_0)^!K,\quad K_1=(i_1)^!K.$$
We will need the following lemma in \S\ref{finalCT}.

\blem\label{sp}
For any lft prestack $\cY$ and any $K\in\cD(\ab^1\times\cY)^{\bG_m\text{-}\mono}$ there is a functorial specialization map
$\Sp_{K}:K_{1}\ra K_0.$
\elem

\bpf
First, assume that $\cY=Y$ is a $k$-scheme of ft.
The natural transformation
\begin{equation}
(i_0)^!\ra (i_0)^!\pi^!\pi_!\cong\pi_!
\label{io-nat}
\end{equation}
is an equivalence by \cite[\S4.1]{DG1}.
We define $\Sp_K$ to be the obvious map
\[K_1\cong\pi_!(i_1)_!(i_1)^!K\to\pi_{!}K\cong K_0.\]
This map is functorial in the following way. 
Given a morphism of $k$-schemes of ft $f:Y'\ra Y$, we set $K'=(f\times\id_{\ab^1})^{!}K$.
Then, we have the commutative diagram
$$\xymatrix{K_1'\ar[r]^{\Sp_{K'}}&K'_{0}\\f^{!}K_1\ar[u]^{\sim}\ar[r]^{f^{!}\Sp_{K}}&f^{!}K_0\ar[u]^{\sim}}$$
Now, for any lft prestack $\cY$ we define $\Sp_{K}$
to be the projective system $(\Sp_{f^{!}K})$ of its $!$-restriction to affine ft-schemes $f:Y\ra\cY$.
\epf

If $K=L\boxtimes\omega_{\ab^1}$ with $L\in\cD(\cY)$, 
then the specialization $\Sp_K$ is the identity morphism 
\begin{equation}
K_1\cong L\cong K_0.
\label{sp0}
\end{equation}
For any homotopically ind-schematic morphism $f:\cY'\to\cY$ of lft prestacks and any 
$K'\in\cD(\cY'\times\ab^1)^{\bG_m\text{-}\mono}$ we write $K=(f\times\id_{\ab^1})_{\bullet}K'$.
Then, we have the commutative diagram
$$\xymatrix{K_1\ar[d]^{\sim}\ar[r]^{\Sp_{K}}&K_{0}\ar[d]^{\sim}\\
f_{\bullet}K'_1\ar[r]^{f_{\bullet}\Sp_{K'}}&f_{\bullet}K'_0}$$

\subsection{The hyperbolic diagram for the affine Grassmannian}\label{sec-Hyp}
Let  $k$ be an algebraically closed field.
Recall that $G$ is  a minimal simply connected KM group over $k$.
We assume that the characteristic of $k$ is zero, or that it is arbitrary if $G$ is an affine group in order to use Theorem \ref{fsm}.
Applying the previous construction with $\cZ=G$ 
and the $\bG_m$-action given by the adjoint action of  $2\Lrho$, from \eqref{ZZZ} we get
$$\cZ^{+}=B,\quad\cZ^{0}=T,\quad\cZ^{-}=B^{-}.$$
Next, we apply \eqref{ZZZ} with $\cZ=\Gr_G$ 
and the $\bG_m$-action given by the adjoint action of  $2\Lrho$.

\bprop\label{id-att}
The following obvious inclusions are isomorphisms
$$
\Gr_{B}\ra(\Gr_{G})^{+},\quad
\Gr_{T}\ra(\Gr_{G})^{0},\quad
\Gr_{B^{-}}\ra(\Gr_{G})^{-}$$
\eprop

\bpf
The ind-scheme $G_\cO$ is formally smooth by Lemma \ref{loop-rep} and Theorem \ref{fsm}.
Thus, the $\bG_m$-equivariant map $G_K\ra\Gr_{G}$ is formally smooth by Lemma \ref{lem-contract}.
By Lemma \ref{ti-fsmooth} it yields formally smooth maps
\[(G_K)^{+}\ra(\Gr_{G})^{+},\quad
(G_K)^{0}\ra(\Gr_{G})^{0}.\]
Since $G_K$ is ind-affine, we have  
\begin{align}\label{FPGK}
(G_K)^{+}=B_K,\quad(G_K)^{0}=T_K.
\end{align}
Hence, the formally smooth maps
\[B_K\ra (\Gr_{G})^{+},\quad
T_K\ra(\Gr_{G})^{0},\]
factor through $\Gr_{B}$ and $\Gr_{T}$.
Since $B_K\to\Gr_B$ and $T_K\to\Gr_T$ are \'etale torsors with groups $T_\cO$ and $B_\cO$, 
there are sections \'etale locally, see \S\ref{sec-ind-sch}.
Thus, by Proposition \ref{et-lift}, the morphisms 
$$\Gr_{B}\ra(\Gr_{G})^{+}
,\quad
\Gr_{T}\ra(\Gr_{G})^{0}$$ 
are formally smooth. 
We also have a bijection on $k$-points.

By Theorem \ref{Bfond} for each $\mu\in \LLa$ the map $\Gr_{B}^{\mu}\ra\Gr_{G}$ is fp locally closed.
Thus, since $(\Gr_{G})^{+}\ra\Gr_{G}$ is a monomorphism, 
the morphism $\Gr_{B}^{\mu}\ra(\Gr_{G})^{+}$ is fp locally closed and formally smooth, hence it is a qc open immersion.
Thus the morphism $\Gr_{B}\ra(\Gr_{G})^{+}$ is a qc open immersion which is a bijection on $k$-points.
Hence it is an isomorphism.

Since $\Gr_{T}\ra\Gr_{B}$ is fp closed by Proposition \ref{ind-quot},
for each $\nu\in \LLa$ the composed map 
$$\Gr_{T}^{\nu}\hra\Gr_{B}^{\nu}\hra\Gr_{G}$$ is fp locally closed by Theorem \ref{Bfond} again.
So the morphism $\Gr_{T}^{\nu}\ra(\Gr_G)^{0}$ is a qc open immersion,
and $\Gr_{T}\ra(\Gr_G)^{0}$ is an isomorphism.
\epf

\blem\label{clopen}
Let $j:\Gr_{T}\ra\Gr_{B}\times_{\Gr_{G}}\Gr_{B^{-}}$ be the diagonal embedding.
\begin{enumerate}[label=$\mathrm{(\alph*)}$,leftmargin=8mm]
\item
The map $j$ is an fp closed and open immersion. We have
\[\Gr_T\cong\bigsqcup_{\nu\in \LLa}\Gr_{B}^{\nu}\times_{\Gr_{G}}\Gr_{B^{-}}^{\nu}
=\Gr_{T}\times_{(\Gr_{T})^{2}}(\Gr_{B}\times_{\Gr_{G}}\Gr_{B^{-}}).\]
\item
We have the following Cartesian diagram, where $\Delta$ is the diagonal,
$$\xymatrix{\Gr_{T}\ar[r]^-{j}\ar[d]&\Gr_{B}\times_{\Gr_{G}}\Gr_{B^{-}}\ar[d]\\\Gr_{G}\ar[r]^-{\Delta}&\Gr_{G}\times\Gr_{G}}$$
\eenum
\elem

\bpf
By Proposition \ref{ind-quot} the maps $\Gr_{T}\ra\Gr_{B}$ and $\Gr_{T}\ra\Gr_{B^{-}}$ are fp closed.
Thus, since the map $$\Gr_{B}\times_{\Gr_{G}}\Gr_{B^{-}}\ra\Gr_{B}\times\Gr_{B^{-}}$$ is a monomorphism, the map $j$ is fp closed.
It is enough to check that $j$ is formally smooth. 
As $j$ is equivariant for left multiplication on the left hand side and diagonal action on the right hand side, 
it is enough to prove formal smoothness at $1$.
Since $\Gr_G$ is a sheaf, by Proposition \ref{id-att}, we have 
\[\Hom^{\bG_m}(\bP^{1},\Gr_{G})=\Gr_{B}\times_{\Gr_{G}}\Gr_{B^{-}}.\]
Since $G_K$ is ind-affine, by \eqref{FPGK} we have 
$$\Hom^{\bG_m}(\bP^{1},G_{K})=B_K\times_{G_K}B_K^{-}=T_K$$
The obvious map $G_K\ra\Gr_{G}$ is $\bG_m$-equivariant, hence yields a map
\[\phi:\Hom^{\bG_m}(\bP^{1},G_K)\ra\Hom^{\bG_m}(\bP^{1},\Gr_{G})\]
which factors as
\[T_K\ra\Gr_{T}\stackrel{j}{\rightarrow}\Gr_{B}\times_{\Gr_{G}}\Gr_{B^{-}}.\]
Since $T_K\to\Gr_T$ is an \'etale $T_\cO$-torsor, there are sections \'etale locally, see \S\ref{sec-ind-sch}.
Thus, by Proposition \ref{et-lift}, it is enough to prove that $\phi$ is formally smooth to prove (a).
We first prove the formal smoothness of the map
$$\phi':\Hom(\bP^{1},G_K)\ra\Hom(\bP^{1},\Gr_{G}).$$
Let $R$ be a ring and $I\subset R$ a square zero ideal.
Let $y:\bP_R^{1}\ra\Gr_{G}$, such that the map $\bar{y}= y\mod I$ lifts as $\ti{y}:\bP^{1}_{R/I}\ra G_K$.
We form the pullback
$$\xymatrix{E\ar[d]\ar[r]&G_K\ar[d]\\\bP^{1}_R\ar[r]^{y}&\Gr_{G}}$$
It yields a $G_\cO$-torsor $E$ over $\bP_R^{1}$
for the étale topology, representable by an ind-affine ind-scheme with a section $\bar{\sigma}$ modulo $I$.
The ind-scheme $G_\cO$ is formally smooth by Lemma \ref{loop-rep} and Theorem \ref{fsm}.
Hence $\bar{\sigma}$ lifts Zariski locally on $\bP^{1}_{R}$ and by writing $E\cong\colim E_a$ and 
applying deformation theory for enoughly large $E_a$ the obstruction to lift $\bar{\sigma}$ globally sits in the cohomology group
\[ H^{1}(\bP^{1}_{R/I}, \colim\Map(\bar{x}^{*}(\Omega^{1}_{E_a/\bP^{1}_{R/I}}), I))\cong 
H^{1}(\bP^{1}_{R/I},\bar{x}^{*}(T_{E/\bP^{1}_{R/I}})\otimes I),\]
where the last equality follows from formal smoothness and \cite[Prop.~7.11.8]{BD}.
Since $E\vert_{\bP^{1}_{R/I}}$ is trivial, we have
$$\bar{x}^{*}(T_{E/\bP^{1}_{R/I}})=\kg[s]\otimes_{R/I}\cO_{\bP^{1}_{R/I}}.$$
Since $H^{1}(\bP^{1}_{R/I},\cO_{\bP^{1}_{R/I}})=0$, the obstruction vanishes.
Using Proposition \ref{equiv-lift} similarly to Proposition \ref{att-smooth}, 
we can choose this lift to be $\bG_m$-equivariant, because $\bG_m$ is diagonalizable.
In particular, the map $\phi'$ is formally smooth.
We have a decomposition into open and closed subfunctors
\[\Gr_{B}\times_{\Gr_{G}}\Gr_{B^{-}}=\bigsqcup_{(\nu,\mu)\in (\LLa)^{2}}\Gr^{\nu}_{B}\times_{\Gr_{G}}\Gr_{B^{-}}^{\mu},\]
obtained by pullback through the map 
\[\Gr_{B}\times_{\Gr_{G}}\Gr_{B^{-}}\ra\Gr_{T}\times\Gr_T.\]
By Proposition \ref{ST-comp2}, the map $j$ identifies $\Gr_{T}$ with the component labelled by the diagonal pairs 
$(\nu,\nu)$.

To prove the claim (b), 
taking only the connected components indexed by $(\nu,\nu)$ with $\nu\in X_{*}(T)$
yields an open closed immersion 
$$\Gr_{B}\times_{\Gr_{T}}\Gr_{B^{-}}\hra\Gr_{B}\times\Gr_{B^{-}}.$$
We also have 
$$\Gr_{G}\times_{(\Gr_{G})^{2}}(\Gr_{B}\times\Gr_{B^{-}})=\Gr_{B}\times_{\Gr_{G}}\Gr_{B^{-}}.$$ 
Further, if we restrict the map 
$$\Gr_{B}\times_{\Gr_{G}}\Gr_{B^{-}}\ra(\Gr_T)^2$$
to the diagonal $\Gr_{T}\hra(\Gr_T)^2$, 
we get an isomorphism 
\begin{align} \label{isom3}\Gr_{G}\times_{(\Gr_{G})^{2}}(\Gr_{B}\times_{\Gr_{T}}\Gr_{B^{-}}) 
\cong\Gr_T\times_{(\Gr_{T})^2}(\Gr_{B}\times_{\Gr_{G}}\Gr_{B^{-}})\end{align}
and (b) follows from (a).
\epf

\subsection{Interpolating family of groups}\label{sec-Inter}
We keep the same notation as in \S\ref{sec-Hyp}. 
We first set $\cZ=G$.
The ind-ft ind-scheme $\ti G$ is closed in $G\times G\times\ab^1$ by Lemma \ref{ti-mono}. 
Let $\Gm$ be
the graph of the $\bG_m$-action on $G$.
Since $\ti{\cZ}$ is functorial in $\cZ$, the ind-scheme $\ti{G}$ is a group ind-scheme such that
\begin{align}\label{ZG}
\ti{G}\times_{\ab^1}\bG_m\cong\Gm\cong G\times \bG_m
,\quad
\ti{G}_0\cong B\times_{T}B^{-}.
\end{align}
Let $\ov{\Gm}$ be the closure of $\Gm$ in $\ti G$.

\blem\label{adh-G}\hfill
\begin{enumerate}[label=$\mathrm{(\alph*)}$,leftmargin=8mm]
\item
The closure $\ov{\Gm}$ is isomorphic to $\ti{G}$.
\item
The ind-scheme $\ti{G}$ is ind-flat over $\ab^{1}$.
\eenum
\elem

\bpf
By \cite[Prop.~2.5.5]{DG1} for a $k$-smooth affine scheme $Z$, we have the same description for $\ti{Z}$. 
The ind-scheme $G$ is formally smooth, but it may not be ind-smooth.
So we can not aply loc.~cit.~to $G$.
Consider the open Bruhat cell $S=U^{-}\times T\times U$.
By \cite[Prop.~2.3.2]{DG1}, since the functor $\cZ\mapsto\ti \cZ$ commutes with filtered colimits, 
we have that $\ti{S}$ is open in $\ti{G}$.
Since the functor $\cZ\mapsto\ti \cZ$ commutes also with finite products and filtered colimits, 
by \cite[Prop.~2.4.4,~2.4.6]{DG1} we have
\[\ti{S}=\ti{U}^{-}\times_{\ab^1}\ti T\times_{\ab^1}\ti{U}=\ab^{1}\times S.\]
In particular $\ti{S}\times_{\ab^1}\bG_m$ is dense in $\ti{S}$. 
Thus it is enough to prove that $\ti{S}$ is schematically dense in $\ti G$.
Fix a presentation  $G=\colim X_a$ as an ind-ft scheme.
We have $S=\colim S_a$,
and there is an open immersion $\ti{S}_a=\ab^{1}\times S_a\hra\ti{X}_a$
that is schematically dense over each fiber over $\ab^{1}$. By \cite[Thm.~11.10.9]{EGAIV3}, 
we deduce that $\ti{S}_a$ is schematically dense in $\ti{X}_a$.
Thus $\ti{S}$ is schematically dense in $\ti{G}$.
Finally, over $\bG_m$ the ind-scheme $\Gm$ is ind-flat.
Flatness over $\ab^1$  is equivalent to torsion freeness 
and the schematic closure  of a flat scheme over $\bG_m$ is flat over $\ab^1$.
\epf

One of the main input is the following proposition.

\bprop\label{ti-quot}
The quotient stack $[(G\times G\times\ab^1)/\ti{G}]$ is $G\times G$-equivariantly open
in an ind-affine ind-scheme over $\ab^1$.
\qed
\eprop

The proof will be given in \S\ref{PfPROP}.

\bcor\label{cor-quot}
The map $\la:\Gr_{\ti{G}}\ra\Gr_G\times\Gr_G\times\ab^1$ is an fp immersion.
\ecor

\bpf
By Propositions  \ref{ind-quot} and \ref{ti-quot}, the map $\la$ is locally closed.
By Proposition \ref{ti-mono}, after a base change by the map 
$$G_K\times G_K\times\ab^1\ra\Gr_G\times\Gr_G\times\ab^1,$$
the fiber product is an immersion in an ind-ft scheme.
Thus $\la$ is an fp immersion by Lemma \ref{quotH}.
\epf

Now, we apply \eqref{ZZZ} with $\cZ=\Gr_G$. 
We want to compare $\wti{\Gr}_{G}$ with $\Gr_{\ti{G}}$.
We first define a map
$\Gr_{\ti{G}}\ra \wti{\Gr}_{G}.$
Since the assignment $\cZ\mapsto\ti\cZ$ is functorial by \S\ref{ti-f}, we have a map
$\wti{G_K}\ra\wti{\Gr}_{G}.$
Further, we have $\wti{G_K}\cong(\ti{G})_K$ because 
$$\Hom^{\bG_m}(\bX,G_{K})=\Hom^{\bG_m}(\bX[t,t^{-1}],G)=\Hom(\Spec(k[t,t^{-1}]),\Hom^{\bG_m}(\bX,G)).$$
This isomorphism is $(\ti{G})_\cO=\wti{G_\cO}$-equivariant.
Thus it factors to the quotient and we get a morphism
\begin{equation}
\eta:\Gr_{\ti{G}}\ra \wti{\Gr}_{G}.
\label{eta-}
\end{equation}

\bprop\label{ti-gr}
The map \eqref{eta-} is an isomorphism.
\eprop

\bpf
Since the stack $\Gr_{G}$ is separated, Lemma \ref{ti-mono} implies that the map
$$\wti{\Gr}_{G}\ra\Gr_G\times\Gr_G\times\ab^1$$ is a monomorphism.
So Corollary \ref{cor-quot} implies that $\eta$ is fp locally closed.
Moreover, it is bijective on $k$-points. 
Above a point $t\in\bG_m$ this is obvious.
Above the point $t=0$ we have an isomorphism
\begin{equation}
\Gr_{\tilde{G}_0}=\Gr_{B\times_{T}B^{-}}\cong\Gr_{B}\times_{\Gr_T}\Gr_{B^{-}}
\label{0-fib}
\end{equation}
which follows from the isomorphism
\[\bB(B\times_{T}B^{-})\cong B\backslash T/B^{-}\cong \bB B\times_{\bB T}\bB B^{-}\]
(note that the isomorphism
$$\bB(G_1\times_{G_2}G_3)\cong\bB G_1\times_{\bB G_2}\bB G_3$$ 
does not hold in general, as can be seen from the following example:
$G_1=B,$ $G_3=B^{-}$ and $G_2=G$).
Thus \eqref{0-fib} identifies $\eta_0$ with a map
\[\Gr_{\ti{G}_{0}}=\Gr_{B}\times_{\Gr_{T}}\Gr_{B^{-}}\ra(\wti{\Gr}_{G})_{0}.\]
By Proposition \ref{int-quot}, we also have a monomorphism
\begin{equation}
(\wti{\Gr}_{G})_{0}\ra\Gr_{B}\times_{\Gr_{T}}\Gr_{B^{-}}.
\label{eta0}
\end{equation}
Thus the fiber $\eta_0$ of $\eta$ is an isomorphism.
Now, to prove that $\eta$ is an isomorphism, it suffices to prove that it is formally smooth. 
As $\ti{G}_K\ra\Gr_{\ti{G}}$ has sections étale locally and is ind-fp, by Proposition \ref{et-lift}, 
it suffices to prove that the map
$\ti{G}_K\ra\wti{\Gr_{G}}$
is formally smooth.
Since $\ti{G}_K=\wti{G_K}$, this follows from Lemma \ref{ti-fsmooth}, 
by applying the tilde functor to $G_K\ra\Gr_{G}$ that is already formally smooth.
\epf

By \eqref{r+}, for any prestack $\cZ$ with a $\bG_m$-action we have morphisms
\begin{equation}
r^\pm:\cZ^\pm\times\ab^1\to\ti\cZ^\pm
\end{equation}
where $\ti\cZ^\pm$ is the fiber product relative to the map $\pi^\pm$ in \eqref{ti-p} given by
\begin{align}\label{tiZ+-}
\ti\cZ^\pm=\cZ^\pm\times_{\cZ}\ti{\cZ}
\end{align}
Set $\cZ=\Gr_{G}$ with the $\bG_m$-action given by $2\Lrho$.
Propositions \ref{id-att} and \ref{ti-gr} yield
\begin{align}\label{ZZZGr}
\cZ^{+}=\Gr_{B},\quad
\cZ^{-}=\Gr_{B^{-}},\quad
\ti{\cZ}=\Gr_{\ti{G}}.
\end{align}
So the morphisms $r^\pm$  become
\begin{equation}
r^+:\Gr_B\times\ab^1\ra\Gr_{B}\times_{\Gr_{G}}\Gr_{\ti G},\quad 
r^-:\Gr_{B^{-}}\times\ab^1\ra\Gr_{B^{-}}\times_{\Gr_{G}}\Gr_{\ti{G}}
\label{r-Gr}
\end{equation}
where the fiber product $\Gr_{B}\times_{\Gr_{G}}\Gr_{\ti G}$ is relative to the map $\pi^+$ and 
the fiber product $\Gr_{B^-}\times_{\Gr_{G}}\Gr_{\ti G}$ is relative to the map $\pi^-$.

\bprop\label{fib-prod}
Let $\cZ=\Gr_{G}$ with the $\bG_m$-action coming from $2\Lrho$.
The maps $r^\pm$ are qc open embeddings.
\eprop

\bpf
Let us prove the assertion for $r^+$.
The case of $r^-$ is similar.
First we claim that $r^+$ is a monomorphism.
Consider the chain of maps
\begin{equation}
\Gr_B\times\ab^1\stackrel{r^+}{\rightarrow}
\Gr_{\ti{G}}\times_{\Gr_{G}}\Gr_{B}\stackrel{(\ti{p},\id)}{\rightarrow}
(\Gr_G\times\Gr_G\times\ab^1)\times_{\Gr_{G}}\Gr_{B}
=\Gr_G\times\Gr_B\times\ab^1
\label{mono-comp}
\end{equation}
where the fiber product $(\Gr_G\times\Gr_G\times\ab^1)\times_{\Gr_{G}}\Gr_{B}$ is relative to the second projection
$\Gr_G\times\Gr_G\to\Gr_G$.
The composed map is the base change to $\ab^1$ of the composition of the diagonal of $\Gr_{B}$ 
with the map $\Gr_{B}\times\Gr_{B}\ra\Gr_G\times\Gr_B$ which is the identity on the second factor. 
So it is a monomorphism. So $r^+$ is also a monomorphism.
Next, we decompose
$$\Gr_{B}=\bigsqcup_{\mu\in \LLa} \Gr_{B}^{\mu}.$$
We claim that the map
\[r^{+,\mu}:\Gr_{B}^{\mu}\ra\Gr_{\ti{G}}\times_{\Gr_{G}}\Gr_{B}^{\mu}.\]
 is an fp immersion.
Indeed, if we use the same composition as in \eqref{mono-comp}, we obtain a map
\[\Gr_B^\mu\times\ab^1\ra\Gr_G\times\Gr_B^\mu\times\ab^1.\]
By Theorem \ref{Bfond}, the map $\Gr_{B}^{\mu}\ra\Gr_{G}$ is an fp immersion, as well as the diagonal of $\Gr_{B}^{\mu}$.
By  Corollary \ref{cor-quot}, the morphism 
$\Gr_{\ti{G}}\ra\Gr_G\times\Gr_G\times\ab^1$ is an fp immersion.
We deduce that $r^{+,\mu}$ is an fp immersion.
To conclude, it is enough to prove that $r^+$ is formally smooth. Let first check formal smoothness on the fibers. 
Over a point $t\in\bG_m$ this is clear.
Over the point $t=0$, using \eqref{eta0}, we must prove that the map
\[r_0^+:\Gr_{B}\times_{\Gr_T}\Gr_T
=\Gr_{B}\ra\Gr_B\times_{\Gr_G}(\Gr_{B^-}\times_{\Gr_{T}}\Gr_{B})
=\Gr_{B}\times_{\Gr_T}(\Gr_{B}\times_{\Gr_{G}}\Gr_{B^{-}}),\]
is a qc open immersion. 
This map is just the base change to $\Gr_{B}$ of the map
\[\Gr_{T}\ra\Gr_{B}\times_{\Gr_{G}}\Gr_{B^{-}},\]
which is fp closed and open by Lemma \ref{clopen}. 
Now, to deduce formal smoothness for $r^+$ it is enough to check it after pulling back by the étale torsor
$B_K^{\mu}\times G_K\ra\Gr_{B}^{\mu}\times\Gr_{G}$, where $B_K^{\mu}$ is the connected component of $B_K$
containing $s^\mu$. 
Then, we get that the pullback is an open immersion and we just apply Lemma \ref{quotH} to go back to the map  $r^+$. 
After this pullback, we get an fp immersion between ind-ft ind-schemes over $\ab^1$ and a formally smooth source.
Thus by  
Proposition \ref{ind-smooth}, it is enough to check formal smoothness on fibers over $\ab^{1}$. This follows from the above.
\epf

\section{The constant term}\label{HLGr}

In this section we introduce and study the constant term functor for the affine Grassmannian. 
The main results are Theorems \ref{Braden} and \ref{t-exact}, which are proved in \S\S\ref{defCT}-\ref{finalCT}
and \S\ref{exactCT} respectively.

\subsection{The definition of the constant term functors}\label{defCT}
We consider the following diagram of $\infty$-stacks
\begin{align}\label{cart1}
\begin{split}\xymatrix{
\Gr_T\ar[dr]^-{j}\ar@/^1pc/[drr]^{i^{+}}\ar@/_1pc/[ddr]_-{i^-}&&&\\
&\Gr_B\times_{\Gr_G}\Gr_{B^-}\ar[d]^{\ti{p}^+}\ar[r]_-{\ti{p}^-}&\Gr_B\ar[d]^-{p^+}\ar[r]_-{q^+}&\Gr_T\\
&\Gr_{B^-}\ar[r]_-{p^-}\ar[d]^-{q^-}&\Gr_G&\\&\Gr_T}
\end{split}
\end{align}
By Theorem \ref{Bfond}, the maps $p^{+}$, $p^-$
are fp immersions when restricted to each connected component $\Gr_B^\nu$, $\Gr_{B^-}^\nu$ of $\Gr_B$, $\Gr_{B^-}$.
The affine Grassmannian $\Gr_{G}$ satisfies gluing by Remark \ref{glustk}.
Hence, since $p^\pm$ is a fp immersion, the functors $(p^\pm)_{*}$, $(p^\pm)^{*}$ and $(p^\pm)^{!}$ are well-defined by \S\ref{funct}.
By base change, the map $\ti{p}^\pm$ is also a fp immersion when restricted to the connected components of $\Gr_{B}$ (resp.~$\Gr_{B^{-}}$).
Hence the functors $(\ti{p}^\pm)_{*}$, $(\ti{p}^\pm)^{*}$ and $(\ti{p}^\pm)^{!}$ are also well-defined.
By Proposition \ref{ind-quot}, since $B/T=U$ and $B^{-}/T=U^{-}$ are 
ind-ft ind-affine ind-schemes,
the maps $i^\pm$ are fp closed immersions.
Since $\Gr_{B}$ satisfies gluing, the functors $(i^\pm)_{*}$, $(i^\pm)^{*}$ and $(i^\pm)^{!}$ are well-defined by \S\ref{funct}.
Using the bar-complexes for the affine Grassmannians, we prove that all prestacks are lft.
Hence the functors 
$(\ti{p}^\pm)_{!}$, $(p^\pm)_{!}$, $(i^\pm)_{!}$, $(q^{\pm})_{!}$ are well-defined by Proposition \ref{prop-lft}.
We define the constant term functors
\begin{align}\label{CT}
\begin{split}
\CT_{*}&=(i^{+})^{*}(p^{+})^{!}:\cD(\Gr_{G})\ra\cD(\Gr_{T}),\\
\CT_{!}^{-}&=(i^{-})^{!}(p^{-})^{*}:\cD(\Gr_{G})\ra\cD(\Gr_{T}).
\end{split}
\end{align}
There is a morphism of functors
\begin{equation}
\CT_*\ra\CT_{!}^{-},
\label{adj-map}
\end{equation}
which is defined as follows  
\begin{align*} 
(i^{+})^{*} (p^{+})^{!}&\ra (i^{+})^{*} (p^{+})^{!} (p^{-})_* (p^{-})^{*}\\
&\cong (i^{+})^{*} (\ti{p}^{+})_{*} (\ti{p}^{-})^! (p^{-})^{*}\\
&\ra(i^{+})^{*} (\ti{p}^{+})_{*}  j_{*}  j^{*} (\ti{p}^{-})^! (p^{-})^{*}\\
&\cong (i^+)^{*} (i^+)_* (i^-)^{!} (p^-)^{*}\\
&\cong (i^-)^{!} (p^-)^{*}
\end{align*}
Here, we used the base change for the Cartesian square in \eqref{cart1},
the fact that $j$ is closed and open by Lemma  \ref{clopen}, hence $j^!\cong j^*$,
and the full faithfulness of $(i^+)_*$ which follows from \cite[Lem.~5.4.1]{BKV}.

Let $\cD(\Gr_{G})^{\bG_m\text{-}\mono}$ be the full subcategory of $\bG_m$-monodromic objects in $\cD(\Gr_{G})$, i.e., 
the full subcategory strongly generated by $\bG_m$-equivariant complexes, i.e., 
the complexes given by a finite iteration of taking a cone of a map in $\cD(\Gr_G)$.
We define $\cD(\Gr_{B})^{\bG_m\text{-}\mono}$ and $\cD(\Gr_{B^-})^{\bG_m\text{-}\mono}$ similarly.

\bthm\label{Braden}
The morphism of functors $\CT_*\ra\CT_{!}^{-}$ in \eqref{adj-map} is an equivalence on $\cD(\Gr_{G})^{\bG_m\text{-}\mono}$.
\ethm
To prove the theorem, we follow the strategy of \cite{DG1}, \cite{DG2}.

\subsection{The contraction principle}\label{sec-cont-principle}
To prove Theorem \ref{Braden}, we first need a contraction principle.
Since $\Gr_{B}=\bigsqcup_{\nu\in \LLa}\Gr_{B}^{\nu}$, the map $q^+$ decomposes as
$q^+=\bigsqcup q_{\nu}$ with
$q_{\nu}:\Gr_{B}^{\nu}\ra\Spec(k).$
Let $T_K^0$ be the neutral component of $T_K$.
By Lemma \ref{rem-contract2}, the $U_\cO$-torsor 
$$U_K\cdot s^{\nu}\cdot T_K^0\ra \Gr_{B}^{\nu}.$$
 is contractive and the map $q^+$ is homotopically ind-schematic.
Thus the functor $(q^+)_\bullet$ is well-defined. The discussion is the same for $q^{-}$.
Let us change our notation. Following \eqref{ZZZGr} we now write
$$Z=\Gr_G,\quad Z^{+}=\Gr_B,\quad Z^{0}=\Gr_T,\quad Z^{-}=\Gr_{B^{-}},\quad \ti Z=\Gr_{\ti G}.$$

\bprop\label{adj}
The following morphisms of functors are equivalences
\begin{enumerate}[label=$\mathrm{(\alph*)}$,leftmargin=8mm]
\item
$(i^{-})^{!}\ra(i^{-})^{!} (q^{-})^{!}  (q^-)_!=(q^{-}  i^{-})^{!} (q^{-})^{!}\ra (q^-)_!$ on $\cD(\Gr_{B^-})^{\bG_m\text{-}\mono}$, 
\item
$(q^+)_\bullet\ra (q^+)_\bullet  (i^{+})_{*} (i^{+})^{*}\cong(q^+)_\bullet (i^{+})_{\bullet} (i^{+})^{*}\cong(i^{+})^{*}$
on $\cD(\Gr_{B})^{\bG_m\text{-}\mono}$.
\eenum
\eprop

\bpf
The first isomorphism in (b) follows from Lemma \ref{ind-bullet0}, because $i^+$ is ind-ft ind-schematic,
and the second one from Proposition \ref{bul-comp-proj}. 
To prove (a) we abbreviate $i=i^{-}$ and $q=q^{-}$.
Consider the commutative triangle
$$\xymatrix{&\ov{Z}^-\ar[d]^{\psi}\ar@/_1pc/[dl]_-{\ov{q}}\\
Z^{0}\ar[ur]_-{\ov{\imath}}\ar[r]^-{i}&Z^{-}\ar@/^1pc/[l]^-{q}}$$
with 
$\ov{Z}^-=\bigsqcup_{\nu\in \LLa} U_K\cdot s^{\nu}\cdot T_K^0$.
Note that $\ov{Z}^-$ is an ind-ft ind-scheme with a $\bG_m$-equivariant presentation that contracts to $s^{\nu}$.
Consequently, using \cite[Prop.~3.2.2 (b)]{DG1} and a colimit argument, 
we get that $\ov{i}^{!}\cong\ov{q}_!$ when restricted to the $\bG_m$-monodromic objects.
Thus, the equality $\ov{q}=q \psi$ yields
$$i^{!}\cong\ov{\imath}^{!}\circ\psi^{!}\cong\ov{q}_!\circ\psi^!\cong q_{!}\circ\psi_{!}\circ\psi^{!}\cong q_!,$$
where the last equality is Lemma \ref{pul-push}.
The proof of (b) is similar, applying the contraction principle upstairs to the pair of functors
$(\ov{\imath}^{+})^{*}$ and $(\ov{q}^+)_*$ and \cite[3.2.2 (a)]{DG1} instead, 
because $(q^+)_\bullet=(\ov{q}^+)_* \psi^{*}$.
\epf

In particular, the constant term functors in \eqref{CT}, when restricted to $\bG_m$-monodromic complexes,  
are given by the following formulas
\begin{align}\label{bis-CT}\CT_{*}=(q^{+})_\bullet (p^{+})^{!},\quad\CT_{!}=(q^{-})_! (p^{-})^{*}\end{align}
The contraction principle in Proposition \ref{adj} and \eqref{adj-map} give a morphism of $\bG_m$-monodromic complexes
$$(q^{+})_{\bullet} (p^{+})^{!}\cong (i^{+})^{*} (p^{+})^{!}\to(i^{-})^{!} (p^{-})^{*}\cong (q^{-})_{!} (p^{-})^{*}$$
The adjoint pair of functors $((q^{-})_{!} (p^{-})^{*},(p^{-})_{*} (q^{-})^{!})$ yields a map
\begin{equation}
(q^{+})_{\bullet} (p^{+})^{!}  (p^{-})_{*} (q^{-})^{!}\ra\id_{\cD(Z^{0})},
\label{unit1}
\end{equation}
On the other hand, the left adjoint of $(p^{-})_{*} (q^{-})^{!}$ is $(q^-)_{!}(p^{-})^{*}$.
Thus, Theorem  \ref{Braden} follows from the following statement which yields an isomorphism of functors
$(q^-)_{!}(p^{-})^{*}\cong(q^{+})_{\bullet} (p^{+})^{!}$.

\bthm\label{brad2}
The morphism \eqref{unit1} is the restriction to the $\bG_m$-monodromic categories of
the co-unit of an adjunction for the pair of functors $((q^{+})_{\bullet} (p^{+})^{!},(p^{-})_{*} (q^{-})^{!})$.
\ethm

To prove Theorem  \ref{brad2}, we need the unit of this adjunction. 
The morphism \eqref{unit1} can be obtained from the morphism \eqref{ker-form3} obtained from to the diagram
$$\xymatrix{&Z^{0}\ar[d]^{j}\ar[dr]\ar[dl]\\
Z^0&Z^{+}\times_{Z}Z^{-}\ar[d]^{r}\ar[r]^-{\al_-}\ar[l]_-{\al_+}&Z^0\\
&Z^{0}\times Z^{0}\ar[ur]\ar[ul]&}$$
Here $\al_\pm$ are the composed maps
$$Z^{+}\times_{Z}Z^{-}\ra Z^{\pm}\ra Z^{0}.$$
Using this, we construct this unit via the interpolation.

\subsection{Equivariant version}
Recall that $Z=\Gr_G$.
As in \cite[\S 3.4]{DG1}, we consider the $\bG_m$-equivariant version.
Set 
$$\cZ^{0}=Z^{0}/\bG_m,\quad
\cZ^{\pm}= Z^{\pm}/\bG_m,\quad
\cZ= Z/\bG_m,\quad
\ti{\cZ}=\ti{Z}/\bG_m$$
where the last quotient is relative to the antidiagonal embedding $\bG_m\ra(\bG_m)^2$ and \eqref{gm-eq}.
We have the obvious morphisms
\[\kp^{\pm}:\cZ^{\pm}\ra\cZ,\quad \kq^{\pm}:\cZ^{\pm}\ra\cZ^0.\]
The morphism \eqref{unit1} lifts to the $\bG_m$-equivariant categories.
This yields a morphism
\begin{equation}
(\kq^{+})_{\bullet} (\kp^{+})^{!} (\kp^{-})_{*} (\kq^{-})^{!}\ra\id_{\cD(\cZ^{0})}
\label{brad-equiv}
\end{equation}
Theorem \ref{brad2} follows from the following as in \cite[Thm.~3.4.3]{DG1}. 

\bthm\label{brad3}\hfill
\begin{enumerate}[label=$\mathrm{(\alph*)}$,leftmargin=8mm]
\item
The morphism \eqref{brad-equiv} is the co-unit of an adjunction for the pair of functors
$((\kq^{+})_{\bullet} (\kp^{+})^{!}\,,\,(\kp^{-})_{*} (\kq^{-})^{!})$.
\item
The equivariant version implies Theorem $ \ref{brad2}$.
\eenum
\ethm

We now focus on the construction of the unit of the adjunction in this setting.

\subsection{Construction of the unit}

By Proposition \ref{ti-gr} and Corollary \ref{cor-quot} we have an fp immersion
\[\la:\widetilde{\Gr_G}=\Gr_{\ti{G}}\ra\Gr_G\times\Gr_G\times\ab^1.\]
By \eqref{gm-eq}, the map $\la$  yields a map
\[\la:\ti{\cZ}\ra\cZ\times\cZ\times\ab^1\]
We consider the sheaf $\cQ\in\cD(\cZ\times\cZ\times\ab^1)$ given by
$$\cQ=\la_{*}\omega_{\ti{\cZ}}.$$
The sheaf $\cQ$ descends to an object from $\cD(\cZ\times\cZ\times(\ab^1/\bG_m))$.
This follows from the Cartesian diagram
\begin{equation}
\begin{split}
\xymatrix{\ti{\cZ}\ar@{=}[r]\ar[d]^{\ti{\kp}}&\ti{Z}/\bG_m\ar[d]^{\ti{p}/\bG_m}\ar[r]&\ti{Z}/(\bG_m\times\bG_m)\ar[d]\\
\cZ\times\cZ\times\ab^1\ar@{=}[r]&Z/\bG_m\times Z/\bG_m\times\ab^1\ar[r]&Z/\bG_m\times Z/\bG_m\times\ab^1/\bG_m}
\end{split}
\label{gm-Q}
\end{equation}
where the $(\bG_m)^2$-action in $\ti\cZ$ is as in  \S\ref{ti-f}.
Note that \eqref{ZG} and Proposition \ref{ti-gr} yield isomorphisms
$$(\ti{\cZ}_1,\ti{p}_1)\cong(\cZ,\Delta)
,\quad
(\ti{\cZ}_0,\ti{p}_0)\cong(\cZ^{+}\times_{\cZ^{0}}\cZ^{-},\kp^{+}\times\kp^{-}).$$
 In particular, using the specialization in Lemma \ref{sp}, we get a specialization map
\begin{equation}
\Sp_{\ti{\cQ}}:\cQ_1=(i_{1})^{!}\ti{\cQ}=\Delta_{*}\omega_{\cZ}\ra\cQ_0=(i_{0})^{!}\ti{\cQ}=(\kp^{+}\times\kp^{-})_{*}\omega_{\cZ^{-}\times_{\cZ^{0}}\cZ^{+}}.
\label{spQ}
\end{equation}
For nice spaces, this map suffices to construct the unit of the adjunction using the formalism of kernels. 
We cannot apply this formalism because it involves considering the functor $p_*$ for the map
$p:\Gr_G\times\Gr_G\ra\Gr_G$ which may not be defined. 
Thus, we first restrict to a smaller substack where such functor is defined and then apply pushforward.
We must construct a map
\begin{equation}
\id_{\cD(Z)}\ra(\kp^-)_{*}  (\kq^{-})^{!} (\kq^{+})_{\bullet} (\kp^{+})^{!}.
\label{adj-5}
\end{equation}
Using base change for $(\kq^{-})^{!} (\kq^{+})_{\bullet}$, 
the right hand side can be interpreted as a pull-push using the following diagram
\begin{equation}
\begin{split}
\begin{tikzpicture}[>=Stealth, every node/.style={inner sep=1pt, align=center, font=\small}]
\node (X) at (0,2.6) {$\cZ^{+}\times_{\cZ^{0}}\cZ^{-}$};
\node (Y) at (-1.3,1.3) {$\cZ^{+}$};
\node (Z) at (1.3,1.3) {$\cZ^{-}$};
\node (A) at (-2.6,0) {$\cZ$};
\node (B) at (0,0) {$\cZ^{0}$};
\node (C) at (2.6,0) {$\cZ$};
\draw[->] (X) -- (Y) node[midway,left,xshift=1pt,yshift=5pt] {$'\kq^{-}$};
\draw[->] (X) -- (Z) node[midway,right,xshift=-1pt,yshift=5pt] {$'\kq^{+}$};
\draw[->] (Y) -- (A) node[midway,left,xshift=2pt,yshift=5pt] {$\kp^{+}$};
\draw[->] (Y) -- (B) node[midway,xshift=1pt,yshift=5pt,right] {$\kq^{+}$};
\draw[->] (Z) -- (B) node[midway,xshift=2pt,yshift=5pt,left] {$\kq^{-}$};
\draw[->] (Z) -- (C) node[midway,xshift=2pt,yshift=5pt,right] {$\kp^{-}$};
\draw[->, bend left=40] (X) to node[above,pos=0.5,xshift=4] {$\kq$} (C);
\draw[->, bend right=40] (X) to node[above,pos=0.5,xshift=-4] {$\kp$} (A);
\end{tikzpicture}
\label{gros-carré}
\end{split}
\end{equation}
The maps $\kq^+$ and $\kq^{-}$ are homotopically ind-schematic by the discussion in \S\ref{sec-cont-principle}.
The maps $\kp^{+}$ and $\kp^-$
are fp immersions when restricted to each connected component $\Gr_B^\nu/\bG_m$ and $\Gr_{B^-}^\nu/\bG_m$,
see \S\ref{defCT}.
By base change and composition, 
the map $\kq$ is also homotopically ind-schematic.
In particular, we have 
\begin{equation}
(\kp^{-})_*  (\kq^{-})^{!} (\kq^{+})_{\bullet} (\kp^{+})^{!}=\kq_{\bullet} \kp^{!}.
\label{bpul-push}
\end{equation}
From \eqref{gros-carré}, we have a commutative diagram
$$\xymatrix{&\cZ^{+}\times_{\cZ^{0}}\cZ^{-}\ar[dd]^{\kp}\ar[dl]_{'\kq^{-}}\ar[dr]^{\kappa}\\\cZ^{+}\ar[dr]^{\kp^{+}}&&\cZ\times\cZ\ar[dl]_{p_1}\\&\cZ}$$
with $p_{1}:\cZ\times\cZ\ra\cZ$ the first projection.
Consider $\cM\in\cD(\cZ)$. Tensoring  \eqref{spQ} by $\cM$ yields a map
\[(p_{1})^{!}\cM\overset{!}{\otimes} \Delta_{*}\omega_{\cZ}\ra (p_{1})^{!}\cM\overset{!}{\otimes} \kappa_{*}\omega_{\cZ^{-}\times_{\cZ^{0}}\cZ^{+}}.\]
Since we quotiented by $\bG_m$, the map $\Delta$ is no more fp closed, but it is still affine fp and 
$\kappa=\kp^{+}\times\kp^{-}$ is an fp immersion when restricted to its connected components. 
Since  $p_1\Delta=\id_{\cZ}$, the projection formula for $\Delta$ and $\kappa$ yields a map
\begin{equation}
\Delta_{*}\cM\ra \kappa_{*}\kappa^{!}p_{1}^{!}\cM,
\label{adj-2}
\end{equation}
So  we get from \eqref{adj-2} a map
\begin{equation}
\Delta_{*}\cM\ra\kappa_*\kp^{!}\cM.
\label{adj-3bis}
\end{equation}
Using  Lemma  \ref{clopen}, the isomorphism \eqref{isom3} 
and taking $\bG_m$-quotients, we get a Cartesian square and a lower commutative triangle
\begin{equation}
\begin{split}
\xymatrix{\cZ^{0}\ar[d]_{i}\ar[r]^-{j}&\cZ^{+}\times_{\cZ^{0}}\cZ^{-} \ar[ddr]^{\kappa}\ar[d]_-{\kappa_1}\\
\cZ\ar[r]^-{\Delta_1} \ar[drr]_{\Delta}&[Z\times Z/\bG_m]\ar[dr]^-{p}&\\
&&\cZ\times\cZ}
\end{split}
\label{diag1}
\end{equation}
Since $p$ is a $\bG_m$-torsor we get an isomorphism
$$p^{*}  p_{*}\cong-\otimes\RGm(\bG_m,\bql)[2].$$
Thus the counit $p^{*}  p_{*}\ra\id$ splits by the unit section $1\ra\bG_m$.
Now \eqref{adj-3bis} and \eqref{diag1} yield
\[p_*(\Delta_1)_*\cM\ra p_{*}(\kappa_1)_*\kp^{!}\cM.\]
Using the splitting this gives a map
\[(\Delta_1)_*\cM\ra(\kappa_1)_*\kp^{!}\cM.\]
Since $\Delta_1$ is fp closed, using the adjunction and the base change along \eqref{diag1}, this gives a map
$$j_{*}i^{*}\cM\ra \kp^{!}\cM.$$
Applying the functor $\kq_{\bullet}$ we  get a map
\begin{equation}
\kq_{\bullet}j_{*}i^{*}\cM\ra \kq_{\bullet}\kp^{!}\cM.
\label{adj-3}
\end{equation}
Since $j$ is fp schematic,  Lemma \ref{ind-bullet0} and Proposition \ref{bul-comp-proj} yield
\begin{align}\label{adj-11}
\kq_\bullet j_{*}=\kq_\bullet j_{\bullet}=i_\bullet=i_*.
\end{align}
So from \eqref{adj-3} and \eqref{adj-11} we obtain a map
\begin{equation}
\cM\ra i_{*}i^{*}\cM\ra \kq_{\bullet}\kp^{!}\cM,
\label{adj-4}
\end{equation}
which is functorial in $\cM$. 
So we get the putative unit of the adjunction \eqref{adj-5} as reformulated in \eqref{bpul-push}.

\subsection{Proof of  Theorem \ref{brad3}}
We have defined the following morphisms of functors in \eqref{brad-equiv} and \eqref{adj-5}
\begin{equation}
\id_{\cD(Z)}\ra(\kp^-)_{*} ( \kq^{-})^{!} (\kq^{+})_{\bullet} (\kp^{+})^{!},
\label{adj-6}
\end{equation}
\begin{equation}
(\kq^{+})_{\bullet} (\kp^{+})^{!} (\kp^{-})_{*} (\kq^{-})^{!}\ra\id_{\cD(\cZ^{0})}.
\label{adj-7}
\end{equation}
To prove Theorem \ref{brad3}, hence Theorems  \ref{brad2} and \ref{Braden},
it is enough to prove that the composed morphisms
\begin{equation}
(\kp^{-})_{*} (\kq^{-})^{!} (\kq^{+})_{\bullet} (\kp^{+})^{!} (\kp^{-})_{*} (\kq^{-})^{!}\ra(\kp^{-})_{*} (\kq^{-})^{!},
\label{adj-8}
\end{equation}
\begin{equation}
(\kq^{+})_{\bullet} (\kp^{+})^{!}\ra(\kq^{+})_{\bullet} (\kp^{+})^{!} (\kp^-)_{*}  (\kq^{-})^{!} (\kq^{+})_{\bullet} (\kp^{+})^{!}
\label{adj-9}
\end{equation}
are isomorphic to the identity. 
We prove the assertion for \eqref{adj-8}.
The proof for \eqref{adj-9} is similar.
The proof follows the argument in \cite[\S 5]{DG1}. 
It relies on Proposition \ref{fib-prod}.

\subsubsection{The kernel for the composition}
We abbreviate
$$\Phi=(\kp^{-})_{*}(\kq^{-})^{!}(\kq^{+})_{\bullet}(\kp^{+})^{!} (\kp^{-})_{*}(\kq^{-})^{!}.$$
We consider the morphism of functors \eqref{adj-8}
$$\Phi\ra(\kp^{-})_{*}(\kq^{-})^{!}.$$
The functor $\Phi$ is given by pull-push along the diagram
\begin{equation}
\begin{split}
\begin{tikzpicture}[>=Stealth, every node/.style={inner sep=1pt, align=center, font=\small}]
\node (A) at (0,4)  {$\cZ^{-}\times_{\cZ}\cZ^{+}\times_{\cZ^{0}}\cZ^{-}$};
\node (B) at (-1.3,2.6) {$\cZ^{-}\times_{\cZ}\cZ^{+}$};
\node (C) at (1.3,2.6)  {$\cZ^{+}\times_{\cZ^{0}}\cZ^{-}$};
\node (D) at (-2.6,1.3) {$\cZ^{-}$};
\node (E) at (0,1.3)  {$\cZ^{+}$};
\node (F) at (2.6,1.3)  {$\cZ^{-}$};
\node (G) at (-3.9,0) {$\cZ^{0}$};
\node (H) at (-1.2,0) {$\cZ$};
\node (I) at (1.3,0)  {$\cZ^{0}$};
\node (J) at (3.9,0)  {$\cZ$};
\draw[->] (A) -- (B);
\draw[->] (A) -- (C);
\draw[->] (B) -- (D);
\draw[->] (B) -- (E);
\draw[->] (C) -- (E);
\draw[->] (C) -- (F);
\draw[->] (D) -- (G) node[midway,left,xshift=1pt,yshift=5pt] {$\kq^{-}$};
\draw[->] (D) -- (H) node[midway,right,xshift=4pt,yshift=5pt] {$\kp^{-}$};
\draw[->] (E) -- (H) node[midway,left,xshift=1pt,yshift=5pt] {$\kp^{+}$};
\draw[->] (E) -- (I) node[midway,right,xshift=4pt,yshift=5pt] {$\kq^{+}$};
\draw[->] (F) -- (I) node[midway,left,xshift=1pt,yshift=5pt] {$\kq^{-}$};
\draw[->] (F) -- (J) node[midway,right,xshift=4pt,yshift=5pt] {$\kp^{-}$};
\end{tikzpicture}
\end{split}
\label{big-diag}
\end{equation}
We want to reinterpret $\Phi$ from a smaller diagram.
Following \eqref{tiZ+-} we set
\[\ti{\cZ}^{-}=\cZ^{-}\times_{\cZ}\ti{\cZ}=\ti Z^-/\bG_m
,\quad
\ti Z^-=Z^{-}\times_{Z}\ti{Z}.\]
By base change, the map $\ti{\kp}$ in \eqref{gm-Q} yields a map
\[\ti{\cZ}^{-}\ra\cZ^-\times\cZ\times\ab^1.\]
Composing it with $\kq^{-}\times\id\times\id:\cZ^-\times\cZ\times\ab^1\ra \cZ^{0}\times\cZ\times\ab^1$ we get the map
\begin{equation}
r:\ti{\cZ}^{-}\ra\cZ^{0}\times\cZ\times\ab^1.
\label{r1}
\end{equation}
The map $r$ is homotopically ind-schematic,
because it is the composite of $\ti{\kp}$ which is fp locally closed and 
 a base change of $\kq^{-}$ by $\cZ^0\times\cZ\times\ab^{1}\ra\cZ_0$, which is homotopically ind-schematic by Proposition \ref{gm-bc}.
For each $t\in\ab^{1}$, let $r_{t}$ be the fiber above $t$.
By Proposition \ref{ti-gr} and \eqref{0-fib} we have an isomorphism 
$$\ti{\cZ}_0\cong \cZ^{+}\times_{\cZ^0} \cZ^{-}.$$
Thus, we have
\[\ti{\cZ}_{0}^{-}=\cZ^{-}\times_{\cZ}\ti{\cZ}_0\cong\cZ^{-}\times_{\cZ}\cZ^{+}\times_{\cZ^{0}}\cZ^{-}.\]
Thus, the upper term of \eqref{big-diag} is isomorphic to $\ti{\cZ}_{0}^{-}$.
As in \cite[(5.8)]{DG1}, 
the functor $\Phi$ is the pull-push of the diagram
\begin{equation}
\xymatrix{\cZ^{0}&\ti{\cZ}_{0}^{-}\ar[r]^{p_{2}  r_{0}}\ar[l]_{p_{1}  r_{0}}&\cZ}
\label{min-diag}
\end{equation}
where $p_1:\cZ^{0}\times\cZ\ra\cZ^{0}$ and $p_2:\cZ^{0}\times\cZ\ra\cZ$ are the obvious projections. 
The morphism $p_{2}  r_{0}$ is homotopically ind-schematic, because it is the composition of 
\[\cZ^{-}\times_{\cZ}\cZ^{+}\times_{\cZ^0}\cZ^{-}\ra\cZ^{+}\times_{\cZ^0}\cZ^{-}\ra\cZ^{-}\stackrel{\kp^-}{\rightarrow}\cZ,\]
all of which are homotopically ind-schematic, see loc.~cit.. 
Set
\begin{equation}
\cS=r_{\bullet}\omega_{\ti{\cZ}^{-}}\in\cD(\cZ^0\times\cZ\times\ab^1)
,\quad
\cS_0=(r_0)_{\bullet}\omega_{\ti{\cZ_0}^{-}}\in\cD(\cZ^0\times\cZ).
\label{r-kernel}
\end{equation}
The same argument as in \eqref{gm-Q} implies that the complex $\cS$ is $\bG_m$-monodromic.
Further, we have
$$\Phi=\Phi_{\cS_0}.$$
Now, we consider the commutative diagram
$$\xymatrix{&\cZ^{-}\ar[dl]_{\kq^{-}}\ar[dr]^{\kp^{-}}\ar[d]\\
\cZ^{0}&\ar[l]_-{p_1}\cZ^{0}\times\cZ\ar[r]^-{p_2}\ar[l]&\cZ}$$
The map $\kq^{-}\times \kp^{-}$ is homotopically ind-schematic, because $\kq^{-}$ is homotopically ind-schematic
and $\kp^{-}$ is ind-schematic of ind-ft. 
Lemma \ref{ind-bullet0} for $\kp^{-}$ implies that 
\begin{equation}
(\kp^{-})_{*} (\kq^{-})^{!}=\Phi_{\cT}
,\quad
\cT=(\kq^{-}\times \kp^{-})_{\bullet}\omega_{\cZ^{-}}.
\label{T-kern}
\end{equation}

\subsubsection{The natural transformations at the level of kernels}
We now want to describe the kernels of the morphisms of functors
$\Phi_{\cS_0}\ra\Phi_{\cT}$ and
$\Phi_{\cT}\ra\Phi_{\cS_0}.$
We start with the first one. 
Recall the closed and open embedding from Lemma  \ref{clopen}
\[j:Z^{0}\hra Z^+\times_{Z^{0}}Z^-.\]
By base change and $\bG_m$-equivariance, we get an open immersion
\begin{align}\label{j-}\kj^{-}:\cZ^{-}\hra\cZ^{-}\times_{\cZ}\cZ^{+}\times_{\cZ^{0}}\cZ^{-}\cong\ti{\cZ}^{-}_{0}.\end{align}
Moreover, we have
\begin{align}\label{qp}\kq^{-}\times\kp^{-}=r_0\kj^-:\cZ^{-}\ra\ti{\cZ}_{0}^{-}\ra\cZ^{0}\times\cZ.\end{align}
The morphism $\Phi_{\cS_0}\ra\Phi_{\cT}$ is obtained from \eqref{adj-7}.
It comes from the morphism of kernels
\begin{equation}
\cS_{0}\ra\cT
\label{map1}
\end{equation}
given by the composition
\[\cS_{0}=(r_0)_\bullet\omega_{\ti{\cZ}^{-}_{0}}\ra (r_0)_\bullet (\kj^{-})_{*}(\kj^{-})^{*}\omega_{\ti{\cZ}^{-}_{0}}\cong 
(r_0)_\bullet  (\kj^-)_{\bullet}\omega_{\cZ^{-}}\cong(\kq^{-}\times \kp^{-})_{\bullet}\omega_{\cZ^{-}}\cong\cT,\]
where we use the equality $(\kj^-)_{\bullet}=(\kj^-)_{*}$ and Proposition \ref{bul-comp-proj}.
This morphism of kernels is also obtained by applying the formalism \eqref{ker-form} to the diagram
$$\xymatrix{
&&\cZ^{-}\ar[d]^{\kj^{-}}\ar[drr]\ar[dll]&&\\
\cZ^0&&\ti{\cZ}_0^{-}\ar[d]^{r_0}\ar[rr]\ar[ll]&&\cZ\\
&&\cZ^{0}\times\cZ\ar[urr]\ar[ull]&&}
$$
Now the isomorphism  $ \cZ\cong\ti{\cZ}_{1}$ of the fiber at 1 gives an isomorphism
\begin{equation}
\cZ^{-}\cong\ti{\cZ}^{-}_{1}.
\label{tfib1}
\end{equation}
Hence the morphism $r_{1}:\cZ^{-}_{1}\ra\cZ^{0}\times\cZ$ is isomorphic to $\kq^{-}\times\kp^{-}$. 
In particular, we have an isomorphism
\begin{equation}
\cT\cong\cS_{1}
\label{taut}
\end{equation}
where $\cS_{1}=(i_{1})^{!}\cS$.
Let us consider now the morphism of functors $\Phi_{\cT}\ra\Phi_{\cS_0}$. 
The specialization map  in Lemma \ref{sp} gives a map
\begin{equation}
\Sp_{\cS}:\cS_{1}\ra\cS_{0}.
\label{sp2}
\end{equation}
By functoriality of the specialization map, the map $\Phi_{\cT}\ra\Phi_{\cS_0}$ is given by the morphism
\begin{equation}
\cT\cong\cS_{1}\ra\cS_{0}
\label{map2}
\end{equation}
equals to the composition of \eqref{sp2} and \eqref{taut}.

\bcor\label{conc-ker}
To prove that \eqref{adj-8} is an isomorphism, it suffices to show that the composed map
\begin{equation}
\cT\cong\cS_1\ra\cS_0\ra\cT
\label{adj10}
\end{equation}
is the identity.
\ecor

\subsubsection{Restriction to an open substack}
Following \eqref{j-}, we consider the open embedding 
$$j^{-}:Z^{-}\hra\ti{Z}^{-}_{0}$$
and the qc open
\[\overbigdot{Z^{-}}:=\ti{Z}^{-}\smallsetminus(\ti{Z}^{-}_{0}\smallsetminus Z^{-}).\]
Let $\overbigdot{\cZ^{-}}$ be the corresponding substack of $\ti{\cZ}^{-}$.
Taking the fiber at 0 yields an isomorphism
\begin{equation}
\cZ^{-}\cong\overbigdot{{\cZ}^{-}_{0}},
\label{arc0}
\end{equation}
as well as, using \eqref{tfib1}, an other identification
\[\cZ^{-}\cong\overbigdot{{\cZ}_{1}^{-}}.\]
We consider the obvious map 
\begin{align}\label{rdot}\overbigdot{r}:\overbigdot{{\cZ}^{-}}\ra\cZ^0\times\cZ\times\ab^1.\end{align}
It yields the sheaf
\[\overbigdot{\cS}=\overbigdot{r}_{\bullet}\omega_{\overbigdot{{\cZ}}^{-}}.\]
The open embedding $\overbigdot{{\cZ}^{-}}\hra\ti{\cZ}^{-}$ gives maps
\[\cS\ra\overbigdot{\cS},\quad\cS_{0}\ra\overbigdot{\cS}_{0},\quad\cS_{1}\ra\overbigdot{\cS}_{1}.\]
As in \eqref{map1}-\eqref{map2}, we have natural transformations
\begin{equation}
\cT\ra\overbigdot{\cS}_1\ra\overbigdot{\cS}_0\ra\cT
\label{adj11}
\end{equation}
yielding the following commutative diagram
$$\xymatrix{\cT\ar[d]_{\id}\ar[r]&\cS_1\ar[d]\ar[r]&\cS_0\ar[r]\ar[d]&\cT\ar[d]^{\id}\\
\cT\ar[r]&\overbigdot{\cS}_1\ar[r]&\overbigdot{\cS}_0\ar[r]&\cT}$$ 
Thus, to prove Corollary \ref{conc-ker} it is enough to show it for \eqref{adj11}.

\subsubsection{Proof of Theorem $\ref{brad3}$}\label{finalCT}
By Proposition \ref{fib-prod}, there is a qc embedding 
$$Z^{-}\times\ab^1\ra\ti{Z}^{-}.$$
By definition\footnote{it is an isomorphism over $\bG_m$ and over 0, use \eqref{arc0}.}, we have an isomorphism $\overbigdot{{Z}^{-}}\cong Z^{-}\times\ab^1$, 
hence an isomorphism
\[\overbigdot{{\cZ}^{-}}\cong\cZ^{-}\times\ab^1.\]
Under this isomorphism we have the following identifications:
\begin{enumerate}[label=$\mathrm{(\alph*)}$,leftmargin=8mm]
\item
the map $\overbigdot{r}$ in \eqref{rdot}
identifies with the map 
$$\cZ^{-}\times\ab^1\ra\cZ^{0}\times\cZ\times\ab^1$$ 
given by $\id_{\ab^1}$ and the map $\kq^{-}\times\kp^{-}$ in \eqref{qp},
\item
the isomorphism $\cZ^{-}\cong\ti{\cZ}^{-}_{1}$ in \eqref{tfib1} identifies with the identity map
\[\cZ^{-}\ra(\cZ^{-}\times\ab^1)\times_{\ab^1}\{1\},\]
\item
the isomorphism $\cZ^{-}\cong\overbigdot{{\cZ}_0^-}$ of \eqref{arc0} identifies with the identity map
\[\cZ^{-}\ra(\cZ^{-}\times\ab^1)\times_{\ab^1}\{0\}.\]
\eenum
Thus the composition \eqref{adj11} identifies with the specialization map
\[\cT\cong (i_{1})^{!}(\ct\boxtimes\omega_{\ab^1})\ra(i_{0})^{!}(\cT\boxtimes\omega_{\ab^1})\cong\cT\]
which is the identity according to the proof of Lemma \ref{sp}.

\subsection{The t-exactness of the constant term}\label{exactCT}
We have proved Theorem \ref{Braden} using Proposition \ref{ti-quot}
which is proved in \S\ref{PfPROP} below.
We now prove that the constant term functor introduced in \eqref{CT} is $t$-exact.
More precisely, we define the normalized constant term functor 
$$\CT_{*}[\deg]=\bigoplus_{\nu\in \LLa}\CT_{*,\nu}[2 \langle\rho,\nu\rangle]:\cD_{G_\cO}(\Gr_{G_c})\ra\cD(\Gr_{T}),$$
where $\CT_{*,\nu}$ is the obvious direct summand of $\CT_*$.

\bthm\label{t-exact}
The normalized constant term functor $\CT_{*}[\deg]$ is $t$-exact.
\ethm

\bpf
For this proof it is convenient to equip $G_K$ with the $\bG_m$-action given by the adjoint action of $-2\Lrho$ instead of $2\Lrho$.
Then, by Proposition \ref{id-att}, the attractor and repulsor locus of the induced $\bG_m$-action on $\Gr_G$ are $\Gr_{B^{-}}$ and $\Gr_{B}$
and all maps in the diagram \eqref{cart1} change accordingly.
In particular, the constant term functors in \eqref{CT} are now given by the following formulas
\begin{align}\label{new-CT}\CT_{*}=(q^{-})_\bullet (p^{-})^{!},\quad\CT_{!}=(q^{+})_! (p^{+})^{*}\end{align}
instead of \eqref{bis-CT}.
We first prove that the normalized constant term functor $\CT_{*}[\deg]$ is left $t$-exact, using the formulas \eqref{new-CT}.
By Proposition \ref{t-gen}, it is enough to prove that for each dominant cocharacter $\la$ we have
\begin{align}\label{CT1}
\CT_{*}[\deg]((i_\la)_*\omega_{\Gr_{\la}}[-2 \langle\rho,\la\rangle])\in \mathstrut^{p}\cD^{\geq 0}(\Gr_{T}).
\end{align}
Recall the diagram \eqref{cart1}.
We also have a Cartesian diagram
$$\xymatrix{
\bigsqcup_{\nu\in \LLa} \Gr_{\la}\cap \Gr_{B^{-}}^{\nu}\ar[d]_{\ti\imath_\la}\ar[r]&\Gr_{\la}\ar[d]^{i_\la}\\
\Gr_{B^{-}}\ar[r]^{p^-}&\Gr_G}
$$
Fix $\nu\in \LLa$.
By base change, we get
\begin{align*}
\CT_*(i_\la)_*\omega_{\Gr_{\la}}
=(q^-)_\bullet(p^{-})^{!}(i_\la)_*\omega_{\Gr_{\la}}
.
\end{align*}
By Proposition \ref{bul-comp-proj}, we deduce that
\begin{align*}
\CT_*(i_\la)_*\omega_{\Gr_{\la}}=\bigoplus_\nu(q^-\ti\imath_\la)_\bullet\omega_{\Gr_{\la}\cap \Gr_{B^{-}}^{\nu}}.
\end{align*}
Now, by Lemma \ref{Gr-ST} we have 
$$(\Gr_{\la}\cap \Gr_{B^{-}}^{\nu})_\red=(\Gr_{\la}\cap T_{\nu})_\red$$ 
and by Theorem \ref{thm:GT} the functor $\Gr_{\la}\cap T_{\nu}$ is a ft scheme. 
Thus there is no difference between $\bullet$-pushforward and $*$-pushforward to $\Gr_T$
by Lemma \ref{ind-bullet0}, and we get
\begin{equation}
\CT_*(i_\la)_*\omega_{\Gr_{\la}}
= \bigoplus_\nu \RGm(\Gr_{\la}\cap T_\nu,\omega_{\Gr_{\la}\cap T_\nu}).
\label{bm-calc}
\end{equation}

By Theorem \ref{thm:GT}, the scheme $\Gr_{\la}\cap T_{\nu}$ has dimension $\langle\rho,\la-\nu\rangle$.
We deduce that
\begin{equation}
\RGm(\Gr_\la\cap T_\nu,\omega_{\Gr_\la\cap T_\nu})[-2\langle\rho,\la-\nu\rangle]\in
\cD^{\geq 0}(\Spec(k)).
\label{bm-calc2}
\end{equation}
Hence, we have the following relation from which \eqref{CT1} follows
\[\CT_{*,\nu}[2 \langle\rho,\nu\rangle]\big((i_\la)_*\omega_{\Gr_{\la}}[-2 \langle\rho,\la\rangle]\big)\in\cD^{\geq 0}(\Spec(k)).\]
We are left to prove right $t$-exactness.
By Proposition \ref{t-gen}, we must prove that
\[\CT_{*}[\deg]\big((i_\la)_!\omega_{\Gr_{\la}}[-2 \langle\rho,\la\rangle]\big)\in \mathstrut^{p}\cD^{\leq 0}(\Gr_{T})\]
or, equivalently, that
\[\CT_{*,\mu}[2 \langle\rho,\mu\rangle]\big((i_\la)_!\omega_{\Gr_{\la}}[-2 \langle\rho,\la\rangle]\big)\in\cD^{\leq 0}(\Spec(k)).\]
By Theorem  \ref{Braden}, we have $\CT_{*}[\deg]\cong\CT^{-}_{!}[\deg].$
We use the right hand side to prove the right $t$-exactness.
We have a Cartesian diagram
$$\xymatrix{
\bigsqcup_{\mu\in \LLa} \Gr_{\la}\cap \Gr_{B}^{\mu}\ar[d]_{\ti\imath_\la}\ar[r]&\Gr_{\la}\ar[d]^{i_\la}\\
\Gr_{B}\ar[r]^{p^+}&\Gr_G}$$
Let $\ti{p}_{\mu}^{+}$ be the obvious map
$$\ti{p}_{\mu}^{+}:\Gr_{\la}\cap S_{\mu}\ra\Gr_{\la}.$$
Formula \eqref{new-CT} and base change yield
\begin{align*}
\CT_!(i_\la)_!\omega_{\Gr_{\la}}
&=(q^+)_!(p^+)^{*}(i_\la)_!\omega_{\Gr_{\la}}\\
&=\bigoplus_\mu \RGm_{c}(\Gr_{\la}\cap S_{\mu}, (\ti{p}_{\mu}^{+})^{*}\omega_{\Gr_{\la}})
\end{align*}
where we used the isomorphism 
$$(\Gr_{\la}\cap \Gr_{B}^{\mu})_\red=(\Gr_{\la}\cap S_{\mu})_\red$$
which follows from Lemma \ref{Gr-ST}.
We must prove that
\begin{equation}
\RGm_{c}(\Gr_{\la}\cap S_{\mu}, (\ti{p}_{\mu}^{+})^{*}\omega_{\Gr_{\la}})[-2 \langle\rho,\la-\mu\rangle]\in\cD^{\leq 0}(\Spec(k)).
\label{gs-mu}
\end{equation}
The problem is Zariski local on $\Gr_{\la}$ and to do that, we consider the commutative diagram \eqref{diag2}
with the maps $i_0$, $i_1$ and $i_2$ there.
By Lemma \ref{pul-push} and base change, we have
\begin{align*}
\RGm_{c}(\Gr_{w\la}^\circ \cap S_\mu, (i_0)^*\omega_{\Gr_{w\la}^\circ})
\cong\RGm_{c}(Y_\mu, (i_1)^*\omega_Y)
\cong\RGm_{c}(\hX_\mu, (i_2)^*\omega_{\hX})
\end{align*}
By Lemma \ref{GrS3}, we have
\begin{align*}
\RGm_{c}(\hX_\mu, (i_2)^*\omega_{\hX})
\cong\RGm_{c}(\hX_\mu, (i_2)^*(\bql)_{\hX}[2\langle\rho,\la-w\la\rangle])\in\cD^{\leqslant -2 \langle\rho,\la-\mu\rangle}(\Spec(k))
\end{align*}
from which the claim follows.
\epf

\subsection{Geometric Satake}
\subsubsection{The dual category}\label{O-cat}
Let $G$ be a minimal KM group over a field of characteristic zero. 
The category $\cO$ consists of the diagonalizable $\kg$-modules $V$ with finite dimensional weight 
spaces and such that the sets of weight $P(V)$ satisfies the following condition
\[P(V)\subset\bigcup_{i=1}^{r}(\la_i-\NN\Delta),\]
see \cite[\S 2.1]{Kum}. It is an Abelian tensor category.
A diagonalizable $\kg$-module $V$ is integrable if $e_i$ and $f_i$ act locally nilpotently for all $i\in I$.
Integrable modules are stable by quotients, subobjects and tensor product.
Let $\Rep(G)$ be the full subcategory of $\kg$-integrable modules in the category $\cO$, which is thus an Abelian tensor category.
The simple objects are $L(\la)$ for $\la\in X_{*}(T)^{+}$, see, e.g., \cite[Cor.~2.1.3, 2.1.8]{Kum}.
Assume that $G$ is symmetrizable, then  $\Rep(G)$ is semisimple, i.e., every $M\in\Rep(G)$ is a direct sum of simple objects with finite 
multiplicities by \cite[\S10]{Kac}, see also \cite[Cor.~2.2.7, 3.2.10]{Kum}.
Given a minimal KM group $G$ defined over an algebraically closed field $k$, one defines its Langlands dual $G^{\vee}$ 
over $\bC$  to be the minimal KM group obtained by exchanging roots and coroots in the KM root datum. If $G$ is symmetrizable, 
then $G^{\vee}$ is symmetrizable.

\subsubsection{The equivalence}
\bprop\label{conserv}
Let $A\in\cD_{G_{\cO}}(\Gr_{G_c})$ such that $\CT_{*}(A)=0$. Then $A=0$. 
So the functor $\CT_*$ is conservative.
\eprop

\bpf
Let $\la$ be the maximal cocharacter such that $(i_{\la})^{!}A\neq 0$. 
By Theorem \ref{thm:GT} we have $(\Gr_{\la}\cap T_{\la})_\red\cong\Spec(k)$.
Thus $\CT_{*,\la}(A)$ is just the stalk of $A$ at $\la$.
It vanishes by assumption, yielding a contradiction.
\epf

For convenience, for any $\la\in\LLa_+$ let $i_\lambda$ be as in \eqref{ila} and set
\[\Delta_{\la}=\mathstrut^{p}(i_\lambda)_{!}\omega_{\Gr_{\la}}[-2\langle\rho,\la\rangle]
,\quad
 \nabla_{\la}= \mathstrut^{p}(i_\lambda)_{*}\omega_{\Gr_{\la}}[-2\langle\rho,\la\rangle]
 ,\quad
 \IC_{\la}=(i_\lambda)_{!*}\omega_{\Gr_{\la}}[-2\langle\rho,\la\rangle].\]
 
\bthm\label{ic-calc}
Assume that $G$ is of affine type over an algebraically closed field. Let $\la\in\LLa_+$.
\begin{enumerate}[label=$\mathrm{(\alph*)}$,leftmargin=8mm]
\remi
We have $\CT_{*}[\deg](\nabla_{\la})\cong L(\la)$ as $\LLa$-graded vector space.
\remi
The obvious maps
$\Delta_{\la}\ra\IC_{\la}\ra\nabla_{\la}$
are all isomorphisms. We have
$\CT_{*}[\deg](\IC_{\la})\cong L(\la).$
\eenum
\ethm

\bpf
For each $\nu\in\LLa$ the functor $\CT_{*,\nu}[2\langle\rho,\nu\rangle]$ is $t$-exact.
Thus
\begin{align*}
\CT_{*,\nu}(\nabla_{\la})
=H^{2\langle\rho,\nu\rangle}(\CT_{*,\nu}((i_\lambda)_{*}\omega_{\Gr_{\la}}[-2\langle\rho,\la\rangle]))
=H_{2\langle\rho,\la-\nu\rangle}^{\BM}(\Gr_{\la}\cap T_{\nu},\bql),
\end{align*}
where the last equality follows from \eqref{bm-calc} and \eqref{bm-calc2}.
By Theorem \ref{thm:GT}, we have 
\[H_{2\langle\rho,\la-\nu\rangle}^{\BM}(\Gr_{\la}\cap T_{\nu},\bql)=\bql[\Top\text{-}\!\Irr(\Gr_{\la}\cap T_{\nu})]=L(\la)_{\nu}\]
where $\Top\text{-}\!\Irr$ is the set of top-dimensional irreducible components.
The claim (a) follows.

To prove (b) we first prove that $\IC_{\la}\cong\nabla_{\la}$. 
By Proposition \ref{conserv}, it is enough to prove the claim after applying $\CT_*[\deg]$.
To do so, we must compute the space $H^{2\langle\rho,\nu\rangle}(T_{\nu},\IC_{\la}|^!_{T_\nu})$.
The characterization of IC-complexes yields
\benumr
\remi
For $\eta>\la$ we have $(i_{\eta})^{!}\IC_{\la}=0$.
\remi
For $\eta=\la$ we have $(i_{\eta})^{!}\IC_{\la}\in\cD_{G_\cO}(\Gr_\la)^\heartsuit$.
\remi
For $\eta<\la$ we have  $(i_{\eta})^{!}\IC_{\la}\in\mathstrut^{p}\cD^{> -2\langle\rho,\la-\eta\rangle}_{G_\cO}(\Gr_{\eta})$.
\eenum

We want to have the same inequalities with $\Gr_{\eta}$ replaced by $\Gr_{\eta}\cap T_{\nu}$.
These inequalities can be checked after taking a smooth cover of $\Gr_{\eta}\cap T_{\nu}$.
We concentrate on the  relation (iii),
otherwise the claim is immediate. Using Proposition \ref{t-strat}(a), we form the Cartesian diagram
\begin{equation}
\begin{split}
\xymatrix{
X_T\ar[d]\ar[r]&E_{\eta}\ar[d]_{\pi_{\eta}}\ar[rr]&&\Spec(k)\ar[d]\\
Y_T=\Gr_{\eta}\cap T_{\nu}\ar[r]&\Gr_{\eta}\ar@/^2pc/[rr]^{h}\ar[r]&[\Gr_{\eta}/G_{\co}]=\bB K_{\eta}\ar[r]&\bB L_{\eta}}
\label{pi-l}
\end{split}
\end{equation}
If we forget the $G_{\co}$-equivariance, the $!$-restriction $(\pi_\eta)^!(i_{\eta})^{!}\IC_{\la}$ 
is a direct sum of copies of $\omega_{E_{\eta}}[-d]$ with $d> -2\langle\rho,\la-\eta\rangle$. 
Since $\omega_{X_T}\in \mathstrut^{p}\cD^{\geq0}(X_T)$, see, e.g., \cite[6.3.5.(a)]{BKV},
restricting further to $X_T$ yields
\[\IC_\la|^{!}_{X_T}\in\mathstrut^{p}\cD^{>-2\langle\rho,\la-\eta\rangle}(X_T).\]
The inequalities (i)-(iii), the dimension estimate in Theorem \ref{thm:GT} 
and a standard spectral sequence argument similar to \cite[Prop.~5.13]{BR} 
imply that only the stratum $\Gr_{\la}$ contributes.
Using (a), we deduce
\[H^{2\langle\rho,\nu\rangle}(T_{\nu},\IC_{\la}|^!_{T_\nu})
=H^{2\langle\rho,\nu\rangle}(Y_T,\IC_{\la}\vert_{\Gr_{\la}})
=H_{2\langle\rho,\la-\nu\rangle}^\BM(Y_T,\bql)=L(\la)_{\nu}.\]
In particular, we have
$$\CT_{*}(\IC_\la)=\CT_{*}(\nabla_\la).$$
By Proposition \ref{conserv}, this yields an isomorphism $\IC_{\la}\cong\nabla_{\la}$.

Next, we prove that $\Delta_{\la}\cong\IC_{\la}$.
We must compute $H_{c}^{2\langle\rho,\mu\rangle}(S_{\mu},\IC_{\la})$.
The strategy is the same.
We must prove that only the stratum $\Gr_{\la}$ contributes.
In that case, using the $t$-exactness and the isomorphisms
$$\Delta_{\la}\vert_{\Gr_{\la}}\cong\IC_{\la}\vert_{\Gr_{\la}}\cong\omega_{\Gr_{\la}}[-2\langle\rho,\la\rangle],$$ 
we get
\[\CT_{*,\mu}(\IC_{\la})=H^{2\langle\rho,\mu\rangle}_{c}(S_{\mu}\cap\Gr_{\la},\IC_{\la})=H_{c}^{2\langle\rho,\mu\rangle}(S_{\mu},\Delta_{\la})=H_{c}^{-2\langle\rho,\la-\mu\rangle}(S_{\mu},s_{\mu}^{*}\omega_{\Gr_{\la}})=\CT_{*,\mu}(\Delta_{\la}).\]
It is sufficient to prove that for all $\eta<\la$
\begin{equation}
 \RGm_{c}(S_{\mu}\cap\Gr_{\eta},\IC_{\la})\in\cD^{<- 2\langle\rho,\la-\mu\rangle}(\Spec(k)).
\label{rl-gm}
\end{equation}
First, by definition of the IC complex \eqref{t3} we have
\[(i_{\eta})^{*}\IC_{\la}\in\mathstrut^{p}\cD^{< -2\langle\rho,\la-\eta\rangle}_{G_\cO}(\Gr_{\eta}).\]
The map $\pi_{\eta}$  is smooth and surjective.
Thus the functor $(\pi_{\eta})^!$ is $t$-exact.
Thus, we have
\[(\pi_{\eta})^{!}(i_{\eta})^{*}\IC_{\la}\in\mathstrut^{p}\cD^{< -2\langle\rho,\la-\eta\rangle}_{G_\cO}(E_{\eta})\]
Further, the sheaf $(i_{\eta})^{*}\IC_{\la}$ being $G(\co)$-equivariant, 
it can be written as $h^{!}K_0$ for some $K_{0}\in\cD(\bB L_{\eta})$.
By the commutativity of \eqref{pi-l}, 
the sheaf $(\pi_{\eta})^{!}(i_{\eta})^{*}\IC_{\la}$ is a direct sum  of $\omega_{E_{\eta}}[-d]$ for $d< -2\langle\rho,\la-\eta\rangle$.
We form the Cartesian square
$$\xymatrix{X_S\ar[d]_{\pi}\ar[r]^{s}&E_{\eta}\ar[d]^{\pi_{\eta}}\\Y_S\ar[r]^-{s_{\mu}}&\Gr_{\eta}}$$
Using smooth descent and Lurie's adjunction, for a sheaf $K\in\cD(Y_S)$, we have an equivalence 
\[K\cong\colim_{[n]}(\pi^{[n]})_{!}(\pi^{[n]})^{!}K,\] 
where the map
$\pi^{[n]}: X_S^{[n]}\ra Y_S$
is as in \eqref{[n]}.
This gives an isomorphism
\[\RGm_{c}(Y_S,K)\cong\colim_{[n]}\RGm_{c}(X_S^{[n]},(\pi^{[n]})^{!}K).\]
Since the category $\cD^{< -2\langle\rho,\la-\eta\rangle}(\Spec(k))$ is stable by colimits, 
to prove \eqref{rl-gm} it is enough to prove it on the various $X_S^{[n]}$ for the sheaf
$$K=\IC_{\la}\vert_{Y_S}=(s_\mu)^*(i_{\eta})^{*}\IC_{\la}.$$
By smoothness, we have
\[(\pi^{[n]})^{!}K\cong(s^{[n]})^{*}((\pi_\eta)^{[n]})^{!}(i_{\eta})^{*}\IC_{\la}\]
The complex $(\pi^{[n]})^{!}K$ is a direct sum of complexes
\[(s^{[n]})^{*}\omega_{(E_\eta)^{[n]}}[-d]=(\pi^{[n]})^{!}(s_{\mu})^{*}\omega_{\Gr_{\eta}}[-d]\]
with $d< -2\langle\rho,\la-\eta\rangle$.
Thus the claim follows from \eqref{gs-mu} and the following lemma.

\blem
If $K\in\cD(Y_S)$ is such that $\RGm_c(Y,K)\in\cD^{<0}(\Spec(k))$, then
$$\RGm_c(X_S^{[n]},(\pi^{[n]})^{!}K)\in\cD^{<0}(\Spec(k)).$$
\elem

\bpf
Considering the tower
\[X_S^{[n]}\ra\cdots\ra X_S\ra Y_S.\]
By induction on $n$, we can assume that $n=1$.
Since the map $\pi$ is smooth, the projection formula gives
\[\pi_{!}\pi^{!}K\cong K\overset{!}{\otimes}\pi_{!}\omega_{X_S}\cong K\overset{!}{\otimes} {f}^!\bar{\pi}_{!}\bql\]
where the maps $\overline{\pi}:\Spec(k)\ra\bB L_{\la}$ and $f:Y\ra\bB L_{\la}$ are the obvious ones.
Here the last base change follows from \eqref{shriek-1}.
Since $L_{\la}$ is geometrically connected, the sheaf $\mathstrut^{p}\cH^{i}(\bar{\pi}_{!}\bql)$ is constant. 
Further, we have $\bar{\pi}_{!}\bql\in\mathstrut^{p}\cD^{\leq 0}(\bB L_{\la})$. 
Filtering the complex $\overline\pi_{!}\bql$, an induction using the smoothness of $\bB L_{\la}$ implies that we can replace
$\overline\pi_{!}\bql$ by the  dualizing sheaf $\omega_{\bB L_{\la}}$.
Thus, the claim reduces to prove that
$\RGm_c(Y_S,K)\in\cD^{<0}(\Spec(k))$ which holds by assumption. 
\epf
\epf

We have the normalized constant term functor:
\[\CT_{*}[\deg]:\cD_{G_\cO}(\Gr_{G_c})^\heartsuit\Ind(\Rep(T^{\vee})),\]
as well as a canonical restriction functor:
\[\Rep(G^{\vee})\ra\Rep(T^{\vee}),\]
that is faithful and injective on objects. In particular, it identifies $\Rep(G^{\vee})$ as a non-full subcategory of $\Rep(T^{\vee})$. 

\bthm\label{satake-fin}
The category $\cD_{G_\cO}(\Gr_{G_c})^\heartsuit$ is semisimple.
The normalized constant term $\CT_{*}[\deg]$ is an equivalence of semisimple Abelian categories
$\cD_{G_\cO}(\Gr_{G_c})^\heartsuit\cong\Ind(\Rep(G^{\vee})).$
\ethm

\bpf
By Theorem \ref{t-exact} and Proposition \ref{conserv}, 
the functor is exact and conservative, thus faithful. 
By Corollary \ref{simple}, the simple objects are the same and $\Rep(G^{\vee})$ is semisimple.
Let us prove that $\cD_{G_\cO}(\Gr_{G_c})^\heartsuit$ is semisimple.
We must prove that for any $\la,\mu\in X_{*}(T)$, we have
\[\Hom_{\cD_{G_{\co}}(\Gr_{G_c})}(\IC_{\la},\IC_{\mu}[1])=0.\]
In the reductive case, the usual argument uses parity vanishing of $\cH^{i}(\IC_{\la})$. 
In our situation, we do not know how to prove 
such a parity result. Nevertheless, to prove semisimplicity we need less.
The argument of \cite[Prop.~3.1]{Ric2} gives that we only need for $\mu<\la$ and $\mu$ dominant that
\begin{equation}
(i_{\mu})^{!}\IC_{\la}\in\mathstrut^{p}\cD^{\geq-2 \langle\rho,\la-\mu\rangle+2}(\Gr_\mu)
,\quad
(i_{\mu})^{*}\IC_{\la}\in\mathstrut^{p}\cD^{\leq -2 \langle\rho,\la-\mu\rangle-2}(\Gr_\mu).
\label{2-st}
\end{equation}
The usual inequality for an IC complex is \eqref{t3} which yields
\[(i_{\mu})^{!}\IC_{\la}\in\mathstrut^{p}\cD^{\geq-2 \langle\rho,\la-\mu\rangle+1}(\Gr_\mu)
,\quad
(i_{\mu})^{*}\IC_{\la}\in\mathstrut^{p}\cD^{\leq -2 \langle\rho,\la-\mu\rangle-1}(\Gr_\mu).\]
However, the stronger inequality follows from Theorem \ref{ic-calc}(b) and \cite[Rmk.~after~Cor.~1.4.24]{BBD}.
Thus, the category $\cD_{G_\cO}(\Gr_{G_c})^\heartsuit$ is semisimple and the functor is fully faithful. 
As both sides are stable by arbitrary direct sums and $\Rep(G^{\vee})$ is already in the image of the functor, we obtain the desired 
equivalence.
\epf

\begin{appendix}

\section{The Vinberg monoid of a Kac-Moody group}\label{vinberg}

The goal of this section is to give a proof of Proposition \ref{ti-quot}.
To do this, we first gather some material on Vinberg monoids of KM groups.
The main result of this section is Proposition \ref{Hyp}.
We will assume that $k$ is an algebraically closed field, and $G$ is a simply connected minimal KM group over $k$.
We also assume that either $G$ is arbitrary and $\car(k)=0$, 
or $G$ is affine and $\car(k)$ is arbitrary.

\subsection{Construction}
We consider the group ind-scheme 
$$G_+=(G\times T)/Z$$ where we embed the center $Z$ of $G$ anti-diagonally.
Let
$$T_+=(T\times T)/Z,\quad Z_+=(Z\times T)/Z\cong T$$
be the maximal torus and the center of $G_+$. 
For each dominant character $\omega$, let
$\rho_\omega$ be as in \eqref{rho-omega}.
Set 
$$
H_G=\prod\limits_{i\in I}\End^\ind(L(\omega_i))\times\ab_\Delta$$
where $\ab_\Delta$ is as in \eqref{AD}.
The map
$G_+  \to  H_G$ given by
$(g,t)\mapsto (\omega_i(t)\rho_{\omega_{i}}(g),\al(t))$
is a monomorphism.
We define $\Vin_G$ to be the scheme theoretic image  of $G_+$ in $H_G$.
The scheme theoretic image commutes with filtered colimits.
The scheme theoretic image of a morphism of schemes $f:X\to Y$ is the smallest closed subscheme $Z\subset Y$ 
through which $f$ factors, see \cite[Tag.~01R7]{Sta}.
Thus the functor $\Vin_G$ is the colimit of the closures of the images of the components of a colimit representing $G$.
The functor $\Vin_G$ has an action of $G_+\times G_+$ that extends the left and right multiplication on $G_+$.
We deduce the following.

\bprop\label{Vin}
$\Vin_G$ is  a $G_+\times G_+$-equivariant ind-affine ind-scheme with a monoid structure.
\qed
\eprop

We call $\Vin_G$ the Vinberg monoid of $G$.
Since $\End^\ind(V)$ may not be of ind-finite type, the Vinberg monoid may also not be of ind-finite type,
see \S\ref{EndInd}.
We identify $G_+$ with its image in $\Vin_G$.
Let $T_\ad=T/Z$.
The group $T_\ad$ embeds in $\ab_\Delta$ via the simple roots as the open subset where all coordinates are nonzero.
Let $T_\Delta\subset \Vin_G$ be the image of the anti-diagonal morphism $T\to T_+$.
Let $\overline{T}_\Delta\subset\Vin_G$ be its closure.
The projection $H_G\to\ab_\Delta$ gives a map
$$\det:\Vin_G\ra\ab_\Delta.$$
This map restricts to an isomorphism
\begin{equation}
T_\Delta\cong T_\ad.
\label{det1}
\end{equation}
For each $t\in T_\Delta$ and each $i\in I$, the endomorphism $\omega_i(t^{-1})\rho_{\omega_{i}}(t)$ is polynomial in the simple roots $\al(t)$.
Hence the isomorphism \eqref{det1} extends to an isomorphism
\begin{equation}
\ov{T}_\Delta\cong\ab_\Delta
\label{det2}
\end{equation}
This yields the section 
\begin{equation}
\sigma=(\det)^{-1}:\ab_\Delta\ra\overline{T}_{\Delta}\subset\Vin_G.
\label{sigma}
\end{equation}

We introduce the open Bruhat cell in $\Vin_G$ following the strategy of Solis in \cite{Sol}.
For a vector space $V$ and an element $v^\vee\in V^\vee$, we consider the qc open subsets in $V$ and $\bP(V)$ given by
$$V_{v}=V\setminus\{v^{\vee}=0\}
,\quad
\bP(V)_v=\bP(V)\smallsetminus\bP(\{v^{\vee}=0\}).$$
We have an $U^-\times U$-equivariant map
$$\psi: H\ra \prod_{i\in I}L(\omega_i)\times L(\omega_i)^{\vee}
,\quad
(f,z)\mapsto (f(v_i),v_i^{\vee}  f)_{i\in I}$$ 
We consider the qc open subset
$H_\Omega\subset H_G$
given by
$$H_\Omega=\psi^{-1}\Big(\prod_{i\in I}L(\omega_i)_{v_i}\times L(\omega_i)_{v_i^\vee}^\vee\Big).$$
As $\Vin_G$ is closed in $H_{G}$, the qc open subset $\Vin_\Omega\subset\Vin_G$ given by
$$\Vin_\Omega=\Vin_G\cap H_\Omega$$
is closed in $H_\Omega$.
Let $\Omega_+=U^{-}T_+ U$ be the open cell of $G_+$.
We have 
$$\Omega_+=G_+\cap \Vin_\Omega
,\quad
\overline{T}_{\Delta}\subset\Vin_\Omega.$$
Quotienting by $\bG_m$, we obtain a map
\[\ov{\psi}:H_\Omega\ra \prod_{i\in I} \bP( L(\omega_i))_{v_i}\times \bP( L(\omega_i)^\vee)_{v_i^\vee}.\]
Further, the orbit maps at the tuples $(v_i, v_i^{\vee})$ yield the map
\[b:G_+\ra \prod_{i\in I} \bP (L(\omega_i))\times \bP( L(\omega_i)^{\vee})\]
The map $b$ coincides with $\ov{\psi}$  over the open cell $\Omega_+$.
By Lemma \ref{stab}, the stabilizer of the tuple $(v_i, v_i^{\vee})$ in $G_+\times G_+$ identifies with $B_+\times B_+^-$.
Hence the map $b$ factors through
\[G_+\stackrel{\Delta}{\rightarrow}G_+\times G_+\ra G_+/B_+\times G_+/B_+^-=G/B\times G/B^{-}.\]
Over  $\Omega_+$ the maps $b$ and $\ov\psi$ are both given by the projection followed by the open immersion 
\begin{equation}
U^{-}\times T_+\times U\rightarrow U^{-}\times U\hra G/B\times G/B^{-}.
\label{op-cell2}
\end{equation}

\blem\label{image}\hfill
\begin{enumerate}[label=$\mathrm{(\alph*)}$,leftmargin=8mm]
\item
$U^{-}\times U$ is closed in $\prod_{i\in I} \bP( L(\omega_i))_{v_i}\times \bP( L(\omega_i)^\vee)_{v_i^\vee}$.
\item
$\ov{\psi}(\Vin_{\Omega})= U^{-}\times U$.
\eenum
\elem

\bpf
Part (a) follows from  \eqref{uminus}.
To prove (b), note that $\Omega_+$ is dense in $G_+$. 
Hence $\Omega_+$ is dense in $\Vin_\Omega$.
By \eqref{op-cell2}, we have $\ov{\psi}(\Omega_+)=U^-\times U$, which is closed in the product
$$\prod_{i\in I} \bP( L(\omega_i))_{v_i}\times \bP( L(\omega_i)^\vee)_{v_i^\vee}.$$
Part (b) follows.
\epf

\bprop\label{bruhat}\hfill
\begin{enumerate}[label=$\mathrm{(\alph*)}$,leftmargin=8mm]
\item
The map 
$U^-\times Z_+\times \overline{T}_{\Delta}\times U\ra\Vin_\Omega$
given by $(u^-,z,t,u)\mapsto u^-ztu$ is an isomorphism. 
\item
$\Omega_+$ is a qc open of $\Vin_\Omega$.
\item
 $\Vin_\Omega$ is an ind-ft-scheme.
\eenum
\eprop

\bpf
(c) follows from (a).
Let $\eta:U^-\times Z_+\times \overline{T}_{\Delta}\times U\ra\Vin_\Omega$ be the map in (a).
By Lemma \ref{image}, we have
$\ov{\psi}(\Vin_\Omega)\subset U^{-}\times U$.
Since the map $\ov{\psi}$ is $U^{-}\times U$-equivariant,
 to prove (a) it is enough to check that
\[Z_+\times \overline{T}_{\Delta}=\ov{\psi}^{-1}(1,1).\]
The inclusion $\subset$ is immediate, because $Z_+$ acts by dilatation on each $L(\omega_i)$. 
Let us prove the converse.
Set 
$$H_0=\Big(\prod_{i\in I}\big(\End^\ind(L(\omega_i))\setminus\{0\}\big)\Big).$$
The group $Z_+$ acts freely on $H_0$, because it acts by dilatation on each factor. 
The functor $H_0/Z_+$ is representable by 
$$\prod_{i\in I}\bP\Big(\End^\ind(L(\omega_i))\Big).$$
We have $\Vin_\Omega\subset H_\Omega\subset H_0$. 
Since $\Vin_\Omega$ is closed in $H_\Omega$, it is locally closed in $H_0$.
Hence, the functor $\Vin_\Omega/Z_+$ is also representable by an ind-scheme.
We have $T_+\cong Z_+\times T_\Delta.$ and as $\ov{T}_{\Delta}$ is closed in $\Vin_\Omega/Z_+$, $Z_+\times \ov{T}_{\Delta}$ is closed in $\Vin_\Omega$.
Hence $Z_+\times \ov{T}_{\Delta}$ is the closure of $T_+$ in $\Vin_\Omega$.
Let  $x$ be a $k$-point in $\ov{\psi}^{-1}(1,1)$.
By \cite[Tag.~02JQ]{Sta}, there is a valuation ring $R$ with fraction field $K$ and
a function $f\in(U^{-}T_+ U)(K)$ that extends to a function $\ti{f}\in\Vin_\Omega(R)$,
and such that $f(\km)=x$ where $\km$ is the closed point of $k$. Let 
$(u^{-},u)=\ov{\psi}(\ti{f})$ in $(U^{-}\times U)(R)$.
We have
$$((u^-,u)^{-1}\ti{f})(\km)=x
,\quad
((u^-,u)^{-1}\ti{f})|_{\Spec(K)}\in T_+(K).$$
Thus the map $x$ is in the closure of $T_+$  which is $Z_+\times \ov{T}_{\Delta}$.
We deduce that the map $\eta$ is a closed immersion which is bijective on $k$-points.
Thus it is an homeomorphism.
We deduce that $\Omega_+$ is open in $\Vin_\Omega$ and thus in $\Vin_G$ and as $\Omega_+$ is ind-affine, we get (b).

Since $\Omega_+$ is schematically dense in $G_+$ and quasi-compact by (b), 
the ind-scheme $\Vin_G$ is also the scheme theoretic image of $\Omega_+$. 
Thus  \cite[Tag.~01R8, 01RD]{Sta} implies that $\Omega_+$ is schematically dense in $\Vin_\Omega$, i.e., 
for any open subset $U\subset \Vin_\Omega$ the scheme theoretic closure of $\Omega_+\cap U$ in $U$ is equal to $U$ and as $\eta$ was already a nilpotent closed immersion, it is an isomorphism proving (a).

\epf

Since $\Vin_\Omega$ is qc open in $\Vin_G$, the subset
\begin{equation}
\Vin_0=(G\times G)(k)\cdot\Vin_\Omega\subset\Vin_G
\label{0-vin}
\end{equation}
is an increasing union of qc open subsets.
The action map $$G\times G\times\Vin_0\ra\Vin_G$$ 
factors through $\Vin_0$, because it factors on $k$-points and $\Vin_0$ is open.
Thus  $\Vin_0$ is $G\times G$-stable.
We have $$\overline{T}_{\Delta}\subset\Vin_0,$$ 
because the coefficient in the highest weight vector of  
$\omega_i(t)\rho_{\omega_{i}}(t^{-1})$ is 1.
We consider the group ind-scheme
\[\Stab_{G\times G}(\sigma)=\{(g_1,g_2,t)\in G\times G\times\ab_\Delta\,;\,g_{1}\sigma(t)g_{2}^{-1}=\sigma(t)\}.\]
It is closed in $G\times G\times\ab_\Delta$, which is 
viewed as a group ind-scheme over $\ab_\Delta$.
The section $\sigma$ in \eqref{sigma} factors through $\ab_\Delta\ra\Vin_0.$

\bprop\label{vin-quotient}
The action map $G\times G\times\ab_\Delta\ra \Vin_0$ on the section $\sigma$ yields an isomorphism of étale sheaves over $\ab_\Delta$
$$[(G\times G\times\ab_\Delta)/\Stab_{G\times G}(\sigma)]\cong\Vin_0$$ 
This isomorphism also holds Zariski locally. 
\eprop

\bpf
By Proposition  \ref{bruhat}, over $\Vin_\Omega$ we have a  section and  $\Vin_0$ is covered by translates of $\Vin_\Omega$.
\epf

\bprop\label{stab-smooth}
The morphism $\Stab_{G\times G}(\sigma)\ra \ab_\Delta$ is formally smooth.
\eprop

\bpf
By Proposition \ref{vin-quotient}, it is  enough to prove the formal smoothness of the action map
$$G\times G\times\ab_\Delta\ra\Vin_0.$$
The group ind-scheme $G$ is formally smooth, due to our assumptions on $k$ or $G$.
Thus the source is formally smooth.
The target is also formally smooth by Proposition  \ref{bruhat}.
The morphism has sections Zariski locally by Proposition \ref{vin-quotient}.
Hence it is formally smooth.
\epf

We want to describe the fibers over $\ab_\Delta$ of this stabilizer group ind-scheme.
For each subset $J\subset\Delta$, let $e_J\in\ov{T}_{\Delta}$ be the element with coordinates
$\al_{j}(e_J)=0$ if $j\in J$ and $\al_j(e_J)=1$ otherwise.
Hence, the linear map $\rho_{\omega_i}(e_J)$ is the projection from $L(\omega_i)$ to the sum 
$L(\omega_i)_J$ of the weight subspaces whose weights belong to the set $\omega_i+\bZ\{\al_j\,;\,j\in J\}$.

\bprop\label{stabI}
For each subset $J\subset\Delta$ we have $\Stab_{G\times G}(e_J)=P_J\times_{L_J} P_J^{-}$.
\eprop

\bpf
Let $R$ be any $k$-algebra $R$.
Let $(g_1,g_2)\in G(R)\times G(R)$ be such that $g_1e_J=e_Jg_2$. 
For each $i\in I$, let $M(\omega_i)_J$ be the $T$-invariant complement subspace of $L(\omega_i)_J$ in $L(\omega_i)$.
We have
\begin{itemize}[leftmargin=6mm]
\item 
$L(\omega_i)_J$ is stable under $\rho_{\omega_i}(g_1)$,
\item
$M(\omega_i)_J$ is stable under $\rho_{\omega_i}(g_2)$,
\item
the maps $L(\omega_i)_J\to L(\omega_i)_J\to  L(\omega_i)/M(\omega_i)_J\cong L(\omega_i)_J$ 
given by 
$\rho_{\omega_i}(g_1)$ and $\rho_{\omega_i}(g_2)$ coincide.
\end{itemize}
The proposition follows from the next lemma.

\blem
\hfill
\hfill
\begin{enumerate}[label=$\mathrm{(\alph*)}$,leftmargin=8mm]
\remi
$\Stab_{G}(L(\omega_i)_J)=P_J$ and $\Stab_{G}(M(\omega_i)_J)=P_J^{-}$.
\remi
The kernel of the $P_J$-action on $L(\omega_i)_J$ is $U_JZ(L_J)$.
\remi
The kernel of the $P_J^-$-action on $M(\omega_i)_J$ is $U_J^-Z(L_J)$.
\eenum
\elem

\bpf
Set $Q_J=\Stab_{G}(L(\omega_i)_J)$. 
We have $P_J\subset Q_J$. 
So $Q_J$ is a standard parabolic.
By \cite[Thm.~5.1.3(g)]{Kum}, since $s_j$ does not 
stabilize $L(\omega_i)_J$ if $j\notin J$, we have a bijection on $k$-points.
By considering the open cell $U_J^{-}P_J$, 
we obtain the same way as in Lemma \ref{stab}, that it is an isomorphism. 
The second claim in (a) is similar.
To prove (b) it suffices to note that $L(\omega_i)_J$  and $M(\omega_i)_J$ are $L_J$-stable.
Hence the claim 
reduces to the Levi, for which it is clear.  
\epf
\epf

\subsection{Proof of Proposition \ref{ti-quot}}\label{PfPROP}
For any positive root $\al$, the map
$\al\circ 2\Lrho:\bG_m\to\bG_m$ extends to a map $\ab^1\ra\ab^{1}$.
Taking the product over all simple roots, we get a map
\begin{equation}
\phi:\ab^{1}\ra\ab_\Delta.
\label{pul-g}
\end{equation}
We introduce the hyperbolic monoid
$$\xymatrix{\Hyp_G\ar[r]\ar[d]_{\det}&\Vin_G\ar[d]^{\det}\\\ab^{1}\ar[r]^-{\phi}&\ab_\Delta}$$
Let $\Hyp_0$ denote the base change of $\Vin_0$.
The base change yields a section  to the  map $\det$
$$\sigma^{\Hyp}:\ab^{1}\ra \Hyp_0.$$

\blem\label{stab-d}\hfill
\begin{enumerate}[label=$\mathrm{(\alph*)}$,leftmargin=8mm]
\item
If $t\neq 0$, then $\Stab_{G\times G}(\sigma^{\Hyp}(t))=G$.
\item
If $t=0$,  then $\Stab_{G\times G}(\sigma^1(t))=B\times_{T}B^{-}$.
\eenum
\elem

\bpf
Part (a) is obvious.
If $t=0$ then $\sigma(\phi(0))=e_{\emptyset}$ in $\Vin^0_{G}$.
Hence (b) follows from Proposition \ref{stab}.
\epf

Proposition \ref{ti-quot} follows from Proposition \ref{Vin} and the following.

\bprop\label{Hyp}\hfill
\begin{enumerate}[label=$\mathrm{(\alph*)}$,leftmargin=8mm]
\item
The action map yields an equivalence of étale sheaves over $\ab^1$
$$[(G\times G\times\ab^1)/\Stab_{G\times G}(\sigma^1)]\cong \Hyp_0$$
The left hand side is open in an ind-affine ind-scheme.
\item
We have $\ti{G}\cong\Stab_{G\times G}(\sigma^{\Hyp})$.
\eenum
\eprop

\bpf
Claim (a) follows from Proposition \ref{vin-quotient} by base change.
To prove (b) we must relate $\ti{G}\to\ab^1$ with the stabilizer of $\sigma^1$.
The fibers of $\ti{G}$ and $\Stab_{G\times G}(\sigma^{\Hyp})$ over any $t\in \ab^1$ are isomorphic by 
Proposition \ref{stab-d}.
The group $\ti{G}|_{\bG_m}$ is the subscheme of $G\times G\times\bG_m$ such that
\[\phi(t)\cdot g_{1}\cdot\phi(t)^{-1}=g_2,\quad\forall g_1,g_2\in G.\]
On the other hand $\Stab_{G\times G}(\sigma^1)|_{\bG_m}$ is defined by the equation
\[g_{1}\cdot\sigma^{\Hyp}(t)\cdot g_{2}^{-1}=\sigma^{\Hyp}(t).\]
By \eqref{sigma} and base change, the element $\sigma^{\Hyp}(t)$ differs from $\phi(t)^{-1}$ by an element of $Z_+$.
Thus the ind-schemes are the same over $\bG_m$.
Since $\Stab_{G\times G}(\sigma^1)$ is closed in $G\times G\times\ab^1$, 
by Lemma \ref{adh-G} the closed immersion $\ti{G}\hra G\times G\times\ab^1$ factors as an fp closed immersion
\[i:\ti{G}\ra\Stab_{G\times G}(\sigma^{\Hyp})\]
which is fiberwise an isomorphism by Lemma \ref{stab-d}.
By Lemma \ref{ti-fsmooth} and Proposition \ref{stab-smooth}, the groups $\ti{G}$ and $\Stab_{G\times G}(\sigma^1)$ are formally smooth over $\ab^1$.
Thus by Proposition \ref{ind-smooth},
the map $i$ is formally smooth.
Thus it is étale.
Since it is a bijective closed immersion, it is an isomorphism.
\epf

\section{Deformation theory for prestacks}

The goal of this section is to prove Proposition \ref{ind-smooth}, from which Propositions \ref{ti-quot} and \ref{Hyp} follow.

\subsection{Quasi-coherent sheaves}
Let $\dAff_k$ be the category of affine derived $k$-schemes, and
$\dPrSt_k=\PrSh(\dAff)$ be the category of derived $\infty$-prestacks over $k$.
Let $\Ind(\dAff_k)\subset\dPrSt_k$ be the full subcategory of ind-objects in  $\dAff_k$ 
with no condition on transition morphisms.
The Yoneda embedding \cite[Prop.~5.1.3.1]{Lu1} yields a fully faithful functor 
$\eta:\dAff_k\ra\dPrSt_k$ that factors through a chain of fully faithful functors
\begin{equation}
\begin{split}
\xymatrix{\dAff_k\ar[r]_-{\eta_1}\ar@/^2pc/[rr]^{\eta}&\Ind(\dAff_k)\ar[r]_{\eta_{2}}&\dPrSt_k}.
\end{split}
\label{faith}
\end{equation}
For any $X\in\dAff_k$, let $\QCoh(X)$ be the stable $\infty$-category 
of quasi-coherent sheaves over $X$,
see \cite[Def.~1.3.5.8]{Lu2}. 
It is equipped with the standard $t$-structure, see \cite[Prop.~ 1.3.5.21]{Lu2}.
Let $\QCoh(X)^{\heartsuit}$ be its heart.
 For each morphism $f:X\ra Y$ in $\dAff_k$, we have a functor 
 $$f^*:\QCoh(X)\ra\QCoh(Y).$$
 Thus, there are  functors
$\Pro(\QCoh),\QCoh:\dAff_k^{\op}\ra\StCat$
that we left Kan-extend to get functors
\[\Pro(\QCoh),\QCoh:\dPrSt_k^{\op}\ra\StCat.\]
In particular, for any prestack $\cX$ and any $X\in\dAff_k$ we have 
\[\QCoh(\cX)=\lim_{X\ra\cX}\QCoh(X),\]
By \cite[\S3.1.5.1]{GRI}, the category $\QCoh(\cX)$ is equipped with a  $t$-structure such that 
\[\QCoh(\cX)^{\leq 0}=\{\cF\in\QCoh(\cX)\,;\,x^*\cF\in\QCoh(S)^{\leq 0}\,,\,\forall~ x:S\to \cX\}.\]
The positive category $\QCoh(\cX)^{\geq 0}$ does not have such a convenient description.

\subsection{Functor of derivations}
We recall the following construction from \cite[2.1.3]{Bt3}.

\bdefi
For each $\infty$-category $\cC$ with finite colimits, we consider the functor  $\Fact_{\cC}:\cC^{\Delta^{1}}\ra\infty$-$\Cat$
which associates to a morphism $x\ra y$ in $\cC$ the $\infty$-category of factorizations $x\ra c\ra y$.
\edefi

For an affine scheme $S=\Spec(R)$ and any $k$-module $M$, let 
$$S[M]=\Spec(R\oplus M)$$ with the multiplication given by
\[(a,m)\cdot(a',m')=(aa,am'+a'm).\]
We have a factorization $S\hra S[M]\ra S$, where the first map is a square zero closed immersion.
We get a morphism of functors $\Aff\ra\infty$-$\Cat$ taking $S$ to
\[S[-]:\QCoh(S)^{\heartsuit}\ra \Fact_{\Aff}(\id_{S}).\]
We abbreviate
\[[-]:\QCoh_{\Aff}^{\heartsuit}\ra \Fact_{\Aff}(\id_{-}).\]
By Lurie \cite[Const.~25.3.1.1]{Lu3}, there exists a unique extension of this functor to $\dAff$ 
and $\QCoh^{\leq 0}$ that commutes with small colimits
\[[-]:\QCoh_{\dAff}^{\leq 0}\ra \Fact_{\dAff}(\id_{-}).\]
We must extend this functor to any prestack.
Applying the $\Pro$ functor, we get a natural transformation
\[[-]:\Pro(\QCoh_{\dAff}^{\leq 0})\ra\Pro(\Fact_{\dAff}(\id_{-}))\cong\eta_{1}^{*}(\Fact_{\Ind(\dAff)}(\id_{-}))\ra \eta^{*}(\Fact_{\dPrSt}(\id_{-})),\]
with  $\eta_1^{*}$ and $\eta^{*}$ the pullback functors induced by restriction to $\dAff$, see \eqref{faith}.
We then apply the left Kan extension $\eta_!$.
It is left adjoint to $\eta^{*}$.
Since $\eta$ is fully faithful, the counit $\eta_!\eta^{*}\ra\id$ is an equivalence.
We obtain
\begin{equation}
[-]:\Pro(\QCoh_{\dPrSt}^{\leq 0})\ra\eta_!\eta^{*}(\Fact_{\dPrSt}(\id_{-}))\cong\Fact_{\dPrSt}(\id_{-}).
\label{foncultim}
\end{equation}

\bdefi
For any morphism of $\infty$-prestacks $\cX\ra\cY$, we define the derivation functor
\[\Der_{\cY}(\cX,-)=\Map_{\cX/./\cY}(\cX[-],\cX).\]
\begin{enumerate}[label=$\mathrm{(\alph*)}$,leftmargin=8mm]
\item
$\cX\ra\cY$ admits a relative pro-cotangent complex if there is an object 
$L_{\cX/\cY}\in\Pro(\QCoh(\cX))$ such that for each $E\in\Pro(\QCoh^{\leq 0}(\cX))$ we have
\[\Der_{\cY}(\cX,E)\cong\Map(E,L_{\cX/\cY}).\]
\item
$\cX$ admits  a pro-cotangent complex if  $\cX\ra\Spec(k)$ has a relative pro-cotangent complex.
\eenum
\edefi

\brem
\hfill
\begin{enumerate}[label=$\mathrm{(\alph*)}$,leftmargin=8mm]
\item
Even if in the sequel we use non-derived prestacks, to construct the cotangent complex we must consider the derived setting.
\item
For a morphism of $\infty$-prestacks $f:\cX'\ra\cX$ with pro-cotangent complexes, functoriality yields a morphism  
 \begin{equation}
f^{*}L_{\cX/\bZ}\ra L_{\cX'/\bZ}.
\label{canL}
\end{equation}
Applying $\Map(-,E)$ for each $E\in\Pro(\QCoh^{\leq 0}(\cX'))$, the map $f$ has a relative pro-cotangent complex
\begin{equation}
L_{\cX'/\cX}\cong\Cofib(f^{*}L_{\cX/\bZ}\ra L_{\cX'/\bZ}).
\label{Omful}
\end{equation}
\eenum
\erem

\bexa\label{Tdef-alg}
\hfill
\begin{enumerate}[label=$\mathrm{(\alph*)}$,leftmargin=8mm]
\item
Let $X$ be an ind-algebraic space.
By  \cite[Prop.~1.2.19]{Hen} it has a pro-cotangent complex $L_{X/\bZ}$, such that for any $X\cong\colim X_{a}$  
and any $x\in X(R)$ we have
\[x^{*}L_{X/\bZ}\cong\lim x^{*}L_{X_{a}/\bZ}.\] 
In particular, we have $L_{X/\bZ}\in\Pro(\QCoh(X)^{\leq 0})$.
\item
If $X$ is a formally smooth ind-scheme of ind-ft and $H$ is a group ind-scheme,
then the quotient $[X/H]$ admits a pro-cotangent complex by \cite[Prop.~2.2.11]{Bt3}.
\eenum
\eexa

In the formally smooth case, we have more constraints on the pro-cotangent complex.

\bprop\label{GR-smooth}
Let $X$ be a formally smooth ind-scheme over a base scheme $S$. 
For each point $x:\Spec(R)\ra X$ we have
\hfill
\begin{enumerate}[label=$\mathrm{(\alph*)}$,leftmargin=8mm]
\item
 $H_0(x^{*}L_{X/S})$ is pro-projective,
\item
$H_1(x^*L_{X/S})=0$.

\eenum
\eprop

\bpf
For (a), by \cite[Lem.~5.2.1]{Bt3}, given a presentation $X\cong\colim X_a$, it is enough to prove that the functor on $R$-modules
$$M\mapsto\Hom_{c,R}(x^{*}\Omega^{1}_{X/S},M):=\colim_{a}\Hom_{R}(x^{*}\Omega^{1}_{X_a/S},M)$$ 
is exact. Denote $\Omega=x^{*}\Omega^{1}_{X/S}$.
The functor is already left exact. For right exactness, if $M\thra N$ and $\phi\in \Hom_{c,A}(\Omega,N)$ 
it amounts to a cofiltered family of  diagrams
$$\xymatrix{&\cO_{X_a}\oplus M\ar[d]\ar[r]&\cO_{X}\ar@{=}[d]\\\cO_{X_{a}}\ar[r]^-{\phi}&\cO_{X_a}\oplus N\ar[r]&\cO_{X_a}}$$
and because $X$ is formally smooth, the section $\phi$ lifts for $a$ big enough.
Part (b) follows from the proof of \cite[Prop.~9.4.2]{GRd}.
\epf

An $\infty$-prestack $\cX$ with a pro-cotangent complex $L_{\cX}$ controls the split square zero extension. 
We want to control all square zero extensions. To do this we introduce an extra condition.

\subsection{Prestacks with a Deformation theory}

Recall the following definition \cite[\S I, Def.~7.1.2]{GRII}.

\bdefi\label{defT}
A morphism $\cX\ra\cY$ of $\infty$-prestacks admits a relative deformation theory if 
\hfill
\begin{enumerate}[label=$\mathrm{(\alph*)}$,leftmargin=8mm]
\item
$\cX$ is convergent, i.e., for each $S\in\dAff$ we have
$$\Map(S,\cX)\cong\lim_{n\in\NN}\Map(\tau^{\leq n}S,\cX)$$ where $\tau^{\leq n}S$ is the $n$-truncation of $S$.
\item
$\cX\ra\cY$ admits  a relative pro-cotangent complex.
\item
$\cX$ is infinitesimally cohesive, see \cite[\S I, \S6]{GRII}.
\eenum
If $\cY=\Spec(\bZ)$, we say that $\cX$ admits a deformation theory.
\edefi

\brem\label{rem-defT1}\hfill
\begin{enumerate}[label=$\mathrm{(\alph*)}$,leftmargin=8mm]
\item
To be  infinitesimally cohesive for a prestack $\cX$ amounts to say that for each square zero extension $\bar S\hra S$, 
the groupoid $\cX(S)$ can be described in terms of $\QCoh(\bar S)$.
\item
Each $n$-truncated prestack is convergent.
In the sequel we will only work with classical prestacks.
\item
An ind-scheme $X$ has a deformation theory by \cite[\S II, Prop.~1.3.2]{GRII}.
By \eqref{Omful}, a morphism of ind-schemes 
$X\ra Y$ has a relative deformation theory. The same holds for an ind-algebraic space, because it is infinitesimally cohesive by 
\cite[Rmk.~17.3.1.7]{Lu3} and we already checked the other conditions.
\item
By \cite[Prop.~2.2.11]{Bt3}, étale quotients $[X/H]$ for $X$ a formally smooth ind-scheme of ind-ft and $H$ an group ind-scheme 
have a deformation theory. 
\eenum
\erem

Now, the main reason for introducing this property is the following, see  \cite[Rmk.~17.3.1.8]{Lu3}.

\bprop\label{lift}
Let $\Spec(\ov R)\hra \Spec(R)$ be a square zero extension of affine schemes of ideal $I$.
Let $f:\cX\ra\cY$ be a morphism of $\infty$-prestacks with a relative deformation theory.
For each commutative diagram
$$\xymatrix{\Spec(\ov{R})\ar[d]\ar[r]^-{\bar{\eta}}&\cX\ar[d]\\\Spec(R)\ar[r]^-{\eta}&\cY}$$
the obstruction to lift $\bar\eta$  belongs to $\Ext_{\Pro(\Mod_{\ov{R}})}^{1}(\bar{\eta}^{*}L_{\cX/\cY},I)$.
If this obstruction vanishes, then the space of liftings is a trivial torsor under 
$\Hom_{\Pro(\Mod_{\ov{R}})}(\bar\eta^{*}L_{\cX/\cY},I)$.
\eprop

\bpf
Let $I=\Ker(R\thra \ov R)$.
By \cite[Tag.~08US, 07BP]{Sta}, we have $\tau_{\geq -1}L_{\ov R/R}=M[1]$.
Thus we get a canonical map $s:L_{\ov R/R}\ra M[1]$ that gives rise to a derivation $s:\ov R\ra \ov R\oplus I[1]$.
Equivalently, the map $s$ is a morphism of rings that splits the projection $\ov R\oplus I[1]\ra \ov R$, 
such that there is a Cartesian diagram
$$\xymatrix{R\ar[r]^\form\ar[d]&\ov R\ar[d]^{i}\\\ov R\ar[r]^-{r}&\ov R\oplus I[1]}$$
where $i$ is the inclusion and $f$ the obvious map.
Since $f$ is cohesive \cite[Def.~17.3.7.1]{Lu3}
there is a pullback square
$$\xymatrix{\cX(R)\ar[d]\ar[r]&\cX(\ov R)\ar[d]\\
\cX(\ov R)\times_{\cY(\ov R)}\cY(R)\ar[r]&\cX(\ov R\oplus I[1])\times_{\cY(\ov R\oplus I[1])}\cY(\ov R)}$$
If we fix a point $\eta\in\cX(\ov R)$ of image $\bar{\eta}\in\cY(\ov R)$ then we get a fiber sequence of spaces
\[\{\eta\}\times_{\cX(\ov R)}\cX(R)\ra \{\bar{\eta}\}\times_{\cY(\ov R)}\cY(R)\ra\Hom(\bar{\eta}^{*}L_{X/Y},I[1])\]
and the desired obstruction, as well as the assertion on torsors.
\epf

\bprop\label{ind-smooth}
Let $R$ be a noetherian ring.
Let $f:X\ra Y$ be an ft schematic morphism of ind-schemes of ind-ft over $S=\Spec(R)$. 
Assume that
\begin{enumerate}[label=$\mathrm{(\alph*)}$,leftmargin=8mm]
\item
$X$ is formally smooth over $S$,
\item
the map $f_s$ is an open immersion for each $s\in S$.
\eenum
Then $f$ is an open embedding.
\eprop

\bpf
Let $x:\Spec(R')\ra X$, we must show that $\tau_{\geq -1 }x^{*}L_{X/Y}=0$. The problem being local, we can assume $R'$ is local of maximal ideal $\km_{R'}$ and $R$ local of maximal ideal $\km$, so that we have $\km\subset\km_{R'}$ 
through the obvious map $R\ra R'$. 
Since $X$ is formally smooth over $S$, by Proposition \ref{GR-smooth}, $\tau_{\geq -1 }x^{*}L_{X/S}$ is pro-projective concentrated in degree zero, thus in $\Pro(\QCoh(\Spec(R')))$, we have an exact sequence:
\[0\ra H_{1}(x^{*}\Omega^{1}_{X/Y})\ra x^{*} f^{*}\Omega^{1}_{Y/S}\ra x^{*}\Omega^{1}_{X/S}\ra x^{*}\Omega^{1}_{X/Y}\ra 0.\]
Since $X$ and $Y$ are of ind-ft over $S$, we deduce that
$x^{*}\Omega^{1}_{X/S}$ and $x^{*} f^{*}\Omega^{1}_{Y/S}$ are $R'$-pro-modules of ft, 
as well as $H_{1}(x^{*}\Omega^{1}_{X/Y})$ because $R$ is noetherian and 
$x^{*}\Omega^{1}_{X/Y}$ is a $R'$-module of finite type because $f$ is schematic.
By Nakayama \cite[Tag.~00DV]{Sta} and because $f_{s}$ is an open immersion for each $s\in S$, we already know that $x^{*}\Omega^{1}_{X/Y}=0$.
Using that $x^{*}\Omega^{1}_{X/S}$ is pro-projective, we obtain that $H_{1}(x^{*}\Omega^{1}_{X/Y})/\km=0$.
Thus, again by Nakayama \cite[Lem.~5.4.9]{Bt3} for pro-modules, we have  $H_{1}(x^{*}\Omega^{1}_{X/Y})=0$.
We thus obtain that $f$ is formally étale. 
We deduce that $f$ is an open embedding because if we write $Y=\colim Y_a$ and $X=\colim X_a=Y_a\times_{Y}X$, we have that 
$f_a:X_a\ra Y_a$ is étale. 
In particular, the map $X_a\ra X_a\times_{Y_a}X_a$ is flat and,
since it is fiberwise an isomorphism, it is an isomorphism. 
In particular, the map $f_a$ is an étale monomorphism.
By \cite[Tag.~025G]{Sta}, it is an open embedding.
\epf

\subsection{Equivariant deformations}
Let $S$ be an affine scheme, $G\ra S$ a fppf group scheme and $\cX\ra S$ a formally smooth prestack with a $G$-action over $S$. 
Assume that $\cX$ admits a deformation theory.
Let 
$$\cX^{G}=\lim(G\times\cX\rightrightarrows\cX\times_{S}\cX)$$ 
be the functor of fixed points, where the morphisms are the action map and the diagonal.
Let $i:S_0\hra S$ be a closed subscheme defined by a quasi-coherent sheaf of ideals $\cI$ of square zero. 
Set $\cX_0=\cX\times_{S}S_0$ and $G_0=G\times_{S}S_0$.
Let $\eps_0\in\cX^{G}(S_0)$. 
We want to define an obstruction to lift $\eps_0$ to a point of $\cX^{G}(S)$. By formal smoothness and Proposition \ref{lift}, 
the space of liftings of $\eps_0$ to $\cX(S)$ is a trivial torsor under the abelian group 
$\Map_{\cO_{S_0}}(\eps_0^{*}L_{\cX/S},\cI).$
Let $L_0$ be the $\cO_{S_0}$-module 
$$L_0=\Map_{\cO_{S_0}}(\eps_0^{*}L_{\cX/S},\cI).$$
Since $\eps_0$ is fixed under $G_0$, the $\cO_{S_0}$-module $L_0$ is indeed a $G_0\ltimes\cO_{S_0}$-module. 
Let $\rho_{0}:G_0\ra\Aut_{S_0}(L_0)$ be the associated representation.

\bprop\label{equiv-lift}
There is a  class $c(\eps_0)$ in $H^{1}(G,i_*L_0)=H^{1}(G_0,L_0)$
whose vanishing is equivalent to the existence of a lift of $\eps_0$ to $\cX(S)$.
\eprop

\brem
Here the cohomology theory considered is Hochschild cohomology see \cite[Exp.~I, \S 5.1]{SGA3}.
\erem

\bpf
The argument follows \cite[Exp.~XII, Lem.~9.4]{SGA3} that we recall. 
The adjunction $H^{1}(G,i_*L_0)=H^{1}(G_0,L_0)$ is \cite[Exp.~III, Lem.~1.1.2]{SGA3}.
We consider the small fppf site on $S$. For any fppf scheme $T\ra S$, we set $G_T$ and $\cX_{T}$ the corresponding base change over $T$. Consider 
the sheaf $A$ on the small fppf site such that for any fppf scheme $T\ra S$, we have
\[A(T)=\{\text{set of liftings of}~ (\eps_0)_{T}~\text{in}~\cX(T)\}.\]
Because formation of $L_0$ commutes with flat base change, we get that $A(T)$ is a torsor under $H_0(T,i_*L_0)$. Because $\eps_0$ is fixed and $T$ 
is fppf on $S$, $g\in G(T)$ acts by affine automorphisms on $A(T)$ compatibly with the action of $G$ on $i_*L_0$, i.e.,
\[g(a_T+v)=g(a_T)+\rho(g)(v), v\in H^{1}(T,i_{*}L_0).\]
As $G$ is fppf, we apply the above discussion $T=G$ and $g=\id_G\in G(G)$. We obtain an action of the fppf sheaf $G$ on $k$.
Let $a$ be a lift $\eps_0$ in $\cX(S)$, let $g^{\sharp}$ be the universal point of $G$, we define $v^{\sharp}\in H^{1}(G,i_*L_0)$ by
\[\rho(g^{\sharp})(v^{\sharp})=g^{\sharp}a_G-a_G.\]
For any $S$-scheme $Y$, we set 
$$z(Y):G(Y)\ra H_0(Y,i_*L_0),\quad
g\mapsto v^{\sharp}(g).$$
This defines an 1-cocycle $z\in Z^{1}(G,L)$ and a class $c(\eps_0)$ in $H^{1}(G,i_{*}L_0)$ that is independent of $a$, see loc.~cit. for details. 
In particular if $\eps_0$ lifts to $\cX^{G}(S)$, we have $c(\eps_0)=0$. 
Conversely if $c(\eps_0)=0$, then there exists $w\in H_0(S,i_*L_0)$ such that $z(Y)(g)=gw_{Y}-w_{Y}$ 
for each $S$-scheme $Y$ and $g\in G(Y)$. By applying it to $Y=G$ and $g^{\sharp}$, we get that $a-w\in\cX^{G}(S)$, as wished. 
\epf

\end{appendix}

\end{document}